\newcommand{\argmin}{\mathop{\arg\!\min}}
\newcommand{\argmax}{\mathop{\arg\!\max}}
\def \RR {\mathbb R}
\def \R {\mathbb R}
\def\eqdef{\overset{\text{def}}{=}}
\def\Z{{\Bbb Z}}
\def\1{{\bf 1}}
\def\x{\mathbf{x}}
\def\EE{\mathbb E}
\def\R{\mathbb{R}}
\def\R{\mathbb R}
\def\E{\mathbb E}
\def\EE{\mathbb E}
\def\PP{\mathbb P}
\def\e{\varepsilon}
\def\la{\langle}
\def\ra{\rangle}
\def\vp{\varphi}
\def\y{\mathbf{y}}
\def\x{\mathbf{x}}
\def\one{{\mathbf 1}}
\def\ar#1{{\color{black}#1}} 
\def\ab#1{{\color{black}#1}} 
\def\arrev#1{{\color{black}#1}} 
\def \PP {\mathbb P}
\def \EE {\mathbb E}
\newcommand{\mI}{{I}}
\newcommand{\mS}{{S}}
\newcommand{\mU}{{U}}
\newcommand{\mV}{{V}}
\newcommand{\mW}{{\overline W}}
\newcommand{\mX}{{X}}
\newcommand{\mY}{{Y}}
\newcommand{\mZ}{{Z}}
\newcommand{\mix}{{\mathbf{M}}}
\newcommand{\cset}{\mathbf{C}}
\newcommand{\ol}{\overline}
\newcommand{\ds}{\displaystyle}
\newcommand{\norm}[1]{\left\| #1 \right\|}
\newcommand{\angles}[1]{\left\langle #1 \right\rangle}
\newcommand{\cbraces}[1]{\left( #1 \right)}
\newcommand{\sbraces}[1]{\left[ #1 \right]}
\newcommand{\braces}[1]{\left\{ #1 \right\}}
\newcommand{\muav}{\mu_{g}}
\newcommand{\Lav}{L_{g}}
\newcommand{\kappaav}{\kappa_g}
\newcommand{\Lmax}{L_{l}}
\newcommand{\mumin}{\mu_{l}}
\newcommand{\kappamax}{\kappa_l}
\newcommand{\eps}{\varepsilon}
\newtheorem{assumption}[theorem]{Assumption}
\newcommand{\numberthis}{\addtocounter{equation}{1}\tag{\theequation}}
\def\e{\varepsilon}
\def \R {\mathbb R}
\def\RR{\mathcal R}
\def\Rbf{\mathbf{R}}
\def\tL{\tilde{L}}
\def\tg{\tilde{g}}
\def\tnabla{\tilde{\nabla}}
\def\tx{\tilde{x}}
\def\ty{\tilde{y}}
\def\Bxi{\boldsymbol{\xi}}
\def\Bxi{\boldsymbol{\xi}}
\def\bld{\boldsymbol}
\newcommand{\cG}{{\cal G}}
\newcommand{\cW}{{\cal W}}
\def\dm#1{{\color{black}#1}} 
\begin{document}

\title*{Recent theoretical advances in decentralized distributed convex optimization}

\titlerunning{Recent theoretical advances in decentralized distributed convex optimization}



\author{ Eduard Gorbunov$^{1,2,3}$, Alexander Rogozin$^{1,2,3}$, Aleksandr Beznosikov$^{1,2}$, Darina Dvinskikh$^{4,1,5}$, Alexander Gasnikov$^{1,5,6}$}
\authorrunning{ E.~Gorbunov, A.~Rogozin, A.~Beznosikov, D.~Dvinskikh, A.~Gasnikov }
\institute{
$^1$Moscow Institute of Physics and Technology, Russia\\
$^2$Sirius University of Science and Technology, Russia\\
$^3$Russian Presidential Academy of National Economy and Public Administration, Moscow, Russia\\ 
$^4$Weierstrass Institute for Applied Analysis and Stochastics, Germany\\
$^5$Institute for Information Transmission Problems RAS, Moscow, Russia\\
$^6$Caucasus Mathematical Center, Adyghe State University, Russia
}

\maketitle

\abstract{
In the last few years, the theory of decentralized distributed convex optimization has made significant progress. The lower bounds on communications rounds and oracle calls have appeared, as well as methods that reach both of these bounds. 
In this paper, we focus on how these results can be explained based on optimal algorithms for the non-distributed setup.
In particular, we provide our recent results that have not been published yet and that could be found in details only in arXiv preprints.
}



\section{Introduction}

In this work, we focus on the following convex  optimization problem 

  \begin{equation}\label{P}
      \min_{x\in Q \subseteq \mathbb{R}^n}  f(x):= \frac{1}{m}\sum_{i=1}^m f_i(x),
  \end{equation}
 where the functions $\{f_i\}_{i=1}^m$ are convex and $Q$ is a convex set.
 Such kind of problems arise in many machine learning applications \cite{shalev2014understanding} (e.g., empirical risk minimization) and statistical applications \cite{spokoiny2012parametric} (e.g., maximum likelihood estimation). To solve these problems, decentralized distributed methods are widely used (see \cite{nedic2020distributed,dvinskikh2020parallel} and reference therein).
This direction has gained popularity with the release of the book \cite{bertsekas1989parallel}. Many researchers (among which we especially note Angelia Nedich) have productively promoted distributed algorithms in the last 30 years.
Due to the emergence of big data and the rapid growth of problem sizes, decentralized distributed methods have gained increased interest in the last decade.
In this paper, we mainly focus on the last five years of theoretical advances, starting with the remarkable paper \cite{arjevani2015communication}. The authors of \cite{arjevani2015communication} introduce the lower complexity bounds for communication rounds required to achieve $\varepsilon$-accuracy solution $x^N$ of \eqref{P} in the function value, i.e., $f(x^N) - \min_{x\in Q}f(x) \le \varepsilon$. 

Let us formulate the result of \cite{arjevani2015communication} (see also  \cite{shi2015extra,scaman2017optimal,scaman2018optimal,lan2017communication,uribe2020dual}) formally. Assume that we have some connected undirected graph (network) with $m$ nodes. 
For each node $i$ of this graph, we privately assign function $f_i$ and suppose that the node $i$ can calculate $\nabla f_i$ at some point $x$. 
At each communication round the nodes can communicate with their  neighbors, i.e., send and receive a message with no more than $O(n)$ numbers. 
In the $O(R)$ neighborhood of a solution $x^*$ of \eqref{P} (where $R = \|x^0 - x^*\|_2$ is the Euclidean distance between starting point $x^0$ and the solution $x^*$ that corresponds to the minimum of this norm), we suppose that functions $f_i$'s are $M$-Lipschitz continuous (i.e., $\|\nabla f_i(x)\|_2 \le M$) and   $L$-Lipschitz smooth (i.e., $\|\nabla f_i(y) - \nabla f_i(x)\|_2 \le L \|y-x\|_2$).
The optimal bounds on the number of communications and the number of oracle calls per node are summarized in Table \ref{tab:DetPrimeOr}.
Here and below $\tilde{O}(\text{ })$ means the same as $O(\text{ })$ up to a $\log (1/\varepsilon)$ factor, and $\tilde{O}(\sqrt\chi)$ corresponds to the consensus time, that is the number of communication rounds required to reach the consensus in the considered network (more accurate definition of $\tilde{O}(\sqrt\chi)$ is given in Sections \ref{Rogozin}, \ref{gorbunov}). 

In the last few years, algorithms have been developed that reach the lower bounds from Table~\ref{tab:DetPrimeOr}.
In Section \ref{Rogozin}, we consider one of such algorithms \cite{rogozin2019projected,rogozin2021towards} for the case when the functions $f_i$'s are smooth.
This algorithm has the simplest nature among all known alternatives: this is a direct consensus-projection generalization of Nesterov's fast gradient method. 

When communication networks vary from time to time (time-varying communication networks, see Section~\ref{Rogozin}), we replace $\sqrt{\chi}$ by $\chi$ and we suppose that different $f_i$'s may have different  constants of smoothness $L_i$. 

The non-smooth case (when functions $f_i$'s are Lipschitz continuous) is studied in Section~\ref{gorbunov}, where  the results from \cite{dvinskikh2019decentralized,gorbunov2019optimal} are summarized. 
The approach is based on reformulation of the distributed decentralized problem as non-distributed convex optimization problem with affine constraints, which are further brought into the target function as a composite quadratic penalty. To solve this problem, Lan's sliding algorithm \cite{lan2020first} can be used.  
	\begin{table}[H]
\caption {Optimal bounds for communication rounds and deterministic oracle calls of  $\nabla f_i$ per node}
\label{tab:DetPrimeOr}
\begin{center}
\begin{tabular}{|c| c| c| c| c|}
 \hline
 & \makecell{ $f_i$ is  $\mu$-strongly convex\\ and $L$-smooth} 
 &   \makecell{$f_i$ is $L$-smooth} 
 & \makecell{ $f_i$ is $\mu$-strongly convex} &  \\
 \hline
 \makecell{\# communication rounds} 
 & \makecell{$\widetilde O\left(\sqrt{\frac{L}{\mu}\chi} \right)$}   
 & \makecell{ $ \widetilde O\left({\sqrt{\frac{LR^2}{\e} \chi}}\right)$ }
 &  \makecell{  $O\left(\sqrt{\frac{M^2}{\mu\e}\chi} \right)$} 
 &  \makecell{  $O\left( \sqrt{\frac{M^2R^2}{\e^2}\chi} \right) $}   \\
 \hline
\makecell{\# oracle calls of $\nabla f_i$\\ per node $i$} 
& $ \widetilde O\left(\sqrt{\frac{L}{\mu}} \right)$ 
&  $O\left(\sqrt{\frac{LR^2}{\e}}  \right) $ 
&   $O\left(\frac{M^2}{\mu\e} \right) $ 
&   $O\left( \frac{M^2R^2}{\e^2}\right)$\\
 \hline
\end{tabular}
\end{center}
\end{table}

The same construction and estimates hold  (see Table~\ref{T:stoch_prima_oracle}) in the non-smooth stochastic case, when instead of subgradients $\nabla f_i(x)$'s we have an access only to their unbiased estimates $\nabla f_i(x,\xi_i)$'s.  We assume here that $\E\|\nabla f(x,\xi)\|_2^2\le M^2$ on a $O(R)$ neighborhood of $x^*$.


	\begin{table}[H]
\caption {Optimal bounds for communication rounds and stochastic oracle calls of $\nabla f_i(x,\xi_i)$ per node} 
\label{T:stoch_prima_oracle}
\begin{center}
\begin{tabular}{ |c| c| c| c| c|}
 \hline
&{\makecell{ $f_i$ is 
$\mu${-strongly convex}\\ and $L$-smooth} } &  {\makecell{ $f_i$ is  $L$-smooth} }&  {\makecell{ $f_i$ is 
$\mu${-strongly convex}} } &  \\
 \hline
\makecell{\# communication rounds}& \makecell{ $\widetilde O\left(\sqrt{\frac{L}{\mu}\chi} \right)$}   & \makecell{ $ \widetilde O\left({\sqrt{\frac{LR^2}{\e} \chi}}\right)$ }&  \makecell{  $O\left(\sqrt{\frac{M^2}{\mu\e}\chi} \right)$} &  \makecell{  $O\left( \sqrt{\frac{M^2R^2}{\e^2}\chi} \right) $}  \\
 \hline
   \makecell{\# oracle calls of $\nabla f_i(x, \xi_i)$\\
 per node $i$} & {\makecell{ {$\widetilde O\left(\max\left\{\frac{\sigma^2}{m\mu\e},
 \sqrt{\frac{L}{\mu}}\right\}\right)$} }}  & {\makecell{{$ O\left(\max\left\{\frac{\sigma^2R^2}{m\e^2}, \sqrt{\frac{LR^2}{\e}}  \right\}\right)$}}} & { {\makecell{  $O\left(\frac{M^2}{\mu\e} \right)$}}} &{\makecell{  $O\left(\frac{M^2R^2}{\e^2}\right)$}} \\
 \hline
\end{tabular}
\end{center}
\end{table}

The smooth part of Table~\ref{T:stoch_prima_oracle} describes the known lower bounds. There exist methods that are optimal only in one of the two mentioned criteria \cite{dvinskikh2019decentralized}: either in communication rounds or in oracle calls per node. \ar{The technique from \cite{rogozin2021towards} (also described in Section~\ref{Rogozin}) combined with proper batch-size policy \cite{dvinskikh2020accelerated} allows to reach these lower bounds up to a logarithmic factor \cite{rogozin2021accelerated}.}


Section~\ref{gorbunov} also contains analogues of the results mentioned in Tables~\ref{tab:DetPrimeOr} and \ref{T:stoch_prima_oracle} for dual (stochastic) gradient type oracle. That is, instead of an access at each node to $\nabla f_i$ we have an access to the gradient of conjugated function $\nabla f_i^*$ \cite{scaman2017optimal,uribe2020dual}. Such oracle appears in different applications, in particular, in Wasserstein barycenter problem \cite{uribe2018distributed,dvurechenskii2018decentralize,kroshnin2019complexity,dvinskikh2019dual,dvinskikh2021decentralized}.

    In Section~\ref{beznosikov}, we transfer the results mentioned above to gradient-free oracle assuming that we have an access only to  $f_i$ instead of $\nabla f_i$. 
In this case, a trivial solution comes to mind: to restore the gradient from finite differences. Based on optimal gradient-type methods in smooth case, it is possible to build optimal gradient-free methods. But what is about non-smooth case? To the best of our knowledge, until recently, it was an open question. Based on \cite{beznosikov2019derivative},  we provide an answer for this question (Section~\ref{beznosikov}). To say more precisely, we transfer optimal gradient-free algorithms for non-smooth (stochastic two-points) convex optimization problems \cite{shamir2017optimal,bayandina2018mirror} from non-distributed set up to decentralized distributed one.  Here, as in  Section~\ref{gorbunov}, we also mainly use the penalty trick and the Lan's sliding.      

It is worth to add several results to the list of recent advances collected in Tables~\ref{tab:DetPrimeOr},~\ref{T:stoch_prima_oracle}. The first result describes the case when $f_i(x) =  \frac{1}{r}\sum_{j=1}^r f_i^j(x)$ in \eqref{P}, $Q = \R^n$. All $f_i^j$ are $L$-smooth and $\mu$-strongly convex. In this case, the lower bounds were obtained in \cite{hendrikx2020optimal}. Optimal algorithms were proposed in \cite{hendrikx2020optimal,li2020optimal}. These algorithms require $\tilde{O}\left(\sqrt{ \frac{L}{\mu}\chi}\right)$ communication rounds and $\tilde{O}\left(r+\sqrt{r \frac{L}{\mu}}\right)$ oracle calls ($\nabla f_i^j$ calculations) per node. This is valuable result since in real machine learning applications the sum-type representation of $f_i$ is typical. 

Another way to use this representation is statistical similarity of $f_i$. The lower bound for communication rounds in deterministic smooth case was also obtained \cite{arjevani2015communication}. Roughly speaking, if the Hessians of the $f_i$'s are $\beta$-close  in the 2-norm, then the lower bound for communication rounds will be \dm{ $\tilde{\Omega}\left(\sqrt{ \frac{\beta}{\mu}\chi}\right)$.
Here $\tilde{\Omega} (\cdot)$  is the notation for lower bounds on the growth rate hiding logarithms.}
For example, if $f_i(x) =  \frac{1}{r}\sum_{j=1}^r f_i^j(x)$ and all $f_i^j$ are $\mu$-strongly convex we have $\beta \simeq \mu + \frac{\text{const}}{\sqrt{r}}$.
In the decentralized distributed setup there is a gap between this lower bound and the optimal (non-accelerated) bound $\tilde{O}\left( \frac{\beta}{\mu}\chi\right)$ that can be achieved at the moment \cite{sun2020convergence}. But for centralized distributed architectures with additional assumptions on $f_i$'s, partial acceleration is possible \cite{hendrikx2020statistically}. \arrev{In the recent paper \cite{tian2021acceleration} the gap for decentralized optimization was closed by using distributed Catalyst.}

Other group of results  relate to very specific (but rather popular) centralized federated learning architectures \cite{kairouz2019advances}. According to  mentioned above estimates, heterogeneous federated learning can be considered as a partial case (with $\chi = 1$) \cite{karimireddy2019scaffold,woodworth2020minibatch,koloskova2020unified,gorbunov2020local}. In the paper \cite{koloskova2020unified}, this was explained based on the analysis of unified decentralized SGD. Paper \cite{koloskova2020unified} also summarizes a lot of different distributed setups in one general approach. We partially try to use the generality from \cite{koloskova2020unified} in Section~\ref{Rogozin}. To the best of our knowledge, it is an open question to accelerate all the results of \cite{koloskova2020unified}. Section~\ref{Rogozin} contains such an acceleration only in deterministic case. 

\ar{
Along with minimization, distributed saddle-point problems are an interesting venue of research \cite{mateos2015distributed,wai2018multi,rogozin2021decentralized,liudecentralized}. The basis for distributed solution of min-max problems are extragradient and Mirror-Prox methods \cite{nemirovski2004prox}. Unlike minimization, where the optimal dependence of iteration complexity on function condition number is $\sqrt\kappa$, the lower complexity bound for saddle-point algorithms includes $\kappa$, and Nesterov acceleration does not improve classical non-accelerated methods. In decentralized case, the lower bound for number of communications is $O(\kappa\sqrt\chi\log(1/\eps))$ \cite{rogozin2021decentralized}. Therefore, Nesterov acceleration technique is not needed for obtaining optimal methods for min-max problems both in classical and distributed optimization. \arrev{But in particular cases (i.e. different constants of strong convexity and strong concavity) acceleration is possible due to distributed Catalyst \cite{tian2021acceleration} and \cite{lin2020near,gasnikov2021accelerated,yang2020catalyst,vladislav2021accelerated}.}}
\arrev{
 Note, also that lower bound and optimal decentralized algorithm for saddle-point problems with variance reduction was proposed in \cite{beznosikov2021optimal_} (this paper develops the results of \cite{hendrikx2020optimal,li2020optimal}). Lower bound optimal decentralized algorithm for saddle-point problems with similarity was proposed in \cite{beznosikov2021distributed}.}

\section{Decentralized Optimization of Smooth Convex Functions} \label{Rogozin}

Consider problem \eqref{P} and rewrite it in the following form:
\begin{align}\label{eq:problem_equality_constraints}
    \min_{\mX\in\R^{m\times n}}~ F(\mX) = \sum_{i=1}^m f_i(x_i)~ \text{ s.t. }~ x_1 = \ldots = x_m
\end{align}
where $\mX = (x_1 \ldots x_m)^\top$. Decentralized optimization problem  is now reformulated as an optimization problem with linear constraints. The constraint set writes as $\cset = \braces{x_1 = \ldots = x_m}$.

Functions $f_i$ are stored on the nodes across the network, which is represented as an undirected graph $\mathcal{G} = (V, E)$. Every node has an access to the function and its first-order characteristics. Decentralized first-order methods use two types of steps -- computational steps, i.e. performing local computations and communication steps, which is exchanging the information with neighbors. Alternating these two types of steps results in minimizing the objective while maintaining agents' vectors approximately equal.

We begin with an overview of how communication procedures are developed and analyzed. The iterative information exchange is referred to as \textit{consensus} \arrev{or} \textit{gossip} algorithms \arrev{in the literature \cite{boyd2006randomized,tsitsiklis1984problems,xiao2004fast,muthukrishnan1998first}}.

\subsection{Consensus algorithms}\label{sec:consensus_algorithms}

Let each agent in the network initially hold a vector $x_i^0$ and let the communication network be represented by a connected graph $\mathcal{G} = (V, E)$. The agents seek to find the average vector across the network, but their communication is restricted to sending and receiving information from their direct neighbors. In one communication round, every two nodes linked by an edge exchange their vector values. After that, agent $i$ sums the received values with predefined coefficients $m_{ij}$, where $j$ is the number of the corresponding neighbor. In other words, every node runs an update
\begin{align*}
    x_i^{k+1} = \ar{[\mix]}_{ii} x_i^k + \sum_{(i, j)\in E} \ar{[\mix]}_{ij} x_j^k,
\end{align*}
\ar{where $[\mix]_{ij}$ are elements of the \textit{mixing matrix} $\mix$. The} update at one communication step takes the form
\begin{align}\label{eq:consensus_iter}
    \mX^{k+1} = \mix \mX^k.
\end{align}
Under additional assumptions this iterative scheme converges to the average of initial vectors over network, i.e. to 
\begin{align*}
    \ol\mX^0 = \frac{1}{m} \one \one^\top \mX^0 = \frac{1}{m} \sum_{i=1}^m x_i^0.
\end{align*}
\begin{assumption}\label{assum:mixing_matrix}
Mixing matrix $\mix$ satisfies the following properties.
\begin{itemize}
		\item (Decentralized property) If $(i, j)\notin E$ and $i\ne j$, then $[\mix]_{ij} = 0$. Otherwise $[\mix]_{ij} > 0$.
		\item (Symmetry and double stochasticity) $\mix \one = \one$ and $\mix = \mix^\top$.
        \item (Spectrum property) Denote $\lambda_2(\mix)$ the absolute value of second largest (in absolute value) eigenvalue of $\mix$. Then $\lambda_2(\mix) < 1$.
	\end{itemize}
\end{assumption}
The choice of weights for mixing matrix is an interesting problem we do not address here (see \cite{boyd2006randomized} for details). A mixing matrix with Metropolis weights satisfies Assumption \ref{assum:mixing_matrix}:
\begin{align*}
	[\mix]_{ij} = 
	\begin{cases}
		1 / (1 + \max\{d_i, d_j\}) &\text{if }(i, j)\in E, \\
		0 &\text{if } (i, j)\notin E, \\
		1 - \ds\sum_{m: (i, m)\in E} [\mix]_{im} &\text{if } i = j,
	\end{cases}
\end{align*}
where $d_i$ denotes the degree of node $i$.

Several variations of Assumption \ref{assum:mixing_matrix} can be found in literature. In particular, in \cite{liu2011accelerated} the mixing matrix is not needed to be symmetric. Instead, it is assumed to be doubly stochastic and have a real spectrum. Moreover, the spectrum property in Assumption \ref{assum:mixing_matrix} implies that $\one$ is the only (up to a scaling factor) eigenvector corresponding to eigenvalue $1$, i.e. $\ker\cbraces{\mI - \mix} = \text{span}(\one)$.

\begin{lemma}\label{lemma:consensus}
    For iterative consensus procedure \eqref{eq:consensus_iter} it holds
    \begin{align*}
        \norm{\mX^k - \ol\mX^0}_2 \le (\lambda_2(\mix))^k \norm{\mX^0 - \ol\mX^0}_2.
    \end{align*}
\end{lemma}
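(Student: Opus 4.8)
The plan is to exploit the eigenstructure of the symmetric doubly-stochastic mixing matrix $\mix$ and the fact that the consensus iteration is linear. First I would observe that $\ol\mX^0 = \frac{1}{m}\one\one^\top \mX^0 = P\mX^0$, where $P = \frac{1}{m}\one\one^\top$ is the orthogonal projector onto $\mathrm{span}(\one)$ (viewed columnwise on $\R^{m\times n}$). The key algebraic fact is that $P$ commutes with $\mix$: since $\mix\one = \one$ and $\mix = \mix^\top$, we get $\mix P = P$ and $P\mix = P$. Consequently $\mix^k\mX^0 - \ol\mX^0 = \mix^k\mX^0 - P\mX^0 = (\mix^k - P)\mX^0$, and because $\mix P = P\mix = P = P^2$ one checks $\mix^k - P = (\mix - P)^k$. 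Thus the error $\mX^k - \ol\mX^0$ equals $(\mix - P)^k(\mX^0 - \ol\mX^0)$, where moreover $\mX^0 - \ol\mX^0 = (\mI - P)\mX^0$ lies in the orthogonal complement of $\mathrm{span}(\one)$.

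Next I would bound the operator norm of $\mix - P$. Working in the orthonormal eigenbasis of the symmetric matrix $\mix$, write its eigenvalues as $1 = \mu_1 \ge |\mu_2| \ge \cdots \ge |\mu_m|$ with $\one/\sqrt{m}$ the eigenvector for $\mu_1 = 1$ (using double stochasticity and symmetry). Then $P$ is exactly the spectral projector onto the $\mu_1$-eigenspace, so $\mix - P$ has the same eigenvectors with eigenvalues $0$ (on $\mathrm{span}(\one)$) and $\mu_2,\ldots,\mu_m$ (on the complement). Hence $\norm{\mix - P}_2 = \max_{i\ge 2}|\mu_i| = \lambda_2(\mix)$, and $\lambda_2(\mix) < 1$ by the spectrum property in Assumption~\ref{assum:mixing_matrix}. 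It follows that $\norm{(\mix - P)^k}_2 \le (\lambda_2(\mix))^k$. (Since we act on $\R^{m\times n}$ with $\mix$ multiplying on the left column by column, all of this is just the statement tensored with $I_n$, which does not change the relevant norm.)

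Finally I would assemble the pieces:
\begin{align*}
    \norm{\mX^k - \ol\mX^0}_2 = \norm{(\mix - P)^k(\mX^0 - \ol\mX^0)}_2 \le \norm{(\mix - P)^k}_2 \norm{\mX^0 - \ol\mX^0}_2 \le (\lambda_2(\mix))^k \norm{\mX^0 - \ol\mX^0}_2,
\end{align*}
which is exactly the claimed bound.

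I do not anticipate a serious obstacle here; the only point requiring a little care is making the commutation identity $\mix^k - P = (\mix - P)^k$ rigorous (it follows by induction from $\mix P = P\mix = P$ and $P^2 = P$, expanding $(\mix - P)^k$ and noting every cross term collapses to $P$), and being explicit that $\norm{\cdot}_2$ on matrices here means the norm induced by treating $\mX$ as a vector in $\R^{mn}$ (equivalently the Frobenius norm), for which the spectral bound on the symmetric operator $\mix - P$ applied columnwise still controls the full norm. An alternative, fully equivalent route avoids naming $P$: decompose $\mX^0 = \ol\mX^0 + (\mX^0 - \ol\mX^0)$, note $\mix\,\ol\mX^0 = \ol\mX^0$ so that $\mX^k - \ol\mX^0 = \mix^k(\mX^0 - \ol\mX^0)$, and then bound $\norm{\mix^k v}_2 \le \lambda_2(\mix)^k\norm{v}_2$ for any $v \perp \one$ directly from the eigendecomposition; I would likely present this shorter version.
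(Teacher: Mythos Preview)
Your proposal is correct and, in substance, matches the paper's proof: both arguments reduce to the facts that $\mix\,\ol\mX^0 = \ol\mX^0$, that $\mX^0 - \ol\mX^0$ lies in $(\mathrm{span}(\one))^\perp$, and that on this subspace the symmetric matrix $\mix$ has spectral radius $\lambda_2(\mix)$. The paper proceeds column by column with a one-step contraction $\norm{\mix x - \ol x}_2 \le \lambda_2(\mix)\norm{x - \ol x}_2$ and then iterates, which is exactly the ``alternative route'' you sketch at the end; your main presentation via the projector $P$ and the identity $(\mix - P)^k = \mix^k - P$ is just a compact repackaging of the same spectral argument.
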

\begin{proof}
    Let $x\in \R^{n}$ and $\ol x = \frac{1}{m}\one \one^\top x$. First, note that $\mix\ol x = \mix\cdot \frac{1}{m} \one \one^\top x = \frac{1}{m} \one\one^\top x = \ol x$. It can be easily seen that $x - \ol x \in (\text{span}(\one))^\bot$ and $\mix x - x\in (\text{span}(\one))^\top$:
    \begin{align*}
        \angles{x - \ol x, \one} &= \angles{\cbraces{\mI - \frac{1}{m} \one \one^\top} x, \one} = \angles{x, \cbraces{\mI - \frac{1}{m} \one \one^\top} \one} = 0, \\
        \angles{\mix x - \ol x, \one} &= \angles{\cbraces{\mix - \frac{1}{m} \one \one^\top} x, \one} = \angles{x, \cbraces{\mix - \frac{1}{m} \one \one^\top} \one} = 0.
    \end{align*}
    On subspace $(\text{span}(\one))^\top$ the largest eigenvalue of $\mix$ is $\lambda_2(\mix)$. We have
    \begin{align*}
        \norm{\mix x - \ol x}_2 = \norm{\mix(x - \ol x)}_2\le \lambda_2(\mix)\norm{x - \ol x}_2.
    \end{align*}
    Applying the derived fact to every column of \arrev{$X^k$} we get $\ol\mX^k = \ol\mX^0$ and $\norm{\mX^{k+1} - \ol\mX^0}_2 \le \lambda_2(\mix) \norm{\mX^k - \ol\mX^0}_2$ for every $k\ge 0$, which concludes the proof.
\end{proof}

By Lemma \ref{lemma:consensus}, consensus scheme \eqref{eq:consensus_iter} requires $O\cbraces{\frac{1}{1 - \lambda_2(\mix)}\log(\frac{1}{\eps})}$ iterations to achieve accuracy $\eps$, i.e. to find arithmetic mean of vectors over the network with precision $\eps$: $\norm{\mX^k - \ol\mX^0}_2\le \eps$.

\arrev{
\textbf{Remark}. If the graph changes with time, we associate a sequence of mixing matrices $\braces{\mix^k}_{k=0}^\infty$ with it. In the time-varying case, the consensus algorithm convergence rate is ruled by worst-case second largest eigenvalue, i.e. $\max\limits_{k\geq 0} \lambda_2(\mix^k)$. Provided that each $\mix^k$ is symmetric, doubly stochastic and satisfies the decentralized property in Assumption \ref{assum:mixing_matrix}, the number of communication rounds to reach consensus accuracy $\eps$ is $O\cbraces{\frac{1}{1 - \max\limits_{k\geq 0} \lambda_2(\mix^k)} \log\frac 1 \eps}$.
}

\subsubsection{Quadratic optimization point of view}\label{subsubsec:quadratic_point_of_view}

For a given undirected graph $\mathcal{G} = (V, E)$ introduce its Laplacian matrix
\begin{equation*}
    [\mW]_{ij} = \begin{cases} 
    -1, &\text{if } (i,j)\in E,\\
    \deg(i), &\text{if } i=j,\\
    0 &\text{otherwise}.
    \end{cases}
\end{equation*}
Laplacian matrix is positive semi-definite and for $\mX = (x_1\ldots x_m)^\top$ it holds $\mW\mX = 0 \Leftrightarrow x_1 = \ldots = x_m$. A more detailed discussion of Laplacian matrix and its applications is provided in Section \ref{sec:distributed_opt}. The consensus problem can be reformulated as
\begin{align}\label{eq:consensus_quadratic_reformulation}
    \min_{\mX\in\R^{m\times n}}~ g(\mX) := \frac{1}{2}\angles{\mX, \mW\mX}.
\end{align}
Any matrix $\mX^*$ with equal rows is a solution of Problem \eqref{eq:consensus_quadratic_reformulation}, and therefore the set of minimizers of Problem \eqref{eq:consensus_quadratic_reformulation} is a linear subspace of form $\mathcal{X}^* = \braces{\one x^\top:~ x\in\R^n}$.
Denote $\lambda_{\max}(\mW)$ and $\lambda_{\min}^+(\mW)$ the largest and the smallest non-zero eigenvalues of $\mW$, respectively. Then $g(\mX)$ is has Lipschitz gradients with constant $\lambda_{\max}(\mW)$ and is strongly convex on $(\ker\mW)^\bot$ with modulus $\lambda_{\min}^+(\mW)$. Let non-accelerated gradient descent be run over function $g(\mX)$
\begin{align}\label{eq:consensus_iter_laplacian}
    \mX^{k+1} = \mX^k - \frac{1}{\lambda_{\max}(\mW)} \mW\mX^k.
\end{align}
First, note that trajectory of method \eqref{eq:consensus_iter_laplacian} stays in $\mX^0 + (\ker\mW)^\bot$. To verify this, consider $\mZ\in\ker\mW$:
\begin{align*}
    \angles{\mW\mX^k, \mZ} &= \angles{\mX^k, \mW\mZ} = 0 \Rightarrow \arrev{\mW\mX^k} \in (\ker\mW)^\bot, \\
    \mX^{N+1} - \mX^0 &= -\frac{1}{\lambda_{\max}(\arrev{\mW})} \sum_{k=0}^N \mW\mX^k \in (\ker\mW)^\bot.
\end{align*}
Method \eqref{eq:consensus_iter_laplacian} converges to some point in $\mathcal{X}^*$. Since its trajectory lies in $\mX^0 + (\ker\mW)^\bot$, the limit point of $\braces{\mX^k}_{k=0}^\infty$ is the projection of $\mX^0$ onto $\ker\mW$, i.e. to $\ol\mX^0$. The algorithm requires $O\cbraces{\frac{\lambda_{\max}(\mW)}{\lambda_{\min}^+(\mW)} \log(\frac{1}{\eps})}$ iterations to reach accuracy $\eps$. 

In order to establish the connection between gradient descent for problem \eqref{eq:consensus_quadratic_reformulation} and consensus algorithm \eqref{eq:consensus_iter}, introduce $\mix = \mI - \frac{\mW}{\lambda_{\max}(\mW)}$. Matrix $\mix$ satisfies Assumption \ref{assum:mixing_matrix}, and update rule \eqref{eq:consensus_iter_laplacian} then rewrites as
\begin{align*}
    \mX^{k+1} = \mX^k - \frac{\mW}{\lambda_{\max}(\mW)} \mX^k  = \cbraces{\mI - \frac{\mW}{\lambda_{\max}(\mW)}} \mX^k = \mix \mX^k.
\end{align*}
Therefore, gradient descent on $g(\mX)$ with constant step-size $\frac{1}{\lambda_{\max}(\mW)}$ is equivalent to non-accelerated consensus algorithm \eqref{eq:consensus_iter}. Moreover, the iteration complexities coincide, since $\lambda_2(\mix) = 1 - \frac{\lambda_{\min}^+(\mW)}{\lambda_{\max}(\mW)}$ and therefore $O\cbraces{\frac{\lambda_{\max}(\mW)}{\lambda_{\min}^+(\mW)} \log(\frac{1}{\eps})} = O\cbraces{\frac{1}{1 - \lambda_2(\mix)} \log(\frac{1}{\eps})}$.

\arrev{
We note that the same gradient descent analogy holds for time-varying networks. Given a sequence of connected undirected graphs $\braces{\cG^k}_{k=0}^\infty$, consider a sequence of corresponding Laplacians $\braces{\mW^k}_{k=0}^\infty$. The consensus algorithm may be interpreted as a gradient descent on a time-varying quadratic function $\braces{g^k(X)}_{k=0}^\infty$, where $g^k(X)$ is defined as
\begin{align}\label{eq:tw_consensus_quadratic_reformulation}
    g^k(X) = \frac{1}{2} \angles{X, \mW^k X}.
\end{align}
All $g^k(X)$ have a common set of minimizers $\braces{\one x^\top:~ x\in\R^d}$. The worst-case Lipschitz constant over time is $\max\limits_{k\geq 0} \lambda_{\max}(\mW^k)$, and consensus iteration writes similar to \eqref{eq:consensus_iter_laplacian} with $\lambda_{\max}(\mW)$ replaced by $\max\limits_{k\geq 0} \lambda_{\max}(\mW^k)$. The convergence guarantees of gradient descent over a time varying function $\braces{g^k(X)}_{k=0}^\infty$ do not break since the Lyapunov function for non-accelerated gradient dynamics is the squared distance to the solution set, i.e. it does not depend on the minimization objective. Therefore, non-accelerated gradient descent decreases the Lyapunov function at each step and is robust to changes in the objective function. The number of communication rounds to reach consensus accuracy $\eps$ is $O\cbraces{\frac{\max\limits_{k\geq 0} \lambda_{\max}(\mW^k)}{\max\limits_{k\geq 0} \lambda_{\min}^+(\mW^k)} \log\frac 1 \eps}$.
}

In order to obtain a better dependence on $\frac{\lambda_{\max}(\mW)}{\lambda_{\min}^+(\mW)}$, Nesterov acceleration \cite{nesterov2004introduction} may be employed. Consider Nesterov accelerated method for strongly convex objectives
\begin{subequations}\label{eq:consensus_iter_laplacian_accelerated}
    \begin{align}
        \beta &= \frac{\sqrt{\lambda_{\max}(\mW)} - \sqrt{\lambda_{\min}^+(\mW)}}{\sqrt{\lambda_{\max}(\mW)} + \sqrt{\lambda_{\min}^+(\mW)}}, \\
        \mY^{k} &= \mX^k + \beta(\mX^k - \mX^{k-1}), \\
        \mX^{k+1} &= \mY^k - \frac{\mW}{\lambda_{\max}(\mW)} \mY^k.
    \end{align}
\end{subequations}
Analogously to non-accelerated scheme \eqref{eq:consensus_iter_laplacian}, the trajectory of accelerated Nesterov method lies in $\mX^0 + (\ker\mW)^\bot$. This can be easily seen by induction:
\begin{align*}
    \mY^k - \mX^0 &= (\mX^k - \mX^0) + \beta((\mX^k - \mX^0) - (\mX^{k-1} - \mX^0)) \in (\ker\mW)^\bot, \\
    \mX^{k+1} - \mX^0 &= \underbrace{(\mY^k - \mY^0)}_{\in(\ker\mW)^\bot} - \frac{1}{\lambda_{\max}(\mW)} \underbrace{\mW\mY^k}_{\in \text{Im}\mW = (\ker\mW)^\bot} \in (\ker\mW)^\bot.
\end{align*}
Therefore, accelerated scheme converges to the projection of $\mX^0$ onto $\ker\mW$, which is $\ol\mX^0$, i.e. a matrix \arrev{which} rows are arithmetic averages of $\mX^0$.

\arrev{
Note that acceleration is not attainable over time-varying graphs. Imagine we run Nesterov gradient method over a time-varying objective $\braces{g^k(X)}_{k=0}^\infty$ defined in \eqref{eq:tw_consensus_quadratic_reformulation}. The 
potential function for accelerated gradient dynamics \cite{bansal2019potential} includes
the objective function. Since the objective function changes, the 
potential function is also time-dependent and this fact breaks the prove of convergence result.
A formal proof of why acceleration is impossible over time-varying networks that stay connected and undirected at each iteration is provided in \cite{kovalev2021lower}.
}

\subsubsection{Chebyshev acceleration}\label{subsubsec:chebyshev}

As shown in Section \ref{subsubsec:quadratic_point_of_view}, the convergence of consensus algorithm depends on the condition number $\chi = \frac{\lambda_{\max}(\mW)}{\lambda_{\min}^+(\mW)}$. \arrev{The factor $\chi$ also appears in upper complexity bounds for decentralized algorithms \cite{scaman2017optimal} and represents the measure of the communication graph connectivity. There exists a technique called \textit{Chebyshev acceleration} that enhances the dependence on $\chi$ by replacing the communication matrix $\mW$ with a Chebyshev polynomial $P_K(\mW)$. The structure of the polynomial ensures that the condition number of $P_K(\mW)$ is $O(1)$ whence its power is $K = \lfloor\sqrt\chi\rfloor$. In other words, multiplication by $P_K(\mW)$ requires $\lfloor\sqrt\chi\rfloor$ communication rounds, but the condition number is reduced from $\chi$ to $O(1)$.} A multiple $P_K(\mW)\arrev{X}$ is computed via an iterative consensus-based process. Introduce $c_2 = \frac{\chi + 1}{\chi - 1},~ a_0 = 1,~ a_1 = c_2,~ c_3 = \frac{2}{\lambda_{\max}(\mW) + \lambda_{\min}^+(\mW)}$, $X^0 = X$, $X^1 = c_2(\mI - c_3\mW)X$, for $t = 1, \ldots, K - 1$ do
\begin{align*}
    a_{t+1} = 2c_2 a_t - a_{t-1},~ X^{t+1} = 2c_2(\mI - c_3\mW)X^t - X^{t-1}
\end{align*}
and return $X^0 - {X^K}/{a_K}$.

Summing up, a consensus algorithm of form \arrev{$X^{k+1} = (I - P_K(\mW) / \lambda_{\max}(P_K(\mW))) X^k$} requires a total of $O(\sqrt\chi\log{1/\eps})$ communications to achieve accuracy $\eps$. \arrev{This complexity is better than $O(\chi\log{1/\eps})$ that corresponds to standard consensus algorithm $X^{k+1} = (I - \mW / \lambda_{\max}(\mW))X^k$. Chebyshev acceleration was used to obtain first optimal decentralized methods in \cite{scaman2017optimal}.}

\subsubsection{\arrev{Summary}}

\arrev{In this section, we covered consensus algorithms over time-static and time-varying undirected graphs.}

\arrev{Firstly, let us cover the time-static networks. Different types of matrices may correspond to the graph: mixing matrix $\mix$ or Laplacian matrix $\mW$. The key difference is that mixing matrix is doubly stochastic, i.e. $\mix\one = \one$ and $\one^\top\mix = \one^\top$, while the Laplacian has a null-space property $\mW\one = 0$. Consensus step based on mixing matrix is just multiplication by $\mix$; therefore, it does not require additional knowledge of mixing matrix spectrum. Concerning Laplacian $\mW$, we can either build a mixing matrix $I - \mW/\lambda_{\max}(\mW)$ or use a quadratic minimization approach (see Section \ref{subsubsec:quadratic_point_of_view}). In both cases, knowledge of $\lambda_{\max}(\mW)$ is required.}

\arrev{Secondly, acceleration techniques are also applicable to consensus iteration schemes, but only in the \textit{time-static} case. We can use Chebyshev acceleration covered in Section \ref{subsubsec:chebyshev} or employ an accelerated Nesterov method to a quadratic problem (Section \ref{subsubsec:quadratic_point_of_view}). In both cases, we improve the iteration complexity from $O\cbraces{\chi\log\cbraces{\frac{1}{\eps}}}$ to $O\cbraces{\sqrt\chi \log\cbraces{\frac{1}{\eps}}}$. However, acceleration is not attainable in the time-varying case. When the graph changes, we can only run non-accelerated consensus.}

\subsection{Main assumptions on objective functions}

In this section, we introduce basic assumptions on the functions locally held by computational entities in the network.
\begin{assumption}\label{assum:convex_smooth}
	For every $i = 1, \ldots, m$, function $f_i$ is differentiable, convex and $L_i$-smooth ($L_i > 0$).
\end{assumption}
\begin{assumption}\label{assum:str_convex}
    For every $i = 1,\ldots, m$, function $f_i$ is $\mu_i$-strongly convex ($\mu_i > 0$).
\end{assumption}
Under Assumptions \ref{assum:convex_smooth} and \ref{assum:str_convex} for any $x_i, y_i\in\R^{m}$ for $i = 1, \ldots, \ar{m}$ it holds
\begin{align*}
    \frac{\mu_i}{2}\norm{y_i - x_i}_2^2 \le f_i(y_i) - f_i(x_i) - \angles{\nabla f(x_i), y_i - x_i} \le \frac{L_i}{2}\norm{y_i - x_i}_2^2.
\end{align*}
Summing the above inequality on $i = 1, \ldots, \ar{m}$ we obtain
\begin{align*}
    \frac{\min_i \mu_i}{2}\norm{\mY - \mX}_2^2 \le \sum_{i=1}^m\frac{\mu_i}{2}\norm{y_i - x_i}_2^2 \le F(\mY) - F(\mX) - \angles{\nabla F(\mX), \mY - \mX} \le \sum_{i=1}^m\frac{L_i}{2}\norm{y_i - x_i}_2^2 \le \frac{\max_i L_i}{2}\norm{\mY - \mX}_2^2.
\end{align*}
On the other hand, given that $\mX, \mY\in\cset$, i.e. $x_1 = \ldots = x_m,~ y_1 = \ldots = y_m$, we have
\begin{align*}
    \frac{1}{2m}\sum_{i=1}^m \mu_i \norm{\mY - \mX}_2^2 \le F(\mY) - F(\mX) - \angles{\nabla F(\mX), \mY - \mX} \le \frac{1}{2m}\sum_{i=1}^m L_i\norm{\mY - \mX}_2^2.
\end{align*}
Therefore, $F(\mX)$ has different strong convexity and smoothness constants on $\R^{m\times n}$ and $\cset$. Following the definitions in \cite{scaman2017optimal}, we introduce
\begin{itemize}
	\item (local constants) $F(X)$ is $\mumin$-strongly convex and $\Lmax$-smooth on $\R^{m\times d}$, where $\ds \mumin = \min_i\mu_i,~ \Lmax = \max_i L_i$.
	\item (global constants) $F(X)$ is $\muav$-strongly convex and $\Lav$-smooth on $\cset$, where $\muav = \frac{1}{m}\sum_{i=1}^m\mu_i,~ \Lav = \frac{1}{m}\sum_{i=1}^m L_i$.
\end{itemize}
Note that local smoothness and convexity constants may be significantly worse then global, i.e. $\Lmax \gg \Lav,~ \mumin \ll \muav$ (see \cite{scaman2017optimal} for details). We denote
\begin{align}\label{eq:kappa_max_av}
    \kappamax = \frac{\Lmax}{\mumin},~ \kappaav = \frac{\Lav}{\muav}.
\end{align}
the local and global condition numbers, respectively. 
\arrev{
A trick proposed in \cite{scaman2017optimal} allows to improve the local condition number by slightly changing functions $f_i$. Namely, introduce $\widehat f_i(x) = f_i(x) - \frac{\mu_i - \muav}{2}\norm{x}_2^2$ instead of $f_i$. Then the local condition number writes as
\begin{align*}
    \widehat\kappamax = \frac{\max_k(L_k - \mu_k)}{\muav} + 1.
\end{align*}
}

\subsection{Distributed gradient descent}

Distributed gradient methods alternate taking optimization updates and information exchange steps. One (synchronized) communication round can be represented as a multiplication by a mixing matrix compatible with the graph topology. One of the first distributed gradient dynamics studied in the literature \cite{nedic2009distributed,yuan2016convergence} uses a time-static mixing matrix and writes as
\begin{align*}
    x_i^{k+1} = \sum_{j=1}^m [\mix]_{ij} x_j^k - \alpha^k \nabla f_i(x_i^k).
\end{align*}
Using the notion of $\mX = (x_1\ldots x_m)^\top$ the above update rule takes the form
\begin{align}\label{eq:dgd}
    \mX^{k+1} = \mix \mX^k - \alpha \nabla F(\mX^k),
\end{align}
which is a combination of two step types: gradient step with constant step-size $\alpha$ and communication round with mixing matrix $\mix^k$. In \cite{yuan2016convergence} the authors showed that function residual $f(\ol\mX^k) - f(x^*)$ in iterative scheme \eqref{eq:dgd} decreases at $O(1/k)$ rate until reaching $O(\alpha)$-neighborhood of solution. 

Method \eqref{eq:dgd} does not find an exact solution in the general case. We follow the arguments in \cite{shi2015extra} to illustrate this fact. First, note that $\tilde\mX$ is a solution of \eqref{eq:problem_equality_constraints} if and only if two following conditions hold.
\begin{enumerate}
    \item (Consensus) $\tilde\mX = \mix\tilde\mX$.
    \item (Optimality) $\one^\top \nabla F(\tilde\mX) = 0$.
\end{enumerate}
Let $\mX^\infty$ be a limit point of \eqref{eq:dgd}. Then
\begin{align*}
    \mX^{\infty} = \mix \mX^\infty - \alpha \nabla F(\mX^\infty).
\end{align*}
Consensus condition yields $\mX^\infty = \mix\mX^\infty$, i.e. $\mX^\infty$ has identical rows $[\mX]_i^\infty = x^\infty$.  Therefore, $\nabla F(\mX^\infty) = 0$, which means $\nabla f_1(x^\infty) = \ldots = \nabla f_m(x^\infty) = 0$. Consequently, $x^\infty$ is a common minimizer of every $f_i$, which is not a realistic case.

Method \eqref{eq:dgd} is a basic distributed first-order method. Its different variations include feasible point algorithms \cite{lee2013distributed} and sub-gradient methods \cite{nedic2009distributed} (actually, the latter work initially proposed scheme \eqref{eq:dgd}). Extensions to stochastic objectives and stochastic mixing matrices have been addressed in \cite{koloskova2020unified,aghajan2020distributed,lin2021quasi}.

\subsection{EXTRA}

Distributed gradient descent \eqref{eq:dgd} is unable to converge to the exact minimum of \eqref{eq:problem_equality_constraints}, which is the major drawback of the method. An exact decentralized first-order algorithm EXTRA was proposed in \cite{shi2015extra}. The approach of \cite{shi2015extra} is based on using two different mixing matrices. Namely, consider two consequent updates of type \eqref{eq:dgd}.
\begin{align}
    \mX^{k+2} &= \mix \mX^{k+1} - \alpha\nabla F(\mX^{k+1}), \label{eq:extra_update_1} \\
    \mX^{k+1} &= \tilde\mix \mX^k - \alpha\nabla F(\mX^k) \label{eq:extra_update_2}
\end{align}
where $\tilde\mix$ is mixing matrix, i.e. $\tilde\mix = (\mix + \mI) / 2$ as proposed in \cite{shi2015extra}. Subtracting \eqref{eq:extra_update_2} from \eqref{eq:extra_update_1} yields
\begin{align}\label{eq:extra_update_3}
    \mX^{k+2} - \mX^{k+1} = \mix\mX^{k+1} - \tilde\mix\mX^k - \alpha\sbraces{\nabla F(\mX^{k+1}) - \nabla F(\mX^k)}
\end{align}
thus leading to an algorithm
\begin{algorithm}
    \caption{EXTRA}
    \label{alg:extra}
    \begin{algorithmic}
        \Require{Step-size $\alpha > 0$.}
        \State{$\mX^1 = \mix\mX^0 - \alpha\nabla F(\mX^0)$}
        \For{$k = 0, 1,\ldots$}
            \State{$\mX^{k+2} = (\mI + \mix)\mX^{k+1} - \tilde\mix\mX^k - \alpha\sbraces{\nabla F(\mX^{k+1}) - \nabla F(\mX^k)}$}
        \EndFor
    \end{algorithmic}
\end{algorithm}

Let $\mX^\infty$ be a limit point of iterate sequence $\braces{\mX^k}_{k=0}^\infty$ generated by \eqref{eq:extra_update_1}, \eqref{eq:extra_update_2}. Then
\begin{align*}
    \mX^\infty - \mX^\infty &= \mix\mX^\infty - \tilde\mix\mX^\infty - \alpha\sbraces{\nabla F(\mX^\infty) - \nabla F(\mX^\infty)}, \\
    (\mix - \tilde\mix)\mX^\infty &= \frac{1}{2}(\mix\mX^\infty - \mX^\infty) = 0.
\end{align*}
The last equality means that $\mX^\infty$ is consensual, i.e. its rows are equal. On the other hand, rearranging the terms in \eqref{eq:extra_update_3} and taking into account that $\mX^1 = \mix\mX^0 - \alpha\nabla F(\mX^0)$ gives
\begin{align*}
    \mX^{k+2} = \tilde\mix\mX^{k+1} - \alpha\nabla F(\mX^{k+1}) + \sum_{t=0}^{k+1} (\mix - \tilde\mix) \mX^{t}.
\end{align*}
Multiplying by $\one^\top$ from the left yields
\begin{align*}
    \one^\top\mX^{k+2} &= \one^\top \tilde\mix\mX^{k+1} - \arrev{\alpha} \one^\top \nabla F(\mX^{k+1}) + \sum_{t=0}^{k-1} \one^\top(\mix - \tilde\mix)\mX^t \\
    &= \one^\top \tilde\mix\mX^{k+1} - \arrev{\alpha} \one^\top \nabla F(\mX^{k+1})
\end{align*}
and taking the limit over $k\to\infty$ we obtain
\begin{align*}
    \one^\top \nabla F(\mX^\infty) = 0
\end{align*}
which is the optimality condition for point $\mX^\infty$. Therefore, a limit point of $\braces{\mX^k}_{k=0}^\infty$ generated by Algorithm \ref{alg:extra} is both consensual and optimal, i.e. is a solution of \eqref{eq:problem_equality_constraints}.

In the original paper \cite{shi2015extra} Algorithm \ref{alg:extra} was proved to converge at a $O(1/k)$ rate for $L$-smooth objectives and achieve a geometric rate $O(C^{-k})$ (where $C < 1$ is some constant) for strongly convex smooth objectives. In \cite{li2020revisiting} explicit dependencies on graph topology were established. Namely, EXTRA requires 
\begin{align*}
    O\cbraces{\cbraces{\frac{\Lmax}{\mumin} + \chi}\log\frac{\ar{(LR^2 + {\tilde M}^2/L)}\chi}{\eps}}\qquad &\text{iterations for strongly convex smooth objectives,} \\
    O\cbraces{\cbraces{\frac{\Lmax}{\eps} + \chi}\log(\ar{(LR^2 + {\tilde M}^2/L)}\chi)}\qquad &\text{iterations for \arrev{(non-strongly)} convex smooth objectives,}
\end{align*}
where
\begin{align}\label{eq:chi}
    \chi = \frac{1}{1 - \lambda_2(\mix)},
\end{align}
$\lambda_2(\mix)$ denotes the second largest eigenvalue of mixing matrix $\mix$, \ar{$\norm{\mX^0 - \mX^*}_2^2\le mR^2,~ \norm{\mX^*}_2^2\le mR^2,~ \norm{\nabla f(\mX^*)}_2^2\le m{\tilde M}^2$}. The term $\chi$ characterizes graph connectivity. A similar term, also referred to as graph condition number, is used in Section \ref{sec:distributed_opt} for graph Laplacian matrix. Graph condition numbers based on mixing matrix and Laplacian have the same meaning, as discussed in Section \ref{sec:consensus_algorithms}.

\subsection{Accelerated decentralized algorithms}

Performance of decentralized gradient methods typically depends on function (local or global) condition number $\kappa$ and graph condition number $\chi$ defined in \eqref{eq:chi}. For non-accelerated dynamics \cite{yuan2016convergence,shi2015extra} complexity bounds include $\kappa$ and $\chi$. Improving dependencies to $\sqrt\kappa$ and $\sqrt\chi$ is an important direction of research in distributed optimization. This can be done by applying direct Nesterov acceleration \cite{nesterov2004introduction} or by employing meta-acceleration techniques such as Catalyst \cite{lin2015universal}. The two major approaches studied in the literature are primal and dual algorithms.

Dual methods are based on a reformulation of problem \eqref{P} using a Laplacian matrix induced by the communication network. This reformulation is discussed in Section \ref{sec:distributed_opt} in more details. The basic idea behind dual approach is to run first-order methods on a dual problem to \eqref{eq:problem_equality_constraints}. Every gradient step on the dual is equivalent to one communication round and one local gradient step taken by every node in the network. \ar{In \cite{scaman2017optimal}, algorithms using Chebyshev acceleration that achieve $O\cbraces{\sqrt{\kappamax\chi}\log(1/\eps)}$ communication complexity are proposed}. On the other hand, lower complexity bound for deterministic methods over strongly convex smooth objectives is $\Omega\cbraces{\sqrt{\kappa_g\chi}\log(1/\eps)}$, as shown in \cite{scaman2017optimal}.

In dual approach, one may run non-distributed accelerated schemes on dual problem and obtain accelerated complexity bounds, i.e. $\sqrt{\kappa\chi}$. For primal-only methods this is not the case, and primal algorithms have to alternate optimization and consensus steps in a proper way and employ specific techniques such as gradient tracking. A direct distributed scheme for Nesterov accelerated method was proposed in \cite{qu2019accelerated}.

\begin{algorithm}
    \caption{Accelerated Distributed Nesterov Method}
    \label{alg:acc_dngd}
    \begin{algorithmic}
        \Require{Starting points $\mX^0 = \mY^0 = \mV^0$, $\mS^0 = \nabla F(\mX^0)$, step-size $\eta > 0$, momentum term $\alpha = \sqrt{\mumin\eta}$}
        \For{$k = 0, 1, \ldots$}
            \State{$\mX^{k+1} = \mix\mY^k - \eta\mS^k$}
            \State{$\mV^{k+1} = (1 - \alpha)\mix\mV^k + \alpha\mix\mY^k - \frac{\eta}{\alpha}\mS^k$}
            \State{$\mY^{k+1} = \frac{\mX^{k+1} + \alpha\mV^{k+1}}{1 + \alpha}$}
            \State{$\mS^{k+1} = \mix\mS^k + \nabla F(\mY^{k+1}) - \nabla F(\mY^k)$}
        \EndFor
    \end{algorithmic}
\end{algorithm}
In Algorithm \ref{alg:acc_dngd} quantity $\mS^{k+1}$ stands for a gradient estimator. The information about the gradients held by different agents is diffused through the network via consensus steps, i.e. $\mix\mS^k$ multiplication. Every node stores one row of $\mS^k$ which approximates the average gradient over the nodes in network: 
\begin{align*}
    s_i^k\approx\frac{1}{m}\sum_{k=1}^m \nabla f_i(y_i^k).
\end{align*}
This technique is referred to as \textit{gradient tracking} and is employed in several primal decentralized methods \cite{qu2019accelerated,nedic2017achieving,ye2020multi,koloskova2021improved,alghunaim2020decentralized,qu2017harnessing,pu2021distributed}.

Algorithm \ref{alg:acc_dngd} requires $O(\chi^{3/2}\kappa_l^{5/7}\log(1/\eps))$ computation and communication steps to achieve accuracy $\eps$, which does not match optimal bounds. EXTRA acceleration via Catalyst envelope \cite{li2020revisiting} requires $O(\sqrt{\kappamax\chi}\log\chi\log(1/\eps))$ iterations for smooth strongly convex objectives. Recently a new method Mudag which unifies gradient tracking, Nesterov acceleration and multi-step consensus procedures was proposed in \cite{ye2020multi}. It \arrev{has}
\begin{align*}
    O\cbraces{\sqrt\kappaav\arrev{\log\cbraces{\frac{1}{\eps}}}}\qquad &\text{computation complexity and} \\
    O\cbraces{\sqrt{\kappaav\chi}\log\cbraces{\frac{\Lmax}{\Lav}\kappaav}\log\cbraces{\frac{1}{\eps}}}\qquad &\text{communication complexity.}
\end{align*}
Mudag reaches optimal computation complexity and optimal communication complexity up to $\log\cbraces{\frac{\Lmax}{\Lav}\kappaav}$ term. A valuable feature of the method is that it has dependencies on global condition number $\kappamax$ instead of local $\kappaav$. In the general case, global condition number may be significantly better. A proximal version of Mudag method for composite optimization is studied in \cite{ye2020decentralized}. The method in \cite{ye2020decentralized} requires an optimal $O(\sqrt{\kappaav\chi}\log(1/\eps))$ number of computations and matches the lower communication complexity bound up to a logarithmic factor. Global condition number is also utilized in paper \cite{rogozin2021towards} where an inexact oracle framework \cite{devolder2013first,devolder2014first} for decentralized optimization is studied. The latter work is discussed in more details in Section \ref{sec:inexact_oracle}. Finally, in \cite{kovalev2020optimal} authors proposed a primal-only method OPAPC which reaches both optimal computation and communication complexities (up to replacing $\kappaav$ with $\kappamax$).

\arrev{
Chebyshev acceleration is widely used to obtain optimal decentralized algorithms. For example, in \cite{kovalev2020optimal} the authors propose method APAPC, which has $O\cbraces{\cbraces{\sqrt{\frac L \mu \chi}  + \chi} \log\frac 1 \eps}$ communication and computational complexities. After that, the authors replace Laplacian $\mW$ with a Chebyshev polynomial $P_K(\mW)$, which results in $\chi(P_K(\mW)) = O(1)$, but every communication round costs $O(\sqrt\chi)$ communication rounds. Therefore, APAPC is modified to a new method OPAPC, which has $O\cbraces{\sqrt\frac L \mu \log\frac 1\eps}$ oracle per node complexity and $O\cbraces{\sqrt{\frac L\mu \chi} \log\frac 1\eps}$ communication complexity. In this particular case, Chebyshev acceleration not only allows to achieve optimal complexity bounds, but also separates oracle and communication complexities of the algorithm.
}

\arrev{Paper \cite{song2021optimal} proposed OGT, a method based on \textit{loopless} Chebyshev acceleration scheme. On the contrary to classical Chebyshev acceleration (used i.e. in OPAPC \cite{kovalev2020optimal}), the loopless technique does not require multiple communication steps at each iteration. OGT requires $O\cbraces{\sqrt{\frac{L}{\mu}}\log\frac{1}{\eps}}$ oracle calls at each node and $O\cbraces{\sqrt{\frac{L}{\mu} \chi} \log\frac{1}{\eps}}$ communication steps, which meets the lower bounds.}

\arrev{Moreover, a recent work \cite{song2021provably} showed that Nesterov acceleration can also be applied for distributed optimization over directed graphs. Their algorithm APD has communication complexity $\sim 1 / \sqrt\eps$ for non-strongly convex objectives and APD-SC has $\sim \sqrt{L/\mu} \log(1/\eps)$ complexity for strongly convex tasks. As stated in \cite{song2021optimal}, in the case of undirected graphs the explicit dependence on network characteristics is attained: the complexity of APD-SC writes as $O\cbraces{\sqrt{\frac L \mu}\chi^{3/2} \log\frac{1}{\eps}}$.}

\subsection{Time-varying networks}

In the time-varying case, the communication network changes from time to time. In practice this changes are typically caused by malfunctions such as loss of connection between \arrev{the} agents. The network is represented as a sequence of undirected communication graphs $\braces{\mathcal{G}^k = (V, E^k)}_{k=0}^\infty$, and every graph $\mathcal{G}^k$ is associated with a mixing matrix $\mix^k$. The algorithms capable of working over time-varying graphs must be robust to sudden network changes. A linearly convergent method DIGing was proposed in \cite{nedic2017achieving}.

\begin{algorithm}
    \caption{DIGing}
    \label{alg:diging}
    \begin{algorithmic}
        \Require{Step-size $\alpha > 0$, starting iterate $\mX^0$, $\mY^0 = \nabla F(\mX^0)$}
        \For{$k = 0, 1, \ldots$}
            \State{$\mX^{k+1} = \mix^k \mX^k - \alpha \mY^k$}
            \State{$\mY^{k+1} = \mix^k \mY^k + \nabla F(\mX^{k+1}) - \nabla F(\mX^k)$}
        \EndFor
    \end{algorithmic}
\end{algorithm}
DIGing incorporates a gradient-tracking scheme and achieves linear convergence under realistic assumptions such as $B$-connectivity (i.e. a union of any $B$ consequent graphs is connected). In \cite{sun2019convergence} authors propose an algorithm which utilizes specific convex surrogates of local functions and local functions similarity in order to enhance convergence speed. \ar{In \cite{li2021accelerated} a gradient-tracking technique combined with Nesterov acceleration was employed to construct an accelerated method AccGT over time-varying B-connected networks.}

Another class of time-varying networks are graphs that stay connected at each iteration. For this type of problems, denote Laplacian at $k$-th iteration $\mW^k$ and define condition number $\chi_{tw} = \frac{\max_{k}\lambda_{\max}(\mW^k)}{\min_k\lambda_{\min}^+(\mW^k)}$. The lower bounds for this class of problems are $O(\sqrt\kappamax\log(1/\eps))$ for the number of (local) computations and $O(\chi\sqrt\kappamax\log(1/\eps))$ for the number of communications \cite{kovalev2021lower}. AccGT \cite{li2021accelerated} and ADOM+ \cite{kovalev2021lower} are optimal algorithms using primal oracle, and ADOM \cite{kovalev2021adom} is an optimal dual method.

\subsection{Inexact oracle point of view}\label{sec:inexact_oracle}

In \cite{rogozin2021towards} the authors study an algorithm which alternates making gradient updates and running multi-step communication procedures. Introduce 
\begin{align*}
    \ol\mX = \frac{1}{m} \one\one^\top \mX = \Pi_{\cset}(\mX) = (\ol x \ldots \ol x)^\top, \text{ where } \ol x = \frac{1}{m}\sum_{i=1}^m x_i \text{ and } \one = (1\ldots 1)^\top.
\end{align*}
Also define an average gradient over nodes $\ol{\nabla F}(\mX) = 1/m\sum_{i=1}^m \nabla f_i(x_i)$. Consider a projection gradient method with trajectory lying in $\cset$
\begin{align*}
    \ol\mX_{k+1} = \ol\mX_k - \beta\ol{\nabla F}(\ol\mX_k).
\end{align*}
In a centralized scenario, the computational network is endowed with a master agent, which communicates with all agents in the network. The master node is able to collect vectors $x_i$ from every node in the network and compute a precise average $\ol x$. In decentralized case the master agent is not available, and therefore nodes are only able to compute an approximate average using consensus procedures. \arrev{The network is allowed to change with time and the sequence of corresponding mixing matrices is restricted to the following assumption}.

\begin{assumption}\label{assum:mixing_matrix_sequence}
	Mixing matrix sequence $\braces{\mix^k}_{k=0}^\infty$ satisfies the following properties.
	\begin{itemize}
		\item (Decentralized property) $(i, j)\notin E_k \;\Rightarrow \;[\mix^k]_{ij} = 0$.
		\item (Double stochasticity) $\mix^k \one = \one,~ \one^\top\mix^k = \one^\top$.
		\item (Contraction property) There exist $\tau\in\Z_{++}$ and $\lambda\in(0, 1)$ such that for every $k\ge \tau - 1$ it holds
		\begin{align*}
		\norm{\mix_{\tau}^k X - \ol X}_2 \le (1 - \lambda)\norm{X - \ol X}_2, 
		\end{align*}
		where $\mix_\tau^k = \mix^k \ldots \mix^{k-\tau+1}$.
	\end{itemize}
\end{assumption}



\begin{algorithm}[H]
	\caption{Consensus}
	\label{alg:consensus}
	\begin{algorithmic}
		\Require{Initial $\mX^0\in\cset$, number of iterations $T$.}
		\For{$t = 1, \ldots, T$}
			\State{$\mX^{t+1} = \mix^t \mX^t$}
		\EndFor
	\end{algorithmic}
\end{algorithm}

\begin{algorithm}[H]
	\caption{Decentralized AGD with consensus subroutine}
	\label{alg:decentralized_agd}
	\begin{algorithmic}[1]
		\Require{Initial guess $\mX^0\in \cset$, constants $L, \mu > 0$, $\mU^0 = \mX^0$, $\alpha^0 = \mix^0 = 0$}
		\For{$k = 0, 1, 2,\ldots$}
			\State{Find $\alpha^{k+1}$ as the greater root of $(A^k + \alpha^{k+1})(1 + A^k \mu) = L(\alpha^{k+1})^2$}
			\State{$A^{k+1} = A^k + \alpha^{k+1}$}
			\State\label{state:agd_step}{$\ds \mY^{k+1} = \frac{\alpha^{k+1} \mU^k + A^k \mX^k}{A^{k+1}}$}
            \State{$\mV^{k+1} = \dfrac{\mu\mY^{k+1} + (1 + A^k\mu)\mU^k}{1 + A^k\mu + \mu} - \dfrac{\alpha^{k+1}}{1 + A^k\mu + \mu} \nabla F(\mY^{k+1})$}
            \State\label{state:consensus_update}{$\mU^{k+1} = \text{Consensus}(\mV^{k+1}, T^k)$}
			\State{$\ds \mX^{k+1} = \frac{\alpha^{k+1} \mU^{k+1} + A^k \mX^k}{A^{k+1}}$}
		\EndFor
	\end{algorithmic}
\end{algorithm}

\subsubsection{Inexact oracle construction}

Trajectory of Algorithm \ref{alg:decentralized_agd} lies in the neighborhood of constraint set $\cset$. It is analyzed in \cite{rogozin2021towards} \arrev{(based on the technique developed in \cite{rogozin2019projected})} using the notation of inexact oracle \cite{devolder2014first,devolder2013first}. Algorithms of this type have been analyzed in time-static case \cite{jakovetic2014fast} using inexact oracle notation, as well. Let $h(x)$ be a convex function defined on a convex set $Q\subseteq\R^m$. For $\delta > 0,~ L > \mu > 0$, a pair $(h_{\delta,L,\mu}(x), s_{\delta,L,\mu}(x))$ is called a $(\delta, L, \mu)$-model of $h(x)$ at point $x\in Q$ if for all $y\in Q$ it holds
\begin{align}\label{eq:inexact_oracle_def_devolder}
	\frac{\mu}{2}\norm{y - x}_2^2 \le h(y) - \cbraces{h_{\delta,L,\mu}(x) + \angles{s_{\delta,L,\mu}(x), y - x}} \le \frac{L}{2} \norm{y - x}_2^2 + \delta.
\end{align}
The inexactness originates from computation of gradient at a point in neighborhood of $\cset$. The next lemma identifies the size of neighborhood and describes the inexact oracle construction.
\begin{lemma}\label{lem:inexact_oracle}
	Define
    \begin{align*}
        \delta &= \frac{1}{2n}\cbraces{\frac{\Lmax^2}{\Lav} + \frac{2\Lmax^2}{\muav} + \Lmax - \mumin} \delta', \numberthis\label{eq:delta_inexact_oracle} \\
        f_{\delta, L, \mu}(\ol x, \mX) &= \frac{1}{n} \sbraces{F(\mX) + \angles{\nabla F(\mX), \ol\mX - \mX} + \frac{1}{2}\cbraces{\mumin - \frac{2\Lmax^2}{\muav}}\norm{\ol\mX - \mX}_2^2}, \\
        g_{\delta, L, \mu}(\ol x, \mX) &= \frac{1}{n}\sum_{i=1}^n \nabla f_i(x_i).
    \end{align*}
	Then $(f_{\delta,L,\mu}(\ol x, \mX), g_{\delta,L,\mu}(\ol x, \mX))$ is a $(\delta,2\Lav,\muav/2)$-model of $f$ at point $\ol x$, i.e.

	\begin{align*}
		\frac{\muav}{4}\norm{\ol y - \ol x}_2^2 \le f(\ol y) - f_{\delta, L, \mu}(\ol x, \mX) - \angles{g_{\delta, L, \mu}(\ol x, \mX), \ol y - \ol x} \le \Lav\norm{\ol y - \ol x}_2^2 + \delta.
	\end{align*}
\end{lemma}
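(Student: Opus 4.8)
We will prove both inequalities of the $(\delta, 2\Lav, \muav/2)$-model definition \eqref{eq:inexact_oracle_def_devolder} by a direct estimate built from the per-node constants of Assumptions~\ref{assum:convex_smooth}--\ref{assum:str_convex}. The plan rests on three elementary observations: that $\ol\mX = \Pi_{\cset}(\mX)$ is the row-wise average, so $\sum_{i=1}^m(\ol x - x_i) = 0$ and every cross term $\sum_i\angles{v, \ol x - x_i}$ with $v$ not depending on $i$ vanishes; that $\delta'$ is to be read as an upper bound on the squared consensus error, $\norm{\mX - \ol\mX}_2^2 \le \delta'$; and that, writing $g = g_{\delta,L,\mu}(\ol x,\mX) = \tfrac1m\sum_i\nabla f_i(x_i)$, one has $f(\ol y) = \tfrac1m\sum_i f_i(\ol y)$, $F(\mX) = \sum_i f_i(x_i)$ and $\angles{\nabla F(\mX), \ol\mX - \mX} = \sum_i\angles{\nabla f_i(x_i), \ol x - x_i}$. (The excerpt writes $\tfrac1n$ and $\sum_{i=1}^n$ for these number-of-agents averages; we use $m$.)

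\textbf{Lower bound.} First I would apply $\mu_i$-strong convexity of $f_i$ at $\ol x$, evaluated at $\ol y$, and average over $i$, obtaining $f(\ol y)\ge \tfrac1m\sum_i f_i(\ol x) + \tfrac1m\sum_i\angles{\nabla f_i(\ol x), \ol y - \ol x} + \tfrac{\muav}{2}\norm{\ol y - \ol x}_2^2$. Next I would replace $\nabla f_i(\ol x)$ by $\nabla f_i(x_i)$: the resulting mismatch $\tfrac1m\sum_i\angles{\nabla f_i(\ol x) - \nabla f_i(x_i), \ol y - \ol x}$ is, by $L_i$-smoothness, Cauchy--Schwarz and $\sum_i\norm{\ol x - x_i}_2\le\sqrt m\,\norm{\mX - \ol\mX}_2$, at most $\tfrac{\Lmax}{\sqrt m}\norm{\ol y - \ol x}_2\norm{\mX - \ol\mX}_2$ in modulus, hence (Young) at most $\tfrac{\muav}{4}\norm{\ol y - \ol x}_2^2 + \tfrac{\Lmax^2}{m\muav}\norm{\mX - \ol\mX}_2^2$. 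Finally I would lower-bound $\tfrac1m\sum_i f_i(\ol x)$ by $\mu_i$-strong convexity of $f_i$ at $x_i$ evaluated at $\ol x$, which gives $\tfrac1m\big(F(\mX) + \angles{\nabla F(\mX), \ol\mX - \mX} + \tfrac{\mumin}{2}\norm{\mX - \ol\mX}_2^2\big)$. Adding the three estimates, the coefficient of $\norm{\mX - \ol\mX}_2^2$ becomes $\tfrac1{2m}(\mumin - \tfrac{2\Lmax^2}{\muav})$ — precisely the one appearing in $f_{\delta,L,\mu}(\ol x,\mX)$ — and what remains is $f(\ol y)\ge f_{\delta,L,\mu}(\ol x,\mX) + \angles{g, \ol y - \ol x} + \tfrac{\muav}{4}\norm{\ol y - \ol x}_2^2$. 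No $\delta$ enters this half.

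\textbf{Upper bound.} The symmetric computation uses $L_i$-smoothness in place of strong convexity. Averaging $f_i(\ol y)\le f_i(\ol x) + \angles{\nabla f_i(\ol x), \ol y - \ol x} + \tfrac{L_i}{2}\norm{\ol y - \ol x}_2^2$ produces the $\tfrac{\Lav}{2}\norm{\ol y - \ol x}_2^2$ term; the same gradient mismatch is now split by Young as $\tfrac{\Lav}{2}\norm{\ol y - \ol x}_2^2 + \tfrac{\Lmax^2}{2m\Lav}\norm{\mX - \ol\mX}_2^2$, so the $\norm{\ol y - \ol x}_2^2$ contributions combine to exactly $\Lav\norm{\ol y - \ol x}_2^2$; and $\tfrac1m\sum_i f_i(\ol x)$ is bounded above, via $L_i$-smoothness at $x_i$, by $\tfrac1m\big(F(\mX) + \angles{\nabla F(\mX), \ol\mX - \mX}\big) + \tfrac{\Lmax}{2m}\norm{\mX - \ol\mX}_2^2$. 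Collecting, $f(\ol y) - f_{\delta,L,\mu}(\ol x,\mX) - \angles{g, \ol y - \ol x} - \Lav\norm{\ol y - \ol x}_2^2$ is at most $\tfrac1{2m}\big(\tfrac{\Lmax^2}{\Lav} + \tfrac{2\Lmax^2}{\muav} + \Lmax - \mumin\big)\norm{\mX - \ol\mX}_2^2$; since this prefactor is nonnegative ($\Lmax\ge\mumin$), using $\norm{\mX - \ol\mX}_2^2\le\delta'$ bounds it by $\delta$ as defined in \eqref{eq:delta_inexact_oracle}.

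\textbf{Main obstacle.} There is no deep difficulty, only careful bookkeeping of the two Young splits: in each half the quadratic budget must be apportioned between $\norm{\ol y - \ol x}_2^2$ and $\norm{\mX - \ol\mX}_2^2$ so that the $\norm{\ol y - \ol x}_2^2$ coefficient lands on $\muav/4$ (resp.\ $\Lav$) and the leftover $\norm{\mX - \ol\mX}_2^2$ coefficient, once the $\mumin - 2\Lmax^2/\muav$ term already built into $f_{\delta,L,\mu}$ is accounted for, matches the constant hard-coded into $\delta$. I also expect the only genuinely load-bearing structural fact to be that $\ol x$ is the exact average, which is what makes the $\sum_i\angles{\cdot, \ol x - x_i}$ cross terms either collapse or reduce cleanly to $\angles{\nabla F(\mX), \ol\mX - \mX}$; this is why the inexact oracle is attached to $\ol x = \ol\mX$ rather than to an arbitrary consensual point.
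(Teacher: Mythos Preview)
Your proof is correct. The paper itself does not include a proof of this lemma---it is stated without argument and attributed to \cite{rogozin2020towards}---so there is no in-paper proof to compare against; your direct approach (per-node smoothness/strong convexity, gradient mismatch handled by Cauchy--Schwarz plus two carefully tuned Young splits, and the observation that $\ol x$ being the exact average makes the cross terms collapse to $\angles{\nabla F(\mX),\ol\mX-\mX}$) is exactly the natural route and recovers the stated constants on the nose, including the implicit reading of $\delta'$ as the consensus-error bound $\norm{\mX-\ol\mX}_2^2\le\delta'$, which is confirmed by the surrounding Lemmas~\ref{lem:consensus_iters_strongly_convex}--\ref{lem:inexact_agd_convergence}.
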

The inexact oracle defined in Lemma \ref{lem:inexact_oracle} represents a $(\delta, 2L_g, \mu_g / 2)$-model of $F$. Note that it uses \textit{global} strong convexity constants instead of \textit{local} ones. Global constants may be significantly better for method performance, as pointed out in \cite{scaman2017optimal}. The lemma relates the projection accuracy $\delta'$ to inexact oracle parameter $\delta$.

\subsubsection{Convergence result for Algorithm \ref{alg:decentralized_agd}}

First, Lemma \ref{lem:inexact_oracle} states that $(\delta, 2L_g, \mu_g/2)$-model of $F$ is obtained if the gradient is computed in $\delta'$-neighborhood of $\cset$. In order to achieve this $\delta'$-neighborhood, one needs to make a sufficient number of consensus (Algorithm \ref{alg:consensus}) iterations.

\begin{lemma}\label{lem:consensus_iters_strongly_convex}
	Let consensus accuracy be maintained at level $\delta'$, i.e. $\norm{\mU^j - \ol\mU^j}_2^2\le \delta' \text{ for } j = 1, \ldots, k$ and let Assumption \ref{assum:mixing_matrix_sequence} hold. Define
	\begin{align*}
	\sqrt D := \cbraces{\frac{2\Lmax}{\sqrt{L\mu}} + 1}\sqrt{\delta'} + \frac{\Lmax}{\mu} \sqrt{n} \cbraces{\norm{\ol u^0 - x^*}_2^2 + \frac{8{\delta'}}{\sqrt{L\mu}}}^{1/2} + \frac{2\norm{\nabla F(\mX^*)}_2}{\sqrt{L\mu}}.
	\end{align*}
	
	Then it is sufficient to make $T_k = T = \frac{\tau}{2\lambda}\log\frac{D}{\delta'}$ consensus iterations \arrev{(where $\tau$ and $\lambda$ are defined in Assumption \ref{assum:mixing_matrix_sequence})} in order to obtain consensus with $\delta'$-accuracy on step $k+1$, i.e. $\norm{\mU^{k+1} - \ol\mU^{k+1}}_2^2\le \delta'$.
\end{lemma}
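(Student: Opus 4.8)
The claim reduces to a boundedness estimate fed into the geometric decay of the consensus subroutine. Two elementary facts drive the decay. First, each $\mix^t$ is doubly stochastic, so $\one^\top\mix^t=\one^\top$ and a consensus step preserves column averages; consequently $\ol\mU^{k+1}=\ol\mV^{k+1}$, and it suffices to estimate how fast the iterates of Algorithm~\ref{alg:consensus} started at $\mV^{k+1}$ approach $\ol\mV^{k+1}$. Second, writing the deviation operator $\mP:=\mI-\tfrac1m\one\one^\top$ (acting on each column), $\mP$ projects onto $(\mathrm{span}\,\one)^\bot$ and commutes with every $\mix^t$ there; iterating the contraction property of Assumption~\ref{assum:mixing_matrix_sequence} over $\lfloor T/\tau\rfloor$ consecutive windows of $\tau$ mixing matrices (the remaining $\le\tau-1$ leading factors being non-expansive in the $2$-norm) gives
\begin{align*}
\norm{\mU^{k+1}-\ol\mU^{k+1}}_2
&\le(1-\lambda)^{\lfloor T/\tau\rfloor}\norm{\mV^{k+1}-\ol\mV^{k+1}}_2\\
&\le e^{-\lambda\lfloor T/\tau\rfloor}\norm{\mV^{k+1}-\ol\mV^{k+1}}_2.
\end{align*}
Hence the lemma follows from: (i) $\norm{\mV^{k+1}-\ol\mV^{k+1}}_2\le\sqrt D$; and (ii) the choice $T=\tfrac{\tau}{2\lambda}\log\tfrac D{\delta'}$, which yields $e^{-\lambda T/\tau}=\sqrt{\delta'/D}$, so the product of (i) and the decay factor is at most $\sqrt{\delta'}$ (rounding $T$ up to a multiple of $\tau$ only sharpens this).

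For (i) I would apply $\mP$ to the update $\mV^{k+1}=\tfrac{\mu\mY^{k+1}+(1+A^k\mu)\mU^k}{1+A^k\mu+\mu}-\tfrac{\alpha^{k+1}}{1+A^k\mu+\mu}\nabla F(\mY^{k+1})$ of Algorithm~\ref{alg:decentralized_agd}, then use $\norm{\mP\,\cdot}_2\le\norm{\cdot}_2$ and the triangle inequality to split $\norm{\mP\mV^{k+1}}_2$ into three terms with coefficients $\tfrac{\mu}{1+A^k\mu+\mu}$, $\tfrac{1+A^k\mu}{1+A^k\mu+\mu}$, $\tfrac{\alpha^{k+1}}{1+A^k\mu+\mu}$ acting on $\mP\mY^{k+1}$, $\mP\mU^k$, $\mP\nabla F(\mY^{k+1})$. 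The hypothesis gives $\norm{\mP\mU^j}_2=\norm{\mU^j-\ol\mU^j}_2\le\sqrt{\delta'}$ for $j=1,\dots,k$, and $\mP\mX^0=0$ since $\mX^0\in\cset$; since $\mX^{j}=\tfrac{\alpha^{j}}{A^{j}}\mU^{j}+\tfrac{A^{j-1}}{A^{j}}\mX^{j-1}$ and $\mY^{k+1}=\tfrac{\alpha^{k+1}}{A^{k+1}}\mU^k+\tfrac{A^k}{A^{k+1}}\mX^k$ are convex combinations, an easy induction gives $\norm{\mP\mX^j}_2\le\sqrt{\delta'}$ for $j\le k$ and then $\norm{\mP\mY^{k+1}}_2\le\sqrt{\delta'}$. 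The first two coefficients sum to $1$, so their combined contribution is $\le\sqrt{\delta'}$ — the free additive $\sqrt{\delta'}$ in the definition of $\sqrt D$.

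The gradient term carries the rest. By $\Lmax$-smoothness of $F$ on $\R^{m\times n}$, $\norm{\mP\nabla F(\mY^{k+1})}_2\le\norm{\nabla F(\mX^*)}_2+\Lmax\norm{\mY^{k+1}-\mX^*}_2$ with $\mX^*=\one(x^*)^\top$ (the term $\norm{\nabla F(\mX^*)}_2$ is not removable, since the $\nabla f_i(x^*)$ need not vanish individually). The coefficient obeys a standard estimate for the scalar sequences of Algorithm~\ref{alg:decentralized_agd}: from $A^{k+1}(1+A^k\mu)=L(\alpha^{k+1})^2$ and the geometric growth of $A^k$ one gets $\tfrac{\alpha^{k+1}}{1+A^k\mu+\mu}\le\tfrac{2}{\sqrt{L\mu}}\le\tfrac1\mu$. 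Finally $\norm{\mY^{k+1}-\mX^*}_2\le\norm{\mP\mY^{k+1}}_2+\norm{\ol\mY^{k+1}-\mX^*}_2\le\sqrt{\delta'}+\norm{\ol\mY^{k+1}-\mX^*}_2$, and the averaged iterate is controlled via Lemma~\ref{lem:inexact_oracle}: since consensus has been kept at accuracy $\delta'$ on steps $1,\dots,k$ by hypothesis, the averages $\{\ol\mX^j,\ol\mU^j,\ol\mY^j\}_{j\le k}$ run the accelerated method with the $(\delta,2\Lav,\muav/2)$-model of Lemma~\ref{lem:inexact_oracle}, and the inexact-oracle Lyapunov estimate \cite{devolder2014first} yields $\norm{\ol u^j-x^*}_2^2,\ \norm{\ol x^j-x^*}_2^2\le\norm{\ol u^0-x^*}_2^2+8\delta'/\sqrt{L\mu}$; passing to averages in the convex combination defining $\mY^{k+1}$ and converting the matrix norm to the per-node norm gives $\norm{\ol\mY^{k+1}-\mX^*}_2\le\sqrt n\,(\norm{\ol u^0-x^*}_2^2+8\delta'/\sqrt{L\mu})^{1/2}$. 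Adding the three contributions reproduces exactly the expression for $\sqrt D$, which proves (i).

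\textbf{Main obstacle.} The only substantive ingredient is the boundedness of $\ol\mY^{k+1}$, i.e.\ the inexact-oracle convergence analysis of Algorithm~\ref{alg:decentralized_agd}: one must check that the per-step inexactness $\delta$ — itself proportional to $\delta'$ through \eqref{eq:delta_inexact_oracle} — accumulates over the $k$ accelerated iterations only into the bounded term $8\delta'/\sqrt{L\mu}$ rather than degrading with $k$, which rests on the geometric summability of the error contributions of a strongly convex $(\delta,L,\mu)$-model. The remaining pieces — average preservation, non-expansiveness of $\mP$ and of single doubly-stochastic steps, the windowed use of the contraction property, and the elementary bounds on $\alpha^{k+1}/(1+A^k\mu+\mu)$, $\norm{\mP\mX^j}_2$, $\norm{\mP\mY^{k+1}}_2$ — are routine, after which plugging $\sqrt D$ into the decay estimate and taking $T=\tfrac{\tau}{2\lambda}\log\tfrac D{\delta'}$ closes the argument up to the harmless rounding of $T$ to a multiple of $\tau$.
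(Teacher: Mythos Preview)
The paper does not actually prove this lemma in the text; it only remarks that ``a basis for the proof of Lemma~\ref{lem:consensus_iters_strongly_convex} is a contraction property of mixing matrix sequence $\{\mix^k\}_{k=0}^\infty$ (see Assumption~\ref{assum:mixing_matrix_sequence})'' and defers the details to~\cite{rogozin2020towards}. Your plan is precisely an unpacking of that hint: the windowed contraction of Assumption~\ref{assum:mixing_matrix_sequence} together with average-preservation reduces the claim to the uniform bound $\norm{\mV^{k+1}-\ol\mV^{k+1}}_2\le\sqrt D$, and your term-by-term decomposition of $\mP\mV^{k+1}$ via the update rule of Algorithm~\ref{alg:decentralized_agd} is the natural way to recover the three summands defining $\sqrt D$.

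One structural point worth making explicit: your bound on $\norm{\ol\mY^{k+1}-\mX^*}_2$ invokes the inexact-oracle convergence analysis (essentially the content of Lemma~\ref{lem:inexact_agd_convergence}) under the inductive hypothesis that consensus accuracy $\delta'$ holds at steps $1,\dots,k$. This is not circular---Lemma~\ref{lem:inexact_agd_convergence} is itself conditional on that same hypothesis and does not rely on Lemma~\ref{lem:consensus_iters_strongly_convex}---so the two lemmas together form a clean induction on $k$, exactly as the paper combines them in Theorem~\ref{th:total_iterations_strongly_convex}. Matching the specific constant $8\delta'/\sqrt{L\mu}$ from the Lyapunov bound of Lemma~\ref{lem:inexact_agd_convergence} requires the geometric summability $\sum_{j\le k}A^j/(1+A^k\mu)=O\bigl(1/\sqrt{L\mu}\bigr)$ for the strongly convex accelerated recursion, which you correctly identify as the only non-routine ingredient.
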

A basis for the proof of Lemma \ref{lem:consensus_iters_strongly_convex} is a contraction property of mixing matrix sequence $\braces{\mix^k}_{k=0}^\infty$ (see Assumption \ref{assum:mixing_matrix_sequence}).

Second, provided that projection accuracy on every step of Algorithm \ref{alg:decentralized_agd} is sustained at level $\delta'$, the algorithm turns into an accelerated scheme with inexactness. Its convergence \arrev{rate} is given by the following 
\begin{lemma}\label{lem:inexact_agd_convergence}
	Provided that consensus accuracy is $\delta'$, i.e. $\norm{\mU^{j} - \ol\mU^j}_2^2\le \delta' \text{ for } j = 1, \ldots, k$, we have
	\begin{align*}
	f(\ol x^k) - f(x^*) &\le \frac{\norm{\ol u^0 - x^*}_2^2}{2A^k} + \frac{2\sum_{j=1}^{k} A^j\delta}{A^k} \\
	\norm{\ol u^k - x^*}_2^2 &\le \frac{\norm{\ol u^0 - x^*}_2^2}{1 + A^k\mu} + \frac{4\sum_{j=1}^{k} A^j\delta}{1 + A^k\mu}
	\end{align*}
	where $\delta$ is given in \eqref{eq:delta_inexact_oracle}.
\end{lemma}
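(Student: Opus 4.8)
The plan is to recognize Algorithm~\ref{alg:decentralized_agd} as Nesterov's accelerated gradient method applied to $f$ with the inexact $(\delta,2\Lav,\muav/2)$-oracle of Lemma~\ref{lem:inexact_oracle}, and then to run the estimate-sequence analysis of inexact accelerated methods \cite{devolder2014first}, tracking how the error $\delta$ accumulates. Concretely, I would first pass from the matrix iterates $\mX^k,\mV^k,\mU^k,\mY^k$ to their projections onto $\cset$, i.e. to the row-averages $\ol x^k,\ol v^k,\ol u^k,\ol y^k\in\R^n$. Since $\mX\mapsto\ol\mX$ is linear and one mixing step preserves the average under Assumption~\ref{assum:mixing_matrix_sequence} (from $\one^\top\mix^k=\one^\top$ one gets $\ol{\mix^k\mZ}=\ol\mZ$), the consensus call on line~6 leaves $\ol\mU^{k+1}=\ol\mV^{k+1}$, and lines~4--7 collapse to the \emph{exact} recursion
\begin{align*}
    \ol y^{k+1} &= \frac{\alpha^{k+1}\ol u^k + A^k\ol x^k}{A^{k+1}}, \\
    \ol u^{k+1} &= \frac{\mu\,\ol y^{k+1}+(1+A^k\mu)\ol u^k}{1+A^k\mu+\mu} - \frac{\alpha^{k+1}}{1+A^k\mu+\mu}\,g_{\delta,L,\mu}(\ol y^{k+1},\mY^{k+1}), \\
    \ol x^{k+1} &= \frac{\alpha^{k+1}\ol u^{k+1}+A^k\ol x^k}{A^{k+1}},
\end{align*}
with $L=2\Lav$, $\mu=\muav/2$, where $\alpha^{k+1}=A^{k+1}-A^k$ and $L(\alpha^{k+1})^2=A^{k+1}(1+A^k\mu)$ by lines~2 and~3, and $g_{\delta,L,\mu}(\ol y^{k+1},\mY^{k+1})$ is the averaged gradient over nodes from Lemma~\ref{lem:inexact_oracle}, which coincides with the search direction obtained by averaging line~5. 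This is exactly one step of inexact AGD, the only subtlety being that the gradient is sampled at the matrix $\mY^{k+1}$ rather than at $\ol y^{k+1}$.

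Second, I would certify that the pair $\bigl(f_{\delta,L,\mu}(\ol y^{k+1},\mY^{k+1}),\,g_{\delta,L,\mu}(\ol y^{k+1},\mY^{k+1})\bigr)$ is a legitimate $(\delta,L,\mu)$-model of $f$ at $\ol y^{k+1}$ in the sense of \eqref{eq:inexact_oracle_def_devolder}. Lemma~\ref{lem:inexact_oracle}, applied with matrix argument $\mY^{k+1}$ and base point $\ol y^{k+1}$, requires only $\norm{\mY^{k+1}-\ol\mY^{k+1}}_2^2\le\delta'$. Unrolling lines~4 and~7 shows that every $\mY^{k+1}$ is a convex combination of $\mX^0$ and of $\mU^1,\ldots,\mU^k$ (the coefficients sum to one because $A^{k+1}=A^k+\alpha^{k+1}$); since the $\ol{(\cdot)}$-deviation of a convex combination is bounded in $\norm{\cdot}_2$ by the largest deviation of its terms, and $\mX^0=\mU^0\in\cset$, the hypothesis $\norm{\mU^j-\ol\mU^j}_2^2\le\delta'$ for $j=1,\ldots,k$ yields the required bound on $\norm{\mY^{k+1}-\ol\mY^{k+1}}_2$. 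Hence the model inequality of Lemma~\ref{lem:inexact_oracle} holds at $\ol y^{k+1}$ with $\delta$ as in \eqref{eq:delta_inexact_oracle}.

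Third, I would run the estimate-sequence induction with $\ell_0(z)=\tfrac12\norm{z-\ol u^0}_2^2$ and, inductively, $\ell_{k+1}(z)=\ell_k(z)+\alpha^{k+1}\bigl[f_{\delta,L,\mu}(\ol y^{k+1},\mY^{k+1})+\angles{g_{\delta,L,\mu}(\ol y^{k+1},\mY^{k+1}),z-\ol y^{k+1}}+\tfrac{\mu}{2}\norm{z-\ol y^{k+1}}_2^2\bigr]$. Each $\ell_k$ is a quadratic of curvature $1+A^k\mu$ minimized at $\ol u^k$ (which is exactly what the averaged recursion encodes), so $\ell_k(x^*)=\ell_k^*+\tfrac{1+A^k\mu}{2}\norm{\ol u^k-x^*}_2^2$ with $\ell_k^*:=\min_z\ell_k(z)$. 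Bounding $\ell_k(x^*)$ from above by $A^k f(x^*)+\tfrac12\norm{\ol u^0-x^*}_2^2$ via the lower side of \eqref{eq:inexact_oracle_def_devolder}, and advancing the Lyapunov certificate with the upper (descent) side together with $L(\alpha^{k+1})^2=A^{k+1}(1+A^k\mu)$, one proves by induction
\begin{align*}
    A^k\bigl(f(\ol x^k)-f(x^*)\bigr)+\tfrac{1+A^k\mu}{2}\norm{\ol u^k-x^*}_2^2 \;\le\; \tfrac12\norm{\ol u^0-x^*}_2^2 + 2\sum_{j=1}^{k}A^j\delta .
\end{align*}
Dropping the nonnegative second term on the left and dividing by $A^k$ gives the first claimed bound; using $f(\ol x^k)-f(x^*)\ge0$ and dividing by $1+A^k\mu$ gives the second.

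I expect the main obstacle to be the inexactness bookkeeping in the third step: one must verify that acceleration accumulates the model error only as $2\sum_{j=1}^kA^j\delta$ (hence $4\sum_{j=1}^kA^j\delta$ for the squared distance) rather than amplifying it --- the characteristic delicate point of inexact accelerated schemes \cite{devolder2014first}, which hinges on $\delta$ entering only the descent side of \eqref{eq:inexact_oracle_def_devolder} and on applying each instance of the model inequality at the correct point among $\ol y^{k+1}$, $\ol x^{k+1}$ and $\ol u^k$. The $\sqrt{\delta'}$-feasibility propagation of the second step is routine by comparison.
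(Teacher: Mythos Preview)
Your proposal is correct and follows precisely the approach the paper indicates: the paper does not spell out a proof of this lemma in the text but frames Algorithm~\ref{alg:decentralized_agd} as accelerated gradient descent run with the $(\delta,2\Lav,\muav/2)$-model of Lemma~\ref{lem:inexact_oracle} and defers to the Devolder--Glineur--Nesterov inexact-oracle analysis \cite{devolder2014first,devolder2013first} and to \cite{rogozin2020towards} for the derivation. Your three steps---averaging the iterates via double stochasticity, propagating the $\delta'$-consensus bound to $\mY^{k+1}$ through its convex-combination structure, and running the estimate-sequence induction to obtain the Lyapunov inequality $A^k\bigl(f(\ol x^k)-f(x^*)\bigr)+\tfrac{1+A^k\mu}{2}\norm{\ol u^k-x^*}_2^2\le\tfrac12\norm{\ol u^0-x^*}_2^2+2\sum_{j=1}^k A^j\delta$---are exactly that analysis, and the two stated bounds follow by dropping the respective nonnegative term.
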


Finally, putting Lemmas \ref{lem:consensus_iters_strongly_convex} and \ref{lem:inexact_agd_convergence} together yields a convergence result for Algorithm \ref{alg:decentralized_agd}.
\begin{theorem}\label{th:total_iterations_strongly_convex}
	\arrev{Recall the definitions of $\tau$ and $\lambda$ from Assumption 2,} choose some $\eps > 0$ and set
	\begin{align*}
	T_k = T = \frac{\tau}{2\lambda}\log\frac{D}{\delta'},~ \delta' = \frac{n\eps}{32} \frac{\muav^{3/2}}{\Lav^{1/2} \Lmax^2}.
	\end{align*}
	Also define
	\begin{subequations}\label{eq:log_coefs_strongly_conv}
		\begin{align*}
			D_1 &= \frac{\Lmax}{\Lav^{1/2}\muav} \sbraces{ 8\sqrt{2}\Lmax\norm{\ol u^0 - x^*}_2\cbraces{\frac{\Lav}{\muav}}^{3/4} + \frac{4\sqrt{2}\norm{\nabla F(\mX^*)}_2}{\sqrt{n}}\cbraces{\frac{\Lav}{\muav}}^{1/4} }, \\
			D_2 &= \frac{\Lmax}{\Lav^{1/2}\muav} \sbraces{ 3\sqrt{\muav} + 4\sqrt{2n}\cbraces{\frac{\Lav}{\muav}}^{1/4} }.
		\end{align*}
	\end{subequations}
	Then Algorithm \ref{alg:decentralized_agd} requires
	\begin{align}\label{eq:agd_computational_complexity}
		N = 2\sqrt{\frac{\Lav}{\muav}} \log\cbraces{\frac{\norm{\ol u^0 - x^*}_2^2}{2\Lav\eps}}
	\end{align}
	gradient computations at each node and
	\begin{align}\label{eq:agd_communication_complexity}
		N_{tot} = N\cdot T =  2\sqrt{\frac{\Lav}{\muav}} \frac{\tau}{\lambda} \cdot \log\cbraces{\frac{2\Lav\norm{\ol u^0 - x^*}_2^2}{\eps}} \log\cbraces{\frac{D_1}{\sqrt\eps} + D_2}
	\end{align}
	communication steps to yield $\mX^N$ such that
	\begin{align*}
		&f(\ol x^N) - f(x^*)\le \eps,~ \norm{\mX^N - \arrev{\ol X^N}}_2^2\le \delta'.
	\end{align*}
\end{theorem}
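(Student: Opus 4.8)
The plan is to combine the per-iteration convergence estimate of Lemma~\ref{lem:inexact_agd_convergence} with the consensus cost of Lemma~\ref{lem:consensus_iters_strongly_convex}, after first pinning down the growth rate of the sequence $A^k$ produced by Algorithm~\ref{alg:decentralized_agd}. Running the algorithm with $L = 2\Lav$ and $\mu = \muav/2$ is precisely the regime in which, by Lemma~\ref{lem:inexact_oracle}, the pair $(f_{\delta,L,\mu}, g_{\delta,L,\mu})$ is a valid $(\delta, 2\Lav, \muav/2)$-model of $f$, provided each call of the consensus subroutine returns an iterate that is $\delta'$-close to its average; the identity \eqref{eq:delta_inexact_oracle} ties $\delta$ to $\delta'$. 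Under this proviso the scheme is an inexact accelerated gradient method and Lemma~\ref{lem:inexact_agd_convergence} applies, giving $f(\ol x^k) - f(x^*) \le \tfrac{\norm{\ol u^0 - x^*}_2^2}{2A^k} + \tfrac{2\sum_{j=1}^k A^j\delta}{A^k}$.

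First I would record the standard lower bound for $A^k$ in accelerated gradient descent with strong convexity: from the defining quadratic $(A^k+\alpha^{k+1})(1+A^k\mu)=L(\alpha^{k+1})^2$ one gets $A^1\ge 1/L$ and $A^{j+1}\ge(1+c\sqrt{\mu/L})A^j$ for an absolute constant $c$ (the exact constant is irrelevant). With $\mu/L=\muav/(4\Lav)$ the common ratio is $1+\Theta(\sqrt{\muav/\Lav})$, which yields two facts. Forcing the first error term below $\eps/2$ requires $N=\Theta(\sqrt{\Lav/\muav})$ outer steps, matching \eqref{eq:agd_computational_complexity} once the constants inside the logarithm are tracked. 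And the geometric growth gives $\sum_{j=1}^k A^j\le \tfrac{A^k}{1-q}=O(\sqrt{\Lav/\muav})\,A^k$, so the inexactness term is $O(\sqrt{\Lav/\muav}\,\delta)$; forcing it below $\eps/2$ forces $\delta=O(\eps\sqrt{\muav/\Lav})$, and inverting \eqref{eq:delta_inexact_oracle} (whose dominant term is $\tfrac{2\Lmax^2}{\muav}$) produces exactly $\delta'=\tfrac{n\eps}{32}\tfrac{\muav^{3/2}}{\Lav^{1/2}\Lmax^2}$.

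It then remains to count communications. Each of the $N$ outer iterations spends $T=\tfrac{\tau}{2\lambda}\log\tfrac{D}{\delta'}$ consensus steps by Lemma~\ref{lem:consensus_iters_strongly_convex}, so $N_{tot}=N\cdot T$. Substituting the chosen $\delta'$ and the values $L=2\Lav$, $\mu=\muav/2$ (so that $\sqrt{L\mu}=\sqrt{\Lav\muav}$ and $\Lmax/\mu=2\Lmax/\muav$) into the formula for $\sqrt D$, one computes $D/\delta'=(\sqrt D/\sqrt{\delta'})^2$ and groups terms: the $\tfrac{\Lmax}{\mu}\sqrt{n}\,\norm{\ol u^0-x^*}_2$ and $\tfrac{2\norm{\nabla F(\mX^*)}_2}{\sqrt{L\mu}}$ pieces generate the $D_1/\sqrt\eps$ contribution and the remaining pieces the additive $D_2$, so $\log\tfrac{D}{\delta'}=2\log\!\big(\tfrac{D_1}{\sqrt\eps}+D_2\big)$ with $D_1,D_2$ as in the statement; multiplying by $N$ gives \eqref{eq:agd_communication_complexity}. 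Finally, $\norm{\mX^N-\aX^N}_2^2\le\delta'$ is nothing but the consensus accuracy that is enforced at the last step.

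The main obstacle is the circularity between the two lemmas: Lemma~\ref{lem:consensus_iters_strongly_convex} certifies $\delta'$-accuracy at step $k+1$ only under the hypothesis that $\delta'$-accuracy held at steps $1,\dots,k$, and Lemma~\ref{lem:inexact_agd_convergence} --- needed to keep the quantity $D$ a uniform bound on the consensus input across all outer iterations --- has the same hypothesis. This is resolved by an induction on $k$: assuming $\delta'$-accuracy up to step $k$, Lemma~\ref{lem:inexact_agd_convergence} controls the trajectory and hence $\norm{\mV^{k+1}-\ol\mV^{k+1}}_2$, Lemma~\ref{lem:consensus_iters_strongly_convex} then guarantees that $T$ consensus iterations restore $\delta'$-accuracy at step $k+1$, and the loop closes. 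Everything else --- the $A^k$ growth estimate and the exact determination of $D_1,D_2$ and of the logarithmic arguments --- is routine but bookkeeping-heavy.
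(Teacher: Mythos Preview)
Your proposal is correct and follows exactly the approach the paper indicates: the paper's own ``proof'' consists of the single sentence ``putting Lemmas~\ref{lem:consensus_iters_strongly_convex} and~\ref{lem:inexact_agd_convergence} together yields a convergence result for Algorithm~\ref{alg:decentralized_agd},'' with details deferred to \cite{rogozin2020towards}. Your write-up is in fact more explicit than the paper's treatment --- you correctly identify the parameter choice $L=2\Lav$, $\mu=\muav/2$ dictated by Lemma~\ref{lem:inexact_oracle}, the geometric growth of $A^k$ that controls both the deterministic term and the accumulated inexactness $\sum_j A^j\delta/A^k$, the inversion of \eqref{eq:delta_inexact_oracle} to obtain $\delta'$, and the inductive resolution of the circular dependence between the two lemmas.
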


In the time-static case, contraction term $\tau/\lambda$ turns into $\chi(\mix)$, and an accelerated consensus procedure of type \eqref{eq:consensus_iter_laplacian_accelerated} may be employed. This results in a better dependence on graph connectivity and leads to a complexity bound $O\cbraces{\sqrt\frac{L_g}{\mu_g}\sqrt{\chi(\mix)} \log^2(\frac{1}{\eps})}$ which is optimal up to a logarithmic term. Similar results are attained in works which use penalty-based methods \cite{li2020decentralized,rogozin2020penalty,gorbunov2019optimal} (see Appendix B in \cite{gorbunov2019optimal}) for details.

\arrev{
\textbf{Remark}. The analysis of Algorithm \ref{alg:decentralized_agd} presented in \cite{rogozin2021accelerated} results in constants $\Lav, \muav$ in the complexity bound. These constants are better than local constants $\Lmax, \mumin$, but still can be improved. The inexact oracle concept allows to reduce decentralized optimization problem to minimization of $f(x)$ over $\R^d$ with inexact oracle. Therefore, the complexity will depend on constants $L_f, \mu_f$ which characterize $f$ itself, not its flattened variant $F$. An accurate analysis on this issue is presented in Section \ref{subsec:decentralized_saddle}.
}

\subsubsection{\arrev{Stochastic decentralized optimization}}

\arrev{
The technique used in Algorithm \ref{alg:decentralized_agd} can be extended to stochastic objectives. Following the definitions in \cite{rogozin2021accelerated}, let
$f_i(x):=\EE_{\xi} f_i(x, \xi_i)$,
where $\xi_i$'s are random variables. Variables $\xi_i$ represent the source of stochasticity in $f_i(x, \xi_i)$ which may be caused by random sampling or stochastic noise. For each $i = 1, \ldots, n$ we assume that $\nabla f_i(x, \xi_i)$ is $L_i(\xi)$ continuous and there exists a constant $L_i\geq 0$ such that $\sqrt{\EE_{\xi_i} L_i(\xi_i)^2} \leq L_i < +\infty$. Under these assumptions $f_i$ is $L_i$-smooth. We also bound the variance of $\nabla f_i(x, \xi_i)$:
\begin{equation*}
\EE_{\xi_i}[\norm{\nabla f_i(x, \xi_i) - \nabla f_i(x)}_2^2] \leqslant \sigma_i^2.
\end{equation*}
Let us define $\sigma_g^2 = \frac{1}{m}\sum\limits_{i=1}^m\sigma_i^2$. The algorithm in \cite{rogozin2021accelerated} combines a consensus subroutine technique similar to Algorithm \ref{alg:decentralized_agd} and also uses a specific batch-size policy. In order to analyze the method, inexact oracle framework similar to that of Section \ref{sec:inexact_oracle} is used. The inexactness of gradient has two sources: inexact projection onto the constraint set via consensus subroutine and stochastic noise. On the one hand, tuning the batch size allows to reduce the variance of the batched gradient at the cost of additional stochastic oracle calls. Therefore, proper batch size guarantees a balance between the stochastic gradient noise and the number of gradient calculations. On the other hand, the accuracy of the consensus is tuned by the choice of number of consensus iterations. Choosing a proper batch size and number of consensus iterations allows to obtain optimal complexities both in the number of computations and communications up to a logarithmic factor. Namely, the method in \cite{rogozin2021accelerated} requires $\widetilde{O}\left(\max\left\{\frac{\sigma_g^2}{n\mu_g\eps},\sqrt{\frac{L_g}{\mu_g}}\log\frac{1}{\eps}\right\}\right)$ oracle calls per node. In the time-varying case, it requires $\widetilde{O}\cbraces{\frac{\tau}{\lambda}\sqrt{\frac{\Lav}{\muav}}}$ communication rounds (where $\tau$ and $\lambda$ are defined in Assumption \ref{assum:mixing_matrix_sequence}), and in time-static case its communication complexity takes the form $\widetilde{O}\cbraces{\sqrt{\frac{\Lav}{\muav}\chi}}$ and is achieved by using Chebyshev acceleration.
}

\subsection{Decentralized Saddle-Point Problems}\label{subsec:decentralized_saddle}

Along with minimization problems, sum-type min-max problems of type
\begin{align*}
    \min_{x\in\mathcal{X}} \max_{y\in\mathcal{Y}}~ \arrev{f(x, y) :=} \frac{1}{m} \sum_{i=1}^m f_i(x, y)
\end{align*}
where $\mathcal{X}$ and $\mathcal{Y}$ are convex compacts, can be solved in a decentralized manner, as well. The same way as in Assumptions \ref{assum:convex_smooth}, \ref{assum:str_convex} for minimization tasks, we introduce assumptions for min-max problems.
\begin{assumption}\label{assum:convex_concave_smooth}
    For every $i = 1, \ldots, m$, function $f_i$ is differentiable, convex in $x$, concave in $y$ and $L_i$-smooth.
\end{assumption}
\begin{assumption}\label{assum:str_convex_concave}
    Function $f$ is $\mu$-strongly-convex in $x$, $\arrev{\mu_f}$-strongly-concave in $y$ ($\arrev{\mu_f} > 0$) and $L_f$ -smooth.
\end{assumption}


\ab{
Saddle-point problems have many practical applications: classical and well-studied in economy and in game theory 
\cite{GT-book,facchinei2007finite}, and modern in imaging denoising \cite{chambolle2011first}, in   adversarial training \cite{Arjovsky_et_al2017, Bengio2014}, and in statistical learning \cite{Abadeh_et_al_2015}. But distributed saddle-point problems is not as widely studied in the literature as the minimization problems. Let  us highlighted the main works devoted to decentralized min-max problems.
}
\ab{
Most of the works are devoted to decentralized algorithms on fixed graph topology. In paper \cite{beznosikov2021distributed}, the authors present lower bounds for deterministic decentralized saddle point problems under Assumptions \ref{assum:convex_concave_smooth} and \ref{assum:str_convex_concave}. These estimates are as follows
\begin{align}
\label{SPP_com}
    \Omega\cbraces{\frac{L_f}{\mu_f}\log\cbraces{1/\eps}}\qquad &\text{computation complexity and} \nonumber\\
    \Omega\cbraces{\sqrt{\chi}\frac{L_f}{\mu_f}\log\cbraces{1/\eps}}\qquad &\text{communication complexity.}
\end{align}
Additionally, the paper provides an optimal algorithm (up to logarithmic factors), which achieves the lower bounds. Among the disadvantages of the Algorithm presented in \cite{beznosikov2021distributed}, one can single out multiple gossip steps, this approach is unstable and not a popular in practice.
A similar Algorithm with multiple gossip steps is proposed in \cite{liu2019decentralized}, but they consider convergence in the non-convex case (under the minty condition \cite{minty62,juditsky2011solving}). Also, this work shows the effectiveness of decentralized training of GANs. \cite{liu2019decentralizedprox} is also devoted to minty non-convex saddle-point problems. It is also interesting to note the work \cite{beznosikov2021distributed} on saddle-point problems in terms of data-similarity. The work gives lower bounds for communication complexity, as well as optimal algorithms for such setting of the problem. In particular, the lower and upper bounds state that
\begin{align*}
    \sqrt{\chi}\left(1 + \frac{\delta}{\mu_f}\right)\log\cbraces{1/\eps}\qquad &\text{communication rounds}
\end{align*}
are enough to achieve $\varepsilon$-precision. Interesting to note, that for uniformly distributed data, with high probability $L_i \approx L_f$ and $\delta \sim {\tilde O}(\max_i L_i/\sqrt{n})$, where $n$ -- the number of local samples on each node. This means that the data-simularity bounds on communication rounds is significantly better than the general one \eqref{SPP_com}.
}

\ab{
It is also important to mention the works devoted to saddle-point problems on time-varying networks. In particular, paper \cite{beznosikov2021optimal} is devoted to lower bounds and optimal algorithms for connected topology. Work \cite{beznosikov2021decentralized} is devoted to the broader case of time-varying networks, for example, methods can do local steps (iterations without communication).
}

\ar{
An interesting variant of saddle-point problems are extensions to local and global variables, i.e. problems of the form
\begin{align}\label{eq:saddle_local_and_global}
    \min_{p, \{x_i\}_{i=1}^m} ~ \max_{r, \{y_i\}_{i=1}^m} ~ {\frac{1}{m}} \sum_{i=1}^m f_i(x_i, p, y_i, r).
\end{align}
Applications of problems of this type minimization tasks with separable and semi-definite constraints \cite{mateos2015distributed}, decentralized reinforcement learning \cite{wai2018multi} and distributed computation of Wasserstein barycenters \cite{rogozin2021decentralized,dvinskikh2020improved}. A subgradient method for problems of type \eqref{eq:saddle_local_and_global} was proposed in \cite{mateos2015distributed}. The method has a $O(1/\sqrt N)$ convergence rate. A recent work \cite{rogozin2021decentralized} proposed a method based on Mirror-Prox, capable of working in general proximal setup, reaching a $O(1/N)$ convergence rate and an accelerated rate on $\chi$. The method achieves optimal oracle and communication complexities in Euclidean convex-concave case over time-static graphs.
}


\section{Convex Problems with Affine Constraints}\label{gorbunov}

In this section\footnote{The narrative in this section follows \cite{gorbunov2019optimal}.}, we consider convex optimization problem with affine constraints
\begin{equation}
\label{PP}
\min_{Ax=0, x\in Q}f(x),    
\end{equation}
where $A \succeq 0$, $\text{Ker} A \neq \{0\}$ and $Q$ is a closed convex subset of $\R^n$. Up to a sign, the dual problem is defined as follows:
\begin{eqnarray}
\min_{y}\psi(y),&& \text{where}\label{DP}\\
\varphi(y) &=& \max_{x\in Q}\left\{\langle y,x\rangle - f(x)\right\},\label{eq:dual_phi_function}\\
\psi(y) &=& \vp(A^\top y) = \max_{x\in Q}\left\{\langle y,Ax\rangle - f(x)\right\}=  \langle A^\top y,x(A^\top y)\rangle - f(x(A^\top y)),\label{eq:dual_function}
\end{eqnarray}
where $x(y) \eqdef \argmax_{x\in Q}\left\{\la y, x\ra - f(x)\right\}$. Since $\text{Ker}A \neq \{0\}$ the solution of the dual problem \eqref{DP} is not unique. We use $y^*$ to denote the solution of \eqref{DP} with the smallest $\ell_2$-norm $R_y \eqdef \|y^*\|_2$. 

\subsection{Primal Approach}\label{sec:primal}
In this section, we focus on primal approaches to solve \eqref{PP} and, in particular, the main goal of this section is to present first-order methods that are optimal both in terms of $\nabla f(x)$ and $A^\top A x$ calculations. One can apply the following trick \cite{dvinskikh2019decentralized, gasnikov2017modern,gorbunov2019optimal} to solve problem \eqref{PP}: instead of \eqref{PP} one can solve penalized problem
\begin{equation}
\label{penalty}
\min_{x\in Q} \left\{F(x) = f(x) + \frac{R_y^2}{\e}\| Ax\|_2^2\right\},  
\end{equation}
where $\e > 0$ is the desired accuracy of the solution in terms of $f(x)$ that we want to achieve (see the details in \cite{gorbunov2019optimal}).

Next, we assume that $f$ is $\mu$-strongly convex, but possibly non-smooth function with bounded (sub) gradients: $\|\nabla f(x)\|_2 \le M$ for all $x\in Q$. In this setting, one can apply {\tt Sliding} algorithm from \cite{Lan2019lectures,lan2016gradient} to get optimal rates of convergence. The method is presented as Algorithm~\ref{alg:sliding} and it is aimed to solve the following problem:
\begin{equation}
    \min\limits_{x\in Q} \left\{\Psi(x) = h(x) + f(x)\right\}, \label{eq:composite_problem}
\end{equation}
where $h(x)$ is convex and $L$-smooth, $f(x)$ is convex, but can be non-smooth, and $x^*$ is an arbitrary solution of the problem. In this case, it is additionally assumed that $f(x)$ has uniformly bounded subgradients: there exists non-negative constant $M$ such that\footnote{For the sake of simplicity, we slightly abuse the notation and denote gradients and subgradients similarly.} $\|\nabla f(x)\|_2 \le M$ for all $x\in Q$ and all subgradients at this point $\nabla f(x)\in\partial f(x)$.
\begin{algorithm} [H]
	\caption{Sliding Algorithm \cite{Lan2019lectures,lan2016gradient}}
	\label{alg:sliding}
	\begin{algorithmic}
\State
\noindent {\bf Input:} Initial point $x_0 \in Q$ and iteration limit $N$.
\State Let $\beta_k \in \RR_{++}, \gamma_k \in \RR_+$, and $T_k \in {\mathbb N}$, $k = 1, 2, \ldots$,
be given and
set $\overline x_0 = x_0$. 
\For {$k=1, 2, \ldots, N$ }
    \State 1. Set $\underline x_k = (1 - \gamma_k) \overline x_{k-1} + \gamma_k x_{k-1}$,
    and let $h_k(\cdot) \equiv l_h(\underline x_{k}, \cdot)$, where $l_h(x,y) = h(x) + \la\nabla h(x), y-x \ra$.
    \State 2. Set
    \begin{equation*}
        (x_k, \tilde x_k) = \text{\tt PS}(h_k, x_{k-1}, \beta_k, T_k).
    \end{equation*}
    \State 3. Set $\overline x_k = (1-\gamma_k) \overline x_{k-1} + \gamma_k \tilde x_k$. 
\EndFor
\State 
\noindent {\bf Output:} $\overline x_N$.

\Statex
\Statex The  $\text{\tt{PS} }$(prox-sliding) procedure.

\State {\bf procedure:} {$(x^+, \tilde x^+) = \text{\tt{PS}}$($g$, $x$, $\beta$, $T$)}
\State Let the parameters $p_t \in \R_{++}$ and $\theta_t \in [0,1]$,
$t = 1, \ldots$, be given. Set $u_{0} = \tilde u_0 = x$.
\State {\bf for} $t = 1, 2, \ldots, T$ {\bf do}
    \begin{eqnarray*}
        u_{t} &=& \argmin_{u \in Q} \Big\{g(u) + l_f(u_{t-1},u)+\frac{\beta}{2}\|u-x\|_2^2 + \frac{\beta p_t}{2}\|u-u_{t-1}\|_2^2\Big\},\\
        \tilde u_t &=&  (1-\theta_t) \tilde u_{t-1} + \theta_t u_t,
    \end{eqnarray*}
    where $l_f(x,y) = f(x) + \la \nabla f(x), y-x \ra$.
\State {\bf end for}
\State Set $x^+ = u_T$ and  $\tilde x^+ = \tilde u_T$.
\State {\bf end procedure:}
\end{algorithmic}
\end{algorithm}
The key property of Algorithm~\ref{alg:sliding} is its ability to separate oracle complexities for smooth and non-smooth parts of the objective. That is, to find such $\hat x$ that $\Psi(\hat x) - \Psi(x^*) \le \varepsilon$ {\tt Sliding} requires
\begin{equation*}
    O\left(\sqrt{\frac{LR^2}{\varepsilon}}\right)\text{ calculations of } \nabla h(x)
\end{equation*}
and
\begin{equation*}
    O\left(\frac{M^2R^2}{\varepsilon^2} + \sqrt{\frac{LR^2}{\varepsilon}}\right)\text{ calculations of } \nabla f(x),
\end{equation*}
where $R = \|x^0 - x^*\|_2$.

Now, we go back to the problem \eqref{penalty} and consider the case when $\mu = 0$. In these settings, to find $\hat x$ such that
\begin{equation}
    F(\hat x) - F(x^*) \le \varepsilon\label{eq:F(x^N)_guarantee}
\end{equation}
one can run Algorithm~\ref{alg:sliding} considering $f(x)$ as the non-smooth term and $\nicefrac{R_y^2}{\varepsilon}\|Ax\|_2^2$ as the smooth one. In this case, {\tt Sliding} requires
\begin{equation}
    O\left(\sqrt{\frac{\lambda_{\max}(A^\top A)R_y^2 R^2}{\e^2}}\right) \text{ calculations of $A^\top Ax$,} \label{eq:sliding_detrem_A^TAx_calculations}
\end{equation}
\begin{equation}
    O\left(\frac{M^2R^2}{\e^2}\right) \text{ calculations of $\nabla f(x)$.} \label{eq:sliding_detrem_grad_calculations}
\end{equation}

Next, we consider the situation when $Q$ is a compact set, $\nabla f(x)$ is not available, and unbiased stochastic gradient $\nabla f(x,\xi)$ is used instead:
\begin{eqnarray}
    \left\|\EE_\xi\left[\nabla f(x,\xi)\right] - \nabla f(x)\right\|_2 &\le& \delta, \label{eq:primal_bias_in_stoch_grad}\\
    \EE_\xi\left[\exp\left(\frac{\left\|\nabla f(x,\xi) - \EE_\xi\left[\nabla f(x,\xi)\right]\right\|_2^2}{\sigma^2}\right)\right] &\le& \exp(1) \label{eq:primal_light_tails_stoch_grad},
\end{eqnarray}
where $\delta \ge 0$ and $\sigma \ge 0$. When $\delta = 0$, i.e., stochastic gradients are unbiased, one can show \cite{Lan2019lectures,lan2016gradient} that Stochastic {\tt Sliding} ({\tt S-Sliding}) method can achieve \eqref{eq:F(x^N)_guarantee} with probability at least $1 - \beta$, $\beta\in(0,1)$ requiring the same number of calculations of $A^\top Ax$ as in \eqref{eq:sliding_detrem_A^TAx_calculations} up to logarithmic factors
and
\begin{equation}
    \widetilde{O}\left(\frac{(M^2+\sigma^2)R^2}{\e^2}\right) \text{ calculations of $\nabla f(x,\xi)$.} \label{eq:sliding_stoch_grad_calculations}
\end{equation}

When $\mu > 0$ one can apply restarts technique for {\tt S-Sliding} and get the method ({\tt RS-Sliding}) \cite{dvinskikh2019decentralized,uribe2017optimal} that guarantees \eqref{eq:F(x^N)_guarantee} with probability at least $1-\beta$, $\beta\in(0,1)$ using
\begin{equation}
    \widetilde{O}\left(\sqrt{\frac{\lambda_{\max}(A^\top A)R_y^2}{\mu\e}}\right) \text{ calculations of $A^\top Ax$,} \label{eq:sliding_stoch_A^TAx_calculations_str_cvx}
\end{equation}
\begin{equation}
    \widetilde{O}\left(\frac{M^2+\sigma^2}{\mu\e}\right) \text{ calculations of $\nabla f(x,\xi)$.} \label{eq:sliding_stoch_grad_calculations_str_cvx}
\end{equation}

We notice that bounds presented above for the non-smooth case are proved when $Q$ is bounded. For the case of unbounded $Q$ the convergence results with such rates were established only in expectation. Moreover, it would be interesting to study {\tt S-Sliding} and {\tt RS-Sliding} in the case when $\delta > 0$, i.e., stochastic gradient is biased.

\subsection{Dual Approach}\label{sec:dual}
In this section, we assume that one can construct a dual problem for \eqref{PP}. If $f$ is $\mu$-strongly convex in $\ell_2$-norm, then $\psi$ and $\varphi$ have $L_{\psi}$--Lipschitz continuous and $L_\varphi$--Lipschitz continuous in $\ell_2$-norm gradients respectively \cite{kakade2009duality,Rockafellar2015}, where $L_{\psi}=\nicefrac{\lambda_{\max}(A^\top A)}{\mu}$ and $L_\varphi = \nicefrac{1}{\mu}$. In our proofs, we often use Demyanov--Danskin theorem \cite{Rockafellar2015} which states that
\begin{equation}
    \nabla \psi(y) = Ax(A^\top y),\quad \nabla\varphi(y) = x(y).\label{eq:gradient_dual_function}
\end{equation}
Moreover, we do not assume that $A$ is symmetric or positive semidefinite.

Below we propose a primal-dual method for the case when $f$ is additionally Lipschitz continuous on some ball and two methods for the problems when the primal function is also $L$-smooth and Lipschitz continuous on some ball. In the subsections below, we assume that $Q = \R^n$. The formal proofs of the presented results are given in \cite{gorbunov2019optimal}.

\subsubsection{Convex Dual Function}\label{sec:conv_dual}
In this section, we assume that the dual function $\varphi(y)$ could be rewritten as an expectation, i.e., $\varphi(y) = \EE_\xi\left[\varphi(y,\xi)\right]$, where stochastic realizations $\varphi(y,\xi)$ are differentiable in $y$ functions almost surely in $\xi$. Then, we can also represent $\psi(y)$ as an expectation: $\psi(y) = \EE_\xi\left[\psi(y,\xi)\right]$. Consider the stochastic function $f(x,\xi)$ which is defined implicitly as follows:
\begin{equation}
    \varphi(y,\xi) = \max\limits_{x\in \R^n}\left\{\la y, x \ra - f(x,\xi)\right\}.\label{eq:dual_stoch_func}
\end{equation}
Similarly to the deterministic case, we introduce $x(y,\xi) \eqdef \argmax_{x\in \R^n}\left\{\la y, x \ra - f(x,\xi)\right\}$ which satisfies $\nabla\varphi(y,\xi) = x(y,\xi)$ due to Demyanov--Danskin theorem, where the gradient is taken w.r.t.\ $y$. As a simple corollary, we get $\nabla \psi(y,\xi) = Ax(A^\top y)$. Finally, introduced notations and obtained relations imply that $x(y) = \EE_\xi[x(y,\xi)]$ and $\nabla\psi(y) = \EE_\xi[\nabla \psi(y,\xi)]$.

Consider the situation when $x(y,\xi)$ is known only through the noisy observations $\tx(y,\xi) = x(y,\xi) + \delta(y,\xi)$ and assume that the noise is bounded in expectation, i.e., there exists non-negative deterministic constant $\delta_y \ge 0$, such that
\begin{equation}
    \left\|\EE_\xi[\delta(y,\xi)]\right\|_2 \le \delta_y,\quad \forall y\in \R^n. \label{eq:noise_level_x}
\end{equation}
Assume additionally that $\tx(y,\xi)$ satisfies so-called ``light-tails'' inequality:
\begin{equation}
    \EE_\xi\left[\exp\left(\frac{\left\|\tx(y,\xi) - \EE_\xi\left[\tx(y,\xi)\right]\right\|_2^2}{\sigma_x^2}\right)\right] \le \exp(1), \quad \forall y\in\R^n,\label{eq:light_tails_x}
\end{equation}
where $\sigma_x$ is some positive constant. It implies that we have an access to the biased stochastic gradient $\tnabla\psi(y,\xi) \eqdef A\tx(y,\xi)$ which satisfies following relations:
\begin{eqnarray}
    \left\|\EE_\xi\left[\tnabla \psi(y,\xi)\right] - \nabla \psi(y)\right\|_2 &\le& \delta, \quad \forall y\in \R^n,\label{eq:bias_stoch_grad}\\
    \EE_\xi\left[\exp\left(\frac{\left\|\tnabla \psi(y,\xi) - \EE_\xi\left[\tnabla \psi(y,\xi)\right]\right\|_2^2}{\sigma_\psi^2}\right)\right] &\le& \exp(1), \quad \forall y\in \R^d,\label{eq:super_exp_moment_stoch_grad}
\end{eqnarray}
where $\delta \eqdef \sqrt{\lambda_{\max}(A^\top A)}\delta_y$ and $\sigma_\psi \eqdef \sqrt{\lambda_{\max}(A^\top A)}\sigma_x$.
We will use $\tnabla\Psi(y,\Bxi^{k})$ to denote batched stochastic gradient:
\begin{equation}
    \tnabla\Psi(y,\Bxi^{k}) = \frac{1}{r_k}\sum\limits_{l=1}^{r_k}\tnabla \psi(y,\xi^{l}), \quad \tx(y,\Bxi^k) = \frac{1}{r_k}\sum\limits_{l=1}^{r_k}\tx(y,\xi^l)\label{eq:batched_biased_stoch_gradient}
\end{equation}
The size of the batch $r_k$ could always be restored from the context, so, we do not specify it here. Note that the batch version satisfies (see the details in \cite{gorbunov2019optimal})
\begin{eqnarray}
    \left\|\EE\left[\tnabla \Psi(x,\Bxi^k)\right] - \nabla \psi(x)\right\|_2 &\le& \delta, \quad \forall x\in\R^n,\label{eq:bias_batched_stoch_grad}\\
    \EE\left[\exp\left(\frac{\left\|\tnabla \Psi(x,\Bxi^k) - \EE\left[\tnabla \Psi(x,\Bxi^k)\right]\right\|_2^2}{O(\nicefrac{\sigma_\psi^2}{r_k^2})}\right)\right] &\le& \exp(1), \quad \forall x\in\R^n,\label{eq:super_exp_moment_batched_stoch_grad}
\end{eqnarray}

In these settings, we consider a method called {\tt SPDSTM} (Stochastic Primal-Dual Similar Triangles Method, see Algorithm~\ref{Alg:PDSTM}). Note that Algorithm~4 from \cite{dvinskikh2019dual} is a special case of {\tt SPDSTM} when $\delta = 0$, i.e., stochastic gradient is unbiased, up to a factor $2$ in the choice of $\tL$.

\begin{algorithm}[h]
\caption{{\tt SPDSTM}}
\label{Alg:PDSTM}   
 \begin{algorithmic}[1]
\Require $\tilde{y}^0=z^0=y^0=0$, number of iterations $N$, $\alpha_0 = A_0=0$
\For{$k=0,\dots, N$}
\State Set $\tL = 2L_\psi$
\State Set $A_{k+1} = A_k + \alpha_{k+1}$, where $2\tL\alpha_{k+1}^2 = A_k + \alpha_{k+1}$
\State $\tilde{y}^{k+1} = \nicefrac{(A_ky^k+\alpha_{k+1}z^k)}{A_{k+1}}$
\State $z^{k+1} = z^k - \alpha_{k+1} \tnabla\Psi(\tilde{y}^{k+1},\Bxi^{k})$
\State $y^{k+1}=\nicefrac{(A_ky^k+\alpha_{k+1}z^{k+1})}{A_{k+1}}$
\EndFor
\Ensure    $y^N$, $\tilde{x}^N = \frac{1}{A_N}\sum_{k=0}^N \alpha_k \tx(A^\top\tilde{y}^k,\Bxi^k)$. 
\end{algorithmic}
 \end{algorithm}

Below we present the main convergence result of this section.
\begin{theorem}[Theorem 5.1 from \cite{gorbunov2019optimal}]\label{thm:spdtstm_smooth_cvx_dual_biased}
    Assume that $f$ is $\mu$-strongly convex and $\|\nabla f(x^*)\|_2 = M_f$. Let $\varepsilon > 0$ be a desired accuracy. Next, assume that $f$ is $L_f$-Lipschitz continuous on the ball $B_{R_f}(0)$ with 
    $$R_f = \tilde{\Omega}\left(\max\left\{\frac{R_y}{A_N\sqrt{\lambda_{\max}(A^\top A)}}, \frac{\sqrt{\lambda_{\max}(A^\top A)}R_y}{\mu}, R_x\right\}\right),$$ where $R_y$ is such that $\|y^*\|_2 \le R_y$, $y^*$ is the solution of the dual problem \eqref{DP}, and $R_x = \|x(A^\top y^*)\|_2$. Assume that at iteration k of Algorithm~\ref{Alg:PDSTM} batch size is chosen according to the formula $r_k \ge \max\left\{1, \frac{ \sigma^2_\psi \widetilde{\alpha}_k \ln(\nicefrac{N}{\beta})}{\hat C\e}\right\}$, where $\widetilde{\alpha}_{k} = \frac{k+1}{2\tL}$, $0 < \varepsilon \le \frac{H\tL R_0^2}{N^2}$, $0 \le \delta \le \frac{G\tL R_0}{(N+1)^2}$ and $N\ge 1$ for some numeric constant $H > 0$, $G > 0$ and $\hat C > 0$. Then with probability  $\geq 1-4\beta$, where $\beta \in \left(0,\nicefrac{1}{8}\right)$, after $N = \widetilde{O}\left(\sqrt{\frac{M_f}{\mu\e}\chi(A^\top A)} \right)$ iterations where $\chi(A^\top A) = \frac{\lambda_{\max}(A^\top A)}{\lambda_{\min}^+(A^\top A)}$, the outputs $\tx^N$ and $y^N$ of Algorithm \ref{Alg:PDSTM} satisfy the following condition
\begin{equation}
    f(\tilde{x}^N) -f(x^*) \le f(\tilde{x}^N) + \psi(y^N) \le \e, \quad \|A\tilde{x}^N\|_2 \le \frac{\e}{R_{y}} \label{eq:SPDSTM_non_str_cvx_guarantee}
\end{equation}
with probability at least $1-4\beta$. What is more, to guarantee \eqref{eq:SPDSTM_non_str_cvx_guarantee} with probability at least $1-4\beta$ Algorithm~\ref{Alg:PDSTM} requires
\begin{equation}
    \widetilde{O}\left(\max\left\{\frac{\sigma_x^2M_f^2}{\varepsilon^2}\chi(A^\top A)\ln\left(\frac{1}{\beta}\sqrt{\frac{M_f}{\mu\e}\chi(A^\top A)}\right), \sqrt{\frac{M_f}{\mu\e}\chi(A^\top A)}\right\}\right)\label{eq:SPDSTM_oracle_calls}
\end{equation}
calls of the biased stochastic oracle $\tnabla\psi(y,\xi)$, i.e.\ $\tx(y,\xi)$.
\end{theorem}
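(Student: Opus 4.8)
\noindent The plan is to read Algorithm~\ref{Alg:PDSTM} as the Similar Triangles Method (an accelerated primal--dual gradient scheme) applied to the dual function $\psi$ of \eqref{eq:dual_function}, fed with the biased batched estimator $\tnabla\Psi(\cdot,\Bxi^k)=A\tx(\cdot,\Bxi^k)$ in place of the exact dual gradient $\nabla\psi(y)=Ax(A^\top y)$ from \eqref{eq:gradient_dual_function}. Since $f$ is $\mu$-strongly convex, $\psi$ is convex and $L_\psi$-smooth with $L_\psi=\lambda_{\max}(A^\top A)/\mu$, which matches the constant $\tL=2L_\psi$ used by the method; because $\operatorname{Ker}A\neq\{0\}$ the dual is not strongly convex, so the relevant rate is the non-strongly-convex accelerated one, $A_N\gtrsim N^2/L_\psi$, with $R_0=\|y^0-y^*\|_2=R_y$. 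The case $\delta=0$ is already Algorithm~4 of \cite{dvinskikh2019dual}, so a good part of the work is to re-run that analysis while tracking the bias $\delta$ and upgrading the conclusion to hold with high probability rather than in expectation.

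First I would establish the deterministic primal--dual skeleton. Unrolling the recursions for $\tilde y^{k+1},z^{k+1},y^{k+1}$ with the step rule $2\tL\alpha_{k+1}^2=A_k+\alpha_{k+1}$ yields the STM estimate, kept before invoking convexity of $\psi$: for every $y$,
\[
A_N\psi(y^N)\le\sum_{k}\alpha_k\big(\psi(\tilde y^k)+\langle\nabla\psi(\tilde y^k),\,y-\tilde y^k\rangle\big)+\tfrac12\|y-y^0\|_2^2 .
\]
The primal output $\tx^N=\tfrac1{A_N}\sum_k\alpha_k\tx(A^\top\tilde y^k,\Bxi^k)$ is the $\alpha$-weighted average of the Fenchel maximizers, and $\sum_k\alpha_k\tnabla\Psi(\tilde y^k,\Bxi^k)=A_N\,A\tx^N$ by construction. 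Using the Fenchel identity $f\big(x(A^\top\tilde y^k)\big)=\langle\tilde y^k,\nabla\psi(\tilde y^k)\rangle-\psi(\tilde y^k)$ and convexity of $f$, the displayed inequality collapses (with $y^0=0$, and in the exact case) to $A_N\big(f(\tx^N)+\psi(y^N)\big)\le A_N\langle A\tx^N,\,y\rangle+\tfrac12\|y\|_2^2$ for all $y$. Optimizing over $y$ gives $f(\tx^N)+\psi(y^N)\le0$ in the exact case, while Fenchel--Young gives the matching lower bound $f(\tx^N)+\psi(y^N)\ge-\|y^N\|_2\|A\tx^N\|_2$; together they yield $\|A\tx^N\|_2\le 2\|y^N\|_2/A_N$, which is $O(L_\psi R_y/N^2)$ once $\|y^N\|_2$ is shown bounded.

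Next I would insert the stochastic and biased corrections. Replacing $\nabla\psi(\tilde y^k)$ by $\tnabla\Psi(\tilde y^k,\Bxi^k)$ in the telescoping estimate adds a martingale-difference sum built from the centered batched noise, whose per-step sub-Gaussian parameter is $O(\sigma_\psi^2/r_k^2)$ by \eqref{eq:super_exp_moment_batched_stoch_grad}, plus deterministic terms controlled by the bias $\delta$ via \eqref{eq:bias_batched_stoch_grad}. A Bernstein-type inequality for martingales with light tails bounds the noise contribution, with probability at least $1-O(\beta)$, by a quantity of order $\sum_k\widetilde{\alpha}_k\sigma_\psi^2/r_k$ plus a $\sqrt{\ln(N/\beta)}$ correction; the prescribed $r_k\ge\max\{1,\ \sigma_\psi^2\widetilde{\alpha}_k\ln(N/\beta)/(\hat C\varepsilon)\}$ makes this $\le O(\varepsilon A_N)$, while $0\le\delta\le G\tL R_0/(N+1)^2$ keeps the bias term $\le O(\varepsilon A_N)$ as well. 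In parallel, an induction on $\|y^k-y^*\|_2$ driven by the same recursions shows all iterates stay in a ball of radius $O(R_f)$, on which $x(\cdot)=\nabla\varphi(\cdot)$ is well defined and $\|\nabla f\|_2$ is bounded by $L_f$; this is precisely why $R_f$ is chosen as in the statement, and it legitimizes assumptions \eqref{eq:noise_level_x}--\eqref{eq:light_tails_x} along the trajectory.

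Finally I would assemble the pieces. With $N=\widetilde{O}\big(\sqrt{L_\psi R_y^2/\varepsilon}\big)$ the above gives, with probability $\ge1-4\beta$, both $f(\tx^N)+\psi(y^N)\le\varepsilon$ and $\|A\tx^N\|_2\le\varepsilon/R_y$; weak duality $f(x^*)=-\psi(y^*)\ge-\psi(y^N)$ then yields $f(\tx^N)-f(x^*)\le f(\tx^N)+\psi(y^N)\le\varepsilon$, which is \eqref{eq:SPDSTM_non_str_cvx_guarantee}. To rewrite the iteration count, note that for the minimal-norm dual solution $A^\top y^*=-\nabla f(x^*)$ (a suitable subgradient) and $y^*\perp\operatorname{Ker}(A^\top)$, hence $\lambda_{\min}^+(A^\top A)R_y^2\le\|A^\top y^*\|_2^2=M_f^2$, so $L_\psi R_y^2/\varepsilon=\lambda_{\max}(A^\top A)R_y^2/(\mu\varepsilon)\le\chi(A^\top A)M_f^2/(\mu\varepsilon)$, giving $N=\widetilde{O}\big(\sqrt{\chi(A^\top A)M_f^2/(\mu\varepsilon)}\big)$; summing $\sum_{k=0}^N r_k$ with $\widetilde{\alpha}_k=(k+1)/(2\tL)$ and $\sigma_\psi^2=\lambda_{\max}(A^\top A)\sigma_x^2$ then produces the $\max$-expression in \eqref{eq:SPDSTM_oracle_calls}. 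The hard part is the third step: carrying \emph{both} the bias and the batched sub-Gaussian noise through the accelerated recursion while retaining a clean high-probability bound (this needs the right martingale concentration and care about which quantities to center), together with the bounded-trajectory argument that guarantees the stochastic-oracle hypotheses remain valid throughout.
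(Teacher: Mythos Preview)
The paper does not actually contain a proof of this theorem: the statement is quoted verbatim as ``Theorem~5.1 from \cite{gorbunov2019optimal}'' and the surrounding text explicitly says that ``the formal proofs of the presented results are given in \cite{gorbunov2019optimal}.'' So there is nothing in the paper to compare your proposal against line by line.

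That said, your sketch is the standard route and matches what one expects from the primal--dual STM analysis in \cite{dvinskikh2019dual,gorbunov2019optimal}: (i) derive the accelerated ``for all $y$'' inequality for the dual $\psi$ with $\tL=2L_\psi$, (ii) carry the batched sub-Gaussian noise \eqref{eq:super_exp_moment_batched_stoch_grad} and the bias \eqref{eq:bias_batched_stoch_grad} through the telescoping sum via a martingale/Bernstein argument, using the prescribed $r_k$ and the bound on $\delta$ to keep both contributions $O(\varepsilon A_N)$, (iii) prove the iterates stay in a ball of radius $O(R_f)$ so that the oracle assumptions remain valid, and (iv) read off the primal guarantees and the oracle count. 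Your identification of the hardest step---the simultaneous high-probability control of bias plus batched light-tailed noise through the accelerated recursion, together with the bounded-trajectory argument---is accurate.

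One minor point: your final conversion gives $N=\widetilde{O}\big(\sqrt{\chi(A^\top A)\,M_f^{2}/(\mu\varepsilon)}\big)$, which is consistent with Tables~\ref{tab:DetPrimeOr} and~\ref{tab:stochastic_bounds_dual} in the paper (they display $M^2$, not $M$, under the square root). The theorem as stated here has $M_f$ rather than $M_f^2$; your derivation via $R_y^2\le M_f^2/\lambda_{\min}^+(A^\top A)$ is the correct one, so the discrepancy appears to be a typo in the statement rather than an error in your argument.
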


\subsubsection{Strongly Convex Dual Functions and Restarts Technique}\label{sec:restarts}
In this section, we assume that primal functional $f$ is additionally $L$-smooth. It implies that the dual function $\psi$ in \eqref{DP} is additionally $\mu_{\psi}$-strongly convex in $y^0 + (\text{Ker} A^\top)^{\perp}$ where $\mu_{\psi} = \nicefrac{\lambda_{\min}^{+}(A^\top A)}{L}$ \cite{kakade2009duality,Rockafellar2015} and $\lambda_{\min}^{+}(A^\top A)$ is the minimal positive eigenvalue of $A^\top A$.

From weak duality $-f(x^*)\le \psi(y^*)$ and \eqref{eq:dual_function} we get the key relation of this section (see also \cite{allen2018make,anikin2017dual,nesterov2012make})
\begin{equation}\label{eq:key_ineq_for_restarts}
f(x(A^\top y))- f(x^*) \le\langle\nabla \psi(y), y\rangle = \langle Ax(A^\top y), y\rangle.    
\end{equation}
This inequality implies the following theorem.
\begin{theorem}[Theorem 5.2 from \cite{gorbunov2019optimal}]\label{thm:grad_norm_testarts_motivation}
    Consider function $f$ and its dual function $\psi$ defined in \eqref{eq:dual_function} such that problems \eqref{PP} and \eqref{DP} have solutions. Assume that $y^N$ is such that $\|\nabla \psi(y^N)\|_2 \le \nicefrac{\e}{R_y}$ and $y^N \le 2R_y$, where $\e > 0$ is some positive number and $R_y = \|y^*\|_2$ where $y^*$ is any minimizer of $\psi$. Then for $x^N = x(A^\top y^N)$ following relations hold:
    \begin{equation}
        f(x^N) - f(x^*) \le 2\e,\quad \|Ax^N\|_2 \le \frac{\e}{R_y},\label{eq:consequence_of_small_grad_norm}
    \end{equation}
    where $x^*$ is any minimizer of $f$.
\end{theorem}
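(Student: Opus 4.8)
The plan is to derive everything from the key relation \eqref{eq:key_ineq_for_restarts} and the gradient identity $\nabla\psi(y) = Ax(A^\top y)$ of \eqref{eq:gradient_dual_function}; no additional structural assumption on $f$ is needed for this particular statement. First I would recall why \eqref{eq:key_ineq_for_restarts} holds: writing $x^N = x(A^\top y^N)$, the definition \eqref{eq:dual_function} of the dual function gives $f(x^N) = \langle A^\top y^N, x^N\rangle - \psi(y^N) = \langle\nabla\psi(y^N), y^N\rangle - \psi(y^N)$, and weak duality for \eqref{PP}--\eqref{DP} yields $-f(x^*)\le\psi(y^*)\le\psi(y^N)$ because $y^*$ minimizes $\psi$; combining these two facts gives $f(x^N)-f(x^*)\le\langle\nabla\psi(y^N),y^N\rangle$.

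Next I would specialize $y=y^N$ and bound the right-hand side by Cauchy--Schwarz, using the two hypotheses $\|\nabla\psi(y^N)\|_2\le\varepsilon/R_y$ and $\|y^N\|_2\le 2R_y$:
\begin{align*}
f(x^N)-f(x^*)\le\langle\nabla\psi(y^N),y^N\rangle\le\|\nabla\psi(y^N)\|_2\,\|y^N\|_2\le\frac{\varepsilon}{R_y}\cdot 2R_y = 2\varepsilon,
\end{align*}
which is the first inequality of \eqref{eq:consequence_of_small_grad_norm}. For the constraint violation I would simply observe that \eqref{eq:gradient_dual_function} gives $Ax^N = Ax(A^\top y^N) = \nabla\psi(y^N)$, so $\|Ax^N\|_2 = \|\nabla\psi(y^N)\|_2\le\varepsilon/R_y$, which is the second inequality.

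The argument is short, so the only real points of care are bookkeeping ones: getting the sign convention right in the weak-duality step so that $-f(x^*)\le\psi(y^*)$, and noting that $x^N$ need not be feasible for \eqref{PP}, so the conclusion is genuinely one-sided --- we obtain $f(x^N)-f(x^*)\le 2\varepsilon$ rather than a two-sided estimate on $|f(x^N)-f(x^*)|$. The statement itself does not explain how to produce a $y^N$ with both $\|\nabla\psi(y^N)\|_2$ small and $\|y^N\|_2\le 2R_y$ simultaneously; that is where the smoothness assumption on $f$ (hence strong convexity of $\psi$ on $y^0+(\mathrm{Ker}\,A^\top)^\perp$) and a restart scheme enter, but those belong to the subsequent development rather than to this lemma.
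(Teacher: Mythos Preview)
Your proof is correct and follows essentially the same approach as the paper: the paper states that the key inequality \eqref{eq:key_ineq_for_restarts} is obtained from weak duality $-f(x^*)\le\psi(y^*)$ together with the definition \eqref{eq:dual_function}, and that the theorem follows directly from it. You have reconstructed exactly this derivation, then finished with Cauchy--Schwarz and the identity $\nabla\psi(y)=Ax(A^\top y)$, which is precisely what is intended.
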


That is why, in this section we mainly focus on the methods that provide optimal convergence rates for the gradient norm. In particular, we consider Recursive Regularization Meta-Algorithm from (see Algorithm~\ref{Alg:RRMA-AC-SA}) \cite{foster2019complexity} with {\tt AC-SA$^2$} (see Algorithm~\ref{Alg:AC-SA2}) as a subroutine (i.e.\ {\tt RRMA-AC-SA$^2$}) which is based on {\tt AC-SA} algorithm (see Algorithm~\ref{Alg:AC-SA}) from \cite{ghadimi2012optimal}. We notice that {\tt RRMA-AC-SA$^2$} is applied for a regularized dual function
\begin{equation}
    \tilde\psi(y) = \psi(y) + \frac{\lambda}{2}\|y - y^0\|_2^2,\label{eq:regularized_dual_function}
\end{equation}
where $\lambda > 0$ is some positive number which will be defined further. Function $\tilde{\psi}$ is $\lambda$-strongly convex and $\tilde{L}_\psi$-smooth in $\R^n$ where $\tilde{L}_\psi = L_\psi + \lambda$. For now, we just assume w.l.o.g.\ that $\tilde{\psi}$ is $(\mu_\psi + \lambda)$-strongly convex in $\R^n$, but we will go back to this question further.

In this section we consider the same oracle as in Section~\ref{sec:conv_dual}, but we additionally assume that $\delta = 0$, i.e., stochastic first-order oracle is unbiased. To define batched version of the stochastic gradient we will use the following notation:
\begin{equation}
    \nabla\Psi(y,\Bxi^{t},r_t) = \frac{1}{r_t}\sum\limits_{l=1}^{r_t}\nabla \psi(y,\xi^{l}), \quad x(y,\Bxi^t,r_t) = \frac{1}{r_t}\sum\limits_{l=1}^{r_t}x(y,\xi^l).\label{eq:batched_biased_stoch_gradient_restarts}
\end{equation}
As before, in the cases when the batch-size $r_t$ can be restored from the context, we will use simplified notation $\nabla\Psi(y,\Bxi^{t})$ and $x(y,\Bxi^t)$. 
\begin{algorithm}[h]
\caption{{\tt RRMA-AC-SA$^2$} \cite{foster2019complexity}}
\label{Alg:RRMA-AC-SA}   
 \begin{algorithmic}[1]
\Require $y^0$~--- starting point, $m$~--- total number of iterations
\State $\psi_0 \leftarrow \tilde\psi$, $\hat y^0 \leftarrow y^0$, $T \leftarrow \left\lfloor\log_2\frac{\tilde L_\psi}{\lambda}\right\rfloor$
\For{$k=1,\ldots, T$}
\State Run {\tt AC-SA$^2$} for $\nicefrac{m}{T}$ iterations to optimize $\psi_{k-1}$ with $\hat y^{k-1}$ as a starting point and get the output $\hat y^k$ 
\State $\psi_k(y) \leftarrow \tilde\psi(y) + \lambda\sum_{l=1}^k 2^{l-1}\|y - \hat y^l\|_2^2$
\EndFor
\Ensure $\hat y^T$. 
\end{algorithmic}
\end{algorithm}

In the {\tt AC-SA} algorithm we use batched stochastic gradients of functions $\psi_k$ which are defined as follows:
\begin{eqnarray}
    \nabla\Psi_k(y,\Bxi^{t}) &=& \frac{1}{r_t}\sum\limits_{l=1}^{r_t}\nabla \psi_k(y,\xi^{l}),\label{eq:batch_stoch_regularized_func}\\
    \nabla\psi_k(y,\xi) &=& \nabla\psi(y,\xi) + \lambda(y - y^0) + \lambda\sum\limits_{l=1}^k2^l (y - \hat{y}^l).\notag
\end{eqnarray}

\begin{algorithm}[h]
\caption{{\tt AC-SA} \cite{ghadimi2012optimal}}
\label{Alg:AC-SA}   
 \begin{algorithmic}[1]
\Require $z^0$~--- starting point, $m$~--- number of iterations, $\psi_k$~--- objective function
\State $y^0_{ag} \leftarrow z^0$, $y^0_{md} \leftarrow z^0$
\For{$t=1,\ldots, m$}
\State $\alpha_t \leftarrow \frac{2}{t+1}$, $\gamma_t \leftarrow \frac{4\tilde L_\psi}{t(t+1)}$
\State $y^t_{md} \leftarrow \frac{(1-\alpha_t)(\lambda + \gamma_t)}{\gamma_t + (1-\alpha_t^2)\lambda}y^{t-1}_{ag} + \frac{\alpha_t((1-\alpha_t)\lambda + \gamma_t)}{\gamma_t + (1-\alpha_t^2)\lambda}z^{t-1}$
\State $z^t \leftarrow \frac{\alpha_t\lambda}{\lambda + \gamma_t}y^t_{md} + \frac{(1-\alpha_t)\lambda + \gamma_t}{\lambda + \gamma_t}z^{t-1} - \frac{\alpha_t}{\lambda + \gamma_t}\nabla\Psi_k(y^t_{md}, \Bxi^t)$
\State $y^t_{ag} \leftarrow \alpha_t z^t + (1-\alpha_t)x^{t-1}_{ag}$
\EndFor
\Ensure $y^m_{ag}$. 
\end{algorithmic}
\end{algorithm}

\begin{algorithm}[h]
\caption{{\tt AC-SA$^2$} \cite{foster2019complexity}}
\label{Alg:AC-SA2}   
 \begin{algorithmic}[1]
\Require $z^0$~--- starting point, $m$~--- number of iterations, $\psi_k$~--- objective function
\State Run {\tt AC-SA} for $\nicefrac{m}{2}$ iterations to optimize $\psi_{k}$ with $z^0$ as a starting point and get the output $y^1$ 
\State Run {\tt AC-SA} for $\nicefrac{m}{2}$ iterations to optimize $\psi_{k}$ with $y^1$ as a starting point and get the output $y^2$
\Ensure $y^2$. 
\end{algorithmic}
\end{algorithm}

The following theorem states the main result for {\tt RRMA-AC-SA$^2$} that we need in the section.
\begin{theorem}[Corollary~1 from \cite{foster2019complexity}]\label{thm:rrma-ac-sa2_convergence}
    Let $\psi$ be $L_\psi$-smooth and $\mu_\psi$-strongly convex function and $\lambda = \Theta\left(\nicefrac{(L_\psi \ln^2 N)}{N^2}\right)$ for some $N > 1$. If the Algorithm~\ref{Alg:RRMA-AC-SA} performs $N$ iterations in total\footnote{The overall number of performed iterations during the calls of {\tt AC-SA$^2$} equals $N$.} with batch size $r$ for all iterations, then it will provide such a point $\hat{y}$ that
    \begin{equation}
        \EE\left[\|\nabla\psi(\hat{y})\|_2^2\mid y^0, r\right] \le C\left(\frac{L_\psi^2\|y^0 - y^*\|_2^2\ln^{4}N}{N^4} + \frac{\sigma_\psi^2\ln^6N}{rN}\right),\label{eq:rrma-ac-sa2_guarantee}
    \end{equation}
    where $C>0$ is some positive constant and $y^*$ is a solution of the dual problem \eqref{DP}.
\end{theorem}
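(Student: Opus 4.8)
\noindent\emph{Proof plan.} This is Corollary~1 of \cite{foster2019complexity}, and the plan is to reproduce the main steps of the recursive-regularization argument there. The starting point is a \emph{regularization reduction}: to bound $\norm{\nabla\psi(\hat y^T)}_2$ it is enough to bound a function-value gap for the last regularized objective optimized in Algorithm~\ref{Alg:RRMA-AC-SA}, namely $\psi_{T-1} = \psi + \tfrac{\lambda}{2}\norm{\cdot - y^0}_2^2 + \lambda\sum_{l=1}^{T-1}2^{l-1}\norm{\cdot - \hat y^l}_2^2$. Differentiating this identity and evaluating at $\hat y^T$ yields
\begin{align*}
\nabla\psi(\hat y^T) = \nabla\psi_{T-1}(\hat y^T) - \lambda(\hat y^T - y^0) - 2\lambda\sum_{l=1}^{T-1}2^{l-1}(\hat y^T - \hat y^l),
\end{align*}
so I only have to control (i) the residual $\norm{\nabla\psi_{T-1}(\hat y^T)}_2$, via $\norm{\nabla\psi_{T-1}(y)}_2^2\le 2L_{T-1}\bigl(\psi_{T-1}(y)-\min\psi_{T-1}\bigr)$ and the fact that, by the choice $T=\lfloor\log_2(\tilde L_\psi/\lambda)\rfloor$, the objective $\psi_{T-1}$ is well-conditioned (condition number $O(1)$); and (ii) the ``regularization-drift'' quantities $\lambda\norm{\hat y^T-y^0}_2$ and $2^{l-1}\lambda\norm{\hat y^T-\hat y^l}_2$.

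\noindent Next I would set up a stage-to-stage recursion. Write $\mu_k=\mu_\psi+\lambda(2^{k+1}-1)$, $L_k=L_\psi+\lambda(2^{k+1}-1)$, $\kappa_k=L_k/\mu_k$ for the constants of $\psi_k$, $y_k^*$ for its minimizer, and $\Delta_k=\psi_k(\hat y^{k+1})-\min\psi_k$ for the accuracy reached when Algorithm~\ref{Alg:RRMA-AC-SA} optimizes $\psi_k$. Because the single quadratic that passes from $\psi_k$ to $\psi_{k+1}$ is anchored exactly at the approximate minimizer $\hat y^{k+1}$ of $\psi_k$, one has $\psi_{k+1}(\hat y^{k+1})-\min\psi_{k+1}\le\Delta_k$, hence by strong convexity the warm start for the next stage satisfies $\norm{\hat y^{k+1}-y_{k+1}^*}_2^2\le 2\Delta_k/\mu_{k+1}$. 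Feeding this warm-start bound into the convergence guarantee of the {\tt AC-SA$^2$} subroutine of \cite{ghadimi2012optimal,foster2019complexity} --- run for $N/T$ iterations with minibatches of size $r$, and enjoying a ``squared'' accelerated ($1/m^4$) deterministic rate together with the usual $\widetilde O(\sigma^2/(\mu m r))$ stochastic rate, since it runs {\tt AC-SA} twice --- gives a recursion of the form
\begin{align*}
\EE[\Delta_{k+1}]\;\lesssim\;\frac{\kappa_{k+1}^{2}}{(N/T)^{4}}\,\EE[\Delta_k]\;+\;\frac{\sigma_\psi^2}{\mu_{k+1}(N/T)r}\cdot\mathrm{polylog}(N).
\end{align*}
With $T=\Theta(\log N)$ and $\lambda=\Theta(L_\psi\ln^2 N/N^2)$ (with a sufficiently large implied constant) one checks $\kappa_k^2/(N/T)^4\le 1/4$ uniformly in $k$ --- the $\kappa_k$ decrease geometrically from $\Theta(L_\psi/\lambda)$ at $k=0$ to $O(1)$ at $k=T-1$ --- so the deterministic part of $\Delta_k$ contracts by a factor $\le 1/4$ per stage, starting from the initial gap $\tilde\psi(y^0)-\min\tilde\psi\lesssim L_\psi\norm{y^0-y^*}_2^2$, while the noise terms accumulate only to $\widetilde O(\sigma_\psi^2/(Nr))$.

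\noindent Finally I would reassemble the gradient bound from the identity above. Term (i) then contributes only noise of size $\widetilde O(\sigma_\psi^2/(Nr))$, its deterministic part being negligible after the contraction. For the drift terms (ii) I would exploit the rapid convergence of the iterates --- each added quadratic heavily overweights the newest anchor, so $\norm{\hat y^T-\hat y^l}_2\le\sum_{j\ge l}\norm{\hat y^{j+1}-\hat y^j}_2$ with $\norm{\hat y^{j+1}-\hat y^j}_2\lesssim\sqrt{\Delta_j/\mu_j}$, and the geometric weights $2^{l-1}\lambda$ combine with these distances into a convergent series. The dominant \emph{deterministic} contribution turns out to be the one carrying $\lambda^2=\Theta(L_\psi^2\ln^4 N/N^4)$, which produces the $L_\psi^2\norm{y^0-y^*}_2^2\ln^4 N/N^4$ term of \eqref{eq:rrma-ac-sa2_guarantee}; all stochastic contributions collapse to $\widetilde O(\sigma_\psi^2/(Nr))$, and tracking the precise powers of $\log N$ through the $\Theta(\log N)$ stages and through the light-tails/minibatch form of the {\tt AC-SA} bound (cf.\ \eqref{eq:super_exp_moment_batched_stoch_grad}) turns these into the stated $\ln^4 N$ and $\ln^6 N$ factors.

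\noindent I expect the main obstacle to be exactly this quantitative bookkeeping: one must control simultaneously the gaps $\Delta_k$, the distances $\norm{\hat y^k-y_k^*}_2$ and $\norm{y_k^*-y_{k-1}^*}_2$, and the geometrically varying constants $\mu_k,L_k$, and verify that the geometric \emph{growth} of the anchor weights $2^{k-1}\lambda$ is compensated \emph{exactly} by the acceleration, so that no single stage dominates and the logarithmic powers come out as claimed. A secondary difficulty is propagating the conditional ``light-tails''/minibatch guarantee through all $\Theta(\log N)$ stages while losing only logarithmic factors; the regularization reduction and the smoothness-to-gradient-norm passage are routine.
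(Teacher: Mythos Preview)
The paper does not prove this theorem at all: it is stated verbatim as Corollary~1 of \cite{foster2019complexity} and used as a black box, so there is no ``paper's own proof'' to compare against. Your proposal is a faithful sketch of the recursive-regularization argument from \cite{foster2019complexity} itself --- the gradient decomposition via the anchored quadratics, the key observation $\psi_{k+1}(\hat y^{k+1})-\min\psi_{k+1}\le\Delta_k$, the geometric contraction of $\Delta_k$ driven by the {\tt AC-SA$^2$} rate, and the final reassembly --- and is correct in outline; the bookkeeping you flag (tracking the $\ln N$ powers and the drift terms) is exactly where the work lies, but there is no missing idea.
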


The following result shows that w.l.o.g.\ we can assume that function $\psi$ defined in \eqref{eq:dual_function} is $\mu_\psi$-strongly convex everywhere with $\mu_\psi = \nicefrac{\lambda_{\min}^+(A^\top A)}{L}$. In fact, from $L$-smoothness of $f$ we have only that $\psi$ is $\mu_\psi$-strongly convex in $y^0 + \left(\text{Ker}(A^\top)\right)^\perp$ (see \cite{kakade2009duality,Rockafellar2015} for the details). However, the structure of the considered here methods is such that all points generated by the {\tt RRMA-AC-SA$^2$} and, in particular, {\tt AC-SA} lie in $y^0 + \left(\text{Ker}(A^\top)\right)^\perp$.

\begin{theorem}[Theorem 5.4 from \cite{gorbunov2019optimal}]\label{thm:ac-sa_points}
    Assume that Algorithm~\ref{Alg:AC-SA} is run for the objective $\psi_k(y) = \tilde\psi(y) + \lambda\sum_{l=1}^k 2^{l-1}\|y - \hat y^l\|_2^2$ with $z^0$ as a starting point, where $z^0,\hat y^1,\ldots,\hat y^k$ are some points from $y^0 + \left(\text{Ker}(A^\top)\right)^\perp$ and $y^0\in\R^n$. Then for all $t\ge 0$ we have $y_{md}^t, z^t, y_{ag}^t \in y^0 + \left(\text{Ker}(A^\top)\right)^\perp$.
\end{theorem}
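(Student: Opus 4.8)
The plan is to argue by induction on $t$, using the crucial fact that the linear subspace $y^0 + (\text{Ker}(A^\top))^\perp$ is an affine subspace that is invariant under all the operations appearing in Algorithm \ref{Alg:AC-SA}. Write $\Lambda = (\text{Ker}(A^\top))^\perp = \text{Im}(A)$ for brevity. The two ingredients I need are: first, that every stochastic gradient $\nabla\Psi_k(y_{md}^t, \Bxi^t)$ evaluated at a point of $y^0+\Lambda$ lies in $\Lambda$ (so adding it keeps us in the affine subspace); second, that the update formulas for $y_{md}^t$, $z^t$, $y_{ag}^t$ are \emph{affine combinations} of points already known to lie in $y^0+\Lambda$, hence their outputs stay there too.

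First I would verify the gradient claim. Recall from \eqref{eq:gradient_dual_function} that $\nabla\psi(y,\xi) = Ax(A^\top y,\xi) \in \text{Im}(A) = \Lambda$. The regularizer contributes $\lambda(y-y^0) + \lambda\sum_{l=1}^k 2^l(y-\hat y^l)$; since $y, \hat y^1,\dots,\hat y^k \in y^0+\Lambda$ by hypothesis (and $z^0\in y^0+\Lambda$), each difference $y-y^0$ and $y-\hat y^l$ lies in $\Lambda$, so the whole regularizer gradient lies in $\Lambda$. Hence $\nabla\psi_k(y,\xi)\in\Lambda$ and therefore $\nabla\Psi_k(y,\Bxi^t)\in\Lambda$ whenever $y\in y^0+\Lambda$.

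Next I would run the induction. The base case is immediate: $y^0_{ag}=y^0_{md}=z^0$, all in $y^0+\Lambda$. For the inductive step, assume $y^{t-1}_{ag}, z^{t-1}\in y^0+\Lambda$. The line defining $y^t_{md}$ is a combination $c_1 y^{t-1}_{ag} + c_2 z^{t-1}$ with $c_1+c_2 = 1$ (one checks $\frac{(1-\alpha_t)(\lambda+\gamma_t)}{\gamma_t+(1-\alpha_t^2)\lambda} + \frac{\alpha_t((1-\alpha_t)\lambda+\gamma_t)}{\gamma_t+(1-\alpha_t^2)\lambda} = \frac{\gamma_t + (1-\alpha_t^2)\lambda}{\gamma_t+(1-\alpha_t^2)\lambda} = 1$), so it is an affine combination and $y^t_{md}\in y^0+\Lambda$. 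Then the update for $z^t$ is an affine combination $\frac{\alpha_t\lambda}{\lambda+\gamma_t}y^t_{md} + \frac{(1-\alpha_t)\lambda+\gamma_t}{\lambda+\gamma_t}z^{t-1}$ (coefficients again sum to $1$) \emph{minus} $\frac{\alpha_t}{\lambda+\gamma_t}\nabla\Psi_k(y^t_{md},\Bxi^t)$; the affine part stays in $y^0+\Lambda$ and the subtracted gradient lies in $\Lambda$ by the previous paragraph, so $z^t\in y^0+\Lambda$. Finally $y^t_{ag} = \alpha_t z^t + (1-\alpha_t)x^{t-1}_{ag}$ (here $x^{t-1}_{ag}$ means $y^{t-1}_{ag}$) is an affine combination of two points of $y^0+\Lambda$, hence lies in $y^0+\Lambda$. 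This closes the induction.

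The only genuinely delicate point is bookkeeping the affine-combination property: one must check that in each formula the scalar coefficients multiplying the \emph{points} (as opposed to the gradient term) sum to exactly $1$, which is what lets a shift by $y^0$ pass through. Everything else is routine linear algebra. I expect the coefficient-sum verification for the $y^t_{md}$ line to be the fussiest step, but it is a short computation using $\gamma_t + (1-\alpha_t^2)\lambda = \gamma_t + (1-\alpha_t)\lambda + \alpha_t(1-\alpha_t)\lambda$, and no real obstacle arises.
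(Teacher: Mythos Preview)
Your proof is correct and follows essentially the same approach as the paper's: induction on $t$, using that $\nabla\psi_k(y,\xi)\in\text{Im}(A)=(\text{Ker}(A^\top))^\perp$ whenever $y\in y^0+\Lambda$ (since $\nabla\psi(y,\xi)=Ax(A^\top y,\xi)\in\text{Im}(A)$ and all regularizer differences $y-y^0$, $y-\hat y^l$ lie in $\Lambda$), together with the verification that the scalar coefficients on the point terms in each update sum to~$1$. Your affine-combination checks for $y_{md}^t$, $z^t$, and $y_{ag}^t$ are all accurate, and the induction closes as you describe.
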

\begin{corollary}[Corollary 5.5 from \cite{gorbunov2019optimal}]\label{cor:rrma-ac-sa2_points}
    Assume that Algorithm~\ref{Alg:RRMA-AC-SA} is run for the objective $\psi_k(y) = \tilde\psi(y) + \lambda\sum_{l=1}^k 2^{l-1}\|y - \hat y^l\|_2^2$ with $y^0$ as a starting point. Then for all $k\ge 0$ we have $\hat y^k \in y^0 + \left(\text{Ker}(A^\top)\right)^\perp$.
\end{corollary}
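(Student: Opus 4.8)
The plan is to prove the claim by induction on $k$, with Theorem~\ref{thm:ac-sa_points} serving as the one-step engine. For the base case $k=0$ I would simply note that $\hat y^0 = y^0$ by the initialization of Algorithm~\ref{Alg:RRMA-AC-SA}, and $y^0 \in y^0 + \left(\text{Ker}(A^\top)\right)^\perp$ holds trivially since $0$ belongs to the linear subspace $\left(\text{Ker}(A^\top)\right)^\perp$.

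For the inductive step I would assume that $\hat y^0, \hat y^1, \ldots, \hat y^{k-1}$ all lie in the affine subspace $y^0 + \left(\text{Ker}(A^\top)\right)^\perp$ and show the same for $\hat y^k$. By the description of Algorithm~\ref{Alg:RRMA-AC-SA}, the point $\hat y^k$ is the output of {\tt AC-SA$^2$} applied to $\psi_{k-1}(y) = \tilde\psi(y) + \lambda\sum_{l=1}^{k-1} 2^{l-1}\|y - \hat y^l\|_2^2$ with $\hat y^{k-1}$ as starting point. The regularization centers entering $\psi_{k-1}$ are precisely $\hat y^1, \ldots, \hat y^{k-1}$, all of which lie in $y^0 + \left(\text{Ker}(A^\top)\right)^\perp$ by the inductive hypothesis, while the extra center $y^0$ coming from $\tilde\psi$ in \eqref{eq:regularized_dual_function} lies there trivially. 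Since {\tt AC-SA$^2$} (Algorithm~\ref{Alg:AC-SA2}) is nothing but two chained runs of {\tt AC-SA}, I would invoke Theorem~\ref{thm:ac-sa_points} twice: the first {\tt AC-SA} run is started at $z^0 = \hat y^{k-1} \in y^0 + \left(\text{Ker}(A^\top)\right)^\perp$ with all regularization centers in the same subspace, so Theorem~\ref{thm:ac-sa_points} gives that its output $y^1$ lies in $y^0 + \left(\text{Ker}(A^\top)\right)^\perp$; the second {\tt AC-SA} run is then started at $z^0 = y^1$ with the same (in-subspace) centers, so Theorem~\ref{thm:ac-sa_points} applied once more shows that its output $y^2 = \hat y^k$ lies in $y^0 + \left(\text{Ker}(A^\top)\right)^\perp$, closing the induction.

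The substantive content is entirely inside Theorem~\ref{thm:ac-sa_points}, namely that one step of {\tt AC-SA} leaves $y^0 + \left(\text{Ker}(A^\top)\right)^\perp$ invariant: the updates for $y^t_{md}$ and $y^t_{ag}$ are affine combinations whose coefficients sum to one (hence they preserve the affine subspace), and the $z^t$ update additionally subtracts the stochastic gradient $\nabla\Psi_k(y^t_{md},\Bxi^t)$, each summand of which lies in the linear subspace $\left(\text{Ker}(A^\top)\right)^\perp$ — the term $\nabla\psi(y,\xi) = Ax(A^\top y,\xi) \in \text{Im}(A) = \left(\text{Ker}(A^\top)\right)^\perp$, and each regularization gradient $\lambda(y-y^0)$, $\lambda 2^l(y-\hat y^l)$ is a difference of points of the affine subspace, hence an element of $\left(\text{Ker}(A^\top)\right)^\perp$. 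Given that theorem, the only mild care needed at the level of the Corollary is to remember that {\tt AC-SA$^2$} is a two-fold composition, so Theorem~\ref{thm:ac-sa_points} must be applied to both inner runs, and that the running list of regularization centers $\hat y^1,\ldots,\hat y^{k-1}$ never leaves the subspace — which is exactly what the induction hypothesis supplies. I do not anticipate any genuine obstacle beyond this bookkeeping.
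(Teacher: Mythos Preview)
Your proposal is correct and follows exactly the approach implicit in the paper, which presents this result as an immediate corollary of Theorem~\ref{thm:ac-sa_points}: induction on $k$, with the base case $\hat y^0 = y^0$ trivial and the inductive step handled by applying Theorem~\ref{thm:ac-sa_points} to each of the two {\tt AC-SA} runs comprising {\tt AC-SA$^2$}, using the inductive hypothesis to ensure that the starting point $\hat y^{k-1}$ and the regularization centers $y^0,\hat y^1,\ldots,\hat y^{k-1}$ all lie in $y^0 + \left(\text{Ker}(A^\top)\right)^\perp$. Your additional recap of why Theorem~\ref{thm:ac-sa_points} itself holds is accurate but not needed at the level of the corollary.
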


Now we are ready to present our approach\footnote{This approach was described in \cite{dvinskikh2019decentralized} and formally proved in \cite{gorbunov2019optimal}.} of constructing an accelerated method for the strongly convex dual problem using restarts of {\tt RRMA-AC-SA$^2$}. To explain the main idea we start with the simplest case: $\sigma_\psi^2 = 0$, $r = 0$. It means that there is no stochasticity in the method and the bound \eqref{eq:rrma-ac-sa2_guarantee} can be rewritten in the following form:
\begin{equation}
    \|\nabla \psi(\hat{y})\|_2 \le \frac{\sqrt{C}L_\psi\|y^0 - y^*\|_2\ln^2 N}{N^2} \le \frac{\sqrt{C}L_\psi\|\nabla\psi(y^0)\|_2\ln^2 N}{\mu_\psi N^2},\label{eq:rrma-ac-sa2_guarantee_simplified}
\end{equation}
where we used inequality $\|\nabla \psi(y^0)\| \ge \mu_\psi\|y^0 - y^*\|$ which follows from the $\mu_\psi$-strong convexity of $\psi$. It implies that after $\bar{N} = \tilde{O}(\sqrt{\nicefrac{L_\psi}{\mu_\psi}})$ iterations of {\tt RRMA-AC-SA$^2$} the method returns such $\bar{y}^1 = \hat{y}$ that $\|\nabla\psi(\bar{y}^1)\|_2 \le \frac{1}{2}\|\nabla\psi(y^0)\|_2$. Next, applying {\tt RRMA-AC-SA$^2$} with $\bar{y}^1$ as a starting point for the same number of iterations we will get new point $\bar{y}^2$ such that $\|\nabla\psi(\bar{y}^2)\|_2 \le \frac{1}{2}\|\nabla\psi(\bar{y}^1)\|_2 \le \frac{1}{4}\|\nabla\psi(y^0)\|_2$. Then, after $l = O(\ln(\nicefrac{R_y\|\nabla\psi(y^0)\|_2}{\e}))$ of such restarts we can get the point $\bar{y}^l$ such that $\|\nabla\psi(\bar y^l)\|_2 \le \nicefrac{\e}{R_y}$ with total number of gradients computations $\bar{N}l = \tilde{O}\left(\sqrt{\nicefrac{L_\psi}{\mu_\psi}}\ln(\nicefrac{R_y\|\nabla\psi(y^0)\|_2}{\e})\right)$.

When $\sigma_\psi^2 \neq 0$ we need to modify this approach. The first ingredient to handle the stochasticity is large enough batch size for the $l$-th restart: $r_l$ should be $\Omega\left(\nicefrac{\sigma_\psi^2}{(\bar{N}\|\nabla \psi(\bar{y}^{l-1})\|_2^2)}\right)$. However, in the stochastic case we do not have an access to the $\nabla \psi(\bar y^{l-1})$, so, such batch size is impractical. One possible way to fix this issue is to independently sample large enough number $\hat{r}_l \sim \nicefrac{R_y^2}{\e^2}$ of stochastic gradients additionally, which is the second ingredient of our approach, in order to get good enough approximation $\nabla\Psi(\bar{y}^{l-1},\Bxi^{l-1},\hat{r}_l)$ of $\nabla \psi(\bar y^{l-1})$ and use the norm of such an approximation which is close to the norm of the true gradient with big enough probability in order to estimate needed batch size $r^l$ for the optimization procedure. Using this, we can get the bound of the following form:
\begin{eqnarray*}
    \EE\left[\|\nabla \psi(\bar y^l)\|_2^2 \mid \bar y^{l-1}, r_l, \hat{r}_l\right] \le A_l &\eqdef& \frac{\|\nabla \psi(\bar y^{l-1})\|_2^2}{8}+ \frac{\|\nabla\Psi(\bar{y}^{l-1},\Bxi^{l-1},\hat{r}_l)-\nabla \psi(\bar y^{l-1})\|_2^2}{32}.
\end{eqnarray*}
The third ingredient is the amplification trick: we run $p_l = \Omega(\ln(\nicefrac{1}{\beta}))$ independent trajectories of {\tt RRMA-AC-SA$^2$}, get points $\bar{y}^{l,1},\ldots, \bar{y}^{l,p_l}$ and choose such $\bar{y}^{l,p(l)}$ among of them that $\|\nabla \psi(\bar{y}^{l,p(l)})\|_2$ is \textit{close enough} to $\min_{p=1,\ldots,p_l}\|\nabla \psi(\bar{y}^{l, p})\|_2$ with high probability, i.e., $\|\nabla \psi(\bar{y}^{l,p(l)})\|_2^2 \le 2\min_{p=1,\ldots,p_l}\|\nabla \psi(\bar{y}^{l, p})\|_2^2 + \nicefrac{\e^2}{8R_y^2}$ with probability at least $1-\beta$ for fixed $\nabla\Psi(\bar{y}^{l-1},\Bxi^{l-1},\hat{r}_l)$. We achieve it due to additional sampling of $\bar{r}_l \sim \nicefrac{R_y^2}{\e^2}$ stochastic gradients at $\bar{y}^{l,p}$ for each trajectory and choosing such $p(l)$ corresponding to the smallest norm of the obtained batched stochastic gradient. By Markov's inequality for all $p=1,\ldots,p_l$
\begin{equation*}
	\PP\left\{\|\nabla\psi(\bar{y}^{l,p})\|_2^2 \ge 2A_l\mid \bar y^{l-1}, r_l, \bar{r}_l\right\} \le \frac{1}{2},
\end{equation*}
hence
\begin{equation*}
	\PP\left\{\min_{p=1,\ldots,p_l}\|\nabla \psi(\bar{y}^{l, p})\|_2^2 \ge 2A_l\mid \bar y^{l-1}, r_l, \bar{r}_l\right\} \le \frac{1}{2^{p_l}}.
\end{equation*}
That is, for $p_l = \log_2(\nicefrac{1}{\beta})$ we have that with probability at least $1-2\beta$
\begin{equation*}
	\|\nabla\psi(\bar{y}^{l,p(l)})\|_2^2 \le \frac{\|\nabla \psi(\bar y^{l-1})\|_2^2}{2} + \frac{\|\nabla\Psi(\bar{y}^{l-1},\Bxi^{l-1},\hat{r}_l)-\nabla \psi(\bar y^{l-1})\|_2^2}{8} + \frac{\e^2}{8R_y^2}
\end{equation*}
for fixed $\nabla\Psi(\bar{y}^{l-1},\Bxi^{l-1},\hat{r}_l)$ which means that
\begin{equation*}
	\|\nabla\psi(\bar{y}^{l,p(l)})\|_2^2 \le \frac{\|\nabla \psi(\bar y^{l-1})\|_2^2}{2} + \frac{\e^2}{4R_y^2}
\end{equation*}
with probability at least $1 - 3\beta$. Therefore, after $l = \log_2(\nicefrac{2R_y^2\|\nabla\psi(y^0)\|_2^2}{\e^2})$ of such restarts our method provides the point $\bar{y}^{l,p(l)}$ such that with probability at least $1 - 3l\beta$
\begin{eqnarray*}
	\|\nabla \psi(\bar{y}^{l,p(l)})\|_2^2 &\le& \frac{\|\nabla\psi(y^{0})\|_2^2}{2^{l}} + \frac{\e^2}{4R_y^2}\sum\limits_{k=0}^{l-1}2^{-k} \le \frac{\e^2}{2R_y^2} + \frac{\e^2}{4R_y^2}\cdot 2 = \frac{\e^2}{R_y^2}.
\end{eqnarray*}

The approach informally described above is stated as Algorithm~\ref{Alg:Restarted-RRMA-AC-SA2}.
\begin{algorithm}[h]
\caption{{\tt Restarted-RRMA-AC-SA$^2$}}
\label{Alg:Restarted-RRMA-AC-SA2}   
 \begin{algorithmic}[1]
\Require $y^0$~--- starting point, $l$~--- number of restarts, $\{\hat r_k\}_{k=1}^l$, $\{\bar{r}_k\}_{k=1}^l$~--- batch-sizes, $\{p_k\}_{k=1}^{l}$~--- amplification parameters
\State Choose the smallest integer $\bar{N} > 1$ such that $\frac{CL_\psi^2\ln^4\bar{N}}{\mu_\psi^2\bar{N}^4} \le \frac{1}{32}$
\State $\bar{y}^{0,p(0)} \leftarrow y^0$ 
\For{$k=1,\ldots,l$}
\State Compute $\nabla\Psi(\bar{y}^{k-1,p(k-1)},\Bxi^{k-1,p(k-1)}, \hat{r}_k)$
\State $r_k \leftarrow \max\left\{1, \frac{64C\sigma_\psi^2\ln^6\bar{N}}{\bar{N}\|\nabla\Psi(\bar{y}^{k-1,p(k-1)},\Bxi^{k-1,p(k-1)}, \hat{r}_k)\|_2^2}\right\}$
\State Run $p_k$ independent trajectories of {\tt RRMA-AC-SA$^2$} for $\bar{N}$ iterations with batch-size $r_k$ with $\bar{y}^{k-1,p(k-1)}$ as a starting point and get outputs $\bar{y}^{k,1},\ldots,\bar{y}^{k,p_k}$
\State Compute $\nabla\Psi(\bar{y}^{k,1},\Bxi^{k,1}, \bar{r}_k),\ldots, \nabla\Psi(\bar{y}^{k,p_k},\Bxi^{k,p_k}, \bar{r}_k)$
\State $p(k) \leftarrow \argmin_{p=1,\ldots,p_k}\|\nabla\Psi(\bar{y}^{k,p},\Bxi^{k,p}, \bar{r}_k)\|_2$
\EndFor
\Ensure $\bar{y}^{l,p(l)}$. 
\end{algorithmic}
\end{algorithm}

\begin{theorem}[Theorem 5.6 from \cite{gorbunov2019optimal}]\label{thm:restarted-rrma-ac-sa2_convergence}
	Assume that $\psi$ is $\mu_\psi$-strongly convex and $L_\psi$-smooth. If Algorithm~\ref{Alg:Restarted-RRMA-AC-SA2} is run with 
	\begin{eqnarray}
	   l &=& \max\left\{1,\log_2\frac{2R_y^2\|\nabla\psi(y^0)\|_2^2}{\e^2}\right\}\notag\\
	   \hat{r}_k &=& \max\left\{1, \frac{4\sigma_\psi^2\left(1 + \sqrt{3\ln\frac{l}{\beta}}\right)^2R_y^2}{\e^2}\right\},\quad r_k = \max\left\{1, \frac{64C\sigma_\psi^2\ln^6\bar{N}}{\bar{N}\|\nabla\Psi(\bar{y}^{k-1,p(k-1)},\Bxi^{k-1,p(k-1)}, \hat{r}_k)\|_2^2}\right\},\notag\\ 
	   p_k &=& \max\left\{1, \log_2\frac{l}{\beta}\right\},\quad \bar{r}_k = \max\left\{1,\frac{128\sigma_\psi^2\left(1 + \sqrt{3\ln\frac{lp_k}{\beta}}\right)^2R_y^2}{\e^2}\right\}\label{eq:r-rrma-ac-sa2_params}
	\end{eqnarray}
	for all $k=1,\ldots,l$ where $\bar{N} > 1$ is such that $\frac{CL_\psi^2\ln^4\bar{N}}{\mu_\psi^2\bar{N}^4} \le \frac{1}{32}$, $\beta \in (0,\nicefrac{1}{3})$ and $\e > 0$, then with probability at least $1 - 3\beta$
	\begin{equation}
		\|\nabla \psi(\bar{y}^{l,p(l)})\|_2 \le \frac{\e}{R_y}\label{eq:restarted-rrma-ac-sa2_grad_norm}
	\end{equation}
	and the total number of the oracle calls equals
	\begin{equation}
	    \sum\limits_{k=1}^{l}(\hat{r}_k + \bar{N}p_kr_k + p_k\bar{r}_k) = \widetilde{O}\left(\max\left\{\sqrt{\frac{L_\psi}{\mu_\psi}}, \frac{\sigma_\psi^2R_y^2}{\e^2}\right\}\right).\label{eq:restarted_number_of_oracle_calls}
	\end{equation}
\end{theorem}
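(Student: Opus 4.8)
The statement bundles a high-probability bound on $\|\nabla\psi(\bar y^{l,p(l)})\|_2$ with a bound on the total oracle budget, and the plan is to get both from a single per-restart contraction estimate that is then unrolled. Writing $\bar y^{k}$ as shorthand for the selected iterate $\bar y^{k,p(k)}$, the engine is Theorem~\ref{thm:rrma-ac-sa2_convergence}: one run of {\tt RRMA-AC-SA$^2$} for $\bar N$ iterations with batch size $r$ from a point $y$ returns $\hat y$ with $\EE[\|\nabla\psi(\hat y)\|_2^2\mid y,r]\le C\big(\tfrac{L_\psi^2\|y-y^*\|_2^2\ln^4\bar N}{\bar N^4}+\tfrac{\sigma_\psi^2\ln^6\bar N}{r\bar N}\big)$. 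Strong convexity of $\psi$ gives $\|\nabla\psi(y)\|_2\ge\mu_\psi\|y-y^*\|_2$, so the first term is $\le\tfrac{CL_\psi^2\ln^4\bar N}{\mu_\psi^2\bar N^4}\|\nabla\psi(y)\|_2^2$, and this is $\le\tfrac1{32}\|\nabla\psi(y)\|_2^2$ as soon as $\bar N$ is chosen as in Line~1 of Algorithm~\ref{Alg:Restarted-RRMA-AC-SA2}, i.e. $\bar N=\widetilde O(\sqrt{L_\psi/\mu_\psi})$.

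It remains to make the variance term a small fraction of $\|\nabla\psi(\bar y^{k-1})\|_2^2$ at the $k$-th restart; since the true gradient is not observable, $r_k$ is calibrated from the pilot estimate $g_k:=\nabla\Psi(\bar y^{k-1},\Bxi^{k-1,p(k-1)},\hat r_k)$ exactly as in the algorithm. Applying the light-tails bound \eqref{eq:super_exp_moment_batched_stoch_grad} to the $\hat r_k$-batch with $\hat r_k=\widetilde{\Theta}(\sigma_\psi^2R_y^2/\e^2)$ gives, off an event of probability $\le\beta/l$, that $\|g_k-\nabla\psi(\bar y^{k-1})\|_2^2\le\tfrac{\e^2}{2R_y^2}$; consequently either $\|\nabla\psi(\bar y^{k-1})\|_2^2\lesssim\e^2/R_y^2$ (in which case the contraction floor below already does the job) or $\|g_k\|_2^2=\Theta(\|\nabla\psi(\bar y^{k-1})\|_2^2)$ up to an $O(\e^2/R_y^2)$ term, which makes $r_k$ adequate for Theorem~\ref{thm:rrma-ac-sa2_convergence} and keeps $\bar N r_k=\widetilde O(\sigma_\psi^2R_y^2/\e^2)$. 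Plugging this $r_k$ into the theorem yields, for each of the $p_k$ independent trajectories started from $\bar y^{k-1}$, the conditional estimate $\EE[\|\nabla\psi(\bar y^{k,p})\|_2^2\mid\bar y^{k-1},g_k,r_k]\le\Delta_k:=\tfrac18\|\nabla\psi(\bar y^{k-1})\|_2^2+\tfrac1{32}\|g_k-\nabla\psi(\bar y^{k-1})\|_2^2$.

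Now the amplification argument goes through verbatim. By Markov each trajectory has $\|\nabla\psi(\bar y^{k,p})\|_2^2\le2\Delta_k$ with probability $\ge\tfrac12$, so with $p_k=\widetilde{\Theta}(\ln(l/\beta))$ independent copies $\min_p\|\nabla\psi(\bar y^{k,p})\|_2^2\le2\Delta_k$ fails with probability $\le2^{-p_k}\le\beta/l$; a second pilot batch of size $\bar r_k=\widetilde{\Theta}(\sigma_\psi^2R_y^2/\e^2)$ at the $p_k$ candidates then lets one pick $p(k)$ with $\|\nabla\psi(\bar y^{k,p(k)})\|_2^2\le2\min_p\|\nabla\psi(\bar y^{k,p})\|_2^2+\tfrac{\e^2}{8R_y^2}$ off another event of probability $\le\beta/l$. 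Chaining these events, substituting $\Delta_k$ and the $g_k$-approximation bound, produces the one-step recursion $\|\nabla\psi(\bar y^{k})\|_2^2\le\tfrac12\|\nabla\psi(\bar y^{k-1})\|_2^2+\tfrac{\e^2}{4R_y^2}$, valid off an event of probability $\le3\beta/l$. A union bound over $k=1,\dots,l$ loses $3\beta$; unrolling the geometric recursion and using $l=\log_2(2R_y^2\|\nabla\psi(y^0)\|_2^2/\e^2)$ gives $\|\nabla\psi(\bar y^{l})\|_2^2\le2^{-l}\|\nabla\psi(y^0)\|_2^2+\tfrac{\e^2}{4R_y^2}\sum_{j\ge0}2^{-j}\le\tfrac{\e^2}{R_y^2}$, which is \eqref{eq:restarted-rrma-ac-sa2_grad_norm}.

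For the complexity, $\sum_k\hat r_k$ and $\sum_k p_k\bar r_k$ are each $\widetilde O(\sigma_\psi^2R_y^2/\e^2)$ because $l$ and the $p_k$ are polylogarithmic, and $\sum_k\bar Np_kr_k\le l\cdot\widetilde O(1)\cdot\widetilde O(\sigma_\psi^2R_y^2/\e^2)$ by the bound on $\bar Nr_k$ noted above, so the stochastic part is $\widetilde O(\sigma_\psi^2R_y^2/\e^2)$; adding the $\widetilde O(\bar N)=\widetilde O(\sqrt{L_\psi/\mu_\psi})$ deterministic work gives \eqref{eq:restarted_number_of_oracle_calls}. The step I expect to be the real obstacle is the probabilistic bookkeeping: every inequality above is conditional — on $\bar y^{k-1}$, on the pilot batch $g_k$, and on $r_k$, which is itself a random variable — so one must fix the correct filtration at each restart before invoking Theorem~\ref{thm:rrma-ac-sa2_convergence}, Markov's inequality, and the sub-Gaussian tail bound, and one must choose all the $\e^2/R_y^2$ floors and the logarithmic factors in $\hat r_k,\bar r_k,p_k$ so that the per-restart failure probabilities sum to exactly $3\beta$ and the geometric recursion telescopes without leaving residual terms.
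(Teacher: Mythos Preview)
Your proposal is correct and follows essentially the same route as the paper's argument: you invoke Theorem~\ref{thm:rrma-ac-sa2_convergence}, convert the distance term to a gradient-norm term via $\mu_\psi$-strong convexity, calibrate $r_k$ through the pilot batch $g_k$, run the Markov-plus-amplification step over $p_k$ independent trajectories, select via the second pilot batch $\bar r_k$, derive the recursion $\|\nabla\psi(\bar y^{k})\|_2^2\le\tfrac12\|\nabla\psi(\bar y^{k-1})\|_2^2+\tfrac{\e^2}{4R_y^2}$, and telescope---this is exactly the scheme the paper sketches just before Algorithm~\ref{Alg:Restarted-RRMA-AC-SA2}. Your closing remark that the real work lies in tracking the filtration and conditioning (on $\bar y^{k-1}$, $g_k$, and the random $r_k$) so that the per-restart failure probabilities sum to $3\beta$ is precisely the technical content deferred to \cite{gorbunov2019optimal}.
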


\begin{corollary}[Corollary 5.7 from \cite{gorbunov2019optimal}]\label{cor:r-rrma-ac-sa2_norm}
	Under assumptions of Theorem~\ref{thm:restarted-rrma-ac-sa2_convergence} we get that with probability at least $1 - 3\beta$
	\begin{equation}
		\|\bar{y}^{l,p(l)} - y^*\|_2 \le \frac{\e}{\mu_\psi R_y},\label{eq:restarted_norm_difference}
	\end{equation}
	where $\beta\in(0,\nicefrac{1}{3})$ the total number of the oracle calls is defined in \eqref{eq:restarted_number_of_oracle_calls}.	
\end{corollary}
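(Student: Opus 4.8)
The plan is to derive the distance estimate \eqref{eq:restarted_norm_difference} as an immediate corollary of the gradient-norm guarantee \eqref{eq:restarted-rrma-ac-sa2_grad_norm} already proved in Theorem~\ref{thm:restarted-rrma-ac-sa2_convergence}, so that no new probabilistic or complexity analysis is needed. First I would fix the event of probability at least $1 - 3\beta$ on which Theorem~\ref{thm:restarted-rrma-ac-sa2_convergence} asserts $\norm{\nabla\psi(\bar{y}^{l,p(l)})}_2 \le \e / R_y$. All of the argument below takes place on this event; consequently the probability $1-3\beta$ and the total number of oracle calls \eqref{eq:restarted_number_of_oracle_calls} carry over verbatim, and only the deterministic implication ``small gradient norm $\Rightarrow$ small distance to the minimizer'' remains to be established.

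The single substantive step is the standard strong-convexity inequality: if $\psi$ is $\mu_\psi$-strongly convex on a convex set containing a point $y$ and a minimizer $y^*$, then, using $\nabla\psi(y^*) = 0$ and Cauchy--Schwarz,
\begin{align*}
    \mu_\psi\norm{y - y^*}_2^2 \le \angles{\nabla\psi(y) - \nabla\psi(y^*),\, y - y^*} = \angles{\nabla\psi(y),\, y - y^*} \le \norm{\nabla\psi(y)}_2 \norm{y - y^*}_2,
\end{align*}
whence $\norm{y - y^*}_2 \le \mu_\psi^{-1}\norm{\nabla\psi(y)}_2$. Applying this with $y = \bar{y}^{l,p(l)}$ and plugging in the bound from the previous theorem gives $\norm{\bar{y}^{l,p(l)} - y^*}_2 \le \mu_\psi^{-1}\cdot \e / R_y$, which is precisely \eqref{eq:restarted_norm_difference}.

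The one point that needs care — and the only genuine obstacle, albeit a minor one — is that $\psi$ is not $\mu_\psi$-strongly convex on all of $\R^n$ but only on the affine subspace $y^0 + (\mathrm{Ker}\,A^\top)^\perp$, so the chain of inequalities above is valid only if both $\bar{y}^{l,p(l)}$ and the relevant $y^*$ lie in this subspace. The first inclusion is exactly Corollary~\ref{cor:rrma-ac-sa2_points} (which rests on Theorem~\ref{thm:ac-sa_points}): every iterate and output of {\tt RRMA-AC-SA$^2$}, and hence of {\tt Restarted-RRMA-AC-SA$^2$}, remains in $y^0 + (\mathrm{Ker}\,A^\top)^\perp$. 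For the second inclusion one takes $y^*$ to be the minimizer of $\psi$ restricted to that affine subspace, which is unique by strong convexity of $\psi$ there, is simultaneously a global minimizer of $\psi$ since $\psi$ is constant along $\mathrm{Ker}\,A^\top$, and satisfies $\nabla\psi(y^*) = 0$; under the normalization used in the theorem this coincides with the designated dual solution $y^*$. With these two containment facts recorded, the strong-convexity bound applies and the proof is complete.
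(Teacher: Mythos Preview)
Your proposal is correct and matches the paper's approach: the paper itself (in the discussion surrounding \eqref{eq:rrma-ac-sa2_guarantee_simplified}) explicitly invokes the inequality $\|\nabla\psi(y)\|_2 \ge \mu_\psi\|y - y^*\|_2$ from $\mu_\psi$-strong convexity, which combined with the gradient bound \eqref{eq:restarted-rrma-ac-sa2_grad_norm} of Theorem~\ref{thm:restarted-rrma-ac-sa2_convergence} immediately yields \eqref{eq:restarted_norm_difference}. Your careful treatment of the subspace issue via Corollary~\ref{cor:rrma-ac-sa2_points} is precisely the justification the paper has in mind when it remarks that one may ``w.l.o.g.\ assume that $\psi$ is $\mu_\psi$-strongly convex everywhere.''
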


Now we are ready to present convergence guarantees for the primal function and variables.
\begin{corollary}[Corollary 5.8 from \cite{gorbunov2019optimal}]\label{cor:r-rrma-ac-sa2_connect_with_primal}
	Let the assumptions of Theorem~\ref{thm:restarted-rrma-ac-sa2_convergence} hold. Assume that $f$ is $L_f$-Lipschitz continuous on $B_{R_f}(0)$ where $$R_f = \left(\frac{\mu_\psi}{8\sqrt{\lambda_{\max}(A^\top A)}} + \frac{\sqrt{\lambda_{\max}(A^\top A)}}{\mu} + \frac{R_x}{R_y}\right)R_y$$ and $R_x = \|x(A^\top y^*)\|_2$. Then, with probability at least $1 - 4\beta$
	\begin{equation}
		f(x^l) - f(x^*) \le \left(2 + \frac{L_f}{8R_y\sqrt{\lambda_{\max}(A^\top A)}}\right)\e,\quad \|Ax^l\|_2 \le \frac{9\e}{8R_y},\label{eq:restarted_primal_guarantees}
	\end{equation}
	where $\beta\in(0,\nicefrac{1}{4})$, $\e\in(0,\mu_\psi R_y^2)$ $x^l \eqdef x(A^\top\bar{y}^{l,p(l)},\Bxi^{l,p(l)}, \bar{r}_l)$ and to achieve it we need the following number of oracle calls:
	\begin{equation}
	    \sum\limits_{k=1}^{l}(\hat{r}_k + \bar{N}p_kr_k + p_k\bar{r}_k) = \widetilde{O}\left(\max\left\{\sqrt{\frac{L}{\mu}\chi(A^\top A)}, \frac{\sigma_x^2M^2}{\e^2}\chi(A^\top A)\right\}\right)\label{eq:restarted_number_of_oracle_calls_connextion_with_primal}
	\end{equation}
	where $M = \|\nabla f(x^*)\|_2$.
\end{corollary}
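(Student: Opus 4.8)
The plan is to chain three facts already available — the dual gradient-norm guarantee of Theorem~\ref{thm:restarted-rrma-ac-sa2_convergence}, the dual iterate bound of Corollary~\ref{cor:r-rrma-ac-sa2_norm}, and the primal--dual bridge of Theorem~\ref{thm:grad_norm_testarts_motivation} — and then to absorb the extra error coming from $x^l$ being a \emph{batched stochastic} estimate of $x(A^\top\bar y^{l,p(l)})$ rather than the exact dual gradient $\nabla\varphi(A^\top\bar y^{l,p(l)})$.

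First I would invoke Theorem~\ref{thm:restarted-rrma-ac-sa2_convergence}: with probability at least $1-3\beta$ one has $\|\nabla\psi(\bar y^{l,p(l)})\|_2\le\e/R_y$, and on the same event Corollary~\ref{cor:r-rrma-ac-sa2_norm} gives $\|\bar y^{l,p(l)}-y^*\|_2\le\e/(\mu_\psi R_y)$. Since $\|y^*\|_2=R_y$ and $\e<\mu_\psi R_y^2$, this forces $\|\bar y^{l,p(l)}\|_2\le R_y+\e/(\mu_\psi R_y)\le 2R_y$, so the hypotheses of Theorem~\ref{thm:grad_norm_testarts_motivation} hold with $y^N=\bar y^{l,p(l)}$; writing $\hat x:=x(A^\top\bar y^{l,p(l)})$ we obtain, on this event, $f(\hat x)-f(x^*)\le 2\e$ and $\|A\hat x\|_2\le\e/R_y$.

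The main step is to control $\|x^l-\hat x\|_2$. Because $\delta=0$, the estimator $x^l=x(A^\top\bar y^{l,p(l)},\Bxi^{l,p(l)},\bar r_l)$ is unbiased for $\hat x=\nabla\varphi(A^\top\bar y^{l,p(l)})$ via \eqref{eq:gradient_dual_function}; the light-tails property \eqref{eq:light_tails_x} together with a standard Nemirovski-type concentration of a vector average (the $x$-level analogue of \eqref{eq:super_exp_moment_batched_stoch_grad}) gives, conditionally on $\bar y^{l,p(l)}$ and with probability at least $1-\beta$, a bound of order $\sigma_x\sqrt{1+\ln(1/\beta)}/\sqrt{\bar r_l}$, and the explicit choice $\bar r_l=\Theta\big(\sigma_\psi^2(1+\sqrt{\ln(lp_l/\beta)})^2R_y^2/\e^2\big)$ with $\sigma_\psi=\sqrt{\lambda_{\max}(A^\top A)}\,\sigma_x$ is calibrated so that $\|x^l-\hat x\|_2\le\e/\big(8\sqrt{\lambda_{\max}(A^\top A)}\,R_y\big)$, hence $\|A(x^l-\hat x)\|_2\le\e/(8R_y)$. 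Then I check $x^l\in B_{R_f}(0)$ through the split $\|x^l\|_2\le\|x(A^\top y^*)\|_2+\|\hat x-x(A^\top y^*)\|_2+\|x^l-\hat x\|_2$: the first term is $R_x$; the second is $\le\tfrac{1}{\mu}\|A^\top(\bar y^{l,p(l)}-y^*)\|_2\le\sqrt{\lambda_{\max}(A^\top A)}\,R_y/\mu$ by $L_\varphi=1/\mu$-smoothness of $\varphi$ and $\|\bar y^{l,p(l)}-y^*\|_2\le R_y$; the third is $\le\mu_\psi R_y/\big(8\sqrt{\lambda_{\max}(A^\top A)}\big)$ since $\e<\mu_\psi R_y^2$, and the three terms sum to exactly $R_f$. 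Since both $x^l$ and $\hat x$ then lie where $f$ is $L_f$-Lipschitz, on the intersection of the two events (failure probability $\le 3\beta+\beta=4\beta$) I obtain
\[ f(x^l)-f(x^*)\le |f(x^l)-f(\hat x)|+\big(f(\hat x)-f(x^*)\big)\le L_f\|x^l-\hat x\|_2+2\e\le\Big(2+\tfrac{L_f}{8R_y\sqrt{\lambda_{\max}(A^\top A)}}\Big)\e, \]
and $\|Ax^l\|_2\le\|A\hat x\|_2+\|A(x^l-\hat x)\|_2\le\e/R_y+\e/(8R_y)=9\e/(8R_y)$, which is \eqref{eq:restarted_primal_guarantees}.

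For the oracle count I would start from $\sum_{k=1}^l(\hat r_k+\bar N p_k r_k+p_k\bar r_k)=\widetilde O\big(\max\{\sqrt{L_\psi/\mu_\psi},\,\sigma_\psi^2R_y^2/\e^2\}\big)$, already supplied by Theorem~\ref{thm:restarted-rrma-ac-sa2_convergence} (the one extra batch $\bar r_l$ used to form $x^l$ is of the same order), and pass to primal quantities: $L_\psi/\mu_\psi=(L/\mu)\chi(A^\top A)$ by $L_\psi=\lambda_{\max}(A^\top A)/\mu$ and $\mu_\psi=\lambda_{\min}^+(A^\top A)/L$, while the optimality relation $A^\top y^*=\nabla f(x^*)$ with $y^*\in(\text{Ker}(A^\top))^\perp$ yields $M=\|\nabla f(x^*)\|_2\ge\sqrt{\lambda_{\min}^+(A^\top A)}\,R_y$, so $\sigma_\psi^2R_y^2/\e^2\le(\sigma_x^2M^2/\e^2)\chi(A^\top A)$; this gives \eqref{eq:restarted_number_of_oracle_calls_connextion_with_primal}. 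I expect the only genuine obstacle to be the calibration in the main step — choosing $\bar r_l$ and tracking absolute constants so that the stochastic deviation of $x^l$ simultaneously keeps $x^l$ inside $B_{R_f}(0)$ with the precise three-term form of $R_f$ and reproduces exactly the constants $2+L_f/(8R_y\sqrt{\lambda_{\max}(A^\top A)})$ and $9/8$; everything else is a routine assembly of results established earlier.
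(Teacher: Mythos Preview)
Your proposal is correct and follows exactly the approach one would expect: chain Theorem~\ref{thm:restarted-rrma-ac-sa2_convergence} and Corollary~\ref{cor:r-rrma-ac-sa2_norm} to place $\bar y^{l,p(l)}$ in position for Theorem~\ref{thm:grad_norm_testarts_motivation}, then control the stochastic deviation $\|x^l-\hat x\|_2$ via the light-tails bound \eqref{eq:light_tails_x} and the batch size $\bar r_l$ from \eqref{eq:r-rrma-ac-sa2_params}, and finally translate the oracle count using $L_\psi/\mu_\psi=(L/\mu)\chi(A^\top A)$ and $R_y^2\lambda_{\min}^+(A^\top A)\le M^2$. The present paper does not include its own proof (it defers to \cite{gorbunov2019optimal}), but your argument matches the structure of the surrounding results and the way the constants in $R_f$ and in \eqref{eq:restarted_primal_guarantees} are set up; the three-term decomposition of $\|x^l\|_2$ reproduces $R_f$ exactly, and the $9/8$ and $2+L_f/(8R_y\sqrt{\lambda_{\max}(A^\top A)})$ constants fall out precisely from $\|x^l-\hat x\|_2\le\e/(8R_y\sqrt{\lambda_{\max}(A^\top A)})$.
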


\subsubsection{Direct Acceleration for Strongly Convex Dual Function}\label{sec:str_cvx_dual}
First of all, we consider the following minimization problem:
\begin{equation}
    \min_{y\in\R^n}\psi(y),\label{eq:main_problem_str_cvx}
\end{equation}
where $\psi(y)$ is $\mu_\psi$-strongly convex and $L_\psi$-smooth. We use the same notation to define the objective in \eqref{eq:main_problem_str_cvx} as for the dual function from \eqref{DP} because later in the section we apply the algorithm introduced below to the \eqref{DP}, but for now it is not important that $\psi$ is a dual function for \eqref{PP} and we prefer to consider more general situation. As in Section~\ref{sec:conv_dual}, we do not assume that we have an access to the exact gradient of $\psi(y)$ and consider instead of it biased stochastic gradient $\tnabla\psi(y,\xi)$ satisfying inequalities \eqref{eq:bias_stoch_grad} and \eqref{eq:super_exp_moment_stoch_grad} with $\delta \ge 0$ and $\sigma_\psi \ge 0$. In the main method of this section batched version of the stochastic gradient is used:
\begin{equation}
    \tnabla\Psi(y,\Bxi^{k}) = \frac{1}{r_k}\sum\limits_{l=1}^{r_k}\tnabla \psi(y,\xi^{l}),\label{eq:batched_stoch_grad_str_cvx}
\end{equation}
where $r_k$ is the batch-size that we leave unspecified for now. Note that $\tnabla\Psi(y,\Bxi^{k})$ satisfies inequalities \eqref{eq:bias_batched_stoch_grad} and \eqref{eq:super_exp_moment_batched_stoch_grad}.

We use Stochastic Similar Triangles Method which is stated in this section as Algorithm~\ref{Alg:STM_str_cvx} to solve problem \eqref{eq:main_problem_str_cvx}. To define the iterate $z^{k+1}$ we use the following sequence of functions:
\begin{eqnarray}
    \tilde{g}_{0}(z) &\eqdef& \frac{1}{2}\|z-z^0\|_2^2 + \alpha_0\left(\psi(y^0) + \la\tnabla \Psi(y^0,\Bxi^0), z - y^0\ra + \frac{\mu_\psi}{2}\|z - y^{0}\|_2^2\right),\notag\\
    \tilde{g}_{k+1}(z) &\eqdef& \tilde{g}_{k}(z) + \alpha_{k+1}\Big(\psi(\tilde{y}^{k+1}) + \la\tnabla \Psi(\tilde{y}^{k+1},\Bxi^{k+1}),z - \tilde{y}^{k+1}\ra+ \frac{\mu_\psi}{2}\|z - \tilde{y}^{k+1}\|_2^2\Big)\notag\\
    &=& \frac{1}{2}\|z - z^0\|_2^2 + \sum\limits_{l=0}^{k+1}\alpha_{l}\left(\psi(\tilde{y}^{l}) + \la\tnabla \Psi(\tilde{y}^{l},\Bxi^{l}),z - \tilde{y}^{l}\ra + \frac{\mu_\psi}{2}\|z - \tilde{y}^{l}\|_2^2\right)\label{eq:g_k+1_sequence_str_cvx}
\end{eqnarray}
We notice that $\tg_{k}(z)$ is $(1+A_k\mu_\psi)$-strongly convex.

\begin{algorithm}[h]
\caption{Stochastic Similar Triangles Methods for strongly convex problems ({\tt SSTM{\_}sc})}
\label{Alg:STM_str_cvx}   
 \begin{algorithmic}[1]
\Require $\tilde{y}^0 = z^0 = y^0$~--- starting point, $N$~--- number of iterations
\State Set $\alpha_0 = A_0 = \nicefrac{1}{L_\psi}$
\State Get $\tnabla\Psi(y^0,\Bxi^0)$ to define $\tg_0(z)$
\For{$k=0,1,\ldots, N-1$}
\State Choose $\alpha_{k+1}$ such that $A_{k+1} = A_k + \alpha_{k+1}$, $A_{k+1}(1+A_k\mu_\psi) = \alpha_{k+1}^2L_\psi$
\State $\tilde{y}^{k+1} = \nicefrac{(A_ky^k+\alpha_{k+1}z^k)}{A_{k+1}}$
\State $z^{k+1} = \argmin_{z\in \R^n} \tilde{g}_{k+1}(z)$, where $\tilde{g}_{k+1}(z)$ is defined in \eqref{eq:g_k+1_sequence_str_cvx}
\State $y^{k+1} = \nicefrac{(A_ky^k+\alpha_{k+1}z^{k+1})}{A_{k+1}}$
\EndFor
\Ensure    $ x^N$ 
\end{algorithmic}
 \end{algorithm}

For this algorithm we have the following convergence result.
\begin{theorem}[Theorem 5.11 from \cite{gorbunov2019optimal}]\label{thm:str_cvx_biased_main_result}
    Assume that the function $\psi$ is $\mu_\psi$-strongly convex and $L_\psi$-smooth, 
    $$
    r_k = \Theta\left(\max\left\{1, \left(\frac{\mu_\psi}{L_\psi}\right)^{\nicefrac{3}{2}}\frac{N^2\sigma_\psi^2\ln\frac{N}{\beta}}{\e}\right\}\right),
    $$ 
    i.e. $r_k \ge \frac{1}{C}\max\left\{1,\left(\frac{\mu_\psi}{L_\psi}\right)^{\nicefrac{3}{2}}\frac{N^2\sigma_\psi^2\left(1+\sqrt{3\ln\frac{N}{\beta}}\right)^2}{\e}\right\}$ with positive constants $C > 0$, $\e>0$ and $N \ge 1$. If additionally $\delta \le \frac{GR_0}{N\sqrt{A_N}}$ and $\e \le \frac{HR_0^2}{A_N}$ where $R_0 = \|y^* - y^0\|_2$ and Algorithm~\ref{Alg:STM_str_cvx} is run for $N$ iterations, then with probability at least $1-3\beta$
    \begin{equation}
    	\|y^N - y^*\|_2^2 \le \frac{\hat{J}^2R_0^2}{A_N},
    \end{equation}
    where $\beta\in(0,\nicefrac{1}{3})$, $$\hat{g}(N) = \frac{\ln\left(\frac{N}{\beta}\right) + \ln\ln\left(\frac{\hat{B}}{b}\right)}{\left(1+\sqrt{3\ln\left(\frac{N}{\beta}\right)}\right)^2},\quad b = \frac{2\sigma_1^2\alpha_{1}^2R_0^2}{r_1},\quad D = 1+\frac{\mu_\psi}{L_\psi} + \sqrt{1+\frac{\mu_\psi}{L_\psi}},$$
	\begin{eqnarray*}
	    \hat{B} &=& 8H C\left(\frac{L_\psi}{\mu_\psi}\right)^{\nicefrac{3}{2}}DR_0^4\left(N\left(\frac{3}{2}\right)^N + 1\right)\left(\hat{A} + 2Dh^2G^2+ 2C\left(\frac{L_\psi}{\mu_\psi}\right)^{\nicefrac{3}{2}}\left(c+2Du^2\right)H\right),
	\end{eqnarray*}
	$$h = u = \frac{2}{\mu_\psi},\quad c = \frac{2}{\mu_\psi^2},$$
	$$
	\hat{A} = \frac{1}{\mu_\psi} + \frac{2G }{L_\psi\mu_\psi N\sqrt{A_N}} + \frac{2G^2}{\mu_\psi^2 N^2} + \left(\frac{L_\psi}{\mu_\psi}\right)^{\nicefrac{3}{4}}\frac{2\sqrt{2CH}}{L_\psi\mu_\psi N\sqrt{A_N}} + \left(\frac{L_\psi}{\mu_\psi}\right)^{\nicefrac{3}{2}}\frac{4CH}{L_\psi\mu_\psi^2N^2A_N},
	$$
$$\hat{J} = \max\left\{\sqrt{\frac{1}{L_\psi}}, \frac{3\hat{B}_1D + \sqrt{9\hat{B}_1^2D^2 + 4\hat{A}+8cHC\left(\frac{L_\psi}{\mu_\psi}\right)^{\nicefrac{3}{2}}}}{2}\right\},\quad \hat{B}_1 = hG + uC_1\sqrt{2HC\left(\frac{L_\psi}{\mu_\psi}\right)^{\nicefrac{3}{2}}\hat{g}(N)}$$
and $C_1$ is some positive constant.
In other words, to achieve $\|y^N - y^*\|_2^2 \le \e$ with probability at least $1-3\beta$ Algorithm~\ref{Alg:STM_str_cvx} needs $N = \widetilde{O}\left(\sqrt{\frac{L_\psi}{\mu_\psi}}\right)$ iterations and $\widetilde{O}\left(\max\left\{\sqrt{\frac{L_\psi}{\mu_\psi}},\frac{\sigma_\psi^2}{\e}\right\}\right)$ oracle calls where $\widetilde{O}(\cdot)$ hides polylogarithmic factors depending on $L_\psi, \mu_\psi, R_0, \e$ and $\beta$.
\end{theorem}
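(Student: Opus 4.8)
The plan is to analyze {\tt SSTM\_sc} by the estimate-sequence (``similar triangles'') technique, treating $\tnabla\Psi(\tilde y^{k+1},\Bxi^{k+1})$ as an inexact stochastic oracle for $\nabla\psi$ and carefully tracking the accumulated error. First I would record the identities forced by the step-size rule $A_{k+1}=A_k+\alpha_{k+1}$, $A_{k+1}(1+A_k\mu_\psi)=\alpha_{k+1}^2 L_\psi$, the key one being $\tfrac{L_\psi}{2}\tfrac{\alpha_{k+1}^2}{A_{k+1}^2}=\tfrac{1+A_k\mu_\psi}{2A_{k+1}}$, so the $L_\psi$-smoothness term generated by one $y$-step is matched by the strong-convexity modulus $1+A_k\mu_\psi$ of $\tg_k$. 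The target is the inductive estimate
\[
A_k\big(\psi(y^k)-\psi(y^*)\big)+\tfrac{1+A_k\mu_\psi}{2}\norm{z^k-y^*}_2^2\;\le\;\tfrac12 R_0^2+\Xi_k ,
\]
with $\Xi_k$ gathering the oracle errors. For the inductive step I would (i) bound $A_{k+1}\psi(y^{k+1})$ via $L_\psi$-smoothness of $\psi$ at $\tilde y^{k+1}$ and $y^{k+1}-\tilde y^{k+1}=\tfrac{\alpha_{k+1}}{A_{k+1}}(z^{k+1}-z^k)$; (ii) use $(1+A_k\mu_\psi)$-strong convexity of $\tg_k$ and optimality of $z^k$ to get $\tg_k(z^{k+1})\ge \tg_k^*+\tfrac{1+A_k\mu_\psi}{2}\norm{z^{k+1}-z^k}_2^2$; (iii) plug in the definition of $\tg_{k+1}$ and the $\mu_\psi$-strong-convexity lower bound $\psi(\tilde y^{k+1})+\la\nabla\psi(\tilde y^{k+1}),z-\tilde y^{k+1}\ra+\tfrac{\mu_\psi}{2}\norm{z-\tilde y^{k+1}}_2^2\le\psi(z)$ at $z=y^*$. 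Replacing $\nabla\psi$ by $\tnabla\Psi$ everywhere reproduces the same inequality with the extra terms $\Xi_{k+1}-\Xi_k=\alpha_{k+1}\la\tnabla\Psi(\tilde y^{k+1},\Bxi^{k+1})-\nabla\psi(\tilde y^{k+1}),\,y^*-z^{k+1}\ra$ plus a $\Theta(\alpha_{k+1}^2/A_{k+1})\norm{\tnabla\Psi(\tilde y^{k+1},\Bxi^{k+1})-\nabla\psi(\tilde y^{k+1})}_2^2$ remainder (Young's inequality on the cross term in (i)).

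The core technical work is controlling $\Xi_N$ in high probability. I would split $y^*-z^{k+1}=(y^*-z^k)+(z^k-z^{k+1})$; the $z^k$-part is $\mathcal F_k$-measurable, so $\sum_k\alpha_{k+1}\la\tnabla\Psi(\tilde y^{k+1},\Bxi^{k+1})-\nabla\psi(\tilde y^{k+1}),y^*-z^k\ra$ decomposes into a martingale part driven by the centered component of the stochastic gradient and a bias part $\le\delta\sum_k\alpha_{k+1}\norm{y^*-z^k}_2$, the latter negligible under $\delta\le GR_0/(N\sqrt{A_N})$. For the martingale part I would invoke a Bernstein-type bound for sums of sub-exponential martingale differences, using the ``light-tails'' inequality \eqref{eq:super_exp_moment_batched_stoch_grad} (in which the batch $r_{k+1}$ shrinks the sub-exponential scale): with probability $\ge 1-\beta$ it is at most $c_1\sqrt{\ln(N/\beta)\sum_k\alpha_{k+1}^2\tfrac{\sigma_\psi^2}{r_{k+1}}\norm{y^*-z^k}_2^2}+c_2\ln(N/\beta)\max_k\alpha_{k+1}\tfrac{\sigma_\psi}{\sqrt{r_{k+1}}}\norm{y^*-z^k}_2$, while the $z^k-z^{k+1}$-part is absorbed by Young's inequality into the quadratic $\norm{z^{k+1}-z^k}_2^2$ already produced in step (ii). Since $\alpha_k$ and $A_k$ grow geometrically, both sums are dominated by their last terms, which is exactly why the batch size $r_k=\Theta(\max\{1,(\mu_\psi/L_\psi)^{3/2}N^2\sigma_\psi^2\ln(N/\beta)/\eps\})$ is the right choice: summed over $N=\widetilde O(\sqrt{L_\psi/\mu_\psi})$ steps the factor $N^3(\mu_\psi/L_\psi)^{3/2}$ cancels and the whole noise contribution becomes $O(\eps)$ relative to $A_N$.

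The remaining — and most delicate — point, and the source of the elaborate constants $\hat A,\hat B,\hat g(N),\hat J$, is that the concentration bound above features $\norm{y^*-z^k}_2$, precisely the quantity being estimated, so there is a genuine circularity. I would break it by induction on $k$ (equivalently a stopping-time argument): assuming $\norm{z^j-y^*}_2^2\le \hat J^2 R_0^2/A_j$ for $j\le k$, substitute these into the concentration estimate, impose self-consistency at step $k+1$, and take $\hat J$ to be the larger root of the resulting quadratic (the origin of the displayed formula for $\hat J$); a union bound over the $O(N)$ steps collects all failures into the final probability $3\beta$. Solving the step-size recursion shows $A_N$ grows geometrically with ratio $1+\Theta(\sqrt{\mu_\psi/L_\psi})$, so $\norm{y^N-y^*}_2^2\le \hat J^2 R_0^2/A_N\le\eps$ once $N=\widetilde O(\sqrt{L_\psi/\mu_\psi})$, and summing the $r_k$ over these $N$ iterations yields the stated $\widetilde O(\max\{\sqrt{L_\psi/\mu_\psi},\sigma_\psi^2/\eps\})$ oracle calls. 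The main obstacle is precisely this last step — making the a-priori-unknown iterate bound and the martingale Bernstein inequality mutually consistent while keeping every constant explicit and the total failure probability at $3\beta$; the similar-triangles algebra of the first two paragraphs and the sub-exponential martingale estimate are lengthy but essentially routine.
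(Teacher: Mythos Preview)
The paper does not actually prove this theorem: it is stated verbatim as ``Theorem~5.11 from \cite{gorbunov2019optimal}'', and the section opens by saying ``The formal proofs of the presented results are given in \cite{gorbunov2019optimal}.'' There is therefore no in-paper proof to compare your proposal against.

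That said, your outline is the standard route for this class of results and matches what the cited preprint does. The estimate-sequence inequality you write down (with modulus $1+A_k\mu_\psi$ on the $z$-term), the decomposition of the oracle error into a bias part controlled by $\delta$, a sub-exponential martingale part handled by a Bernstein-type inequality under the light-tails assumption, and a quadratic remainder absorbed via Young's inequality, are exactly the right ingredients. You also correctly isolate the genuinely nontrivial step: the concentration bound involves $\norm{z^k-y^*}_2$, which is the quantity being controlled, and this circularity is broken by an inductive (stopping-time) argument that leads to a quadratic in the a-priori radius --- this is precisely the origin of the displayed formula for $\hat J$ and of the auxiliary constants $\hat A,\hat B,\hat g(N)$ in the statement. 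One small point worth making explicit in your write-up: the $3\beta$ in the probability comes from union-bounding over three separate high-probability events (the martingale concentration, the control of the quadratic noise sum, and the bias/last-step term), not from a union over $N$ steps; the dependence on $N$ is already inside the $\ln(N/\beta)$ factor in the Bernstein bound.
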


Next, we apply the {\tt SSTM{\_}sc} to the problem \eqref{DP} when the objective of the primal problem \eqref{PP} is $L$-smooth, $\mu$-strongly convex and $L_f$-Lipschitz continuous on some ball which will be specified next, i.e., we consider the same setup as in Section~\ref{sec:conv_dual} but we additionally assume that the primal functional $f$ has $L$-Lipschitz continuous gradient. As in Section~\ref{sec:conv_dual} we also consider the case when the gradient of the dual functional is known only through biased stochastic estimators, see \eqref{eq:dual_stoch_func}--\eqref{eq:super_exp_moment_batched_stoch_grad} and the paragraphs containing these formulas.

In Section~\ref{sec:conv_dual} and \ref{sec:restarts} we mentioned that in the considered case dual function $\psi$ is $L_\psi$-smooth on $\R^n$ and $\mu_\psi$-strongly convex on $y^0 + (\text{Ker} A^\top)^{\perp}$ where $L_\psi = \nicefrac{\lambda_{\max}(A^\top A)}{\mu}$ and $\mu_\psi = \nicefrac{\lambda_{\min}^+(A^\top A)}{L}$. Using the same technique as in the proof of Theorem~\ref{thm:ac-sa_points} we show next that w.l.o.g.\ one can assume that $\psi$ is $\mu_\psi$-strongly convex on $\R^n$ since $\tnabla\Psi(y,\Bxi^k)$ lies in $\text{Im}A = (\text{Ker} A^\top)^{\perp}$ by definition of $\tnabla\Psi(y,\Bxi^k)$. For this purposes we need the explicit formula for $z^{k+1}$ which follows from the equation $\nabla \tg_{k+1}(z^{k+1}) = 0$:
\begin{equation}
    z^{k+1} = \frac{z^0}{1+A_{k+1}\mu_\psi} + \sum\limits_{l=0}^{k+1}\frac{\alpha_l\mu_\psi}{1+A_{k+1}\mu_\psi}\ty^l - \frac{1}{1+A_{k+1}\mu_\psi}\sum\limits_{l=0}^{k+1}\alpha_l\tnabla\Psi(\ty^l,\Bxi^l).\label{eq:z^k+1_str_cvx_explicit}
\end{equation}
\begin{theorem}[Theorem 5.12 from \cite{gorbunov2019optimal}]\label{thm:sstm_str_cvx_points}
    For all $k\ge 0$ we have that the iterates of Algorithm~\ref{Alg:STM_str_cvx} $\ty^k, z^k, y^k$ lie in $y^0 + \left(\text{Ker}(A^\top)\right)^\perp$.
\end{theorem}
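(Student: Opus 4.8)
The plan is to prove the claim by a straightforward induction on $k$, resting on two elementary observations that were already exploited in the proof of Theorem~\ref{thm:ac-sa_points}. First, $(\text{Ker} A^\top)^\perp = \text{Im}(A)$, so every stochastic gradient consumed by the method, namely $\tnabla\Psi(\ty^l,\Bxi^l) = A\tx(\ty^l,\Bxi^l)$, lies in $(\text{Ker} A^\top)^\perp$; consequently so does any linear combination of such vectors. Second, $y^0 + (\text{Ker} A^\top)^\perp$ is an affine subspace, hence it is closed under affine combinations (combinations whose coefficients sum to one) of its points, and it is closed under adding any element of the linear subspace $(\text{Ker} A^\top)^\perp$. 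The whole argument is just an organized application of these two facts to the update rules of Algorithm~\ref{Alg:STM_str_cvx}.

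For the base case, $\ty^0 = z^0 = y^0$ and $0 \in (\text{Ker} A^\top)^\perp$, so all three iterates lie in $y^0 + (\text{Ker} A^\top)^\perp$ when $k = 0$. For the inductive step, I would assume $\ty^l, z^l, y^l \in y^0 + (\text{Ker} A^\top)^\perp$ for all $l \le k$ and treat the three new iterates in turn. Since $A_{k+1} = A_k + \alpha_{k+1}$, the point $\ty^{k+1} = (A_k y^k + \alpha_{k+1} z^k)/A_{k+1}$ is an affine combination of $y^k$ and $z^k$, hence lies in $y^0 + (\text{Ker} A^\top)^\perp$. For $z^{k+1}$ I would use the closed form \eqref{eq:z^k+1_str_cvx_explicit}: the coefficients of $z^0$ and of $\ty^0,\ldots,\ty^{k+1}$ there sum to $\bigl(1 + \mu_\psi\sum_{l=0}^{k+1}\alpha_l\bigr)/(1 + A_{k+1}\mu_\psi) = 1$ because $\sum_{l=0}^{k+1}\alpha_l = A_{k+1}$, so those terms form an affine combination of points already shown to lie in $y^0 + (\text{Ker} A^\top)^\perp$, while the remaining term $-\frac{1}{1+A_{k+1}\mu_\psi}\sum_{l=0}^{k+1}\alpha_l\tnabla\Psi(\ty^l,\Bxi^l)$ is a linear combination of elements of $(\text{Ker} A^\top)^\perp$; adding the two pieces keeps $z^{k+1}$ in $y^0 + (\text{Ker} A^\top)^\perp$. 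Finally $y^{k+1} = (A_k y^k + \alpha_{k+1} z^{k+1})/A_{k+1}$ is again an affine combination of $y^k$ and $z^{k+1}$, which closes the induction.

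There is no genuine difficulty here; the only step that needs care is the coefficient bookkeeping in \eqref{eq:z^k+1_str_cvx_explicit}, and — if one wants the argument self-contained — re-deriving that formula by writing $\nabla\tg_{k+1}(z^{k+1}) = 0$ and using that $\tg_{k+1}$ is $(1 + A_{k+1}\mu_\psi)$-strongly convex, so its minimizer is unique and has the stated closed form. It is worth stressing why the lemma is needed: $L$-smoothness of $f$ only yields $\mu_\psi$-strong convexity of $\psi$ on $y^0 + (\text{Ker} A^\top)^\perp$, and Theorem~\ref{thm:sstm_str_cvx_points} shows the entire trajectory of {\tt SSTM{\_}sc} stays inside that set, which is precisely what licenses treating $\psi$ as globally $\mu_\psi$-strongly convex when Theorem~\ref{thm:str_cvx_biased_main_result} is applied to the dual problem \eqref{DP}.
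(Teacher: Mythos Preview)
Your proof is correct and follows essentially the same route as the paper: the paper derives the closed form \eqref{eq:z^k+1_str_cvx_explicit} precisely so that the argument of Theorem~\ref{thm:ac-sa_points} can be replayed, and it explicitly notes that $\tnabla\Psi(y,\Bxi^k)\in\text{Im}\,A=(\text{Ker}\,A^\top)^\perp$, after which the induction via affine combinations and the coefficient check $\sum_{l=0}^{k+1}\alpha_l=A_{k+1}$ proceed exactly as you outline.
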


This theorem makes it possible to apply the result from Theorem~\ref{thm:str_cvx_biased_main_result} for {\tt SSTM{\_}sc} which is run on the problem \eqref{DP}.
\begin{corollary}[Corollary 5.13 from \cite{gorbunov2019optimal}]\label{cor:radius_grad_norm_guarantee_str_cvx}
Under assumptions of Theorem~\ref{thm:str_cvx_biased_main_result} we get that after $N = \widetilde{O}\left(\sqrt{\frac{L_\psi}{\mu_\psi}}\ln\frac{1}{\e}\right)$ iterations of Algorithm~\ref{Alg:STM_str_cvx} which is run on the problem \eqref{DP} with probability at least $1-3\beta$
    \begin{equation}
        \|\nabla \psi(y^N)\|_2 \le \frac{\e}{R_y}, \label{eq:grad_norm_str_cvx}
    \end{equation}
    where $\beta \in\left(0,\nicefrac{1}{3}\right)$ and the total number of oracles calls equals
    \begin{equation}
        \widetilde{O}\left(\max\left\{\sqrt{\frac{L_\psi}{\mu_\psi}},\frac{\sigma_\psi^2R_y^2}{\e^2}\right\}\right). \label{eq:radius_grad_norm_oracle_calls_str_cvx}
    \end{equation}
    If additionally $\e \le \mu_\psi R_y^2$, then with probability at least $1-3\beta$
    \begin{eqnarray}
        \|y^N - y^*\|_2 &\le& \frac{\e}{\mu_\psi R_y},\label{eq:radius_grad_norm_main_str_cvx}\\
        \|y^N\|_2 &\le& 2R_y\label{eq:radius_dual_str_cvx}
    \end{eqnarray}
\end{corollary}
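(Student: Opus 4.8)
The plan is to derive all three conclusions of the corollary from Theorem~\ref{thm:str_cvx_biased_main_result} (the squared-distance guarantee $\|y^N-y^*\|_2^2\le \hat J^2R_0^2/A_N$ for {\tt SSTM{\_}sc}) by combining it with $L_\psi$-smoothness and $\mu_\psi$-strong convexity of $\psi$, after first reducing to a subspace on which $\psi$ is genuinely strongly convex.

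\emph{Subspace reduction.} Running Algorithm~\ref{Alg:STM_str_cvx} on \eqref{DP} from $y^0=0$, every batched stochastic gradient $\tnabla\Psi(y,\Bxi^k)=A\,\tx(y,\Bxi^k)$ lies in $\text{Im}\,A=(\text{Ker}\,A^\top)^\perp$, so by Theorem~\ref{thm:sstm_str_cvx_points} all iterates $\ty^k,z^k,y^k$ stay in $y^0+(\text{Ker}\,A^\top)^\perp$, a subspace on which $\psi$ is $\mu_\psi$-strongly convex and $L_\psi$-smooth. Hence Theorem~\ref{thm:str_cvx_biased_main_result} applies verbatim with $R_0=\|y^0-y^*\|_2=\|y^*\|_2=R_y$; this is exactly the bookkeeping already done for {\tt AC-SA} in Theorem~\ref{thm:ac-sa_points}.

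\emph{Gradient norm and oracle count.} Since $\nabla\psi(y^*)=0$, smoothness gives $\|\nabla\psi(y^N)\|_2\le L_\psi\|y^N-y^*\|_2$, so it suffices to drive $\|y^N-y^*\|_2^2$ below $\e':=\e^2/(L_\psi R_y)^2$. Applying Theorem~\ref{thm:str_cvx_biased_main_result} with target accuracy $\e'$: since $A_N$ grows geometrically in $N$ at a rate governed by $\sqrt{\mu_\psi/L_\psi}$, reaching $\|y^N-y^*\|_2^2\le\e'$ costs $N=\widetilde{O}\!\left(\sqrt{L_\psi/\mu_\psi}\,\ln(1/\e')\right)=\widetilde{O}\!\left(\sqrt{L_\psi/\mu_\psi}\,\ln(1/\e)\right)$ iterations, and the batch sizes $r_k$ prescribed there sum to $\sum_k r_k=\widetilde{O}\!\left(\max\{\sqrt{L_\psi/\mu_\psi},\,\sigma_\psi^2/\e'\}\right)=\widetilde{O}\!\left(\max\{\sqrt{L_\psi/\mu_\psi},\,\sigma_\psi^2R_y^2/\e^2\}\right)$ after collecting logarithmic and constant factors. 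One must also check that the standing conditions $\delta\le GR_0/(N\sqrt{A_N})$ and $\e'\le HR_0^2/A_N$ of Theorem~\ref{thm:str_cvx_biased_main_result} hold for this $N$ with $R_0=R_y$. This yields \eqref{eq:grad_norm_str_cvx} and \eqref{eq:radius_grad_norm_oracle_calls_str_cvx} with probability $\ge1-3\beta$.

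\emph{Distance and norm of the iterate, and the main obstacle.} The last two displays follow at once. Strong convexity of $\psi$ on the subspace gives $\mu_\psi\|y^N-y^*\|_2\le\|\nabla\psi(y^N)\|_2\le\e/R_y$, i.e.\ \eqref{eq:radius_grad_norm_main_str_cvx}; then $\|y^N\|_2\le\|y^N-y^*\|_2+\|y^*\|_2\le \e/(\mu_\psi R_y)+R_y$, and the extra hypothesis $\e\le\mu_\psi R_y^2$ bounds the first term by $R_y$, giving $\|y^N\|_2\le2R_y$, which is \eqref{eq:radius_dual_str_cvx}. The main (and essentially only) obstacle is the middle step: keeping the iteration count at $\widetilde{O}(\sqrt{L_\psi/\mu_\psi}\ln(1/\e))$ after substituting $\e'$ (so that $\ln(1/\e')=\widetilde{O}(\ln(1/\e))$), ensuring the $\delta$- and $\e'$-conditions of Theorem~\ref{thm:str_cvx_biased_main_result} remain compatible with that $N$, and verifying that the $L_\psi$- and $R_y$-dependence introduced by the rescaling is absorbed into $\widetilde{O}(\cdot)$, on top of the subspace bookkeeping of Step~1.
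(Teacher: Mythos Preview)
Your subspace reduction (Step~1) and the derivation of \eqref{eq:radius_grad_norm_main_str_cvx}--\eqref{eq:radius_dual_str_cvx} from \eqref{eq:grad_norm_str_cvx} (Step~3) are correct and match the natural route: once $\|\nabla\psi(y^N)\|_2\le \e/R_y$, strong convexity on $y^0+(\text{Ker}\,A^\top)^\perp$ gives $\|y^N-y^*\|_2\le \e/(\mu_\psi R_y)$, and the triangle inequality together with $\e\le \mu_\psi R_y^2$ yields $\|y^N\|_2\le 2R_y$.

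The gap is in Step~2, and it is exactly the point you flag as ``the main obstacle'' but then wave away. With $\e'=\e^2/(L_\psi R_y)^2$ as the squared-distance target in Theorem~\ref{thm:str_cvx_biased_main_result}, the oracle-complexity term becomes
\[
\frac{\sigma_\psi^2}{\e'}=\frac{\sigma_\psi^2 L_\psi^2 R_y^2}{\e^2},
\]
not $\sigma_\psi^2 R_y^2/\e^2$. The $\widetilde O(\cdot)$ convention in Theorem~\ref{thm:str_cvx_biased_main_result} hides only \emph{polylogarithmic} factors in $L_\psi,\mu_\psi,R_0,\e,\beta$, so a polynomial factor $L_\psi^2$ cannot be ``collected'' into it. Your iteration count is fine (the extra $\ln L_\psi$ and $\ln R_y$ in $\ln(1/\e')$ are genuinely polylogarithmic), but the oracle bound you obtain is $\widetilde O\big(\max\{\sqrt{L_\psi/\mu_\psi},\,L_\psi^2\sigma_\psi^2R_y^2/\e^2\}\big)$, which is weaker than \eqref{eq:radius_grad_norm_oracle_calls_str_cvx}.

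To recover the stated bound one cannot simply post-process the distance guarantee through $L_\psi$-smoothness; one needs the finer information inside the proof of Theorem~\ref{thm:str_cvx_biased_main_result} in \cite{gorbunov2019optimal} (the Lyapunov-type control on $A_N(\psi(y^N)-\psi(y^*))$ and on the iterates simultaneously), which lets one bound $\|\nabla\psi(y^N)\|_2$ without paying the full $L_\psi$-to-$\mu_\psi$ conversion twice. In short: your skeleton is right, but the black-box reduction ``distance $\Rightarrow$ gradient norm via smoothness'' loses a factor $L_\psi^2$ that the corollary does not allow.
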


\begin{corollary}[Corollary 5.14 from \cite{gorbunov2019optimal}]\label{cor:sstm_str_cvx_connect_with_primal}
	Let the assumptions of Theorem~\ref{thm:str_cvx_biased_main_result} hold. Assume that $f$ is $L_f$-Lipschitz continuous on $B_{R_f}(0)$ where 
	$$R_f = \left(\sqrt{\frac{2C}{\lambda_{\max}(A^\top A)}} + G_1 + \frac{\sqrt{\lambda_{\max}(A^\top A)}}{\mu}\right)\frac{\e}{R_y} + R_x,$$
	$R_x = \|x(A^\top y^*)\|_2$, $\e \le \mu_\psi R_y^2$ and $\delta_y \le \frac{G_1\e}{NR_y}$ for some positive constant $G_1$. Assume additionally that the last batch-size $r_N$ is slightly bigger than other batch-sizes, i.e.\
	\begin{eqnarray}
	   r_N &\ge& \frac{1}{C}\max\left\{1,\left(\frac{\mu_\psi}{L_\psi}\right)^{\nicefrac{3}{2}}\frac{N^2\sigma_\psi^2\left(1+\sqrt{3\ln\frac{N}{\beta}}\right)^2R_y^2}{\e^2},\frac{\sigma_\psi^2\left(1+\sqrt{3\ln\frac{N}{\beta}}\right)^2R_y^2}{\e^2}\right\}.\label{eq:sstm_sc_last_batch} 
	\end{eqnarray}
	Then, with probability at least $1 - 4\beta$
	\begin{eqnarray}
	    f(\tx^N) - f(x^*) &\le& \left(2 + \left(\sqrt{\frac{2C}{\lambda_{\max}(A^\top A)}} + G_1\right)\frac{L_f}{R_y}\right)\e,\label{eq:stm_str_cvx_primal_guarantees_func}\\
	    \|A\tx^N\|_2 &\le& \left(1 + \sqrt{2C} + G_1\sqrt{\lambda_{\max}(A^\top A)}\right)\frac{\e}{R_y},\label{eq:stm_str_cvx_primal_guarantees_norm}
	\end{eqnarray}
	where $\beta\in(0,\nicefrac{1}{4})$, $\tilde{x}^N \eqdef \tilde{x}(A^\top y^{N},\Bxi^{N}, r_N)$ and to achieve it we need the total number of oracle calls including the cost of computing $\tilde{x}^N$ equals 
	\begin{equation}
        \widetilde{O}\left(\max\left\{\sqrt{\frac{L}{\mu}\chi(A^\top A)},\frac{\sigma_x^2M^2}{\e^2}\chi(A^\top A)\right\}\right) \label{eq:primal_oracle_calls_str_cvx}
    \end{equation}
    where $M = \|\nabla f(x^*)\|_2$.
\end{corollary}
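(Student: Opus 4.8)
The plan is to run Algorithm~\ref{Alg:STM_str_cvx} ({\tt SSTM{\_}sc}) on the dual problem \eqref{DP}, combine (i) the dual gradient-norm and distance guarantees already packaged in Corollary~\ref{cor:radius_grad_norm_guarantee_str_cvx}, (ii) the dual-to-primal bridge of Theorem~\ref{thm:grad_norm_testarts_motivation}, and (iii) a bias--variance estimate for the only primal quantity one can actually compute, the batched stochastic maximizer $\tx^N=\tx(A^\top y^N,\Bxi^N,r_N)$ (note $\tnabla\Psi(y,\Bxi^k)=A\tx(A^\top y,\Bxi^k,r_k)$). Since $f$ is now $L$-smooth and $\mu$-strongly convex, $\psi$ is $L_\psi$-smooth with $L_\psi=\lambda_{\max}(A^\top A)/\mu$ and, on $y^0+(\operatorname{Ker}A^\top)^\perp$, $\mu_\psi$-strongly convex with $\mu_\psi=\lambda_{\min}^+(A^\top A)/L$; by Theorem~\ref{thm:sstm_str_cvx_points} every iterate stays in that affine subspace, so those strong-convexity results apply. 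The enlarged last batch $r_N$ in \eqref{eq:sstm_sc_last_batch} is needed only to make the primal readout $\tx^N$ accurate enough.

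First I would invoke Corollary~\ref{cor:radius_grad_norm_guarantee_str_cvx}: after $N=\widetilde O\cbraces{\sqrt{L_\psi/\mu_\psi}\ln(1/\e)}$ iterations it gives, with probability at least $1-3\beta$, the simultaneous bounds $\norm{\nabla\psi(y^N)}_2\le\e/R_y$, $\norm{y^N}_2\le 2R_y$, and --- using $\e\le\mu_\psi R_y^2$ --- $\norm{y^N-y^*}_2\le\e/(\mu_\psi R_y)$. Feeding the first and third (second) of these into Theorem~\ref{thm:grad_norm_testarts_motivation} produces, for the \emph{exact} maximizer $x^N:=x(A^\top y^N)=\nabla\varphi(A^\top y^N)$ (recall $\nabla\psi(y)=Ax(A^\top y)$, $\nabla\varphi(y)=x(y)$ from \eqref{eq:gradient_dual_function}),
\begin{equation*}
f(x^N)-f(x^*)\le 2\e,\qquad \norm{Ax^N}_2\le \e/R_y .
\end{equation*}

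The remaining work is the passage from $x^N$ to $\tx^N$. Decompose $\tx^N-x^N=(\tx^N-\EE\tx^N)+(\EE\tx^N-x^N)$. The bias is controlled by \eqref{eq:noise_level_x}: $\norm{\EE\tx^N-x^N}_2\le\delta_y\le G_1\e/(NR_y)\le G_1\e/R_y$. For the deviation, the batched estimator obeys a light-tails inequality (the batched version of \eqref{eq:light_tails_x}, cf.\ \eqref{eq:super_exp_moment_batched_stoch_grad}), so a standard sub-Gaussian deviation bound together with the choice of $r_N$ in \eqref{eq:sstm_sc_last_batch} and $\sigma_\psi=\sqrt{\lambda_{\max}(A^\top A)}\,\sigma_x$ yields, with probability at least $1-\beta$, $\norm{\tx^N-\EE\tx^N}_2\le\sqrt{2C/\lambda_{\max}(A^\top A)}\,\e/R_y$, hence $\norm{A(\tx^N-\EE\tx^N)}_2\le\sqrt{2C}\,\e/R_y$. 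Adding these to the displayed bounds on $x^N$ and using $f(\tx^N)-f(x^N)\le L_f\norm{\tx^N-x^N}_2$ gives precisely \eqref{eq:stm_str_cvx_primal_guarantees_func}--\eqref{eq:stm_str_cvx_primal_guarantees_norm}, valid on the intersection of the two events, i.e.\ with probability at least $1-4\beta$. Applying the Lipschitz property requires $x^N,\tx^N\in B_{R_f}(0)$; this is exactly what $R_f$ is sized for, and it follows from $\norm{x^N-x(A^\top y^*)}_2\le\tfrac1\mu\norm{A^\top(y^N-y^*)}_2\le\tfrac{\sqrt{\lambda_{\max}(A^\top A)}}{\mu}\norm{y^N-y^*}_2$ ($\nabla\varphi=x(\cdot)$ is $1/\mu$-Lipschitz and $y^N-y^*\in(\operatorname{Ker}A^\top)^\perp$), the bound on $\norm{y^N-y^*}_2$, the condition $\e\le\mu_\psi R_y^2$, and $R_x=\norm{x(A^\top y^*)}_2$.

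For the oracle count, Corollary~\ref{cor:radius_grad_norm_guarantee_str_cvx} costs $\widetilde O\cbraces{\max\{\sqrt{L_\psi/\mu_\psi},\,\sigma_\psi^2R_y^2/\e^2\}}$ calls of $\tx(\cdot,\xi)$, and the $r_N=\widetilde O(\sigma_\psi^2R_y^2/\e^2)$ extra calls used to form $\tx^N$ are absorbed into the second term; substituting $L_\psi/\mu_\psi=\tfrac{L}{\mu}\chi(A^\top A)$ and $\sigma_\psi^2R_y^2=\lambda_{\max}(A^\top A)\sigma_x^2R_y^2\le \sigma_x^2M^2\chi(A^\top A)$ --- the latter because, by first-order optimality and $y^*\in(\operatorname{Ker}A^\top)^\perp$, $M=\norm{\nabla f(x^*)}_2=\norm{A^\top y^*}_2\ge\sqrt{\lambda_{\min}^+(A^\top A)}\,R_y$ --- turns this into \eqref{eq:primal_oracle_calls_str_cvx}. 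I expect the only real obstacle to sit in the third paragraph: propagating the biased stochastic error of $\tx^N$ with the stated numerical constants ($\sqrt{2C}$, $G_1\sqrt{\lambda_{\max}(A^\top A)}$) and, above all, verifying that the computed point $\tx^N$ indeed stays inside the ball $B_{R_f}(0)$ on which $f$ is assumed $L_f$-Lipschitz; the dual-to-primal transfer and the complexity bookkeeping are routine given the earlier results.
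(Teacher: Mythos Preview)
Your proposal is essentially correct and follows the natural route one would take (and the one the cited preprint takes): invoke Corollary~\ref{cor:radius_grad_norm_guarantee_str_cvx} for the dual iterate $y^N$, transfer to the exact primal $x^N=x(A^\top y^N)$ via Theorem~\ref{thm:grad_norm_testarts_motivation}, and then control $\tx^N-x^N$ by splitting into bias (bounded by $\delta_y$) and a sub-Gaussian deviation (controlled by the enlarged batch $r_N$ in~\eqref{eq:sstm_sc_last_batch}); the oracle-count substitution $\sigma_\psi^2R_y^2\le\sigma_x^2M^2\chi(A^\top A)$ via $M=\|A^\top y^*\|_2\ge\sqrt{\lambda_{\min}^+(A^\top A)}\,R_y$ is exactly right. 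The present paper does not supply its own proof of this corollary---it merely quotes the result from~\cite{gorbunov2019optimal}---so there is nothing structurally different to compare; your outline matches the argument there.

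One genuine point to watch, which you already flag: the containment $x^N,\tx^N\in B_{R_f}(0)$. Your chain $\|x^N-x(A^\top y^*)\|_2\le\frac{1}{\mu}\|A^\top(y^N-y^*)\|_2\le\frac{\sqrt{\lambda_{\max}(A^\top A)}}{\mu}\|y^N-y^*\|_2$ together with Corollary~\ref{cor:radius_grad_norm_guarantee_str_cvx} gives $\|x^N-x^*\|_2\le\frac{\sqrt{\lambda_{\max}(A^\top A)}}{\mu}\cdot\frac{\e}{\mu_\psi R_y}$, i.e.\ an extra factor $1/\mu_\psi$ compared with the term $\frac{\sqrt{\lambda_{\max}(A^\top A)}}{\mu}\cdot\frac{\e}{R_y}$ appearing in the stated $R_f$. (Compare with the analogous $R_f$ in Corollary~\ref{cor:r-rrma-ac-sa2_connect_with_primal}, which carries $\frac{\sqrt{\lambda_{\max}(A^\top A)}}{\mu}R_y$ and is consistent with your bound under $\e\le\mu_\psi R_y^2$.) This does not affect the method or the complexity~\eqref{eq:primal_oracle_calls_str_cvx}; it only means that the radius $R_f$ as printed in the statement may be a constant factor too small in one summand, and a careful write-up should either enlarge $R_f$ accordingly or go back to Theorem~\ref{thm:str_cvx_biased_main_result} directly (choosing $A_N$ large enough to get the sharper bound on $\|y^N-y^*\|_2$, at the price of a further logarithmic factor in $N$). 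Everything else in your plan is routine.
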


\subsection{Applications to Decentralized Distributed Optimization}\label{sec:distributed_opt}

In this section, we apply our results to the decentralized optimization problems. First of all, we want to add additional motivation to the problem we are focusing on. As it was stated in the introductory part of this work, we are interested in the convex optimization problem
\begin{equation}
     \min\limits_{x\in Q\subseteq \R^n}f(x),\label{eq:main_problem}
\end{equation}
where $f$ is a convex function and $Q$ is closed and convex subset of $\R^n$. More precisely, we study particular case of \eqref{eq:main_problem} when the objective function $f$ could be represented as a mathematical expectation
\begin{equation}
    f(x) = \EE_\xi\left[f(x,\xi)\right]\label{eq:objectve_expectation},
\end{equation}
where $\xi$ is a random variable. Typically $x$ represents the feature vector defining the model, only samples of $\xi$ are available and the distribution of $\xi$ is unknown. One possible way to minimize generalization error \eqref{eq:objectve_expectation} is to solve empirical risk minimization or finite-sum minimization problem instead, i.e., solve \eqref{eq:main_problem} with the objective
\begin{equation}
    \hat f(x) = \frac{1}{m}\sum\limits_{i=1}^m f(x,\xi_i),\label{eq:erm_problem}
\end{equation}
where $m$ should be sufficiently large to approximate the initial problem. Indeed, if $f(x,\xi)$ is convex and $M$-Lipschitz continuous for all $\xi$, $Q$ has finite diameter $D$ and $\hat x = \argmin_{x\in Q}\hat f(x)$, then (see \cite{cesa-bianchi2002generalization,shalev2009stochastic}) with probability at least $1-\beta$
\begin{equation}
    f(\hat x) - \min\limits_{x\in Q} f(x) = O\left(\sqrt{\frac{M^2D^2n\ln(m)\ln\left(\nicefrac{n}{\beta}\right)}{m}}\right),\label{eq:convex_erm_argmin_property}
\end{equation}
and if additionally $f(x,\xi)$ is $\mu$-strongly convex for all $\xi$, then (see \cite{feldman2019high}) with probability at least $1-\beta$
\begin{equation}
    f(\hat x) - \min\limits_{x\in Q} f(x) = O\left(\frac{M^2D^2\ln(m)\ln\left(\nicefrac{m}{\beta}\right)}{\mu m} + \sqrt{\frac{M^2D^2\ln\left(\nicefrac{1}{\beta}\right)}{m}}\right).\label{eq:str_convex_erm_argmin_property}
\end{equation}
In other words, to solve \eqref{eq:main_problem}+\eqref{eq:objectve_expectation} with $\varepsilon$ functional accuracy via minimization of empirical risk \eqref{eq:erm_problem} it is needed to have $m = \widetilde{\Omega}\left(\nicefrac{M^2D^2n}{\varepsilon^2}\right)$ in the convex case and $m = \widetilde{\Omega}\left(\max\left\{\nicefrac{M^2D^2}{\mu\varepsilon},\nicefrac{M^2D^2}{\varepsilon^2}\right\}\right)$ in the $\mu$-strongly convex case where $\widetilde{\Omega}(\cdot)$ hides a constant factor, a logarithmic factor of $\nicefrac{1}{\beta}$ and a polylogarithmic factor of $\nicefrac{1}{\e}$.

Stochastic first-order methods such as Stochastic Gradient Descent ({\tt SGD}) \cite{gower2019sgd,nemirovski2009robust,nguyen2018sgd,RobbinsMonro:1951,vaswani2019fast} or its accelerated variants like {\tt AC-SA} \cite{lan2012optimal} or Similar Triangles Method ({\tt STM}) \cite{dvurechensky2017randomized,gasnikov2018universal, nesterov2018lectures} are very popular choice to solve either \eqref{eq:main_problem}+\eqref{eq:objectve_expectation} or \eqref{eq:main_problem}+\eqref{eq:erm_problem}. In contrast with their cheap iterations in terms of computational cost, these methods converge only to the neighbourhood of the solution, i.e., to the ball centered at the optimality and radius proportional to the standard deviation of the stochastic estimator. For the particular case of finite-sum minimization problem one can solve this issue via variance-reduction trick \cite{defazio2014saga, gorbunov2019unified, johnson2013accelerating, schmidt2017minimizing} and its accelerated variants \cite{allen2016katyusha,zhou2018direct,zhou2018simple}. Unfortunately, this technique is not applicable in general for the problems of type \eqref{eq:main_problem}+\eqref{eq:objectve_expectation}. Another possible way to reduce the variance is mini-batching. When the objective function is $L$-smooth one can accelerate the computations of batches using parallelization \cite{devolder2013exactness,dvurechensky2016stochastic,gasnikov2018universal,ghadimi2013stochastic}, and it is one of the examples where centralized distributed optimization appears naturally \cite{bertsekas1989parallel}.

In other words, in some situations, e.g., when the number of samples $m$ is too big, it is preferable in practice to split the data into $q$ blocks, assign each block to the separate worker, e.g., processor, and organize computation of the gradient or stochastic gradient in the parallel or distributed manner. Moreover, in view of \eqref{eq:convex_erm_argmin_property}-\eqref{eq:str_convex_erm_argmin_property} sometimes to solve an expectation minimization problem it is needed to have such a big number of samples that corresponding information (e.g.\ some objects like images, videos and etc.) cannot be stored on $1$ machine because of the memory limitations (see Section~\ref{sec:wasserstein} for the detailed example of such a situation). Then, we can rewrite the objective function in the following form
\begin{equation}
    f(x) = \frac{1}{q}\sum\limits_{i=1}^q f_i(x),\quad f_i(x) = \EE_{\xi_i}\left[f(x,\xi_i)\right] \text{ or } f_i(x) = \frac{1}{s_i}\sum\limits_{j=1}^{s_i}f(x,\xi_{ij}).\label{eq:finite_sum_minimization}
\end{equation}
Here $f_i$ corresponds to the loss on the $i$-th data block and could be also represented as an expectation or a finite sum. So, the general idea for parallel optimization is to compute gradients or stochastic gradients by each worker, then aggregate the results by the master node and broadcast new iterate or needed information to obtain the new iterate back to the workers.

The visual simplicity of the parallel scheme hides synchronization drawback and high requirement to master node \cite{scaman2017optimal}. The big line of works is aimed to solve this issue via periodical synchronization \cite{bayoumi2020tighter, stich2018local, yu2019linear, woodworth2020local, woodworth2020minibatch, koloskova2020unified, gorbunov2020local}, error-compensation \cite{karimireddy2019error, stich2018sparsified,beznosikov2020biased,gorbunov2020linearly}, quantization \cite{alistarh2017qsgd, horvath2019natural, horvath2019stochastic, mishchenko2019distributed, wen2017terngrad} or combination of these techniques \cite{basu2019qsparse,liu2019double}. 

However, in this work we mainly focus on another approach to deal with aforementioned drawbacks~--- decentralized distributed optimization \cite{bertsekas1989parallel,kibardin1979decomposition}. It is based on two basic principles: every node communicates only with its neighbours and communications are performed simultaneously. Moreover, this architecture is more robust, e.g., it can be applied to time-varying (wireless) communication networks \cite{rogozin2021towards}.

But let us consider first the centralized or parallel architecture. As we mentioned in the introduction, when the objective function is $L$-smooth one can compute batches in parallel \cite{devolder2013exactness,dvurechensky2016stochastic,gasnikov2018universal,ghadimi2013stochastic} in order to accelerate the work of the method and get the method (see Section~3 from \cite{gorbunov2019optimal} for the details) using
\begin{equation}
    O\left(\frac{\nicefrac{\sigma^2R^2}{\e^2}}{\sqrt{\nicefrac{LR^2}{\e}}}\right) \text{ or } O\left(\frac{\nicefrac{\sigma^2}{\mu\e}}{\sqrt{\nicefrac{L}{\mu}}\ln\left(\nicefrac{\mu R^2}{\e}\right)}\right)\label{eq:parallel_opt_number_of_workers}
\end{equation}
workers and having the working time proportional to the number of iterations of an accelerated first-order method. However, the number of workers defined in \eqref{eq:parallel_opt_number_of_workers} could be too big in order to use such an approach in practice. But still computing the batches in parallel even with much smaller number of workers could reduce the working time of the method if the communication is fast enough.

Besides the computation of batches in parallel for the general type of problem \eqref{eq:main_problem}+\eqref{eq:objectve_expectation}, parallel optimization is often applied to the finite-sum minimization problems \eqref{eq:main_problem}+\eqref{eq:erm_problem} or \eqref{eq:main_problem}+\eqref{eq:finite_sum_minimization} that we rewrite here in the following form:
\begin{equation}
    \min\limits_{x\in Q\subseteq \R^n}f(x) = \frac{1}{m}\sum\limits_{k=1}^m f_k(x).\label{eq:main_problem_decentralized_sec}
\end{equation}
We notice that in this section $m$ is a number of workers and $f_k(x)$ is known only for the $k$-th worker. Consider the situation when workers are connected in a network and one can construct a spanning tree for this network. Assume that the diameter of the obtained graph equals $d$, i.e., the height of the tree~--- maximal distance (in terms of connections) between the root and a leaf \cite{scaman2017optimal}. If we run Similar Triangles Methods ({\tt STM}, \cite{gasnikov2018universal}) on such a spanning tree then we will get that the number of communication rounds will be
\begin{equation*}
   O\left(dN + d\min\left\{\frac{\sigma^2 R^2}{\e^2}\ln\left(\frac{\sqrt{\nicefrac{LR^2}{\e}}}{\beta}\right), \frac{\sigma^2}{\mu\e}\ln\left(\frac{LR^2}{\e}\right)\ln\left(\frac{\sqrt{\nicefrac{L}{\mu}}}{\beta}\right)\right\}\right),
\end{equation*}
where 
\begin{equation*}
    N = O\left(\min\left\{\sqrt{\frac{LR^2}{\e}}, \sqrt{\frac{L}{\mu}}\ln\left(\frac{LR^2}{\e}\right)\right\}\right).
\end{equation*}

Now let us consider the decentralized case when workers can communicate only with their neighbours. Next, we describe the method of how to reflect this restriction in the problem \eqref{eq:main_problem_decentralized_sec}. Consider the Laplacian matrix $\overline{W}\in\R^{m\times m}$ of the network with vertices $V$ and edges $E$ which is defined as follows:
\begin{equation}
    \overline{W}_{ij} = \begin{cases} 
    -1, &\text{if } (i,j)\in E,\\
    \deg(i), &\text{if } i=j,\\
    0 &\text{otherwise},
    \end{cases}\label{eq:laplacian_matrix}
\end{equation}
where $\deg(i)$ is degree of $i$-th node, i.e.\ number of neighbours of the $i$-th worker. Since we consider only connected networks the matrix $\overline{W}$ has unique eigenvector $\bld{1}_m \eqdef (1,\ldots,1)^\top \in \R^m$ corresponding to the eigenvalue $0$. It implies that for all vectors $a = (a_1,\ldots,a_m)^\top\in \R^m$ the following equivalence holds:
\begin{equation}
    a_1 = \ldots = a_m \; \Longleftrightarrow \; \overline{W}a = 0.\label{eq:main_property_of_laplacian_simple}
\end{equation}
Now let us think about $a_i$ as a number that $i$-th node stores. Then, using \eqref{eq:main_property_of_laplacian_simple} we can use Laplacian matrix to express in the short matrix form the fact that all nodes of the network store the same number. In order to generalize it for the case when $a_i$ are vectors from $\R^n$ we should consider the matrix $W \eqdef \overline{W} \otimes I_n$ where $\otimes$ represents the Kronecker product. Indeed, if we consider vectors $x_1,\ldots,x_m\in\R^n$ and $\x = \left(x_1^\top,\ldots,x_m^\top\right)\in \R^{nm}$, then \eqref{eq:main_property_of_laplacian_simple} implies
\begin{equation}
    x_1 = \ldots = x_m \; \Longleftrightarrow \; W\x = 0.\label{eq:main_property_of_laplacian}
\end{equation}
For simplicity, we also call $W$ as a Laplacian matrix and it does not lead to misunderstanding since everywhere below we use $W$ instead of $\overline{W}$. The key observation here that computation of $Wx$ requires one round of communications when the $k$-th worker sends $x_k$ to all its neighbours and receives $x_j$ for all $j$ such that $(k,j)\in E$, i.e.\ $k$-th worker gets vectors from all its neighbours. Note, that $W$ is symmetric and positive semidefinite \cite{scaman2017optimal} and, as a consequence, $\sqrt{W}$ exists. Moreover, we can replace $W$ by $\sqrt{W}$ in \eqref{eq:main_property_of_laplacian} and get the equivalent statement:
\begin{equation}
    x_1 = \ldots = x_m \; \Longleftrightarrow \; \sqrt{W}\x = 0.\label{eq:main_property_of_laplacian_sqrt}
\end{equation}

Using this we can rewrite the problem \eqref{eq:main_problem_decentralized_sec} in the following way:
\begin{equation}
    \min\limits_{\substack{\sqrt{W}\x = 0, \\ x_1,\ldots, x_m \in Q \subseteq \R^n}}f(\x) = \frac{1}{m}\sum\limits_{k=1}^m f_k(x_k).\label{eq:main_problem_decentralized_sec_rewritten}
\end{equation}
We are interested in the general case when $f_k(x_k) = \EE_{\xi_k}\left[f_k(x_k,\xi_k)\right]$ where $\{\xi_k\}_{k=1}^m$ are independent. This type of objective can be considered as a special case of \eqref{eq:finite_sum_minimization}. Then, as it was mentioned in the introduction it is natural to use stochastic gradients $\nabla f_k(x_k,\xi_k)$ that satisfy
\begin{eqnarray}
    \left\|\EE_{\xi_k}\left[\nabla f_k(x_k,\xi_k)\right] - \nabla f_k(x_k)\right\|_2 &\le& \delta, \label{eq:primal_bias_in_stoch_grad_distrib}\\
    \EE_{\xi_k}\left[\exp\left(\frac{\left\|\nabla f_k(x_k,\xi_k) - \EE_{\xi_k}\left[\nabla f_k(x_k,\xi_k)\right]\right\|_2^2}{\sigma^2}\right)\right] &\le& \exp(1). \label{eq:primal_light_tails_stoch_grad_distrib}
\end{eqnarray}
Then, the stochastic gradient
\begin{equation*}
    \nabla f(\x,\xi) \eqdef \nabla f(\x,\{\xi_k\}_{k=1}^m) \eqdef \frac{1}{m}\sum\limits_{k=1}^m \nabla f_k(x_k,\xi_k)
\end{equation*}
satisfies (see also \eqref{eq:super_exp_moment_batched_stoch_grad})
\begin{equation*}
    \EE_{\xi}\left[\exp\left(\frac{\left\|\nabla f(\x,\xi) - \EE_{\xi}\left[\nabla f(\x,\xi)\right]\right\|_2^2}{\sigma_f^2}\right)\right] \le \exp(1)
\end{equation*}
with $\sigma_f^2 = O\left(\nicefrac{\sigma^2}{m}\right)$. 

As always, we start with the smooth case with $Q = \R^n$ and assume that each $f_k$ is $L$-smooth, $\mu$-strongly convex and satisfies $\|\nabla_{k}f_k(x_k)\|_2 \le M$ on some ball $B_{R_M}(x^*)$ where we use $\nabla_{k}f(x_k)$ to emphasize that $f_k$ depends only on the $k$-th $n$-dimensional block of $\x$. Since the functional $f(\x)$ in \eqref{eq:main_problem_decentralized_sec_rewritten} has separable structure, it implies that $f$ is $\nicefrac{L}{m}$-smooth, $\nicefrac{\mu}{m}$-strongly convex and satisfies $\|\nabla f(\x)\|_2 \le \nicefrac{M}{\sqrt{m}}$ on $B_{\sqrt{m}R_M}(\x^*)$. Indeed, for all $\x,\y \in \R^n$
\begin{eqnarray*}
    \|\x - \y\|_2^2 &=& \sum\limits_{k=1}^m \|x_k - y_k\|_2^2,\\
    \|\nabla f(\x) - \nabla f(\y)\|_2 &=& \sqrt{\frac{1}{m^2}\sum\limits_{k=1}^m\|\nabla_{k} f_k(x_k) - \nabla_{k} f_k(y_k)\|_2^2} \le \sqrt{\frac{L^2}{m^2}\sum\limits_{k=1}^m\|x_k - y_k\|_2^2} = \frac{L}{m}\|\x - \y\|_2,\\
    f(\x) &=& \frac{1}{m}\sum\limits_{k=1}^m f_k(x_k) \ge \frac{1}{m}\sum\limits_{k=1}^m \left(f(y_k) + \la\nabla_k f_k(y_k), x_k - y_k \ra + \frac{\mu}{2}\|x^k - y^k\|_2^2\right)\\
    &=& f(\y) + \la\nabla f(\y), \x - \y \ra + \frac{\mu}{2m}\|\x - \y\|_2^2,\\\
    \|\nabla f(\x)\|_2^2 &=& \frac{1}{m^2}\sum\limits_{k=1}^m\|\nabla_k f_k(x_k)\|_2^2.
\end{eqnarray*}

Therefore, one can consider the problem \eqref{eq:main_problem_decentralized_sec_rewritten} as \eqref{PP} with $A = \sqrt{W}$ and $Q = \R^{nm}$. Next, if the starting point $\x^0$ is such that $\x^0 = ((x^0)^\top,\ldots,(x^0)^\top)^\top$ then 
\begin{eqnarray*}
    \Rbf^2 \eqdef \|\x^0 - \x^*\|_2^2 = m\|x^0 - x^*\|_2^2 = mR^2,\quad R_\y^2 \eqdef \|\y^*\|_2^2 \le \frac{\|\nabla f(\x^*)\|_2^2}{\lambda_{\min}^+(W)} \le \frac{M^2}{m\lambda_{\min}^+(W)}.
\end{eqnarray*}
Now it should become clear why in Section~\ref{sec:primal} we paid most of our attention on number of $A^\top A\x$ calculations. In this particular scenario $A^\top A\x = \sqrt{W}^\top \sqrt{W}x = Wx$ which can be computed via one round of communications of each node with its neighbours as it was mentioned earlier in this section. That is, for the primal approach we can simply use the results discussed in Section~\ref{sec:primal}. For convenience, we summarize them in Tables~\ref{tab:deterministic_bounds_primal} and \ref{tab:stochastic_bounds_primal} which are obtained via plugging the parameters that we obtained above in the bounds from Section~\ref{sec:primal}. Note that the results presented in this match the lower bounds obtained in \cite{arjevani2015communication} in terms of the number of communication rounds up to logarithmic factors and and there is a conjecture \cite{dvinskikh2019decentralized} that these bounds are also optimal in terms of number of oracle calls per node for the class of methods that require optimal number of communication rounds. Recently, the very similar result about the optimal balance between number of oracle calls per node and number of communication round was proved for the case when the primal functional is convex and $L$-smooth and deterministic first-order oracle is available \cite{xu2019accelerated}.
\begin{table}[ht!]
    \centering
    \begin{tabular}{|c|c|c|c|}
         \hline
         Assumptions on $f_k$ & Method & \makecell{\# of communication\\ rounds} & \makecell{\# of $\nabla f_k(x)$ oracle\\ calls per node}\\
         \hline
         \makecell{ $\mu${-strongly convex,}\\ $L$-smooth} & \makecell{{\tt D-MASG},\\$Q = \R^n$,\\{\cite{fallah2019robust}}} & $\widetilde{O}\left(\sqrt{\frac{L}{\mu}\chi}\right)$ & $\widetilde{O}\left(\sqrt{\frac{L}{\mu}}\right)$ \\
         \hline
         $L$-smooth & \makecell{{\tt STP{\_}IPS} with \\ {\tt STP} as a subroutine,\\ $Q = \R^n$,\\{ \cite{gorbunov2019optimal}}}  & $\widetilde{O}\left(\sqrt{\frac{LR^2}{\e}\chi}\right)$ & $\widetilde{O}\left(\sqrt{\frac{LR^2}{\e}}\right)$ \\
         \hline
         \makecell{ $\mu${-strongly convex,}\\ $\|\nabla f_k(x)\|_2 \le M$}& \makecell{\tt R-Sliding,\\{\makecell{\cite{dvinskikh2019decentralized,Lan2019lectures,lan2016gradient,lan2017communication} }}} & $\widetilde{O}\left(\sqrt{\frac{M^2}{\mu\e}\chi}\right)$ & $\widetilde{O}\left(\frac{M^2}{\mu\e}\right)$ \\
         \hline
         $\|\nabla f_k(x)\|_2 \le M$ & \makecell{\tt Sliding,\\\makecell{\cite{Lan2019lectures,lan2016gradient,lan2017communication}}} & $ O\left(\sqrt{\frac{M^2R^2}{\e^2}\chi}\right)$ & $ O\left(\frac{M^2R^2}{\e^2}\right)$ \\
         \hline
    \end{tabular}
    \caption{\small Summary of the covered results in this paper for solving \eqref{eq:main_problem_decentralized_sec_rewritten} using primal deterministic approach from Section~\ref{sec:primal}. First column contains assumptions on $f_k$, $k=1,\ldots,m$ in addition to the convexity, $\chi = \chi(W) = \nicefrac{\lambda_{\max}(W)}{\lambda_{\min}^+(W)}$, where $\lambda_{\max}(W)$ and $\lambda_{\min}^+(W)$ are maximal and minimal positive eigenvalues of matrix $W$. All methods except {\tt D-MASG} should be applied to solve \eqref{penalty}.}
    \label{tab:deterministic_bounds_primal}
\end{table}
\begin{table}[t!]
    \centering
    \begin{tabular}{|c|c|c|c|}
         \hline
         Assumptions on $f_k$ & Method & \makecell{\# of communication\\ rounds} & \makecell{\# of $\nabla f_k(x,\xi)$ oracle\\ calls per node}\\
         \hline
         \makecell{ $\mu${-strongly convex,}\\ $L$-smooth} & \makecell{{\tt D-MASG},\\in expectation, \\$Q = \R^n$,\\{\cite{fallah2019robust}}} & $\widetilde{O}\left(\sqrt{\frac{L}{\mu}\chi}\right)$ & $\widetilde{O}\left(\max\left\{\sqrt{\frac{L}{\mu}}, \frac{\sigma^2}{\mu\e} \right\}\right)$\\
         \hline
         $L$-smooth & \makecell{{\tt SSTP{\_}IPS} with \\ {\tt STP} as a subroutine,\\ $Q = \R^n$,\\{\makecell{conjecture, \\ \cite{dvinskikh2019decentralized,gorbunov2019optimal}}}} & $\widetilde{O}\left(\sqrt{\frac{LR^2}{\e}\chi}\right)$ & $\widetilde{O}\left(\max\left\{\sqrt{\frac{LR^2}{\e}}, \frac{\sigma^2 R^2}{\e^2}\right\}\right)$\\
         \hline
         \makecell{ $\mu${-strongly convex,}\\ $\|\nabla f_k(x)\|_2 \le M$}& \makecell{{\tt RS-Sliding}\\ $Q$ is bounded,\\{ \makecell{\cite{dvinskikh2019decentralized,Lan2019lectures,lan2016gradient,lan2017communication} }}} & $\widetilde{O}\left(\sqrt{\frac{M^2}{\mu\e}\chi}\right)$ & $\widetilde{O}\left(\frac{M^2 + \sigma^2}{\mu\e}\right)$ \\
         \hline
         $\|\nabla f_k(x)\|_2 \le M$ & \makecell{{\tt S-Sliding} \\ $Q$ is bounded,\\\makecell{\cite{Lan2019lectures,lan2016gradient,lan2017communication}}} & $ \widetilde{O}\left(\sqrt{\frac{M^2R^2}{\e^2}\chi}\right)$ & $\widetilde{O}\left(\frac{(M^2+\sigma^2)R^2}{\e^2}\right)$\\
         \hline
    \end{tabular}
    \caption{\small Summary of the covered results in this paper for solving \eqref{eq:main_problem_decentralized_sec_rewritten} using primal stochastic approach from Section~\ref{sec:primal} with the stochastic oracle satisfying \eqref{eq:primal_bias_in_stoch_grad_distrib}-\eqref{eq:primal_light_tails_stoch_grad_distrib} with $\delta = 0$. First column contains assumptions on $f_k$, $k=1,\ldots,m$ in addition to the convexity, $\chi = \chi(W) = \nicefrac{\lambda_{\max}(W)}{\lambda_{\min}^+(W)}$, where $\lambda_{\max}(W)$ and $\lambda_{\min}^+(W)$ are maximal and minimal positive eigenvalues of matrix $W$. All methods except {\tt D-MASG} should be applied to solve \eqref{penalty}. The bounds from the last two rows hold even in the case when $Q$ is unbounded, but in the expectation (see \cite{lan2016algorithms}). 
    }
    \label{tab:stochastic_bounds_primal}
\end{table}

Finally, consider the situation when $Q = \R^n$ and each $f_k$ from \eqref{eq:main_problem_decentralized_sec_rewritten} is dual-friendly, i.e.\ one can construct dual problem for \eqref{eq:main_problem_decentralized_sec_rewritten}
\begin{eqnarray}
\min_{\y\in\R^{nm}}\Psi(\y), && \text{where } \y =(y_1^\top,\ldots, y_m^\top)^\top \in \R^{nm},\; y_1,\ldots,y_m\in\R^{n},\label{eq:dual_problem_distributed}\\
\varphi_k(y_k) &=& \max_{x_k\in \R^n}\left\{\langle y_k,x_k\rangle - f_k(x_k)\right\},\label{eq:dual_phi_k_function_distributed}\\
\Phi(\y) &=& \frac{1}{m}\sum\limits_{k=1}^m\varphi_k(my_k),\; \Psi(\y) = \Phi(\sqrt{W}\y) = \frac{1}{m}\sum\limits_{k=1}^m\varphi_k(m[\sqrt{W}\x]_k),\label{eq:dual_phi_psi_function_distributed}
\end{eqnarray}
where $[\sqrt{W}\x]_k$ is the $k$-th $n$-dimensional block of $\sqrt{W}x$. Note that
\begin{eqnarray*}
    \max\limits_{\x\in\R^{nm}}\left\{\la\y , \x \ra - f(\x)\right\} &=& \max\limits_{\x\in\R^{nm}}\left\{\sum\limits_{k=1}^m\la y_k, x_k\ra - \frac{1}{m}\sum\limits_{k=1}^m f_k(x_k)\right\} \\
    &=&\frac{1}{m}\sum\limits_{k=1}^m \max\limits_{x_k\in\R^n}\left\{\la my_k,x_k\ra - f_k(x_k)\right\} =  \frac{1}{m}\sum\limits_{k=1}^m\varphi_k(my_k) = \Phi(\y),
\end{eqnarray*}
so, $\Phi(\y)$ is a dual function for $f(\x)$. As for the primal approach, we are interested in the general case when $\varphi_k(y_k) = \EE_{\xi_k}\left[\varphi_k(y_k,\xi_k)\right]$ where $\{\xi_k\}_{k=1}^m$ are independent and stochastic gradients $\nabla \varphi_k(x_k,\xi_k)$ satisfy
\begin{eqnarray}
    \left\|\EE_{\xi_k}\left[\nabla \varphi_k(y_k,\xi_k)\right] - \nabla \varphi_k(y_k)\right\|_2 &\le& \delta_\varphi, \label{eq:dual_bias_in_stoch_grad_distrib}\\
    \EE_{\xi_k}\left[\exp\left(\frac{\left\|\nabla \varphi_k(y_k,\xi_k) - \EE_{\xi_k}\left[\nabla \varphi_k(y_k,\xi_k)\right]\right\|_2^2}{\sigma^2}\right)\right] &\le& \exp(1). \label{eq:dual_light_tails_stoch_grad_distrib}
\end{eqnarray}
Consider the stochastic function $f_k(x_k,\xi_k)$ which is defined implicitly as follows:
\begin{equation}
    \varphi_k(y_k,\xi_k) = \max\limits_{x_k\in \R^n}\left\{\la y_k, x_k \ra - f(x_k,\xi_k)\right\}.\label{eq:dual_stoch_func_distrib}
\end{equation}
Since
\begin{eqnarray*}
    \nabla \Phi(\y) = \sum\limits_{k=1}^m\nabla\varphi_k(my_k) \overset{\eqref{eq:gradient_dual_function}}{=} \sum\limits_{k=1}^m x_k(my_k) \eqdef \x(\y),\quad x_k(y_k) \eqdef \argmax_{x_k\in\R^n}\left\{\langle y_k,x_k\rangle - f_k(x_k)\right\}
\end{eqnarray*}
it is natural to define the stochastic gradient $\nabla \Phi(\y,\xi)$ as follows:
\begin{eqnarray*}
    \nabla \Phi(\y,\xi) &\eqdef& \nabla \Phi(\y,\{\xi_k\}_{k=1}^m) \eqdef \sum\limits_{k=1}^m \nabla \varphi_k(my_k,\xi_k)\overset{\eqref{eq:gradient_dual_function}}{=} \sum\limits_{k=1}^m x_k(my_k,\xi_k)\eqdef \x(\y,\xi),\\
    x_k(y_k,\xi_k) &\eqdef& \argmax_{x_k\in\R^n}\left\{\langle y_k,x_k\rangle - f_k(x_k,\xi_k)\right\}.
\end{eqnarray*}
It satisfies (see also \eqref{eq:super_exp_moment_batched_stoch_grad})
\begin{eqnarray*}
    \left\|\EE_{\xi}\left[\nabla \Phi(\y,\xi)\right] - \nabla \Phi(\y)\right\|_2 &\le& \delta_\Phi,\\
    \EE_{\xi}\left[\exp\left(\frac{\left\|\nabla \Phi(\y,\xi) - \EE_{\xi}\left[\nabla \Phi(\y,\xi)\right]\right\|_2^2}{\sigma_\Phi^2}\right)\right] &\le& \exp(1)
\end{eqnarray*}
with $\delta_\Phi = m\delta_\varphi$ and $\sigma_\Phi^2 = O\left(m\sigma^2\right)$. Using this, we define the stochastic gradient of $\Psi(\y)$ as $\nabla \Psi(\y, \xi) \eqdef \sqrt{W}\nabla \Phi(\sqrt{W}\y,\xi) = \sqrt{W}\x(\sqrt{W}\y,\xi)$ and, as a consequence, we get
\begin{eqnarray*}
    \left\|\EE_{\xi}\left[\nabla \Psi(\y,\xi)\right] - \nabla \Psi(\y)\right\|_2 &\le& \delta_\Psi,\\
    \EE_{\xi}\left[\exp\left(\frac{\left\|\nabla \Psi(\y,\xi) - \EE_{\xi}\left[\nabla \Psi(\y,\xi)\right]\right\|_2^2}{\sigma_\Psi^2}\right)\right] &\le& \exp(1)
\end{eqnarray*}
with $\delta_\Psi = \sqrt{\lambda_{\max}(W)}\delta_\Phi$ and $\sigma_\Psi = \sqrt{\lambda_{\max}(W)}\sigma_\Phi$.

Taking all of this into account we conclude that problem \eqref{eq:dual_problem_distributed} is a special case of \eqref{DP} with $A = \sqrt{W}$. To make the algorithms from Section~\ref{sec:dual} distributed we should change the variables in those methods via multiplying them by $\sqrt{W}$ from the left \cite{dvinskikh2019decentralized,dvinskikh2019dual,uribe2017optimal}, e.g.\ for the iterates of {\tt SPDSTM} we will get
\begin{equation*}
    \tilde{y}^{k+1} := \sqrt{W}\tilde{y}^{k+1}, \quad z^{k+1} := \sqrt{W}z^{k+1},\quad y^{k+1} := \sqrt{W}y^{k+1},
\end{equation*}
which means that it is needed to multiply lines 4-6 of Algorithm~\ref{Alg:PDSTM} by $\sqrt{W}$ from the left. After such a change of variables all methods from Section~\ref{sec:dual} become suitable to run them in the distributed fashion. Besides that, it does not spoil the ability of recovering the primal variables since before the change of variables all of the methods mentioned in Section~\ref{sec:dual} used $\x(\sqrt{W}\y)$ or $\x(\sqrt{W}\y, \xi)$ where points $y$ were some dual iterates of those methods, so, after the change of variables we should use $\x(\y)$ or $\x(\y, \xi)$ respectively. Moreover, it is also possible to compute $\|\sqrt{W}x\|_2^2 = \la\x, W\x \ra$ in the distributed fashion using consensus type algorithms: one communication step is needed to compute $W\x$, then each worker computes $\la x_k, [W\x]_k \ra$ locally and after that it is needed to run consensus algorithm. We summarize the results for this case in Table~\ref{tab:stochastic_bounds_dual}. Note that the proposed bounds are optimal in terms of the number of communication rounds up to polylogarithmic factors \cite{arjevani2015communication,scaman2017optimal,scaman2019optimal,scaman2018optimal}. Note that the lower bounds from \cite{scaman2017optimal,scaman2019optimal,scaman2018optimal} are presented for the convolution of two criteria: number of oracle calls per node and communication rounds. One can obtain lower bounds for the number of communication rounds itself using additional assumption that time needed for one communication is big enough and the term which corresponds to the number of oracle calls can be neglected. Regarding the number of oracle calls there is a conjecture \cite{dvinskikh2019decentralized} that the bounds that we present in this paper are also optimal up to polylogarithmic factors for the class of methods that require optimal number of communication rounds.

\begin{table}[ht!]
    \centering
    \begin{tabular}{|c|c|c|c|c|}
         \hline
         Assumptions on $f_k$ & Method  & \makecell{\# of communication\\ rounds} & \makecell{\# of $\nabla \varphi_k(y,\xi)$ oracle\\ calls per node}\\
         \hline
         \makecell{ $\mu${-strongly convex,}\\ $L$-smooth,\\
         $\|\nabla f_k(x)\|_2 \le M$} & \makecell{{\tt R-RRMA-AC-SA$^2$} \\ (Algorithm~\ref{Alg:Restarted-RRMA-AC-SA2}),\\Corollary~\ref{cor:r-rrma-ac-sa2_connect_with_primal}, \\
         {\tt SSTM{\_}sc} \\ (Algorithm~\ref{Alg:STM_str_cvx}),\\Corollary~\ref{cor:sstm_str_cvx_connect_with_primal}}& 
         $\widetilde{O}\left(\sqrt{\frac{L}{\mu}\chi}\right)$ & $\widetilde{O}\left(\max\left\{\sqrt{\frac{L}{\mu}\chi}, \frac{\sigma_\Phi^2M^2}{\e^2}\chi \right\}\right)$\\
         \hline
         \makecell{ $\mu${-strongly convex,}\\
         $\|\nabla f_k(x)\|_2 \le M$} & \makecell{{\tt SPDSTM} \\ (Algorithm~\ref{Alg:PDSTM}),\\ Theorem~\ref{thm:spdtstm_smooth_cvx_dual_biased}} & 
         $\widetilde{O}\left(\sqrt{\frac{M^2}{\mu\e}\chi}\right)$ & $\widetilde{O}\left(\max\left\{\sqrt{\frac{M^2}{\mu\e}\chi}, \frac{\sigma_\Phi^2M^2}{\e^2}\chi \right\}\right)$\\
         \hline
    \end{tabular}
    \caption{\small Summary of the covered results in this paper for solving \eqref{eq:dual_problem_distributed} using dual stochastic approach from Section~\ref{sec:dual} with the stochastic oracle satisfying \eqref{eq:primal_bias_in_stoch_grad_distrib}-\eqref{eq:primal_light_tails_stoch_grad_distrib} with $\delta = 0$ for {\tt R-RRMA-AC-SA$^2$} and $\delta_\varphi = \widetilde{O}\left(\nicefrac{\e }{(M\sqrt{m\chi})}\right)$ for {\tt SSTM{\_}sc} and {\tt SPDSTM}. First column contains assumptions on $f_k$, $k=1,\ldots,m$ in addition to the convexity, $\chi = \chi(W)$. }
    \label{tab:stochastic_bounds_dual}
\end{table}

\subsection{Discussion}\label{sec:discussion}
In this section, we want to discuss some aspects of the proposed results that were not covered in the main part of this paper. First of all, we should say that in the smooth case for the primal approach our bounds for the number of communication steps coincides with the optimal bounds for the number of communication steps for parallel optimization if we substitute the diameter $d$ of the spanning tree in the bounds for parallel optimization by $\widetilde{O}(\sqrt{\chi(W)})$.

However, we want to discuss another interesting difference between parallel and decentralized optimization in terms of the complexity results which was noticed in \cite{dvinskikh2019decentralized}. From the line of works \cite{kulunchakov2019estimate1,kulunchakov2019estimate2,kulunchakov2019generic,lan2018random} it is known that for the problem \eqref{eq:main_problem}+\eqref{eq:finite_sum_minimization} (here we use $m$ instead of $q$ and iterator $k$ instead of $i$ for consistency) with $L$-smooth and $\mu$-strongly convex $f_k$ for all $k=1,\ldots, m$ the optimal number of oracle calls, i.e.\ calculations of of the stochastic gradients of $f_k$ with $\sigma^2$-subgaussian variance is
\begin{equation}
    \widetilde{O}\left(m + \sqrt{m\frac{L}{\mu}} + \frac{\sigma^2}{\mu\e}\right).\label{eq:optimal_bound_stoch_str_cvx}
\end{equation}
The bad news is that \eqref{eq:optimal_bound_stoch_str_cvx} does not work with full parallelization trick and the best possible way to parallelize it is described in \cite{lan2018random}. However, standard accelerated scheme using mini-batched versions of the stochastic gradients without variance-reduction technique and incremental oracles which gives the bound
\begin{equation}
    \widetilde{O}\left(m\sqrt{\frac{L}{\mu}} + \frac{\sigma^2}{\mu\e}\right)\label{eq:classical_accelerated_scheme}
\end{equation}
for the number of oracle calls and it admits full parallelization. It means that in the parallel optimization setup when we have computational network with $m$ nodes and the spanning tree for it with diameter $d$ the number of oracle calls per node is
\begin{equation}
    \widetilde{O}\left(\sqrt{\frac{L}{\mu}} + \frac{\sigma^2}{m\mu\e}\right) = \widetilde{O}\left(\max\left\{\sqrt{\frac{L}{\mu}},\frac{\sigma^2}{m\mu\e}\right\}\right)\label{eq:paralel_opt_oracle_per_node}
\end{equation}
and the number of communication steps is
\begin{equation}
    \widetilde{O}\left(d\sqrt{\frac{L}{\mu}}\right).\label{eq:parallel_opt_communications}
\end{equation}
However, for the decentralized setup the second row of Table~\ref{tab:stochastic_bounds_primal} states that the number of communication rounds is the same as in \eqref{eq:parallel_opt_communications} up to substitution of $d$ by $\sqrt{\chi(W)}$ and the number of oracle calls per node is
\begin{equation}
    \widetilde{O}\left(\max\left\{\sqrt{\frac{L}{\mu}},\frac{\sigma^2}{\mu\e}\right\}\right)\label{eq:decentralized_opt_oracle_per_node}
\end{equation}
which has $m$ times bigger statistical term under the maximum than in \eqref{eq:paralel_opt_oracle_per_node}. What is more, recently it was shown that there exists such a decentralized distributed method that requires
$$\widetilde{O}\left( \frac{\sigma^2}{m\mu\e}\right)$$ 
stochastic gradient oracle calls per node \cite{olshevsky2019asymptotic,olshevsky2019non}, but it is not optimal in terms of the number of communications. 
\ar{Recently a stochastic optimization method with consensus subroutine for time-varying graphs requiring $\tilde O\left(\sigma^2/(n\mu\eps)\right)$ oracle calls and $\tilde O\left(\sqrt{{L}/{\mu}}\chi\right)$ communications was proposed in \cite{rogozin2021accelerated}. The results of \cite{rogozin2021accelerated} can be easily extended to $\tilde O(\sqrt{{L}/{\mu}}\sqrt\chi)$ communication complexity in the time-static case via employing accelerated consensus with Chebyshev acceleration.} Moreover, there is a hypothesis \cite{dvinskikh2019decentralized} that in the smooth case the bounds from Tables~\ref{tab:deterministic_bounds_primal}~and~\ref{tab:stochastic_bounds_primal} (rows 2 and 3) are not optimal in terms of the number of oracle calls per node and optimal ones can be found in Table~\ref{T:stoch_prima_oracle}.


\subsection{Application for Population Wasserstein Barycenter Calculation}\label{sec:wasserstein}

In this section we consider the problem of calculation of population Wasserstein barycenter since this example hides different interesting details connected with the theory discussed in this paper. In our presentation of this example we rely mostly on the recent works \dm{\cite{dvinskikh2020sa,dvinskikh2021decentralized}}.

\subsubsection{Definitions and Properties}\label{sec:wass_defs}
We define the probability simplex in $\R^n$ as $S_n(1) = \left\{x\in\R_{+}^n\mid \sum_{i=1}^n x_i  = 1\right\}$. One can interpret the elements of $S_n(1)$ as discrete probability measures with $n$ shared atoms. For an arbitrary pair of measures $p,q\in S_n(1)$ we introduce the set $\Pi(p,q) = \left\{\pi\in\R_+^{n\times n}\mid \pi\one = p,\; \pi^\top\one = q\right\}$ called transportation polytope. Optimal transportation (OT) problem between measures $p,q\in S_n(1)$ is defined as follows
\begin{equation}
    \cW(p,q) = \min\limits_{\pi\in\Pi(p,q)}\langle C, \pi\rangle = \min\limits_{\pi\in\Pi(p,q)}\sum\limits_{i,j=1}^n C_{ij}\pi_{ij}\label{eq:wasserstein_distance}
\end{equation}
where $C$ is a transportation cost matrix. That is, $(i,j)$-th component $C_{ij}$ of $C$ is a cost of transportation of the unit mass from point $x_i$ to the point $x_j$ where points
are atoms of measures from $S_n(1)$.

Next, we consider the entropic OT problem (see \cite{peyre2019computational, rigollet2018entropic})
\begin{equation}
    \cW_\mu (p,q) = \min_{\pi\in \Pi(p,q)}\sum\limits_{i,j=1}^n \left(C_{ij}\pi_{ij} + \mu\pi_{ij}\ln\pi_{ij}\right).\label{eq:entropic_wasserstein_distance}
\end{equation}
Consider some probability measure $\PP$ on $S_n(1)$. Then one can define population barycenter of measures from $S_n(1)$ as
\begin{equation}
    p_\mu^* = \argmin\limits_{p\in S_n(1)}\int_{q\in S_n(1)}\cW_\mu(p,q)d\PP(q) = \argmin\limits_{p\in S_n(1)}\underbrace{\EE_q\left[\cW_\mu(p,q)\right]}_{\cW_\mu(p)}.\label{eq:population_barycenter}
\end{equation}
For a given set of samples $q^1,\ldots, q^m$ we introduce empirical barycenter as
\begin{equation}
    \hat p_\mu^* = \argmin\limits_{p\in S_n(1)}\underbrace{\frac{1}{m}\sum\limits_{i=1}^m\cW_\mu(p,q^i)}_{\hat\cW(p)}.\label{eq:empirical_barycenter}
\end{equation}
We consider the problem \eqref{eq:population_barycenter} of finding population barycenter with some accuracy and discuss possible approaches to solve this problem in the following subsections.

However, before that, we need to mention some useful properties of $\cW_\mu(p,q)$. First of all, one can write explicitly the dual function of $W_\mu(p,q)$ for a fixed $q\in S_n(1)$ (see \cite{cuturi2016smoothed,dvinskikh2020sa}):
\begin{eqnarray}
    \cW_\mu(p,q) &=& \max\limits_{\lambda \in \R^n}\left\{\la\lambda, p\ra - \cW_{q,\mu}^*(\lambda)\right\}\label{eq:wasserstein_dist_sual_reformulation}\\
    \cW_{q,\mu}^*(\lambda) &=& \mu\sum\limits_{j=1}^n q_j\ln\left(\frac{1}{q_j}\sum\limits_{i=1}^n\exp\left(\frac{-C_{ij} + \lambda_i}{\mu}\right)\right).\label{eq:dual_function_wasserstein_distance}
\end{eqnarray}
Using this representation one can deduce the following theorem.
\begin{theorem}[\cite{dvinskikh2020sa}]\label{thm:wasserstein_dist_properties}
    For an arbitrary $q\in S_n(1)$ the entropic Wasserstein distance $\cW_\mu(\cdot,q): S_n(1) \to \R$ is $\mu$-strongly convex w.r.t.\ $\ell_2$-norm and $M$-Lipschitz continuous w.r.t.\ $\ell_2$-norm. Moreover,  $M \le \sqrt{n}M_\infty$ where $M_\infty$ is Lipschitz constant of $\cW_\mu(\cdot,q)$ w.r.t.\ $\ell_\infty$-norm and\footnote{Under assumption that measures are separated from zero, see the details in \cite{blanchet2018towards} and the proof of Proposition~2.5 from \cite{dvinskikh2020sa}.} $M_\infty = \widetilde{O}(\|C\|_\infty)$.
\end{theorem}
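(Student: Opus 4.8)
The plan is to establish the three claims of Theorem~\ref{thm:wasserstein_dist_properties} separately, using the dual representation \eqref{eq:wasserstein_dist_sual_reformulation}--\eqref{eq:dual_function_wasserstein_distance} as the main analytic tool. For strong convexity, the key observation is that $\cW_\mu(\cdot,q)$ is itself an entropic optimal transport value, and strong convexity in $p$ should follow from the strong convexity of the negative entropy term $\mu\sum_{ij}\pi_{ij}\ln\pi_{ij}$ together with the affine dependence of the marginal constraint $\pi\one = p$ on $p$. Concretely, I would fix $p_0,p_1\in S_n(1)$, let $\pi_0,\pi_1$ be the corresponding optimal plans, take $\pi_t = (1-t)\pi_0 + t\pi_1$ as a feasible plan for $p_t = (1-t)p_0 + tp_1$, and use the $1$-strong convexity (in $\ell_2$, or even $\ell_1$ via Pinsker) of the entropy functional on the simplex of matrices to get $\cW_\mu(p_t,q) \le (1-t)\cW_\mu(p_0,q) + t\cW_\mu(p_1,q) - \tfrac{\mu}{2}t(1-t)\|\pi_1 - \pi_0\|^2$, and finally bound $\|\pi_1 - \pi_0\|$ below by $\|p_1 - p_0\|$ using $\pi\one = p$ (the marginal map is a contraction in the reverse direction, so this needs a small argument: $\|p_1-p_0\|_2 = \|(\pi_1-\pi_0)\one\|_2 \le \sqrt n\|\pi_1-\pi_0\|_2$ is the wrong direction, so instead I would appeal directly to the known fact, cited to \cite{cuturi2016smoothed,dvinskikh2020sa}, that the dual $\cW_{q,\mu}^*$ is $\tfrac1\mu$-smooth, which by Fenchel duality is equivalent to $\mu$-strong convexity of $\cW_\mu(\cdot,q)$).

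For the smoothness/Lipschitz claim I would work from the dual side: since $\cW_\mu(\cdot,q)$ is the Fenchel conjugate (up to sign conventions) of the smooth function $\cW_{q,\mu}^*$, and since $S_n(1)$ is bounded, the subgradients of $\cW_\mu(\cdot,q)$ are exactly the dual optimizers $\lambda^*(p)$, which are bounded on the simplex. I would then estimate $\|\lambda^*(p)\|_\infty$: the first-order optimality conditions for the entropic OT dual give $\lambda_i^* = \mu\ln p_i - \mu\ln\big(\sum_j (q_j/u_j)\exp(-C_{ij}/\mu)\big)$ for suitable dual potential $u$, and under the assumption that the measures are bounded away from zero one gets $\|\lambda^*\|_\infty = \widetilde O(\|C\|_\infty)$; this is precisely Proposition~2.5 of \cite{dvinskikh2020sa} and I would simply invoke it. The bound $M \le \sqrt n M_\infty$ is then the elementary norm inequality $\|\lambda^*\|_2 \le \sqrt n \|\lambda^*\|_\infty$.

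The main obstacle I expect is the $\|\lambda^*\|_\infty$ estimate, i.e. controlling the dual optimal potentials of the entropic transport problem uniformly in $p$: this is exactly where the hypothesis ``measures separated from zero'' (footnoted to \cite{blanchet2018towards}) is needed, because without it $\ln p_i \to -\infty$ and the Lipschitz constant blows up. Since the excerpt explicitly defers this to \cite{dvinskikh2020sa} and \cite{blanchet2018towards}, in the write-up I would state the stability estimate for the Sinkhorn potentials as a lemma with a pointer to those references rather than reproving it, and spend the bulk of the argument on the clean strong-convexity half, which follows transparently from the $\tfrac1\mu$-smoothness of $\cW_{q,\mu}^*$ in \eqref{eq:dual_function_wasserstein_distance} (a direct Hessian computation: $\nabla^2\cW_{q,\mu}^*$ is $\tfrac1\mu$ times a Markov-type matrix with spectral norm at most $1$) combined with the duality correspondence between smoothness and strong convexity already used elsewhere in the paper (cf.\ \cite{kakade2009duality,Rockafellar2015}).
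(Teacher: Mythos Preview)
The paper does not supply its own proof of this theorem: it is stated with a citation to \cite{dvinskikh2020sa}, and the only additional content in the text is the footnote deferring the bound $M_\infty = \widetilde O(\|C\|_\infty)$ to \cite{blanchet2018towards} and Proposition~2.5 of \cite{dvinskikh2020sa}. So there is no in-paper argument to compare against; your outline is effectively a reconstruction of the cited proof.

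That said, your reconstruction is sound and matches the standard route. Two remarks. First, your instinct to abandon the direct ``interpolate optimal plans'' argument is correct: the marginal map satisfies $\|p_1-p_0\|_2 \le \sqrt{n}\,\|\pi_1-\pi_0\|_2$, so that line would only deliver $\mu/n$-strong convexity. Going through Fenchel duality is the right fix, and your Hessian observation for \eqref{eq:dual_function_wasserstein_distance} (a $q$-convex combination of scaled softmax covariances $\tfrac1\mu(\mathrm{diag}(p^{(j)})-p^{(j)}(p^{(j)})^\top)\preceq \tfrac1\mu I$) gives exactly the $\tfrac1\mu$-smoothness needed to invoke \cite{kakade2009duality,Rockafellar2015}. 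Second, for the Lipschitz half you correctly read $M_\infty$ as $\sup_p\|\lambda^*(p)\|_\infty$ (so that $M\le\sqrt n\,M_\infty$ is just $\|\cdot\|_2\le\sqrt n\,\|\cdot\|_\infty$); be aware that this is not literally ``the Lipschitz constant w.r.t.\ $\ell_\infty$'' in the dual-norm sense, but it is the quantity controlled by the Sinkhorn-potential stability estimates in the cited references, and is clearly what the statement intends. Your decision to black-box that estimate and point to \cite{blanchet2018towards,dvinskikh2020sa} is exactly what the paper itself does.
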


We also want to notice that function $\cW_{q,\mu}^*(\lambda)$ is only strictly convex and the minimal eigenvalue of its hessian $\gamma \eqdef \lambda_{\min}(\nabla^2\cW_{q,\mu}(\lambda^*))$ evaluated in the solution $\lambda^* \eqdef \argmax_{\lambda\in \R^n}\left\{\la\lambda, p\ra - \cW_{q,\mu}^*(\lambda)\right\}$ is very small and there exist only such bounds that are exponentially small in $n$.

We will also use another useful relation (see \cite{dvinskikh2020sa}):
\begin{equation}
    \nabla\cW_\mu(p,q) = \lambda^*,\quad \la\lambda^*, \one \ra = 0 \label{eq:dual_solution_grad_wasser_dist_relation}
\end{equation}
where the gradient $\nabla\cW_\mu(p,q)$ is taken w.r.t.\ the first argument.

\subsubsection{SA Approach}\label{sec:sa_approach_barycenter}
Assume that one can obtain and use fresh samples $q^1,q^2,\ldots$ in online regime. This approach is called Stochastic Approximation (SA). It implies that at each iteration one can draw a fresh sample $q^k$ and compute the gradient w.r.t.\ $p$ of function $\cW_\mu(p,q^k)$ which is $\mu$-strongly convex and $M$-Lipschitz continuous with $M = \widetilde{O}(\sqrt{n}\|C\|_\infty)$. Optimal methods for this case are based on iterations of the following form
\begin{equation*}
    p^{k+1} = \text{proj}_{S_n(1)}\left(p^k - \eta_k \nabla \cW_\mu(p^k,q^k)\right)
\end{equation*}
where $\text{proj}_{S_n(1)}(x)$ is a projection of $x\in\R^n$ on $S_n(1)$ and the gradient $\nabla \cW_\mu(p^k,q^k)$ is taken w.r.t.\ the first argument. One can show that {\tt restarted-SGD} ({\tt R-SGD}) from \cite{juditsky2014deterministic} that using biased stochastic gradients (see also \cite{juditsky2012first-order,gasnikov2016gradient-free,dvinskikh2020sa}) $\tnabla\cW_\mu(p,q)$ such that
\begin{equation}
    \|\tnabla\cW_\mu(p,q) - \nabla \cW_\mu(p,q)\|_2 \le \delta\label{eq:barycenter_inexact_grad}
\end{equation}
for some $\delta \ge 0$ and for all $p,q\in S_n(1)$ after $N$ calls of this oracle produces such a point $p^N$ that with probability at least $1-\beta$ the following inequalities hold:
\begin{equation}
    \cW_\mu(p^N) - \cW_\mu(p_\mu^*) = O\left(\frac{n\|C\|_\infty^2\ln(\nicefrac{N}{\alpha})}{\mu N} + \delta\right)\label{eq:r-sgd_pop_barycenter_func_guarantee}
\end{equation}
and, as a consequence of $\mu$-strong convexity of $\cW_\mu(p,q)$ for all $q$,
\begin{equation}
    \|p^N - p_\mu^*\|_2 = O\left(\sqrt{\frac{n\|C\|_\infty^2\ln(\nicefrac{N}{\alpha})}{\mu^2 N} + \frac{\delta}{\mu}}\right).\label{eq:r-sgd_pop_barycenter_distance_guarantee}
\end{equation}
That is, to guarantee
\begin{equation}
    \|p^N - p_\mu^*\|_2 \le \e\label{eq:r-sgd_pop_barycenter_distance_guarantee_eps}
\end{equation}
with probability at least $1-\beta$, {\tt R-SGD} requires
\begin{equation}
    \widetilde{O}\left(\frac{n\|C\|_\infty^2}{\mu^2\e^2}\right) \quad \tnabla \cW_\mu(p,q) \text{ oracle calls}\label{eq:r-sgd_number_of_oracle_calls}
\end{equation}
under additional assumption that $\delta = O(\mu\e^2)$.

However, it is computationally hard problem to find $\nabla \cW_\mu(p,q)$ with high-accuracy, i.e.\ find $\tnabla \cW_\mu(p,q)$ satisfying \eqref{eq:barycenter_inexact_grad} with $\delta = O(\mu\e^2)$. Taking into account the relation \eqref{eq:dual_solution_grad_wasser_dist_relation} we get that it is needed to solve the problem \eqref{eq:wasserstein_dist_sual_reformulation} with accuracy $\delta = O(\mu\e^2)$ in terms of the distance to the optimum. i.e.\ it is needed to find such $\tilde\lambda$ that $\|\tilde\lambda - \lambda^*\|_2 \le \delta$ and set $\tnabla \cW_\mu(p,q) = \tilde\lambda$. Using variants of Sinkhorn algorithm \cite{kroshnin2019complexity,stonyakin2019gradient,guminov2019accelerated} one can show \cite{dvinskikh2020sa} that {\tt R-SGD} finds point $p^N$ such that \eqref{eq:r-sgd_pop_barycenter_distance_guarantee_eps} holds with probability at least $1-\beta$ and it requires
\begin{equation}
    \widetilde{O}\left(\frac{n^3\|C\|_\infty^2}{\mu^2\e^2}\min\left\{\exp\left(\frac{\|C\|_\infty}{\mu}\right)\left(\frac{\|C\|_\infty}{\mu} + \ln\left(\frac{\|C\|_\infty}{\gamma\mu^2\e^4}\right)\right), \sqrt{\frac{n}{\gamma\mu^3\e^4}}\right\}\right)\label{eq:sa_overall_complexity}
\end{equation}
arithmetical operations.

\subsubsection{SAA Approach}\label{sec:saa_approach_barycenter}
Now let us assume that large enough collection of samples $q^1,\ldots,q^m$ is available. Our goal is to find such $p\in S_n(1)$ that $\|\hat p - p_\mu^*\|_2 \le \e$ with high probability, i.e.\ $\e$-approximation of the population barycenter, via solving empirical barycenter problem \eqref{eq:empirical_barycenter}. This approach is called Stochastic Average Approximation (SAA). Since $\cW_\mu(p,q^i)$ is $\mu$-strongly convex and $M$-Lipschitz in $p$ with $M = \widetilde{O}(\sqrt{n}\|C\|_\infty)$ for all $i=1,\ldots,m$ we can conclude that with probability $\ge 1-\beta$
\begin{equation}
    \cW_\mu(\hat p_\mu^*) - \cW_\mu(p_\mu^*) \overset{\eqref{eq:str_convex_erm_argmin_property}}{=} O\left(\frac{n\|C\|_\infty^2\ln(m)\ln\left(\nicefrac{m}{\beta}\right)}{\mu m} + \sqrt{\frac{n\|C\|_\infty^2\ln\left(\nicefrac{1}{\beta}\right)}{m}}\right)\label{eq:erm_rm_difference_no_beta}
\end{equation}
where we use that the diameter of $S_n(1)$ is $O(1)$. Moreover, in \cite{shalev2009stochastic} it was shown that one can guarantee that with probability $\ge 1-\beta$
\begin{equation}
    \cW_\mu(\hat p_\mu^*) - \cW_\mu(p_\mu^*) \overset{\eqref{eq:str_convex_erm_argmin_property}}{=} O\left(\frac{n\|C\|_\infty^2}{\beta\mu m}\right).\label{eq:erm_rm_difference_beta}
\end{equation}
Taking advantages of both inequalities we get that if
\begin{equation}
    m = \widetilde{\Omega}\left(\min\left\{\max\left\{\frac{n\|C\|_\infty^2}{\mu^2\varepsilon^2},\frac{n\|C\|_\infty^2}{\mu^2\varepsilon^4}\right\},\frac{n\|C\|_\infty^2}{\beta\mu^2\e^2}\right\}\right) = \widetilde{\Omega}\left(n\min\left\{\frac{\|C\|_\infty^2}{\mu^2\varepsilon^4},\frac{\|C\|_\infty^2}{\beta\mu^2\e^2}\right\}\right)\label{eq:barycenters_needed_sample_size}
\end{equation}
then with probability at least $1-\frac{\beta}{2}$
\begin{equation}
    \|\hat p_\mu^* - p_\mu^*\|_2 \le \sqrt{\frac{2}{\mu}\left(\cW_\mu(\hat p_\mu^*) - \cW_\mu(p_\mu^*)\right)} \overset{\eqref{eq:erm_rm_difference_no_beta},\eqref{eq:erm_rm_difference_beta},\eqref{eq:barycenters_needed_sample_size}}{\le} \frac{\e}{2}.\label{eq:population_and_empirical_risks}
\end{equation}
Assuming that we have such $\hat p\in S_n(1)$ that with probability at least $1-\frac{\beta}{2}$ the inequality 
\begin{equation}
    \|\hat p - \hat p_\mu^*\|_2 \le \frac{\e}{2}\label{eq:empirical_barycenter_eps_solution}
\end{equation}
holds, we apply the union bound and get that with probability $\ge 1 - \beta$
\begin{equation}
    \|\hat p - p_\mu^*\|_2 \le \|\hat p - \hat p_\mu^*\|_2 + \|\hat p_\mu^* - p_\mu^*\|_2 \le \e.\label{eq:e_solution_population_barycenter}
\end{equation}

It remains to describe the approach that finds such $\hat p \in S_n(1)$ that satisfies \eqref{eq:e_solution_population_barycenter} with probability at least $1-\beta$. Recall that in this subsection we consider the following problem
\begin{equation}
    \hat\cW_\mu(p) = \frac{1}{m}\sum\limits_{i=1}^m \cW_\mu(p,q^i) \to \min\limits_{p\in S_n(1)}.
\end{equation}
For each summand $\cW_\mu(p,q^i)$ in the sum above we have the explicit formula \eqref{eq:dual_function_wasserstein_distance} for the dual function $\cW_{q^i,\mu}^*(\lambda)$. Note that one can compute the gradient of $\cW_{q^i,\mu}^*(\lambda)$ via $O(n^2)$ arithmetical operations. What is more, $\cW_{q^i,\mu}^*(\lambda)$ has a finite-sum structure, so, one can sample $j$-th component of $q^i$ with probability $q_j^i$ and get stochastic gradient
\begin{equation}
    \nabla \cW_{q^i,\mu}^*(\lambda,j) = \mu\nabla\left(\ln\left(\frac{1}{q_j^i}\sum\limits_{i=1}^n\exp\left(\frac{-C_{ij} + \lambda_i}{\mu}\right)\right)\right)\label{eq:stoch_grad_dual_wasserstein}
\end{equation}
which requires $O(n)$ arithmetical operations to be computed.

We start with the simple situation. Assume that each measures $q^i$ are stored on $m$ separate machines that form some network with Laplacian matrix $\overline{W}\in\R^{m\times m}$. For this scenario we can apply the dual approach described in Section~\ref{sec:distributed_opt} and apply bounds from Table~\ref{tab:stochastic_bounds_dual}. If for all $i= 1,\ldots, m$ the $i$-th node computes the full gradient of dual functions $\cW_{q^i,\mu}$ at each iteration then in order to find such a point $\hat p$ that with probability at least $1-\frac{\beta}{2}$
\begin{equation}
    \hat\cW_{\mu}(\hat p) - \hat\cW_{\mu}(\hat p_\mu^*) \le \hat\e,\label{eq:empirical_functional_gap}
\end{equation}
where $W = \overline{W}\otimes I_n$, this approach requires $\widetilde{O}\left(\sqrt{\frac{n\|C\|_\infty^2}{\mu\hat\e}\chi(W)}\right)$ communication rounds and $\widetilde{O}\left(n^{2.5}\sqrt{\frac{\|C\|_\infty^2}{\mu\hat\e}\chi(W)}\right)$ arithmetical operations per node to find gradients $\nabla \cW_{q^i,\mu}^*(\lambda)$. If instead of full gradients workers use stochastic gradients $\nabla \cW_{q^i,\mu}^*(\lambda,j)$ defined in \eqref{eq:stoch_grad_dual_wasserstein} and these stochastic gradients have light-tailed distribution, i.e.\ satisfy the condition \eqref{eq:dual_light_tails_stoch_grad_distrib} with parameter $\sigma > 0$, then to guarantee \eqref{eq:empirical_functional_gap} with probability $\ge 1-\frac{\beta}{2}$ the aforementioned approach needs the same number of communications rounds and $\widetilde{O}\left(n\max\left\{\sqrt{\frac{n\|C\|_\infty^2}{\mu\hat\e}\chi(W)}, \frac{m\sigma^2n\|C\|_\infty^2}{\hat\e^2}\chi(W)\right\}\right)$ arithmetical operations per node to find gradients $\nabla \cW_{q^i,\mu}^*(\lambda, j)$. Using $\mu$-strong convexity of $\cW_\mu(p,q^i)$ for all $i=1,\ldots,m$ and taking $\hat\e = \frac{\mu\e^2}{8}$ we get that our approach finds such a point $\hat p$ that satisfies \eqref{eq:empirical_barycenter_eps_solution} with probability at least $1-\frac{\beta}{2}$ using
\begin{equation}
    \widetilde{O}\left(\frac{\sqrt{n}\|C\|_\infty}{\mu\e}\sqrt{\chi(W)}\right) \quad \text{communication rounds}\label{eq:communic_rounds_dual_barycenters}
\end{equation}
and 
\begin{equation}
    \widetilde{O}\left(n^{2.5}\frac{\|C\|_\infty}{\mu\e}\sqrt{\chi(W)}\right)\label{eq:arithm_opers_deterministic_dual}
\end{equation}
arithmetical operations per node to find gradients in the deterministic case and
\begin{equation}
    \widetilde{O}\left(n\max\left\{\frac{\sqrt{n}\|C\|_\infty}{\mu\e}\sqrt{\chi(W)}, \frac{m\sigma^2n\|C\|_\infty^2}{\mu^2\e^4}\chi(W)\right\}\right)\notag
\end{equation}
arithmetical operations per node to find stochastic gradients in the stochastic case. However, the state-of-the-art theory of learning states (see \eqref{eq:barycenters_needed_sample_size}) that $m$ should so large that in the stochastic case the second term in the bound for arithmetical operations typically dominates the first term and the dimensional dependence reduction from $n^{2.5}$ in the deterministic case to $n^{1.5}$ in the stochastic case is typically negligible in comparison with how much $\frac{m\sigma^2\sqrt{n}\|C\|_\infty^2}{\mu^2\e^4}\chi(W)$ is larger than $\frac{\|C\|_\infty}{\mu\e}\sqrt{\chi(W)}$. That is, our theory says that it is better to use full gradients in the particular example considered in this section (see also Section~\ref{sec:discussion}). Therefore, further in the section we will assume that $\sigma^2 = 0$, i.e.\ workers use full gradients of dual functions $\cW_{q^i,\mu}^*(\lambda)$.

However, bounds \eqref{eq:communic_rounds_dual_barycenters}-\eqref{eq:arithm_opers_deterministic_dual} were obtained under very restrictive at the first sight assumption that we have $m$ workers and each worker stores only one measure which is unrealistic. One can relax this assumption in the following way. Assume that we have $\hat l < m$ machines connected in a network with Laplacian matrix $\hat{W}$ and $j$-th machine stores $\hat m_j \ge 1$ measures for $j=1,\ldots, \hat l$ and $\sum_{j=1}^{\hat l}\hat m_j = m$. Next, for $j$-th machine we introduce $\hat m_j$ virtual workers also connected in some network that $j$-th machine can emulate along with communication between virtual workers and for every virtual worker we arrange one measure, e.g.\ it can be implemented as an array-like data structure with some formal rules for exchanging the data between cells that emulates communications. We also assume that inside the machine we can set the preferable network for the virtual nodes in such a way that each machine emulates communication between virtual nodes and computations inside them fast enough. Let us denote the Laplacian matrix of the obtained network of $m$ virtual nodes as $\overline{W}$. Then, our approach finds such a point $\hat p$ that satisfies \eqref{eq:empirical_barycenter_eps_solution} with probability at least $1-\frac{\beta}{2}$ using
\begin{equation}
    \widetilde{O}\left(\underbrace{\left(\max\limits_{j=1,\ldots,\hat l}T_{\text{cm},j}\right)}_{T_{\text{cm},\max}}\frac{\sqrt{n}\|C\|_\infty}{\mu\e}\sqrt{\chi(W)}\right)\label{eq:communic_rounds_dual_barycenters_virtual}
\end{equation}
time to perform communications and 
\begin{equation}
    \widetilde{O}\left(\underbrace{\left(\max\limits_{j=1,\ldots,\hat l}T_{\text{cp},j}\right)}_{T_{\text{cp},\max}} n^{2.5}\frac{\|C\|_\infty}{\mu\e}\sqrt{\chi(W)}\right)\label{eq:arithm_opers_deterministic_dual_virtual}
\end{equation}
time for arithmetical operations per machine to find gradients where $T_{\text{cm},j}$ is time needed for $j$-th machine to emulate communication between corresponding virtual nodes at each iteration and $T_{\text{cp},j}$ is time required by $j$-th machine to perform $1$ arithmetical operation for all corresponding virtual nodes in the gradients computation process at each iteration. For example, if we have only one machine and network of virtual nodes forms a complete graph than $\chi(W) = 1$, but $T_{\text{cm},\max}$ and $T_{\text{cp},\max}$ can be large and to reduce the running time one should use more powerful machine. In contrast, if we have $m$ machines connected in a star-graph than $T_{\text{cm},\max}$ and $T_{\text{cp},\max}$ will be much smaller, but $\chi(W)$ will be of order $m$ which is large. Therefore, it is very important to choose balanced architecture of the network at least for virtual nodes per machine if it is possible. This question requires a separate thorough study and lies out of scope of this paper.

\subsubsection{SA vs SAA comparison}\label{sec:sa_vs_saa_comparison}
Recall that in SA approach we assume that it is possible to sample new measures in online regime which means that the computational process is performed on one machine, whereas in SAA approach we assume that large enough collection of measures is distributed among the network of machines that form some computational network. In practice measures from $S_n(1)$ correspond to some images. As one can see from the complexity bounds, both SA and SAA approaches require large number of samples to learn the population barycenter defined in \eqref{eq:population_barycenter}. If these samples are images, then they typically cannot be stored in RAM of one computer. Therefore, it is natural to use distributed systems to store the data. 

Now let us compare complexity bounds for SA and SAA. We summarize them in Table~\ref{tab:sa_saa_comparison}.
\begin{table}[ht!]
    \centering
    \begin{tabular}{|c|c|c|c|}
         \hline
         Approach & Complexity\\
         \hline
         SA &  \makecell{$\widetilde{O}\left(\frac{n^3\|C\|_\infty^2}{\mu^2\e^2}\min\left\{\exp\left(\frac{\|C\|_\infty}{\mu}\right)\left(\frac{\|C\|_\infty}{\mu} + \ln\left(\frac{\|C\|_\infty}{\gamma\mu^2\e^4}\right)\right), \sqrt{\frac{n}{\gamma\mu^3\e^4}}\right\}\right)$\\
         arithmetical operations}\\
         \hline
         \makecell{SA,\\
         the 2-d term\\
         is smaller} &  \makecell{$\widetilde{O}\left(\frac{n^{3.5}\|C\|_\infty^2}{\sqrt{\gamma}\mu^{3.5}\e^4}\right)$ arithmetical operations}\\
         \hline
         SAA & \makecell{$\widetilde{O}\left(T_{\text{cm},\max}\frac{\sqrt{n}\|C\|_\infty}{\mu\e}\sqrt{\chi(W)}\right)$ time to perform communications,\\
         $ \widetilde{O}\left(T_{\text{cp},\max} n^{2.5}\frac{\|C\|_\infty}{\mu\e}\sqrt{\chi(W)}\right)$ time for arithmetical operations per machine,\\
         where $m = \widetilde{\Omega}\left(n\min\left\{\frac{\|C\|_\infty^2}{\mu^2\varepsilon^4},\frac{\|C\|_\infty^2}{\beta\mu^2\e^2}\right\}\right)$} \\
         \hline
         \makecell{SAA,\\
         $\chi(W) = \Omega(m)$,\\
         $T_{\text{cm},\max} = O(1)$,\\
         $T_{\text{cp},\max} = O(1)$,\\
         $\sqrt{\beta} \ge \e$}& \makecell{$\widetilde{O}\left(\frac{n\|C\|_\infty^2}{\sqrt{\beta}\mu^2\e^2}\right)$ communication rounds,\\
         $\widetilde{O}\left(\frac{n^3\|C\|_\infty^2}{\sqrt{\beta}\mu^2\e^2}\right)$ arithmetical operations per machine} \\
         \hline
    \end{tabular}
    \caption{Complexity bounds for SA and SAA approaches for computation of population barycenter defined in \eqref{eq:population_barycenter} with accuracy $\e$. The third row states the complexity bound for SA approach when the second term under the minimum in \eqref{eq:sa_overall_complexity} is dominated by the first one, e.g.\ when $\mu$ is small enough. The last row corresponds to the case when $T_{\text{cm},\max} = O(1)$, $T_{\text{cp},\max} = O(1)$, $\sqrt{\beta} \ge \e$, e.g.\ $\beta = 0.01$ and $\e \le 0.1$, and the communication network is star-like, which implies $\chi(W) = \Omega(m)$}
    \label{tab:sa_saa_comparison}
\end{table}
When the communication is fast enough and $\mu$ is small we typically have that SAA approach significantly outperforms SA approach in terms of the complexity as well even for communication architectures with big $\chi(W)$. Therefore, for balanced architecture one can expect that SAA approach will outperform SA even more.

To conclude, we state that population barycenter computation is a natural example when it is typically much more preferable to use distributed algorithms with dual oracle instead of SA approach in terms of memory and complexity bounds.



\section{Derivative-Free Distributed Optimization}\label{beznosikov}

As mentioned above in Section \ref{gorbunov}, the decentralized optimization problem can be rewritten as a problem with affine constraints:
\begin{equation}
    \label{temp1}
    \min\limits_{\substack{\sqrt{W}\x = 0, \\ x_1,\ldots, x_m \in Q}} f(\x) = \frac{1}{m}\sum\limits_{i=1}^m f_i(x_i),
\end{equation}
where we use matrix $W \eqdef \overline{W}\otimes I_n$ for Laplacian matrix $\overline{W} = \|\overline{W}_{ij}\|_{i,j=1,1}^{m,m} \in \R^{m\times m}$ of the connection graph. In turn, the problem with affine constraints:
\begin{equation*}
\min_{Ax=0, x\in Q} f(x),    
\end{equation*}
is rewritten in a penalized form as follows:
\begin{equation}
\label{temp2}
\min_{x\in Q} F(x) = f(x) + \frac{R_y^2}{\varepsilon}\| Ax\|_2^2,  
\end{equation}
with some positive constants $\varepsilon$ and $R_y$ (for details see Section \ref{gorbunov}). As a result, we have a classical composite optimization problem, therefore this section will focus on this problem. In what follows, we will rely on work \cite{beznosikov2019derivative}. Note that the work \cite{stepanov2021one} with a similar results has recently appeared (unlike work \cite{beznosikov2019derivative}, it considers a more practical one-point feedback -- for a more detailed explanation of the difference, see \cite{stepanov2021one}). \arrev{Note also, that results of \cite{beznosikov2019derivative,stepanov2021one} can be generalized for saddle-point problems by using proper version of Sliding technique \cite{lan2021mirror}.} We will find out a method based on the Sliding Algorithm (see \cite{lan2016gradient} and Section \ref{gorbunov}) for the convex composite optimization problem with smooth and non-smooth terms. One can find gradient-free methods for distributed optimization in the literature (see \cite{ligf2014,tang2020distributed}), but the method that will be discussed further is the first, which combines zeroth-order and first-order oracles. Its uses the first-order oracle for the smooth part and the zeroth-order oracle for the non-smooth part. 

\subsection{Theoretical part}

\subsubsection{Convex Case}\label{sec:main_res_cvx}

We consider\footnote{The narrative in this section follows \cite{beznosikov2019derivative}.} the composite optimization problem
\begin{equation}
    \label{problem_orig} 
    \min\limits_{x \in Q} \Psi_0(x) = f(x) + g(x).
\end{equation}

In this part of paper, we will work not in the Euclidean norm $\|\cdot\|_2$, but in a certain norm $\| \cdot\|$ (and the dual norm $\|\cdot\|_*$ for the norm $\|\cdot\|$). Also define the Bregman divergence associated with some function $\nu(x)$, which is $1$-strongly convex w.r.t. $\|\cdot\|$-norm and differentiable on $Q$, as follows
    \begin{equation*}
        V(x,y) = \nu(y) - \nu(x) - \la\nabla \nu(x), y-x \ra ,\quad \forall x,y\in Q.
    \end{equation*}
The use of Bregman divergence and special norms allows taking into account the geometric setup of the problem. For example, when we work with the problem in a probability simplex, it seems natural to use the $\|\cdot\|_1$-norm and the Kullback--Leibler divergence.

Next, we introduce some assumptions for problem \eqref{problem_orig}: $Q\subseteq \R^n$ is a compact and convex set with diameter $D_Q$ in $\|\cdot\|$-norm, function $g$ is convex and $L$-smooth on $Q$ w.r.t. norm $\|\cdot\|$, i.e.
    \begin{equation*}
        \|\nabla g(x) - \nabla g(y)\|_* \le L\|x - y\|,\quad \forall x,y\in Q,
    \end{equation*}
$f$ is convex differentiable function on $Q$. 

Assume that we have an access to the first-order oracle for $g$, i.e. gradient $\nabla g(x)$ is available, and to the biased stochastic zeroth-order oracle for $f$ (see also \cite{gorbunov2018accelerated, beznosikov2020gradient}) that for a given point $x$ returns noisy value $\tilde f(x, \xi)$ such that
\begin{equation}
    \label{tilde_f} 
    \tilde{f}(x, \xi) = f(x, \xi) + \Delta(x),
\end{equation}
where $\Delta(x)$ is a bounded noise of unknown nature
\begin{equation*}
    |\Delta(x)| \leq \Delta
\end{equation*}
and random variable $\xi$ is such that
\begin{equation*}
    \mathbb{E}[f(x,\xi)] = f(x).
\end{equation*}
Additionally, we assume that for all $x \in Q_s$ ($s \le D_Q$)
\begin{equation*}
    \|\nabla f(x,\xi) \|_2 \leq M(\xi),\quad
    \mathbb{E}[M^2(\xi)] = M^2.
\end{equation*}
It is important to note that for the function $f(x)$ these assumptions are made only for theoretical estimates; we have no real access to $\nabla f(x)$. The question is how to replace the gradient of the function $f(x)$. The easiest way is to collect gradient completely using finite differences:
\begin{eqnarray}
    \label{full_coor}
     f'_{\text{full}}\arrev{(x, \xi)}= \frac{1}{{r}}\sum\limits_{i=1}^{n}\left(\tilde f(x+r h_i, \xi) - \tilde f(x-r h_i, \xi) \right)h_i,
\end{eqnarray}
here we consider a standard orthogonal normalized basis $\{h_1, \ldots, h_{n}\}$. This way we really get a vector close to the gradient. The obvious disadvantage of this method is that one need to call the oracle for $\tilde f(x, \xi)$ $2n$ times. Another way is to use random direction $e$ uniformly distributed on the Euclidean sphere (see \cite{Nesterov,Shamir15}):
\begin{equation}
    \label{oracle_f} 
    \tilde{f}_r'(x, \xi , e) = \frac{n}{2 r} (\tilde{f}(x + r e , \xi) - \tilde{f}(x - r e, \xi) ) e.
\end{equation}
In particular, the authors of \cite{beznosikov2019derivative} use this approximation. 

Now another problem arises -- we need to combine the zeroth-order and first-order oracles for different parts of the composite problem. It seems natural that the gradient-free oracle should be called more often than the gradient one. The authors of paper \cite{beznosikov2019derivative} solve this problem and propose to apply the algorithm based on Lan's Sliding \cite{lan2016gradient}. The basic idea is that we fix $\nabla g$ and iterate through the inner loop ($\text{\tt PS}$ procedure), changing only the point $x$ in $\tilde{f}_r'(x, \xi, e)$.
\begin{algorithm} [H]
	\caption{Zeroth-Order Sliding Algorithm ({\tt zoSA})}
	\label{alg}
	\begin{algorithmic}
\State
\noindent {\bf Input:} Initial point $x_0 \in Q$ and iteration limit $N$.
\State Let $\beta_k \in \RR_{++}, \gamma_k \in \RR_+$, and $T_k \in {\mathbb N}$, $k = 1, 2, \ldots$,
be given and
set $\overline x_0 = x_0$. 
\For {$k=1, 2, \ldots, N$ }
    \State 1. Set $\underline x_k = (1 - \gamma_k) \overline x_{k-1} + \gamma_k x_{k-1}$,
    and let $h_k(\cdot) \equiv l_g(\underline x_{k}, \cdot)$ be defined in \eqref{lg}.
    \State 2. Set
    \begin{equation*}
        (x_k, \tilde x_k) = \text{\tt PS}(h_k, x_{k-1}, \beta_k, T_k);
    \end{equation*}
    \State 3. Set $\overline x_k = (1-\gamma_k) \overline x_{k-1} + \gamma_k \tilde x_k$. 
\EndFor
\State 
\noindent {\bf Output:} $\overline x_N$.

\Statex
\Statex The  $\text{\tt{PS} }$(prox-sliding) procedure.

\State {\bf procedure:} {$(x^+, \tilde x^+) = \text{\tt{PS}}$($h$, $x$, $\beta$, $T$)}
\State Let the parameters $p_t \in \R_{++}$ and $\theta_t \in [0,1]$,
$t = 1, \ldots$, be given. Set $u_{0} = \tilde u_0 = x$.
\State {\bf for} $t = 1, 2, \ldots, T$ {\bf do}
    \begin{eqnarray*}
        u_{t} &=& \argmin_{u \in Q} \Big\{h(u) + \langle \tilde{f}_r'(u_{t-1}, \xi_{t-1}, e_{t-1}), u \rangle+\beta V(x, u) + \beta p_t V(u_{t-1}, u)\Big\},\\
        \tilde u_t &=&  (1-\theta_t) \tilde u_{t-1} + \theta_t u_t.
    \end{eqnarray*}
\State {\bf end for}
\State Set $x^+ = u_T$ and  $\tilde x^+ = \tilde u_T$.
\State {\bf end procedure:}
\end{algorithmic}
\end{algorithm}
In the Algorithm~\ref{alg} we need the following function
\begin{equation}
    \label{lg} 
    l_g(x,y) = g(x) + \langle \nabla g(x), y-x \rangle.
\end{equation}
It is important that the random variables $\xi_t$ are independent, and also $e_t$ is sampled independently from previous iterations. 

We also note that {\tt zoSA} (in contrast to the basic version -- Algorithm \ref{alg:sliding}) takes into account the geometric setting of the problem and uses Bregman divergence $V(x,y)$ instead of the standard Euclidean distance in prox-sliding procedure.

Next, we will briefly talk about the convergence of this method (see the full version of the analysis in \cite{beznosikov2019derivative}). First of all, we note the universal technical lemmas that forms a general approach to working with gradient-free methods for non-smooth functions. But before that we introduce a new notation:
\begin{equation}
   \label{F} 
    F(x) = \EE_e [f(x + r e)].
\end{equation}
$F (x)$ is called the smoothed function of $f(x)$. It is important to note that the function $F (x)$ is not calculated by the algorithm, this object is needed only for theoretical analysis. The first lemma states some properties of $F (x)$:
\begin{lemma}\label{lem:lemma_8_shamir_main}
    Assume that differentiable function $f$ defined on $Q_s$ satisfy $\|\nabla f(x)\|_2\le M$ with some constant $M > 0$. Then $F(x)$ defined in \eqref{F} is convex, differentiable and $F(x)$ satisfies
\begin{eqnarray*}
    \sup_{x \in Q} |F(x) - f(x)| \leq rM, \quad \nabla F(x) = \mathbb{E}_{e} \left[\frac{n}{r} f(x + r e)e\right], \quad \|\nabla F(x)\|_* \le \tilde c p_* \sqrt{n}M,
\end{eqnarray*}
where $\tilde c$ is some positive constant independent of $n$ and $p_*$ is determined by the following relation: 
$ \sqrt[4]{\EE[\|e\|_*^4]} \le p_*.$
\end{lemma}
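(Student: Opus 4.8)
\textbf{Proof strategy for Lemma \ref{lem:lemma_8_shamir_main}.}
The plan is to establish the three claimed properties of the smoothed function $F(x) = \EE_e[f(x+re)]$ in turn, relying on the standard smoothing technique of Nesterov and Shamir. First I would record the setup: $e$ is uniformly distributed on the Euclidean unit sphere in $\R^n$, and $F$ is defined on the shrunk set $Q_s$ with $s \le D_Q$ so that $x+re$ stays in the domain where $\|\nabla f\|_2 \le M$ holds. Convexity of $F$ is immediate since $F$ is an average of the convex functions $x \mapsto f(x+re)$ over a fixed distribution, and differentiability follows from the fact that $F$ can equivalently be written as an integral of $f$ over a Euclidean ball (by a change of variables from the sphere to the ball, the spherical average becomes $F(x) = \frac{1}{\mathrm{vol}(B)} \int_{B} c\, f(x + r u)\, du$ up to the appropriate normalization), and integrals of a locally Lipschitz function against a smooth compactly supported density are differentiable in $x$.

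Next I would prove the sup-bound $\sup_{x\in Q}|F(x)-f(x)| \le rM$. This is a one-line estimate: $|F(x)-f(x)| = |\EE_e[f(x+re) - f(x)]| \le \EE_e[|f(x+re)-f(x)|] \le \EE_e[M \|re\|_2] = rM$, using that $\|e\|_2 = 1$ on the sphere and that $f$ is $M$-Lipschitz in $\|\cdot\|_2$ on the relevant neighborhood (which is where the restriction to $Q_s$ is used). Then I would derive the gradient formula $\nabla F(x) = \EE_e\left[\frac{n}{r} f(x+re) e\right]$. The cleanest route is Stokes'/divergence theorem: writing $F$ as a normalized integral over the ball of radius $r$ and differentiating under the integral sign turns the gradient of the ball-average into a surface integral over the sphere, which after normalizing is exactly $\EE_e[\frac{n}{r} f(x+re)e]$. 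Equivalently, one may note that by symmetry $\EE_e[\frac{n}{r} f(x)e] = 0$, so the formula also reads $\nabla F(x) = \EE_e[\frac{n}{2r}(f(x+re)-f(x-re))e]$, the expectation of the two-point estimator \eqref{oracle_f} (without noise), which is how it is used by the algorithm.

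Finally, for the bound $\|\nabla F(x)\|_* \le \tilde c\, p_* \sqrt n\, M$, I would combine the gradient formula with the Lipschitz property. Since $\EE_e[\frac n r f(x) e] = 0$, we have $\nabla F(x) = \EE_e[\frac n r (f(x+re) - f(x)) e]$, hence by the triangle inequality for $\|\cdot\|_*$ and Cauchy--Schwarz,
\begin{align*}
\|\nabla F(x)\|_* \le \frac n r\, \EE_e\big[|f(x+re) - f(x)|\, \|e\|_*\big] \le \frac n r\, \EE_e\big[ M r \|e\|_2 \|e\|_*\big] = nM\, \EE_e[\|e\|_*],
\end{align*}
and then $\EE_e[\|e\|_*] \le (\EE_e[\|e\|_*^4])^{1/4} \le p_*$; a sharper route keeps a factor $\|e\|_2=1$ and uses a concentration/moment estimate for $\|e\|_*$ on the sphere to pull out the extra $1/\sqrt n$, yielding $\|\nabla F(x)\|_* \le \tilde c\, p_* \sqrt n\, M$ for an absolute constant $\tilde c$. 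The main obstacle is getting this last bound with the correct $\sqrt n$ scaling rather than a crude $n$: this requires exploiting the concentration of the spherical measure (so that a ``typical'' direction $e$ has small $\|\cdot\|_*$-to-$\|\cdot\|_2$ ratio) rather than a worst-case bound, and it is here that the fourth-moment quantity $p_*$ enters — I would cite the corresponding estimate from \cite{Shamir15,beznosikov2019derivative} to control $\EE_e[\|e\|_*^4]$ for the norms of interest.
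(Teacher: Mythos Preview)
The paper does not actually prove this lemma in the text; it merely states it and refers to \cite{beznosikov2019derivative} and, implicitly, to the smoothing technique of \cite{Shamir15}. So there is no in-paper argument to compare against. Your outline follows exactly the standard route those references use, and the first three items (convexity, the sup-bound $|F-f|\le rM$, and the Stokes/divergence-theorem derivation of the gradient formula) are fine.

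Two points need tightening. First, your remark that ``by a change of variables the spherical average becomes a ball average'' is not correct as stated: the sphere average and the ball average of $f$ are different objects. What is true is that the gradient identity $\nabla F(x)=\EE_e\big[\tfrac{n}{r}f(x+re)e\big]$ is the divergence-theorem formula for the \emph{ball} smoothing $F(x)=\EE_{u\sim \mathrm{Unif}(B_2^n)}[f(x+ru)]$; the paper's definition \eqref{F} with a sphere-uniform $e$ is a notational slip (common in this literature), and you should say so rather than claim the two averages coincide.

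Second, the mechanism you describe for passing from the crude bound $\|\nabla F(x)\|_*\le nMp_*$ to $\tilde c\sqrt{n}Mp_*$ is off. The extra $1/\sqrt{n}$ does not come from concentration of $\|e\|_*$; it comes from concentration of the \emph{scalar} increment. Use the symmetric representation $\nabla F(x)=\EE_e\big[\tfrac{n}{2r}(f(x+re)-f(x-re))\,e\big]$, set $h(e)=f(x+re)-f(x-re)$, note that $\EE_e[h(e)]=0$ by symmetry and that $h$ is $2Mr$-Lipschitz on $S^{n-1}$, and apply the spherical Poincar\'e/L\'evy inequality to get $\EE_e[h(e)^2]\le c\,M^2r^2/n$. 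Cauchy--Schwarz then gives
\[
\|\nabla F(x)\|_*\le \tfrac{n}{2r}\sqrt{\EE_e[h(e)^2]}\,\sqrt{\EE_e[\|e\|_*^2]}\le \tilde c\,\sqrt{n}\,M\,p_*,
\]
which is exactly the claim. Once you make this step explicit (or cite it precisely from \cite{Shamir15}), your proof is complete and matches the argument in the references the paper defers to.
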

In other words, $F(x)$ provides a good approximation of $f(x)$ for small enough $r$. 
\begin{lemma}\label{lem:second_lemma}
For $\tilde f'_r(x, \xi , e)$ defined in \eqref{oracle_f} the following inequalities hold: 
\begin{equation*}
    \|\EE[ \tilde{f}_r'(x, \xi, e)] - \nabla F(x)\|_* \leq  \frac{n\Delta p_*}{r}, \quad 
    \EE[\|\tilde{f}_r'(x, \xi, e)\|^2_{*}] \leq 2p_*^2\left(cnM^2 + \frac{n^2\Delta^2}{r^2}\right),
\end{equation*}
where $c$ is some positive constant independent of $n$.
\end{lemma}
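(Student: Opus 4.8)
The plan is to prove Lemma~\ref{lem:second_lemma} by a direct computation, treating the bias and the second-moment bounds separately, and reducing everything to properties of the single-direction estimator $\tilde f_r'(x,\xi,e)$ via the decomposition $\tilde f(x,\xi) = f(x,\xi) + \Delta(x)$ from \eqref{tilde_f}.

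First I would handle the bias. Write $\tilde f_r'(x,\xi,e) = \frac{n}{2r}\bigl(f(x+re,\xi) - f(x-re,\xi)\bigr)e + \frac{n}{2r}\bigl(\Delta(x+re) - \Delta(x-re)\bigr)e$. Taking expectation over $\xi$ first (using $\mathbb{E}[f(x,\xi)] = f(x)$) and then over $e$, the first term has expectation $\mathbb{E}_e\bigl[\frac{n}{2r}(f(x+re)-f(x-re))e\bigr]$; by symmetry of the distribution of $e$ on the sphere (so $e$ and $-e$ are identically distributed) this equals $\mathbb{E}_e\bigl[\frac{n}{r} f(x+re)e\bigr] = \nabla F(x)$ by the formula for $\nabla F$ in Lemma~\ref{lem:lemma_8_shamir_main}. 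Hence the bias is $\mathbb{E}\bigl[\tilde f_r'(x,\xi,e) - \nabla F(x)\bigr] = \mathbb{E}_e\bigl[\frac{n}{2r}(\Delta(x+re) - \Delta(x-re))e\bigr]$. Bounding the dual norm, pulling the expectation inside, and using $|\Delta(\cdot)| \le \Delta$ together with $\|e\|_*\le p_*$-type control gives $\|\mathbb{E}[\tilde f_r'(x,\xi,e)] - \nabla F(x)\|_* \le \frac{n}{2r}\cdot 2\Delta \cdot \mathbb{E}[\|e\|_*] \le \frac{n\Delta p_*}{r}$, where I use $\mathbb{E}\|e\|_* \le (\mathbb{E}\|e\|_*^4)^{1/4} \le p_*$ by Jensen.

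For the second moment, split $\tilde f_r'(x,\xi,e)$ into the "true function" part $\frac{n}{2r}(f(x+re,\xi)-f(x-re,\xi))e$ and the "noise" part $\frac{n}{2r}(\Delta(x+re)-\Delta(x-re))e$, and use $\|a+b\|_*^2 \le 2\|a\|_*^2 + 2\|b\|_*^2$. The noise part is bounded pointwise: $\bigl\|\frac{n}{2r}(\Delta(x+re)-\Delta(x-re))e\bigr\|_*^2 \le \frac{n^2}{r^2}\Delta^2\|e\|_*^2$, so its expectation is $\le \frac{n^2\Delta^2}{r^2}\mathbb{E}\|e\|_*^2 \le \frac{n^2\Delta^2 p_*^2}{r^2}$ (again by Jensen on the fourth moment). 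For the true-function part, by the mean value theorem along the segment, $|f(x+re,\xi) - f(x-re,\xi)| \le 2r \|\nabla f(\zeta,\xi)\|_2 \|e\|_2 \le 2r M(\xi)$ using $\|e\|_2 = 1$ on the Euclidean sphere and the bound $\|\nabla f(x,\xi)\|_2\le M(\xi)$; hence $\bigl\|\frac{n}{2r}(f(x+re,\xi)-f(x-re,\xi))e\bigr\|_*^2 \le n^2 M^2(\xi)\|e\|_*^2$. Taking expectation and using $\mathbb{E}[M^2(\xi)] = M^2$ and independence of $\xi$ and $e$ gives $\le n^2 M^2 \,\mathbb{E}\|e\|_*^2 \le n^2 M^2 p_*^2$. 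Combining, $\mathbb{E}[\|\tilde f_r'(x,\xi,e)\|_*^2] \le 2 p_*^2\bigl(n^2 M^2 + \frac{n^2\Delta^2}{r^2}\bigr)$, which is the claimed form with $c$ a constant arising from a sharper estimate of $\mathbb{E}\|e\|_*^2$ against $(\mathbb{E}\|e\|_*^4)^{1/2}$ and from the known $O(n)$ (rather than $O(n^2)$) behavior of the directional-derivative estimator on the sphere.

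The main obstacle I anticipate is obtaining the dimension dependence $cnM^2$ rather than a crude $n^2 M^2$ in the second-moment bound: the naive mean value theorem argument above loses a factor of $n$. The standard fix (as in Shamir and Nesterov's analysis of spherical smoothing) is to not bound $|f(x+re,\xi)-f(x-re,\xi)|$ by the Lipschitz constant times the step, but instead to exploit that $e$ is uniform on the sphere so that $\mathbb{E}_e[\langle \nabla f, e\rangle^2 \|e\|_*^2]$ concentrates: writing the difference to first order as $\approx 2r\langle\nabla f(x,\xi), e\rangle$ and controlling the $O(r)$ remainder using smoothness of $f$, one gets $\mathbb{E}_e[\|\frac{n}{r}\langle\nabla f(x,\xi),e\rangle e\|_*^2] = O(n)\|\nabla f(x,\xi)\|_2^2 \|e\|_*^2$-type behavior; this is exactly the content imported from \cite{Shamir15,Nesterov} and is why the constant $c$ appears. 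I would cite that estimate rather than reprove it, and the remaining work is the routine bookkeeping of the $\Delta$-noise terms sketched above.
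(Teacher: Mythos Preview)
Your decomposition and the bias bound are correct and match the standard argument (the paper itself does not prove the lemma in-text but defers to \cite{beznosikov2019derivative}; the proof there proceeds exactly via your splitting $\tilde f = f + \Delta$, the inequality $\|a+b\|_*^2\le 2\|a\|_*^2+2\|b\|_*^2$, and a Cauchy--Schwarz passage against $\sqrt{\EE\|e\|_*^4}\le p_*^2$).

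There is, however, a genuine gap in your account of how the sharp $cnM^2$ term arises. You propose to write $f(x+re,\xi)-f(x-re,\xi)\approx 2r\langle\nabla f(x,\xi),e\rangle$ and then to ``control the $O(r)$ remainder using smoothness of $f$''. But in this section $f$ is precisely the \emph{non-smooth} component of the composite objective: the only standing hypothesis on $f(\cdot,\xi)$ is $\|\nabla f(x,\xi)\|_2\le M(\xi)$, i.e.\ Lipschitz continuity, and no second-order smoothness is assumed or may be used. A Taylor-remainder argument is therefore unavailable, and your sketch as written does not close. The estimate you actually want from \cite{Shamir15} is proved without any smoothness: for an $M(\xi)$-Lipschitz function $h=f(\cdot,\xi)$, the map $e\mapsto h(x+re)$ is $rM(\xi)$-Lipschitz on the Euclidean unit sphere, and concentration of Lipschitz functions on $S^{n-1}$ (L\'evy/Poincar\'e on the sphere) directly yields moment bounds of the form
\[
\EE_e\bigl[(h(x+re)-h(x-re))^{2k}\bigr]\;\le\;c_k\Bigl(\tfrac{r^2M(\xi)^2}{n}\Bigr)^{k},
\]
with universal constants $c_k$. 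Taking $k=2$ and combining with $\sqrt{\EE\|e\|_*^4}\le p_*^2$ via Cauchy--Schwarz produces the $cnM^2p_*^2$ contribution. If you black-box this Shamir estimate your proof goes through; just drop the smoothness-based remainder description, since it is both inapplicable here and not how the cited result is obtained.
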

In other words, one can consider $\tilde{f}_r'(x, \xi, e)$ as a biased stochastic gradient of $F(x)$ with bounded second moment. Therefore, instead of solving \eqref{problem_orig} directly one can focus on the problem
\begin{equation}
    \label{problem} 
    \min\limits_{x \in Q} \Psi(x) = F(x) + g(x)
\end{equation}
with small enough $r$. As mentioned earlier, this approach is universal. In particular, the analysis of gradient-free methods for non-smooth saddle-point problems can be carried out in a similar way \cite{beznosikov2020gradient}. 

Now we will give the main facts from \cite{beznosikov2019derivative} for {\tt zoSA} algorithm itself.
The following theorem states convergence guarantees:
\begin{theorem}\label{cor:main} Suppose that $\{ p_t\}_{t\ge 1}$, $\{\theta_t\}_{t\ge 1}$ are
\begin{equation}
    \label{pt_main}
    p_t = \frac{t}{2}, ~~~ \theta_t = \frac{2(t+1)}{t(t+3)}, ~~~\text{for all } t \geq 1,
\end{equation}
$N$ is given, $\{\beta_k \}$, $\{\gamma_k\}$, $\{T_k\}$ are
\begin{equation}
    \label{k_main}
    \beta_k = \frac{2L}{k},~~~\gamma_k = \frac{2}{k+1},~~~T_k = \frac{C N p_*^2\left(nM^2 + \frac{n^2\Delta^2}{r^2} \right)k^2}{\tilde D L^2}
\end{equation}
with $\tilde D = \nicefrac{3D_{Q,V}^2}{4}$, $D_{Q,V} = \max\{\sqrt{2V(x,y)}\mid x,y \in Q\}$, $D_{Q} = \max\{\|x-y\|\mid x,y \in Q\}$, with some positive constant $C$. Then for all $ N \geq 1$
\begin{equation*}
    \mathbb{E}[\Psi(\overline x_N)- \Psi(x^*)] \leq\frac{12LD_{Q,V}^2}{N(N+1)}+ \frac{n \Delta D_Q p_*}{r}.
\end{equation*}
\end{theorem}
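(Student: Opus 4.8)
## Proof Strategy for Theorem \ref{cor:main}

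The plan is to obtain the bound by combining two ingredients: the convergence theory of Lan's Sliding algorithm (Algorithm \ref{alg:sliding}) applied to the smoothed problem \eqref{problem}, and the oracle-quality estimates for $\tilde f_r'(x,\xi,e)$ provided by Lemmas \ref{lem:lemma_8_shamir_main} and \ref{lem:second_lemma}. First I would recall that {\tt zoSA} is exactly the Sliding method run on \eqref{problem} with the smooth term $g$ and the nonsmooth term $F$, except that in the inner {\tt PS} procedure the true subgradient of $F$ is replaced by the biased stochastic estimator $\tilde f_r'(u_{t-1},\xi_{t-1},e_{t-1})$, and the Euclidean proximal term is replaced by a Bregman divergence $V$. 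So the first step is to invoke (a Bregman, stochastic-inexact-oracle version of) the Sliding convergence bound: with the stated parameters $\beta_k = 2L/k$, $\gamma_k = 2/(k+1)$, $p_t = t/2$, $\theta_t = 2(t+1)/(t(t+3))$, one gets a bound of the form
\begin{align*}
    \EE[\Psi(\overline x_N) - \Psi(x^*)] \le \frac{c_1 L D_{Q,V}^2}{N^2} + (\text{variance term}) + (\text{bias term}),
\end{align*}
where the variance term is controlled by choosing $T_k$ large enough and the bias term comes from $\|\EE[\tilde f_r'] - \nabla F\|_* \le n\Delta p_*/r$.

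Next I would plug in the oracle estimates from Lemma \ref{lem:second_lemma}. The second-moment bound $\EE[\|\tilde f_r'\|_*^2] \le 2p_*^2(cnM^2 + n^2\Delta^2/r^2)$ tells us the effective ``$M^2$'' for the nonsmooth part of the sliding analysis; the choice
\begin{align*}
    T_k = \frac{C N p_*^2\left(nM^2 + \tfrac{n^2\Delta^2}{r^2}\right)k^2}{\tilde D L^2}
\end{align*}
is precisely what is needed so that the accumulated inner-loop error from the stochastic oracle is absorbed into the leading $O(LD_{Q,V}^2/N^2)$ rate (this is the standard ``sliding'' phenomenon — enough inner iterations per outer iteration makes the nonsmooth contribution negligible to the rate). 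For the bias, each inner step contributes an error proportional to $\|\EE[\tilde f_r'] - \nabla F\|_* \cdot D_Q \le (n\Delta p_*/r) D_Q$, and after the weighted averaging over $t$ and $k$ this telescopes to the additive $n\Delta D_Q p_*/r$ term. Finally, one must convert the guarantee for $\Psi(x) = F(x) + g(x)$ back to $\Psi_0(x) = f(x) + g(x)$: by Lemma \ref{lem:lemma_8_shamir_main}, $\sup_x|F(x) - f(x)| \le rM$, so $\Psi_0(\overline x_N) - \Psi_0(x^*) \le \Psi(\overline x_N) - \Psi(x^*) + 2rM$; but in the theorem as stated the left side is written in terms of $\Psi$, so this last conversion is only needed if one wants the statement for the original objective (and the numerical constant $12$ and the clean form suggest absorbing lower-order $rM$ pieces appropriately, or simply reporting the $\Psi$-bound directly).

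The main obstacle, I expect, is the careful bookkeeping in the inexact-oracle sliding analysis: one has to re-derive the {\tt PS}-procedure recursion (the three-point/prox-inequality for the $V$-regularized update) with a \emph{biased} stochastic gradient in place of the exact subgradient, track separately the deterministic bias term (linear in $n\Delta p_*/r$) and the zero-mean stochastic term (whose variance is the $2p_*^2(cnM^2 + n^2\Delta^2/r^2)$ bound), and then verify that with the prescribed $p_t,\theta_t,\beta_k,\gamma_k,T_k$ the stochastic part contributes $O(LD_{Q,V}^2/N^2)$ in expectation while the bias part contributes exactly $n\Delta D_Q p_*/r$. This is essentially the proof of the original Sliding theorem adapted to (i) Bregman geometry and (ii) a $(\delta,\sigma)$-type inexact stochastic oracle; none of the individual steps is deep, but matching the constants so that the final bound has the stated clean form $\frac{12LD_{Q,V}^2}{N(N+1)} + \frac{n\Delta D_Q p_*}{r}$ requires care. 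The full details are carried out in \cite{beznosikov2019derivative}, and the role of this proof sketch is to indicate how the pieces assemble rather than to reproduce that computation.
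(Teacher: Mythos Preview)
Your proposal is correct and mirrors exactly the approach the paper takes: the paper does not give a self-contained proof but sets up precisely the ingredients you identify --- Lemma~\ref{lem:lemma_8_shamir_main} (properties of the smoothed function $F$) and Lemma~\ref{lem:second_lemma} (bias and second-moment bounds for $\tilde f_r'$) --- and then defers the bookkeeping of plugging these into the Bregman/stochastic-inexact version of Lan's Sliding analysis to \cite{beznosikov2019derivative}. Your observation that the theorem is stated for $\Psi$ rather than $\Psi_0$, so the $2rM$ conversion belongs to Corollary~\ref{cor:main_corollary} and not here, is also correct.
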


Finally, need to connect the result above to the initial problem \eqref{problem_orig}.
\begin{corollary}\label{cor:main_corollary} Under the assumptions of Theorem~\ref{cor:main} we have that the following inequality holds for all $N \ge 1$:
    \begin{eqnarray}
        \mathbb{E}[\Psi_0(\overline x_N)- \Psi_0(x^*)] &\leq& 2rM + \frac{12LD_{Q,V}^2}{N(N+1)}+ \frac{n \Delta D_Q p_*}{r}.\label{original_main}
    \end{eqnarray}
From \eqref{original_main} it follows that if
\begin{eqnarray*}
   r &=& \Theta\left(\frac{\varepsilon}{M}\right),\quad \Delta = O\left(\frac{\varepsilon^2}{nMD_Q \min\{p_*,1\}}\right)\label{r_delta_main}
\end{eqnarray*}
and $\varepsilon = O\left(\sqrt{n}MD_Q\right)$, then the number of evaluations for $\nabla g$ and $\tilde f'_r$, respectively, required by Algorithm~\ref{alg} to find an $\varepsilon$-solution of \eqref{problem_orig}, i.e.\ such $\overline x_N$ that $\EE[\Psi_0(\overline x_N)] - \Psi_0(x^*) \le \varepsilon$, can be bounded by
\begin{equation}
    \label{bound_in_main}
    O\left(\sqrt{\frac{L D_{Q,V}^2}{\varepsilon}} \right) \text{and} \quad 
  O\left(\sqrt{\frac{L D_{Q,V}^2}{\varepsilon}} + \frac{D_{Q,V}^2p_*^2 nM^2}{\varepsilon^2}\right).
    \end{equation}
\end{corollary}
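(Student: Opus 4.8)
The plan is to deduce the statement from Theorem~\ref{cor:main} by quantifying how far the smoothed objective $\Psi = F + g$ of problem~\eqref{problem} is from the genuine objective $\Psi_0 = f + g$ of problem~\eqref{problem_orig}. The only extra ingredient needed is the first estimate of Lemma~\ref{lem:lemma_8_shamir_main}, namely $\sup_{x\in Q}|F(x) - f(x)| \le rM$, after which the complexity bounds follow by substituting the prescribed values of $r$, $\Delta$ and $N$ into~\eqref{original_main}.

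First I would prove~\eqref{original_main}. Let $x^*$ be a minimizer of $\Psi_0$ over $Q$ and let $\hat x$ be a minimizer of $\Psi$ over $Q$. Lemma~\ref{lem:lemma_8_shamir_main} gives $f(\overline x_N) \le F(\overline x_N) + rM$ and $f(x^*) \ge F(x^*) - rM$, so for every realization of the algorithm's randomness
\begin{align*}
\Psi_0(\overline x_N) - \Psi_0(x^*)
&= \big(f(\overline x_N) + g(\overline x_N)\big) - \big(f(x^*) + g(x^*)\big) \\
&\le \big(F(\overline x_N) + g(\overline x_N)\big) - \big(F(x^*) + g(x^*)\big) + 2rM \\
&= \big(\Psi(\overline x_N) - \Psi(x^*)\big) + 2rM \\
&\le \big(\Psi(\overline x_N) - \Psi(\hat x)\big) + 2rM,
\end{align*}
where the last step uses $\Psi(\hat x) \le \Psi(x^*)$. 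Taking expectations and applying Theorem~\ref{cor:main} (with reference point $\hat x$) yields $\mathbb{E}[\Psi_0(\overline x_N) - \Psi_0(x^*)] \le 2rM + \tfrac{12LD_{Q,V}^2}{N(N+1)} + \tfrac{n\Delta D_Q p_*}{r}$, which is~\eqref{original_main}.

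Next I would read off the oracle complexities~\eqref{bound_in_main}. With $r = \Theta(\varepsilon/M)$ the smoothing term satisfies $2rM = O(\varepsilon)$, and with $\Delta = O\big(\varepsilon^2/(nMD_Q\min\{p_*,1\})\big)$ the bias term satisfies $\tfrac{n\Delta D_Q p_*}{r} = O(\varepsilon)$; hence it suffices to drive the optimization term $\tfrac{12LD_{Q,V}^2}{N(N+1)}$ below $O(\varepsilon)$, i.e.\ to take $N = O\big(\sqrt{LD_{Q,V}^2/\varepsilon}\big)$, which is also the number of $\nabla g$ evaluations since Algorithm~\ref{alg} queries $\nabla g$ once per outer iteration. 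The number of zeroth-order calls is $\sum_{k=1}^N T_k$; substituting $T_k$ from~\eqref{k_main} and using $\sum_{k=1}^N k^2 = \Theta(N^3)$ gives $\sum_{k=1}^N T_k = \Theta\!\big(N^4 p_*^2 (nM^2 + n^2\Delta^2/r^2)/(L^2 D_{Q,V}^2)\big)$. The hypothesis $\varepsilon = O(\sqrt n M D_Q)$ together with the chosen $r$ and $\Delta$ forces $n^2\Delta^2/r^2 = O(nM^2)$, so the bracket is $O(nM^2)$; plugging $N^4 = O(L^2 D_{Q,V}^4/\varepsilon^2)$ produces $O\big(D_{Q,V}^2 p_*^2 nM^2/\varepsilon^2\big)$, and the additive $\sqrt{LD_{Q,V}^2/\varepsilon}$ term appears because $T_k \ge 1$, so $\sum_k T_k \ge N$. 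This is precisely~\eqref{bound_in_main}.

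The one genuinely delicate point is the last bookkeeping step: checking that the noise-induced contribution $n^2\Delta^2/r^2$ to the inner-loop length $T_k$ does not dominate the clean term $nM^2$. This is exactly where the assumption $\varepsilon = O(\sqrt n M D_Q)$ is consumed, in combination with $r = \Theta(\varepsilon/M)$ and the prescribed smallness of the oracle bias $\Delta$; everything else reduces to elementary estimates on the geometric reduction $\Psi_0 \leftrightarrow \Psi$ and on the sums defining the total number of oracle queries.
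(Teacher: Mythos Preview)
Your proof is correct and follows essentially the same approach as the paper (which in turn relies on \cite{beznosikov2019derivative}): you pass from $\Psi_0$ to the smoothed objective $\Psi$ via the pointwise bound $|F-f|\le rM$ of Lemma~\ref{lem:lemma_8_shamir_main}, invoke Theorem~\ref{cor:main} for the smoothed problem, and then read off the oracle complexities by summing $T_k$ from~\eqref{k_main}. Your bookkeeping on $\sum_k T_k$, including the observation that $T_k\ge 1$ yields the additive $N$ term and that the hypothesis $\varepsilon=O(\sqrt{n}MD_Q)$ is precisely what keeps $n^2\Delta^2/r^2=O(nM^2)$, matches the intended argument.
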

It is interesting to analyze the obtained results depending on $p^*$, and these constants are determined depending on what geometry we have defined for our problem. For example, if we consider Euclidean proximal setup, i.e. $\|\cdot\| = \|\cdot\|_2$, $V(x,y) = \frac{1}{2}\|x-y\|_2^2,$ $D_{Q,V} = D_Q$. In this case we have $p_*$ and bound \eqref{bound_in_main} for the number of \eqref{tilde_f} oracle calls reduces to
\begin{equation*}
  O\left(\sqrt{\frac{L D_Q^2}{\varepsilon}} + \frac{D_Q^2nM^2}{\varepsilon^2}\right)
\end{equation*}
and the number of $\nabla g(x)$ computations remains the same. It means that our result gives the same number of first-order oracle calls as in the original Gradient Sliding algorithm, while the number of the biased stochastic zeroth-order oracle calls is $n$ times larger in the leading term than in the analogous bound from the original first-order method. In the Euclidean case our bounds reflect the classical dimension dependence for the derivative-free optimization (see \cite{larson2019derivative}).

But if we work on the probability simplex in $\R^n$ and the proximal setup is entropic: $V(x,y)$ is the Kullback--Leibler divergence, i.e.\ $V(x,y) = \sum_{i=1}^n x_i\ln\frac{x_i}{y_i}$. In this situation we have $D_{Q,V} = \sqrt{2\log n}$, $D_{Q} = 2$, $p_* = O\left(\nicefrac{\log(n)}{n}\right)$ \cite{gorbunov2019upper}. Then number of $\nabla g(x)$ calculations is bounded by $O\left( \sqrt{\nicefrac{(L\log^2 n)}{\varepsilon}}\right)$. As for the number of $\tilde{f}'_r(x, \xi, e)$ computations, we get the following bound:
\begin{equation}
\label{temp144}
  O\left(\sqrt{\frac{L \log n}{\varepsilon}} + \frac{M^2 \log^2 n}{\varepsilon^2}\right).
\end{equation}

\subsubsection{Strongly Convex Case}\label{sec:main_res_str_cvx}
In this section we additionally assume that $g$ is $\mu$-strongly convex w.r.t. Bregman divergence $V(x,y)$ \cite{stonyakin2020inexact}, i.e. for all $ x,y\in Q$ 
\begin{equation*}
    g(x) \geq g(y) + \langle \nabla g(y), x - y\rangle + \mu V(x,y).
\end{equation*}

The authors of \cite{beznosikov2019derivative} use restarts technique and get Algorithm~\ref{alg2}. 
\begin{algorithm} [H]
	\caption{The Multi-phase Zeroth-Order Sliding Algorithm ({\tt M-zoSA})}
	\label{alg2}
	\begin{algorithmic}
\State
{\bf Input:} Initial point $y_0 \in Q$ and iteration limit $N_0$, initial estimate $\rho_0$ (s.t. $\Psi (y_0) - \Psi(y^*) \leq \rho_0$)
\For {$i = 1, 2, \ldots, I$ }
    \State Run {\tt zoSA} with $x_0 = y_{i-1}$, $N = N_0$, $\{p_t\}$ and $\{\theta_t\}$ in \eqref{pt_main}, $\{\beta_k\}$ and $\{\gamma_k\}$, $\{T_k\}$ in \eqref{k_main} with $\tilde D = \nicefrac{\rho_0}{\mu 2^i}$, and $y_i$ is output.
\EndFor
\State {\bf Output:} $y_I$.
	\end{algorithmic}
\end{algorithm}

The following theorem states the main complexity results for {\tt M-zoSA}.
\begin{theorem}\label{thm:mzosa_main}
    For {\tt M-zoSA} with $N_0 = 2 \lceil \sqrt{\nicefrac{5L}{\mu}}\rceil$ we have
    \begin{equation*}
        \EE{[\Psi (y_i) - \Psi(y^*)]} \leq \frac{\rho_0}{2^i} + \frac{2 n \Delta D_Q p_*}{r}.
    \end{equation*}
\end{theorem}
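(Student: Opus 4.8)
The plan is to prove Theorem~\ref{thm:mzosa_main} by induction on the phase index $i$, using the single-phase convergence bound from Theorem~\ref{cor:main} (applied to $\Psi = F + g$ rather than $\Psi_0$) together with the $\mu$-strong convexity of $g$ w.r.t.\ the Bregman divergence $V$. First I would set up the inductive hypothesis $\EE[\Psi(y_{i-1}) - \Psi(y^*)] \le \rho_0/2^{i-1} + 2n\Delta D_Q p_*/r$; the base case $i = 0$ holds by the assumption $\Psi(y_0) - \Psi(y^*) \le \rho_0$ in the input of Algorithm~\ref{alg2} (the additive noise term is nonnegative, so the inequality is only loosened). The heart of the argument is the inductive step: at phase $i$, {\tt zoSA} is run from the starting point $x_0 = y_{i-1}$ for $N = N_0$ iterations with the parameter choice $\tilde D = \rho_0/(\mu 2^i)$, and by Theorem~\ref{cor:main} the output $y_i = \overline x_{N_0}$ satisfies
\begin{align*}
    \EE[\Psi(y_i) - \Psi(y^*)] \le \frac{12 L D_{Q,V}^2}{N_0(N_0+1)} + \frac{n\Delta D_Q p_*}{r},
\end{align*}
where, crucially, the "effective diameter" $D_{Q,V}^2$ appearing here must be replaced by the \emph{local} quantity $2\tilde D = \rho_0/(\mu 2^{i-1})$ — this is exactly where the restart logic enters.

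To justify that replacement, I would invoke $\mu$-strong convexity of $g$ (hence of $\Psi = F + g$, since $F$ is convex by Lemma~\ref{lem:lemma_8_shamir_main}): strong convexity w.r.t.\ $V$ gives $\mu V(y^*, x_0) \le \Psi(x_0) - \Psi(y^*)$, so the starting point of phase $i$ lies in a Bregman ball around $y^*$ of radius controlled by the previous phase's residual. Conditioning on $y_{i-1}$, the inductive hypothesis yields (up to the persistent noise floor) $\Psi(y_{i-1}) - \Psi(y^*) \le \rho_0/2^{i-1}$, whence $V(y^*, y_{i-1}) \le \rho_0/(\mu 2^{i-1})$. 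A careful re-reading of the proof of Theorem~\ref{cor:main} shows that the bound there depends on $D_{Q,V}^2$ only through $V(x^*, x_0)$ (the Bregman divergence from the solution to the initial point), not through the global diameter; substituting $V(y^*, y_{i-1}) \le \rho_0/(\mu 2^{i-1}) = 2\tilde D$ gives
\begin{align*}
    \EE[\Psi(y_i) - \Psi(y^*) \mid y_{i-1}] \le \frac{12 L \cdot 2\tilde D}{N_0(N_0+1)} + \frac{n\Delta D_Q p_*}{r} = \frac{24 L \rho_0}{\mu 2^{i-1} N_0(N_0+1)} + \frac{n\Delta D_Q p_*}{r}.
\end{align*}
With $N_0 = 2\lceil\sqrt{5L/\mu}\rceil$ we have $N_0(N_0+1) \ge N_0^2 \ge 20 L/\mu$, so $24 L\rho_0/(\mu 2^{i-1} N_0^2) \le \tfrac{24}{20}\rho_0/2^{i-1} \le \rho_0/2^i \cdot (48/20)$ — here I would tighten the constant (the stated $N_0$ presumably makes $24L/(\mu N_0^2) \le 1/2$ work after the exact constant in Theorem~\ref{cor:main} is tracked, or the factor of $2$ comes from $N_0(N_0+1)$ versus $N_0^2$), landing at $\rho_0/2^i$. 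Taking total expectation and adding the geometric accumulation of the noise terms across phases $1,\dots,i$ — which sums to at most $2 n\Delta D_Q p_*/r$ since $\sum_{j\ge 1} 2^{-(j-1)}\cdot\tfrac12 \le 1$ but each phase contributes the \emph{same} floor $n\Delta D_Q p_*/r$, so one must observe the floor does not accumulate because it is already present in the hypothesis and is merely propagated, not added $i$ times — completes the induction.

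The main obstacle I anticipate is precisely the bookkeeping of the additive noise floor $n\Delta D_Q p_*/r$ across restarts: a naive induction would accumulate $i$ copies of it, giving $\Theta(i)\cdot n\Delta D_Q p_*/r$ rather than the clean constant $2n\Delta D_Q p_*/r$. Resolving this requires a slightly sharper single-phase statement — one that says the residual after a phase is at most (a contraction factor times the incoming residual) plus the floor, \emph{with the contraction factor small enough ($\le 1/2$) that the floor's geometric series converges} — so that the floor is a genuine fixed point of the recursion rather than something that piles up. A secondary technical point is verifying that Theorem~\ref{cor:main}'s constant is compatible with $N_0 = 2\lceil\sqrt{5L/\mu}\rceil$ making the contraction factor exactly $\le 1/2$; this is a routine but delicate constant-chase through the $12 L D_{Q,V}^2/(N(N+1))$ term, and I would handle it by writing $N_0 \ge \sqrt{5L/\mu}$, so $N_0(N_0+1) > N_0^2 \ge 5L/\mu$, hence $12L/(N_0(N_0+1)) \cdot 2\tilde D < \tfrac{24}{5}\mu\tilde D$, and then the identification $\tilde D = \rho_0/(\mu 2^i)$ would need the leading constant in Theorem~\ref{cor:main} to actually be $\le 5/24$ of what is written — suggesting either the $N_0$ in the theorem statement absorbs an extra numerical factor I am not seeing, or the intended reading of Theorem~\ref{cor:main} uses $D_{Q,V}^2$ in place of $2\tilde D$ with a different normalization; I would reconcile this against the source \cite{beznosikov2019derivative} before finalizing the constant.
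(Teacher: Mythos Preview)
Your approach—induction on the phase index, using the single-phase bound of Theorem~\ref{cor:main} together with $\mu$-strong convexity of $g$ (hence of $\Psi$) to convert the previous function gap into a bound on $V(y^*,y_{i-1})$—is exactly the standard restart argument the paper invokes; the paper itself does not give a proof in the main text but defers to \cite{beznosikov2019derivative}, which follows Lan's multi-phase sliding template verbatim. Your handling of the noise floor is also correct and need not worry you: the clean recursion is $a_i \le \tfrac12 a_{i-1} + b$ with $b = n\Delta D_Q p_*/r$ (the contraction acts on the \emph{entire} previous bound, including its floor), so unrolling gives $a_i \le 2^{-i}\rho_0 + b\sum_{j=0}^{i-1}2^{-j} \le 2^{-i}\rho_0 + 2b$ with no accumulation; the numerical-constant mismatch you flag is a real bookkeeping detail, but note that the $D_{Q,V}^2$ in Theorem~\ref{cor:main} enters both through $V(x^*,x_0)$ and through $\tilde D$ in the choice of $T_k$, and once both are replaced by the phase-$i$ scale $\rho_0/(\mu 2^{i})$ the source's more detailed statement makes $N_0 = 2\lceil\sqrt{5L/\mu}\rceil$ sufficient for the contraction factor $\le \tfrac12$.
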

Using this we derive the complexity bounds for {\tt M-zoSA}.
\begin{corollary}\label{cor:mzosa_main}
For all $N \ge 1$ the iterates of {\tt M-zoSA} satisfy
    \begin{equation}
        \label{original_strconv}
        \mathbb{E}[\Psi_0(y_i)- \Psi_0(y^*)] \leq 2rM + \frac{\rho_0}{2^i} + \frac{2n \Delta D_Q p_*}{r}.
    \end{equation}
From \eqref{original_strconv} it follows that if
\begin{eqnarray*}
    r = \Theta\left(\frac{\varepsilon}{M}\right),\quad \Delta = O\left(\frac{\varepsilon^2}{nMD_Q\min\{p_*,1\}}\right)
\end{eqnarray*}
and $\varepsilon = O\left(\sqrt{n}MD_Q\right)$, then the number of evaluations for $\nabla g$ and $\tilde f'_r$, respectively, required by Algorithm~\ref{alg2} to find a $\varepsilon$-solution of \eqref{problem_orig} can be bounded by
\begin{equation*}
    O\left(\sqrt{\frac{L}{\mu}} \log_2 \max\left[ 1, \nicefrac{\rho_0}{\varepsilon}\right]\right), \quad 
   O\left(\sqrt{\frac{L}{\mu}} \log_2 \max\left[ 1, \nicefrac{\rho_0}{\varepsilon}\right] + \frac{p_*^2nM^2}{\mu\varepsilon}\right).
    \end{equation*}
\end{corollary}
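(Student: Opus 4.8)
The plan is to obtain Corollary \ref{cor:mzosa_main} from Theorem \ref{thm:mzosa_main} together with the smoothing estimate of Lemma \ref{lem:lemma_8_shamir_main}, and then to tune $r$, $\Delta$ and the number of restarts so that every term in the resulting guarantee is $O(\varepsilon)$, finally reading off the oracle counts from the schedules \eqref{pt_main}, \eqref{k_main}. First I would pass from $\Psi$ to $\Psi_0$: since $\Psi_0(x) = f(x)+g(x)$, $\Psi(x) = F(x)+g(x)$ and Lemma \ref{lem:lemma_8_shamir_main} gives $|f(x)-F(x)|\le rM$ uniformly on $Q$, we have $\Psi_0(y_i)\le \Psi(y_i)+rM$ and $\Psi(y^*)\le \Psi(x^*)\le \Psi_0(x^*)+rM$ for a minimizer $x^*$ of $\Psi_0$; subtracting, $\Psi_0(y_i)-\Psi_0(x^*)\le \bigl(\Psi(y_i)-\Psi(y^*)\bigr)+2rM$, and plugging Theorem \ref{thm:mzosa_main} into the right-hand side produces exactly \eqref{original_strconv}.

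Next I would balance the three terms of \eqref{original_strconv}. With $r=\Theta(\varepsilon/M)$ the term $2rM$ is $O(\varepsilon)$, and with $\Delta = O\bigl(\varepsilon^2/(nMD_Q\min\{p_*,1\})\bigr)$ the noise term $2n\Delta D_Q p_*/r$ is $O(\varepsilon)$ as well — the factor $\min\{p_*,1\}$ covers both the Euclidean regime ($p_*=\Theta(1)$) and the entropic regime ($p_*=O(\log n/n)<1$). Then it is enough to run $I=\lceil\log_2\max\{1,c\rho_0/\varepsilon\}\rceil$ phases so that $\rho_0/2^{I}=O(\varepsilon)$, which yields $\mathbb{E}[\Psi_0(y_I)]-\Psi_0(x^*)\le\varepsilon$. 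Each phase calls \texttt{zoSA} with $N_0 = 2\lceil\sqrt{5L/\mu}\rceil$ outer iterations, each spending exactly one evaluation of $\nabla g$ (step 1, via $l_g(\underline x_k,\cdot)$), so the total number of first-order calls is $I N_0 = O\bigl(\sqrt{L/\mu}\,\log_2\max\{1,\rho_0/\varepsilon\}\bigr)$, the first claimed bound.

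For the zeroth-order count I would sum the inner-loop lengths $T_k$ from \eqref{k_main} over $k$ and over phases. In phase $i$ one has $N=N_0$ and $\tilde D=\rho_0/(\mu 2^{i})$; the key point is that under the standing assumption $\varepsilon = O(\sqrt n\,MD_Q)$ and the above choice of $\Delta$ the quantity $n^2\Delta^2/r^2$ is dominated by $nM^2$, so $T_k = O\bigl(N_0 p_*^2 nM^2\mu 2^{i} k^2/(\rho_0 L^2)\bigr)$ (kept at least $1$). Using $\sum_{k=1}^{N_0}k^2 = O(N_0^3)$ and $N_0=\Theta(\sqrt{L/\mu})$, the per-phase cost is $O\bigl(p_*^2 nM^2 2^{i}/(\mu\rho_0)\bigr)$ plus the forced $N_0$ from the ``$\max$ with $1$''; the geometric sum over $i=1,\dots,I$ is dominated by the last phase, $\sum_i 2^{i}=O(2^{I})=O(\rho_0/\varepsilon)$, which turns the first contribution into $O(p_*^2 nM^2/(\mu\varepsilon))$ and the second into $I N_0 = O\bigl(\sqrt{L/\mu}\,\log_2\max\{1,\rho_0/\varepsilon\}\bigr)$. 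Adding the two gives the second claimed bound.

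The step I expect to be the main obstacle is the bookkeeping in the last paragraph: one must track the joint dependence on $r$, $\Delta$ and $p_*$ through the formula for $T_k$, verify that the noisy-oracle contribution $n^2\Delta^2/r^2$ does not overtake $nM^2$ — which is precisely where the hypothesis $\varepsilon=O(\sqrt n\,MD_Q)$ is consumed — and confirm that the geometrically growing per-phase costs still sum, up to a constant, to the cost of the final phase. Everything else is a direct substitution into results already established (Theorem \ref{thm:mzosa_main}, Lemma \ref{lem:lemma_8_shamir_main}).
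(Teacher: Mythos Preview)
Your proposal is correct and follows exactly the route the paper intends: combine Theorem~\ref{thm:mzosa_main} with the smoothing bound of Lemma~\ref{lem:lemma_8_shamir_main} to obtain \eqref{original_strconv} (this is the same passage from $\Psi$ to $\Psi_0$ underlying Corollary~\ref{cor:main_corollary}), then balance $r$, $\Delta$ and the number of restarts, and finally read off the oracle counts from the schedules \eqref{pt_main}--\eqref{k_main} with $N_0=2\lceil\sqrt{5L/\mu}\rceil$ and $\tilde D=\rho_0/(\mu 2^{i})$. The paper gives no separate proof of this corollary beyond ``using this we derive the complexity bounds,'' and your bookkeeping for the geometric sum of the $T_k$ over phases is the intended computation.
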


\subsubsection{From Composite Optimization to Decentralized Distributed Optimization}

Finally, we get an estimate for solving the decentralized optimization problem. With the help of \eqref{temp1} and \eqref{temp2}, we reduce the original decentralized problem to the penalized problem. Next, we need to define parameters of $f$ using parameters of local functions $f_i$. Assume that for each $f_i$ we have $\|\nabla f_i(x_i)\|_2 \le M$ for all $x_i \in Q$, all $f_i$ are convex functions, the starting point is $\x_0^\top = (x_0^\top,\ldots,x_0^\top)^\top$ and $\x_*^\top = (x_*^\top,\ldots, x_*^\top)^\top$ is the optimality point for \eqref{temp1}. Then, one can show that $\|\nabla f(\x)\|_2 \le \nicefrac{M}{\sqrt{m}}$ on the set of such $\x$ that $x_1,\ldots, x_m\in Q$, $D_{Q^m}^2 = mD_Q^2$, $D_{Q^m,V}^2 = mD_{Q,V}^2$ and $R_{\y}^2$ from \eqref{temp2} is $R_{\y}^2 \le \nicefrac{M^2}{m\lambda_{\min}^+(W)}.$ And we have estimates in the Euclidean case: 
\begin{equation*}
    O\left(\sqrt{\frac{\chi(W) M^2 D_Q^2}{\varepsilon^2}} \right) \text{ communication rounds and } \quad 
    O\left(\sqrt{\frac{\chi(W) M^2 D_Q^2}{\varepsilon^2}} +  \frac{nD_Q^2M^2}{\varepsilon^2}\right)  \text{ calculations of $\tilde f(x, \xi)$ per node.}
\end{equation*}
At the same time, when we work on a simplex and use the Kullback-Leibler divergence, we get estimates similar to \eqref{temp144}:
\begin{equation*}
    O\left(\sqrt{\frac{\chi(W) M^2  \log n}{\varepsilon^2}} \right) \text{ communication rounds and } \quad 
    O\left(\sqrt{\frac{\chi(W) M^2 \log n}{\varepsilon^2}} +  \frac{M^2 \log^2 n }{\varepsilon^2}\right)  \text{ calculations of $\tilde f(x, \xi)$ per node.}
\end{equation*}

The bound for the communication rounds matches the lower bound from \cite{scaman2018optimal,scaman2019optimal} and one can note that under above  assumptions the obtained bound for zeroth-order oracle calculations per node is optimal up to polylogarithmic factors in the class of methods with optimal number of communication rounds (see also \cite{dvinskikh2019decentralized,gorbunov2019optimal}). In particular, in the Euclidean case, we lose $n$ times (which corresponds to the case if we were to restore the gradient in the way \eqref{full_coor}), and in the case of a simplex, only in the $\log n$ times.

\section*{Acknowledgements}
Authors are express gratitude to A.~Nazin, A.~Nedich, G.~Scutari, C.~Uribe and P.~Dvurechensky for fruitful discussions.  

The research of A. Gasnikov, A. Beznosikov and. A. Rogozin was partially supported by RFBR, project number 19-31-51001. The research of E. Gorbunov and D. Dvinskikh was partially supported by the Ministry of Science and Higher Education of the Russian Federation (Goszadaniye) № 075-00337-20-03, project No. 0714-2020-0005.

\bibliographystyle{abbrv}
\bibliography{refs}

\begin{thebibliography}{100}

\bibitem{Abadeh_et_al_2015}
S.~Abadeh, P.~Esfahani, and D.~Kuhn.
\newblock Distributionally robust logistic regression.
\newblock In {\em Advances in Neural Information Processing Systems
  (NeurIPS))}, pages 1576--1584, 2015.

\bibitem{aghajan2020distributed}
A.~Aghajan and B.~Touri.
\newblock Distributed optimization over dependent random networks.
\newblock {\em arXiv preprint arXiv:2010.01956}, 2020.

\bibitem{alghunaim2020decentralized}
S.~A. Alghunaim, E.~K. Ryu, K.~Yuan, and A.~H. Sayed.
\newblock Decentralized proximal gradient algorithms with linear convergence
  rates.
\newblock {\em IEEE Transactions on Automatic Control}, 66(6):2787--2794, 2020.

\bibitem{alistarh2017qsgd}
D.~Alistarh, D.~Grubic, J.~Li, R.~Tomioka, and M.~Vojnovic.
\newblock {QSGD}: {C}ommunication-efficient {SGD} via gradient quantization and
  encoding.
\newblock In {\em Advances in Neural Information Processing Systems}, pages
  1709--1720, 2017.

\bibitem{allen2016katyusha}
Z.~Allen-Zhu.
\newblock Katyusha: The first direct acceleration of stochastic gradient
  methods.
\newblock In {\em Proceedings of the 49th Annual ACM SIGACT Symposium on Theory
  of Computing}, STOC 2017, pages 1200--1205, New York, NY, USA, 2017. ACM.
\newblock arXiv:1603.05953.

\bibitem{allen2018make}
Z.~Allen-Zhu.
\newblock How to make the gradients small stochastically: Even faster convex
  and nonconvex sgd.
\newblock In {\em Advances in Neural Information Processing Systems}, pages
  1157--1167, 2018.

\bibitem{anikin2017dual}
A.~S. Anikin, A.~V. Gasnikov, P.~E. Dvurechensky, A.~I. Tyurin, and A.~V.
  Chernov.
\newblock Dual approaches to the minimization of strongly convex functionals
  with a simple structure under affine constraints.
\newblock {\em Computational Mathematics and Mathematical Physics},
  57(8):1262--1276, Aug 2017.

\bibitem{arjevani2015communication}
Y.~Arjevani and O.~Shamir.
\newblock Communication complexity of distributed convex learning and
  optimization.
\newblock In {\em Advances in neural information processing systems}, pages
  1756--1764, 2015.

\bibitem{Arjovsky_et_al2017}
M.~Arjovsky, S.~Chintala, and L.~Bottou.
\newblock Wasserstein generative adversarial networks.
\newblock {\em Proceedings of the 34th International Conference on Machine
  Learning (ICML)}, 70(1):214--223, 2017.

\bibitem{bansal2019potential}
N.~Bansal and A.~Gupta.
\newblock Potential-function proofs for gradient methods.
\newblock {\em Theory of Computing}, 15(1):1--32, 2019.

\bibitem{basu2019qsparse}
D.~Basu, D.~Data, C.~Karakus, and S.~Diggavi.
\newblock Qsparse-local-sgd: Distributed sgd with quantization, sparsification,
  and local computations.
\newblock {\em arXiv preprint arXiv:1906.02367}, 2019.

\bibitem{bayandina2018mirror}
A.~Bayandina, P.~Dvurechensky, A.~Gasnikov, F.~Stonyakin, and A.~Titov.
\newblock Mirror descent and convex optimization problems with non-smooth
  inequality constraints.
\newblock In {\em Large-Scale and Distributed Optimization}, pages 181--213.
  Springer, 2018.

\bibitem{bertsekas1989parallel}
D.~P. Bertsekas and J.~N. Tsitsiklis.
\newblock {\em Parallel and distributed computation: numerical methods},
  volume~23.
\newblock Prentice hall Englewood Cliffs, NJ, 1989.

\bibitem{beznosikov2021decentralized}
A.~Beznosikov, P.~Dvurechensky, A.~Koloskova, V.~Samokhin, S.~U. Stich, and
  A.~Gasnikov.
\newblock Decentralized local stochastic extra-gradient for variational
  inequalities.
\newblock {\em arXiv preprint arXiv:2106.08315}, 2021.

\bibitem{beznosikov2019derivative}
A.~Beznosikov, E.~Gorbunov, and A.~Gasnikov.
\newblock Derivative-free method for composite optimization with applications
  to decentralized distributed optimization.
\newblock {\em IFAC-PapersOnLine}, 53(2):4038--4043, 2020.

\bibitem{beznosikov2020biased}
A.~Beznosikov, S.~Horv{\'a}th, P.~Richt{\'a}rik, and M.~Safaryan.
\newblock On biased compression for distributed learning.
\newblock {\em arXiv preprint arXiv:2002.12410}, 2020.

\bibitem{beznosikov2021optimal_}
A.~Beznosikov, D.~Kovalev, A.~Sadiev, P.~Richtarik, and A.~Gasnikov.
\newblock Optimal distributed algorithms for stochastic variational
  inequalities.
\newblock {\em arXiv preprint}, 2021.

\bibitem{beznosikov2021optimal}
A.~Beznosikov, A.~Rogozin, D.~Kovalev, and A.~Gasnikov.
\newblock Near-optimal decentralized algorithms for saddle point problems over
  time-varying networks.
\newblock In {\em International Conference on Optimization and Applications},
  pages 246--257. Springer, 2021.

\bibitem{beznosikov2020gradient}
A.~Beznosikov, A.~Sadiev, and A.~Gasnikov.
\newblock Gradient-free methods with inexact oracle for convex-concave
  stochastic saddle-point problem.
\newblock In {\em International Conference on Mathematical Optimization Theory
  and Operations Research}, pages 105--119. Springer, 2020.

\bibitem{beznosikov2021distributed}
A.~Beznosikov, G.~Scutari, A.~Rogozin, and A.~Gasnikov.
\newblock Distributed saddle-point problems under data similarity.
\newblock {\em Advances in Neural Information Processing Systems}, 34, 2021.

\bibitem{blanchet2018towards}
J.~Blanchet, A.~Jambulapati, C.~Kent, and A.~Sidford.
\newblock Towards optimal running times for optimal transport.
\newblock {\em arXiv preprint arXiv:1810.07717}, 2018.

\bibitem{boyd2006randomized}
S.~Boyd, A.~Ghosh, B.~Prabhakar, and D.~Shah.
\newblock Randomized gossip algorithms.
\newblock {\em IEEE transactions on information theory}, 52(6):2508--2530,
  2006.

\bibitem{cesa-bianchi2002generalization}
N.~Cesa-bianchi, A.~Conconi, and C.~Gentile.
\newblock On the generalization ability of on-line learning algorithms.
\newblock In T.~G. Dietterich, S.~Becker, and Z.~Ghahramani, editors, {\em
  Advances in Neural Information Processing Systems 14}, pages 359--366. MIT
  Press, 2002.

\bibitem{chambolle2011first}
A.~Chambolle and T.~Pock.
\newblock A first-order primal-dual algorithm for convex problems with
  applications to imaging.
\newblock {\em Journal of mathematical imaging and vision}, 40(1):120--145,
  2011.

\bibitem{cuturi2016smoothed}
M.~Cuturi and G.~Peyré.
\newblock A smoothed dual approach for variational wasserstein problems.
\newblock {\em SIAM Journal on Imaging Sciences}, 9(1):320--343, 2016.

\bibitem{defazio2014saga}
A.~Defazio, F.~Bach, and S.~Lacoste-Julien.
\newblock Saga: A fast incremental gradient method with support for
  non-strongly convex composite objectives.
\newblock In {\em Proceedings of the 27th International Conference on Neural
  Information Processing Systems}, NIPS'14, pages 1646--1654, Cambridge, MA,
  USA, 2014. MIT Press.

\bibitem{devolder2013exactness}
O.~Devolder.
\newblock {\em Exactness, inexactness and stochasticity in first-order methods
  for large-scale convex optimization}.
\newblock PhD thesis, PhD thesis, ICTEAM and CORE, Universit{\'e} Catholique de
  Louvain, 2013.

\bibitem{devolder2013first}
O.~Devolder, F.~Glineur, and Y.~Nesterov.
\newblock First-order methods with inexact oracle: the strongly convex case.
\newblock {\em CORE Discussion Papers}, 2013016:47, 2013.

\bibitem{devolder2014first}
O.~Devolder, F.~Glineur, and Y.~Nesterov.
\newblock First-order methods of smooth convex optimization with inexact
  oracle.
\newblock {\em Mathematical Programming}, 146(1):37--75, 2014.

\bibitem{dvinskikh2020sa}
D.~Dvinskikh.
\newblock Stochastic approximation versus sample average approximation for
  population wasserstein barycenters.
\newblock {\em arXiv preprint arXiv:2001.07697}, 2020.

\bibitem{dvinskikh2021decentralized}
D.~Dvinskikh.
\newblock Decentralized algorithms for wasserstein barycenters.
\newblock {\em arXiv preprint arXiv:2105.01587}, 2021.

\bibitem{dvinskikh2019decentralized}
D.~Dvinskikh and A.~Gasnikov.
\newblock Decentralized and parallel primal and dual accelerated methods for
  stochastic convex programming problems.
\newblock {\em Journal of Inverse and Ill-posed Problems}, 29(3):385--405,
  2021.

\bibitem{dvinskikh2020parallel}
D.~Dvinskikh, A.~Gasnikov, A.~Rogozin, and A.~Beznosikov.
\newblock Parallel and distributed algorithms for ml problems.
\newblock {\em arXiv preprint arXiv:2010.09585}, 2020.

\bibitem{dvinskikh2019dual}
D.~Dvinskikh, E.~Gorbunov, A.~Gasnikov, P.~Dvurechensky, and C.~A. Uribe.
\newblock On primal and dual approaches for distributed stochastic convex
  optimization over networks.
\newblock In {\em 2019 IEEE 58th Conference on Decision and Control (CDC)},
  pages 7435--7440. IEEE, 2019.

\bibitem{dvinskikh2020improved}
D.~Dvinskikh and D.~Tiapkin.
\newblock Improved complexity bounds in wasserstein barycenter problem.
\newblock In {\em International Conference on Artificial Intelligence and
  Statistics}, pages 1738--1746. PMLR, 2021.

\bibitem{dvinskikh2020accelerated}
D.~M. Dvinskikh, A.~I. Turin, A.~V. Gasnikov, and S.~S. Omelchenko.
\newblock Accelerated and non accelerated stochastic gradient descent in model
  generality.
\newblock {\em Matematicheskie Zametki}, 108(4):515--528, 2020.

\bibitem{dvurechenskii2018decentralize}
P.~Dvurechenskii, D.~Dvinskikh, A.~Gasnikov, C.~Uribe, and A.~Nedich.
\newblock Decentralize and randomize: Faster algorithm for wasserstein
  barycenters.
\newblock In {\em Advances in Neural Information Processing Systems}, pages
  10760--10770, 2018.

\bibitem{dvurechensky2016stochastic}
P.~Dvurechensky and A.~Gasnikov.
\newblock Stochastic intermediate gradient method for convex problems with
  stochastic inexact oracle.
\newblock {\em Journal of Optimization Theory and Applications},
  171(1):121--145, 2016.

\bibitem{dvurechensky2017randomized}
P.~Dvurechensky, A.~Gasnikov, and A.~Tiurin.
\newblock Randomized similar triangles method: A unifying framework for
  accelerated randomized optimization methods (coordinate descent, directional
  search, derivative-free method).
\newblock {\em arXiv:1707.08486}, 2017.

\bibitem{facchinei2007finite}
F.~Facchinei and J.~Pang.
\newblock {\em Finite-Dimensional Variational Inequalities and Complementarity
  Problems}.
\newblock Springer Series in Operations Research and Financial Engineering.
  Springer New York, 2007.

\bibitem{fallah2019robust}
A.~Fallah, M.~Gurbuzbalaban, A.~Ozdaglar, U.~Simsekli, and L.~Zhu.
\newblock Robust distributed accelerated stochastic gradient methods for
  multi-agent networks.
\newblock {\em arXiv preprint arXiv:1910.08701}, 2019.

\bibitem{feldman2019high}
V.~Feldman and J.~Vondrak.
\newblock High probability generalization bounds for uniformly stable
  algorithms with nearly optimal rate.
\newblock {\em arXiv preprint arXiv:1902.10710}, 2019.

\bibitem{foster2019complexity}
D.~Foster, A.~Sekhari, O.~Shamir, N.~Srebro, K.~Sridharan, and B.~Woodworth.
\newblock The complexity of making the gradient small in stochastic convex
  optimization.
\newblock {\em arXiv preprint arXiv:1902.04686}, 2019.

\bibitem{gasnikov2017modern}
A.~Gasnikov.
\newblock Universal gradient descent.
\newblock {\em arXiv preprint arXiv:1711.00394}, 2017.

\bibitem{gasnikov2021accelerated}
A.~Gasnikov, D.~Dvinskikh, P.~Dvurechensky, D.~Kamzolov, V.~Matyukhin,
  D.~Pasechnyuk, N.~Tupitsa, and A.~Chernov.
\newblock Accelerated meta-algorithm for convex optimization problems.
\newblock {\em Computational Mathematics and Mathematical Physics},
  61(1):17--28, 2021.

\bibitem{gasnikov2016gradient-free}
A.~V. Gasnikov, A.~A. Lagunovskaya, I.~N. Usmanova, and F.~A. Fedorenko.
\newblock Gradient-free proximal methods with inexact oracle for convex
  stochastic nonsmooth optimization problems on the simplex.
\newblock {\em Automation and Remote Control}, 77(11):2018--2034, Nov 2016.
\newblock arXiv:1412.3890.

\bibitem{gasnikov2018universal}
A.~V. Gasnikov and Y.~E. Nesterov.
\newblock Universal method for stochastic composite optimization problems.
\newblock {\em Computational Mathematics and Mathematical Physics},
  58(1):48--64, 2018.

\bibitem{ghadimi2012optimal}
S.~Ghadimi and G.~Lan.
\newblock Optimal stochastic approximation algorithms for strongly convex
  stochastic composite optimization i: A generic algorithmic framework.
\newblock {\em SIAM Journal on Optimization}, 22(4):1469--1492, 2012.

\bibitem{ghadimi2013stochastic}
S.~Ghadimi and G.~Lan.
\newblock Stochastic first- and zeroth-order methods for nonconvex stochastic
  programming.
\newblock {\em SIAM Journal on Optimization}, 23(4):2341--2368, 2013.
\newblock arXiv:1309.5549.

\bibitem{Bengio2014}
I.~Goodfellow, J.~Pouget-Abadie, M.~Mirza, B.~Xu, D.~Warde-Farley, S.~Ozair,
  A.~Courville, and Y.~Bengio.
\newblock Generative adversarial nets.
\newblock In {\em Advances in Neural Information Processing Systems
  (NeurIPS))}, pages 2672--2680, 2014.

\bibitem{gorbunov2019optimal}
E.~Gorbunov, D.~Dvinskikh, and A.~Gasnikov.
\newblock Optimal decentralized distributed algorithms for stochastic convex
  optimization.
\newblock {\em arXiv preprint arXiv:1911.07363}, 2019.

\bibitem{gorbunov2018accelerated}
E.~Gorbunov, P.~Dvurechensky, and A.~Gasnikov.
\newblock An accelerated method for derivative-free smooth stochastic convex
  optimization.
\newblock {\em SIOPT (in print)}, 2022.

\bibitem{gorbunov2020local}
E.~Gorbunov, F.~Hanzely, and P.~Richt{\'a}rik.
\newblock Local sgd: Unified theory and new efficient methods.
\newblock {\em arXiv preprint arXiv:2011.02828}, 2020.

\bibitem{gorbunov2019unified}
E.~Gorbunov, F.~Hanzely, and P.~Richt{\'a}rik.
\newblock A unified theory of sgd: Variance reduction, sampling, quantization
  and coordinate descent.
\newblock In {\em International Conference on Artificial Intelligence and
  Statistics}, pages 680--690. PMLR, 2020.

\bibitem{gorbunov2020linearly}
E.~Gorbunov, D.~Kovalev, D.~Makarenko, and P.~Richt{\'a}rik.
\newblock Linearly converging error compensated sgd.
\newblock {\em Advances in Neural Information Processing Systems}, 33, 2020.

\bibitem{gorbunov2019upper}
E.~Gorbunov, E.~A. Vorontsova, and A.~V. Gasnikov.
\newblock On the upper bound for the expectation of the norm of a vector
  uniformly distributed on the sphere and the phenomenon of concentration of
  uniform measure on the sphere.
\newblock {\em Mathematical Notes}, 106, 2019.

\bibitem{gower2019sgd}
R.~M. Gower, N.~Loizou, X.~Qian, A.~Sailanbayev, E.~Shulgin, and P.~Richtarik.
\newblock Sgd: General analysis and improved rates.
\newblock {\em arXiv preprint arXiv:1901.09401}, 2019.

\bibitem{guminov2019accelerated}
S.~Guminov, P.~Dvurechensky, N.~Tupitsa, and A.~Gasnikov.
\newblock On a combination of alternating minimization and nesterov’s
  momentum.
\newblock In {\em International Conference on Machine Learning}, pages
  3886--3898. PMLR, 2021.

\bibitem{hendrikx2020optimal}
H.~Hendrikx, F.~Bach, and L.~Massoulie.
\newblock An optimal algorithm for decentralized finite sum optimization.
\newblock {\em arXiv preprint arXiv:2005.10675}, 2020.

\bibitem{hendrikx2020statistically}
H.~Hendrikx, L.~Xiao, S.~Bubeck, F.~Bach, and L.~Massoulie.
\newblock Statistically preconditioned accelerated gradient method for
  distributed optimization.
\newblock {\em arXiv preprint arXiv:2002.10726}, 2020.

\bibitem{horvath2019natural}
S.~Horvath, C.-Y. Ho, L.~Horvath, A.~N. Sahu, M.~Canini, and P.~Richtarik.
\newblock Natural compression for distributed deep learning.
\newblock {\em arXiv preprint arXiv:1905.10988}, 2019.

\bibitem{horvath2019stochastic}
S.~Horv{\'a}th, D.~Kovalev, K.~Mishchenko, S.~Stich, and P.~Richt{\'a}rik.
\newblock Stochastic distributed learning with gradient quantization and
  variance reduction.
\newblock {\em arXiv preprint arXiv:1904.05115}, 2019.

\bibitem{jakovetic2014fast}
D.~Jakoveti{\'c}, J.~Xavier, and J.~M. Moura.
\newblock Fast distributed gradient methods.
\newblock {\em IEEE Transactions on Automatic Control}, 59(5):1131--1146, 2014.

\bibitem{johnson2013accelerating}
R.~Johnson and T.~Zhang.
\newblock Accelerating stochastic gradient descent using predictive variance
  reduction.
\newblock In {\em Advances in neural information processing systems}, pages
  315--323, 2013.

\bibitem{juditsky2012first-order}
A.~Juditsky and A.~Nemirovski.
\newblock First order methods for non-smooth convex large-scale optimization,
  i: General purpose methods.
\newblock In S.~W. Suvrit~Sra, Sebastian~Nowozin, editor, {\em Optimization for
  Machine Learning}, pages 121--184. Cambridge, MA: MIT Press, 2012.

\bibitem{juditsky2011solving}
A.~Juditsky, A.~Nemirovski, and C.~Tauvel.
\newblock Solving variational inequalities with stochastic mirror-prox
  algorithm.
\newblock {\em Stochastic Systems}, 1(1):17--58, 2011.

\bibitem{juditsky2014deterministic}
A.~Juditsky and Y.~Nesterov.
\newblock Deterministic and stochastic primal-dual subgradient algorithms for
  uniformly convex minimization.
\newblock {\em Stochastic Systems}, 4(1):44--80, 2014.

\bibitem{kairouz2019advances}
P.~Kairouz, H.~B. McMahan, B.~Avent, A.~Bellet, M.~Bennis, A.~N. Bhagoji,
  K.~Bonawitz, Z.~Charles, G.~Cormode, R.~Cummings, et~al.
\newblock Advances and open problems in federated learning.
\newblock {\em arXiv preprint arXiv:1912.04977}, 2019.

\bibitem{kakade2009duality}
S.~Kakade, S.~Shalev-Shwartz, and A.~Tewari.
\newblock On the duality of strong convexity and strong smoothness: Learning
  applications and matrix regularization.
\newblock {\em Unpublished Manuscript, http://ttic. uchicago.
  edu/shai/papers/KakadeShalevTewari09.pdf}, 2(1), 2009.

\bibitem{karimireddy2019scaffold}
S.~P. Karimireddy, S.~Kale, M.~Mohri, S.~J. Reddi, S.~U. Stich, and A.~T.
  Suresh.
\newblock Scaffold: Stochastic controlled averaging for federated learning.
\newblock {\em arXiv preprint arXiv:1910.06378}, 2019.

\bibitem{karimireddy2019error}
S.~P. Karimireddy, Q.~Rebjock, S.~U. Stich, and M.~Jaggi.
\newblock Error feedback fixes signsgd and other gradient compression schemes.
\newblock {\em arXiv preprint arXiv:1901.09847}, 2019.

\bibitem{bayoumi2020tighter}
A.~Khaled, K.~Mishchenko, and P.~Richt{\'a}rik.
\newblock Tighter theory for local sgd on identical and heterogeneous data.
\newblock In {\em International Conference on Artificial Intelligence and
  Statistics}, pages 4519--4529, 2020.

\bibitem{kibardin1979decomposition}
V.~Kibardin.
\newblock Decomposition into functions in the minimization problem.
\newblock {\em Avtomatika i Telemekhanika}, (9):66--79, 1979.

\bibitem{koloskova2021improved}
A.~Koloskova, T.~Lin, and S.~U. Stich.
\newblock An improved analysis of gradient tracking for decentralized machine
  learning.
\newblock {\em Advances in Neural Information Processing Systems}, 34, 2021.

\bibitem{koloskova2020unified}
A.~Koloskova, N.~Loizou, S.~Boreiri, M.~Jaggi, and S.~U. Stich.
\newblock A unified theory of decentralized sgd with changing topology and
  local updates.
\newblock {\em ICML 2020, arXiv preprint arXiv:2003.10422}, 2020.

\bibitem{kovalev2021lower}
D.~Kovalev, E.~Gasanov, A.~Gasnikov, and P.~Richtarik.
\newblock Lower bounds and optimal algorithms for smooth and strongly convex
  decentralized optimization over time-varying networks.
\newblock {\em Advances in Neural Information Processing Systems}, 34, 2021.

\bibitem{kovalev2020optimal}
D.~Kovalev, A.~Salim, and P.~Richt{\'a}rik.
\newblock Optimal and practical algorithms for smooth and strongly convex
  decentralized optimization.
\newblock {\em Advances in Neural Information Processing Systems}, 33, 2020.

\bibitem{kovalev2021adom}
D.~Kovalev, E.~Shulgin, P.~Richt{\'a}rik, A.~Rogozin, and A.~Gasnikov.
\newblock Adom: Accelerated decentralized optimization method for time-varying
  networks.
\newblock {\em arXiv preprint arXiv:2102.09234}, 2021.

\bibitem{kroshnin2019complexity}
A.~Kroshnin, N.~Tupitsa, D.~Dvinskikh, P.~Dvurechensky, A.~Gasnikov, and
  C.~Uribe.
\newblock On the complexity of approximating wasserstein barycenters.
\newblock In {\em International conference on machine learning}, pages
  3530--3540. PMLR, 2019.

\bibitem{kulunchakov2019estimate1}
A.~Kulunchakov and J.~Mairal.
\newblock Estimate sequences for stochastic composite optimization: Variance
  reduction, acceleration, and robustness to noise.
\newblock {\em arXiv preprint arXiv:1901.08788}, 2019.

\bibitem{kulunchakov2019estimate2}
A.~Kulunchakov and J.~Mairal.
\newblock Estimate sequences for variance-reduced stochastic composite
  optimization.
\newblock {\em arXiv preprint arXiv:1905.02374}, 2019.

\bibitem{kulunchakov2019generic}
A.~Kulunchakov and J.~Mairal.
\newblock A generic acceleration framework for stochastic composite
  optimization.
\newblock {\em arXiv preprint arXiv:1906.01164}, 2019.

\bibitem{lan2012optimal}
G.~Lan.
\newblock An optimal method for stochastic composite optimization.
\newblock {\em Mathematical Programming}, 133(1):365--397, Jun 2012.
\newblock Firs appeared in June 2008.

\bibitem{lan2016gradient}
G.~Lan.
\newblock Gradient sliding for composite optimization.
\newblock {\em Mathematical Programming}, 159(1):201--235, Sep 2016.

\bibitem{Lan2019lectures}
G.~Lan.
\newblock Lectures on optimization methods for machine learning.
\newblock {\em e-print}, 2019.

\bibitem{lan2020first}
G.~Lan.
\newblock {\em First-order and Stochastic Optimization Methods for Machine
  Learning}.
\newblock Springer, 2020.

\bibitem{lan2017communication}
G.~Lan, S.~Lee, and Y.~Zhou.
\newblock Communication-efficient algorithms for decentralized and stochastic
  optimization.
\newblock {\em Mathematical Programming}, pages 1--48, 2017.

\bibitem{lan2021mirror}
G.~Lan and Y.~Ouyang.
\newblock Mirror-prox sliding methods for solving a class of monotone
  variational inequalities.
\newblock {\em arXiv preprint arXiv:2111.00996}, 2021.

\bibitem{lan2018random}
G.~Lan and Y.~Zhou.
\newblock Random gradient extrapolation for distributed and stochastic
  optimization.
\newblock {\em SIAM Journal on Optimization}, 28(4):2753--2782, 2018.

\bibitem{lan2016algorithms}
G.~Lan and Z.~Zhou.
\newblock Algorithms for stochastic optimization with expectation constraints.
\newblock {\em arXiv:1604.03887}, 2016.

\bibitem{larson2019derivative}
J.~Larson, M.~Menickelly, and S.~M. Wild.
\newblock Derivative-free optimization methods.
\newblock {\em Acta Numerica}, 28:287--404, 2019.

\bibitem{lee2013distributed}
S.~Lee and A.~Nedic.
\newblock Distributed random projection algorithm for convex optimization.
\newblock {\em IEEE Journal of Selected Topics in Signal Processing},
  7(2):221--229, 2013.

\bibitem{li2020decentralized}
H.~Li, C.~Fang, W.~Yin, and Z.~Lin.
\newblock Decentralized accelerated gradient methods with increasing penalty
  parameters.
\newblock {\em IEEE Transactions on Signal Processing}, 68:4855--4870, 2020.

\bibitem{li2020revisiting}
H.~Li and Z.~Lin.
\newblock Revisiting extra for smooth distributed optimization.
\newblock {\em arXiv preprint arXiv:2002.10110}, 2020.

\bibitem{li2021accelerated}
H.~Li and Z.~Lin.
\newblock Accelerated gradient tracking over time-varying graphs for
  decentralized optimization.
\newblock {\em arXiv preprint arXiv:2104.02596}, 2021.

\bibitem{li2020optimal}
H.~Li, Z.~Lin, and Y.~Fang.
\newblock Optimal accelerated variance reduced extra and diging for strongly
  convex and smooth decentralized optimization.
\newblock {\em arXiv preprint arXiv:2009.04373}, 2020.

\bibitem{ligf2014}
J.~Li, C.~Wu, Z.~Wu, and Q.~Long.
\newblock Gradient-free method for nonsmooth distributed optimization.
\newblock {\em Journal of Global Optimization}, 61, 02 2014.

\bibitem{lin2015universal}
H.~Lin, J.~Mairal, and Z.~Harchaoui.
\newblock A universal catalyst for first-order optimization.
\newblock In {\em Proceedings of the 28th International Conference on Neural
  Information Processing Systems}, NIPS'15, pages 3384--3392, Cambridge, MA,
  USA, 2015. MIT Press.

\bibitem{lin2020near}
T.~Lin, C.~Jin, and M.~I. Jordan.
\newblock Near-optimal algorithms for minimax optimization.
\newblock In {\em Conference on Learning Theory}, pages 2738--2779. PMLR, 2020.

\bibitem{lin2021quasi}
T.~Lin, S.~P. Karimireddy, S.~U. Stich, and M.~Jaggi.
\newblock Quasi-global momentum: Accelerating decentralized deep learning on
  heterogeneous data.
\newblock {\em arXiv preprint arXiv:2102.04761}, 2021.

\bibitem{liu2011accelerated}
J.~Liu and A.~S. Morse.
\newblock Accelerated linear iterations for distributed averaging.
\newblock {\em Annual Reviews in Control}, 35(2):160--165, 2011.

\bibitem{liu2019decentralized}
M.~Liu, W.~Zhang, Y.~Mroueh, X.~Cui, J.~Ross, T.~Yang, and P.~Das.
\newblock A decentralized parallel algorithm for training generative
  adversarial nets.
\newblock In {\em Advances in Neural Information Processing Systems (NeurIPS)},
  2020.

\bibitem{liudecentralized}
W.~Liu, A.~Mokhtari, A.~Ozdaglar, S.~Pattathil, Z.~Shen, and N.~Zheng.
\newblock A decentralized proximal point-type method for non-convex non-concave
  saddle point problems.

\bibitem{liu2019decentralizedprox}
W.~Liu, A.~Mokhtari, A.~Ozdaglar, S.~Pattathil, Z.~Shen, and N.~Zheng.
\newblock A decentralized proximal point-type method for saddle point problems.
\newblock {\em arXiv preprint arXiv:1910.14380}, 2019.

\bibitem{liu2019double}
X.~Liu, Y.~Li, J.~Tang, and M.~Yan.
\newblock A double residual compression algorithm for efficient distributed
  learning.
\newblock {\em arXiv preprint arXiv:1910.07561}, 2019.

\bibitem{mateos2015distributed}
D.~Mateos-N{\'u}nez and J.~Cort{\'e}s.
\newblock Distributed subgradient methods for saddle-point problems.
\newblock In {\em 2015 54th IEEE Conference on Decision and Control (CDC)},
  pages 5462--5467. IEEE, 2015.

\bibitem{minty62}
G.~J. Minty.
\newblock Monotone (nonlinear) operators in {Hilbert} space.
\newblock {\em Duke Mathematical Journal}, 29(3):341 -- 346, 1962.

\bibitem{mishchenko2019distributed}
K.~Mishchenko, E.~Gorbunov, M.~Tak{\'a}{\v{c}}, and P.~Richt{\'a}rik.
\newblock Distributed learning with compressed gradient differences.
\newblock {\em arXiv preprint arXiv:1901.09269}, 2019.

\bibitem{muthukrishnan1998first}
S.~Muthukrishnan, B.~Ghosh, and M.~H. Schultz.
\newblock First-and second-order diffusive methods for rapid, coarse,
  distributed load balancing.
\newblock {\em Theory of computing systems}, 31(4):331--354, 1998.

\bibitem{nedic2020distributed}
A.~Nedic.
\newblock Distributed gradient methods for convex machine learning problems in
  networks: Distributed optimization.
\newblock {\em IEEE Signal Processing Magazine}, 37(3):92--101, 2020.

\bibitem{nedic2017achieving}
A.~Nedic, A.~Olshevsky, and W.~Shi.
\newblock Achieving geometric convergence for distributed optimization over
  time-varying graphs.
\newblock {\em SIAM Journal on Optimization}, 27(4):2597--2633, 2017.

\bibitem{nedic2009distributed}
A.~Nedi{\'c} and A.~Ozdaglar.
\newblock Distributed subgradient methods for multi-agent optimization.
\newblock {\em IEEE Transactions on Automatic Control}, 54(1):48--61, 2009.

\bibitem{nemirovski2004prox}
A.~Nemirovski.
\newblock Prox-method with rate of convergence $o(1/t)$ for variational
  inequalities with lipschitz continuous monotone operators and smooth
  convex-concave saddle point problems.
\newblock {\em SIAM Journal on Optimization}, 15(1):229--251, 2004.

\bibitem{nemirovski2009robust}
A.~Nemirovski, A.~Juditsky, G.~Lan, and A.~Shapiro.
\newblock Robust stochastic approximation approach to stochastic programming.
\newblock {\em SIAM Journal on Optimization}, 19(4):1574--1609, 2009.

\bibitem{nesterov2004introduction}
Y.~Nesterov.
\newblock {\em Introductory Lectures on Convex Optimization: a basic course}.
\newblock Kluwer Academic Publishers, Massachusetts, 2004.

\bibitem{nesterov2012make}
Y.~Nesterov.
\newblock How to make the gradients small.
\newblock {\em Optima}, 88:10--11, 2012.

\bibitem{nesterov2018lectures}
Y.~Nesterov.
\newblock {\em Lectures on convex optimization}, volume 137.
\newblock Springer, 2018.

\bibitem{Nesterov}
Y.~Nesterov and V.~G. Spokoiny.
\newblock Random gradient-free minimization of convex functions.
\newblock {\em Foundations of Computational Mathematics}, 17(2):527--566, 2017.

\bibitem{nguyen2018sgd}
L.~M. Nguyen, P.~H. Nguyen, M.~van Dijk, P.~Richt{\'a}rik, K.~Scheinberg, and
  M.~Tak{\'a}{\v{c}}.
\newblock Sgd and hogwild! convergence without the bounded gradients
  assumption.
\newblock {\em arXiv preprint arXiv:1802.03801}, 2018.

\bibitem{olshevsky2019asymptotic}
A.~Olshevsky, I.~C. Paschalidis, and S.~Pu.
\newblock Asymptotic network independence in distributed optimization for
  machine learning.
\newblock {\em arXiv preprint arXiv:1906.12345}, 2019.

\bibitem{olshevsky2019non}
A.~Olshevsky, I.~C. Paschalidis, and S.~Pu.
\newblock A non-asymptotic analysis of network independence for distributed
  stochastic gradient descent.
\newblock {\em arXiv preprint arXiv:1906.02702}, 2019.

\bibitem{peyre2019computational}
G.~Peyr{\'e}, M.~Cuturi, et~al.
\newblock Computational optimal transport.
\newblock {\em Foundations and Trends{\textregistered} in Machine Learning},
  11(5-6):355--607, 2019.

\bibitem{pu2021distributed}
S.~Pu and A.~Nedi{\'c}.
\newblock Distributed stochastic gradient tracking methods.
\newblock {\em Mathematical Programming}, 187(1):409--457, 2021.

\bibitem{qu2017harnessing}
G.~Qu and N.~Li.
\newblock Harnessing smoothness to accelerate distributed optimization.
\newblock {\em IEEE Transactions on Control of Network Systems},
  5(3):1245--1260, 2017.

\bibitem{qu2019accelerated}
G.~Qu and N.~Li.
\newblock Accelerated distributed nesterov gradient descent.
\newblock {\em IEEE Transactions on Automatic Control}, 2019.

\bibitem{rigollet2018entropic}
P.~Rigollet and J.~Weed.
\newblock Entropic optimal transport is maximum-likelihood deconvolution.
\newblock {\em Comptes Rendus Mathematique}, 356(11-12):1228--1235, 2018.

\bibitem{RobbinsMonro:1951}
H.~Robbins and S.~Monro.
\newblock A stochastic approximation method.
\newblock {\em Annals of Mathematical Statistics}, 22:400--407, 1951.

\bibitem{Rockafellar2015}
R.~T. Rockafellar.
\newblock {\em Convex analysis}.
\newblock Princeton university press, 2015.

\bibitem{rogozin2021decentralized}
A.~Rogozin, A.~Beznosikov, D.~Dvinskikh, D.~Kovalev, P.~Dvurechensky, and
  A.~Gasnikov.
\newblock Decentralized distributed optimization for saddle point problems.
\newblock {\em arXiv preprint arXiv:2102.07758}, 2021.

\bibitem{rogozin2021accelerated}
A.~Rogozin, M.~Bochko, P.~Dvurechensky, A.~Gasnikov, and V.~Lukoshkin.
\newblock An accelerated method for decentralized distributed stochastic
  optimization over time-varying graphs.
\newblock {\em Conference on decision and control}, 2021.

\bibitem{rogozin2019projected}
A.~Rogozin and A.~Gasnikov.
\newblock Projected gradient method for decentralized optimization over
  time-varying networks.
\newblock {\em arXiv preprint arXiv:1911.08527}, 2019.

\bibitem{rogozin2020penalty}
A.~Rogozin and A.~Gasnikov.
\newblock Penalty-based method for decentralized optimization over time-varying
  graphs.
\newblock In {\em International Conference on Optimization and Applications},
  pages 239--256. Springer, 2020.

\bibitem{rogozin2021towards}
A.~Rogozin, V.~Lukoshkin, A.~Gasnikov, D.~Kovalev, and E.~Shulgin.
\newblock Towards accelerated rates for distributed optimization over
  time-varying networks.
\newblock In {\em International Conference on Optimization and Applications},
  pages 258--272. Springer, 2021.

\bibitem{scaman2017optimal}
K.~Scaman, F.~Bach, S.~Bubeck, Y.~T. Lee, and L.~Massouli{\'e}.
\newblock Optimal algorithms for smooth and strongly convex distributed
  optimization in networks.
\newblock In {\em Proceedings of the 34th International Conference on Machine
  Learning-Volume 70}, pages 3027--3036. JMLR. org, 2017.

\bibitem{scaman2019optimal}
K.~Scaman, F.~Bach, S.~Bubeck, Y.~T. Lee, and L.~Massouli{\'e}.
\newblock Optimal convergence rates for convex distributed optimization in
  networks.
\newblock {\em Journal of Machine Learning Research}, 20(159):1--31, 2019.

\bibitem{scaman2018optimal}
K.~Scaman, F.~Bach, S.~Bubeck, L.~Massouli{\'e}, and Y.~T. Lee.
\newblock Optimal algorithms for non-smooth distributed optimization in
  networks.
\newblock In {\em Advances in Neural Information Processing Systems}, pages
  2740--2749, 2018.

\bibitem{schmidt2017minimizing}
M.~Schmidt, N.~Le~Roux, and F.~Bach.
\newblock Minimizing finite sums with the stochastic average gradient.
\newblock {\em Mathematical Programming}, 162(1-2):83--112, 2017.

\bibitem{shalev2014understanding}
S.~Shalev-Shwartz and S.~Ben-David.
\newblock {\em Understanding machine learning: From theory to algorithms}.
\newblock Cambridge university press, 2014.

\bibitem{shalev2009stochastic}
S.~Shalev-Shwartz, O.~Shamir, N.~Srebro, and K.~Sridharan.
\newblock Stochastic convex optimization.
\newblock In {\em COLT}, 2009.

\bibitem{shamir2017optimal}
O.~Shamir.
\newblock An optimal algorithm for bandit and zero-order convex optimization
  with two-point feedback.
\newblock {\em Journal of Machine Learning Research}, 18:52:1--52:11, 2017.
\newblock First appeared in arXiv:1507.08752.

\bibitem{Shamir15}
O.~Shamir.
\newblock An optimal algorithm for bandit and zero-order convex optimization
  with two-point feedback.
\newblock {\em Journal of Machine Learning Research}, 18(52):1--11, 2017.

\bibitem{shi2015extra}
W.~Shi, Q.~Ling, G.~Wu, and W.~Yin.
\newblock Extra: An exact first-order algorithm for decentralized consensus
  optimization.
\newblock {\em SIAM Journal on Optimization}, 25(2):944--966, 2015.

\bibitem{song2021optimal}
Z.~Song, L.~Shi, S.~Pu, and M.~Yan.
\newblock Optimal gradient tracking for decentralized optimization.
\newblock {\em arXiv preprint arXiv:2110.05282}, 2021.

\bibitem{song2021provably}
Z.~Song, L.~Shi, S.~Pu, and M.~Yan.
\newblock Provably accelerated decentralized gradient method over unbalanced
  directed graphs.
\newblock {\em arXiv preprint arXiv:2107.12065}, 2021.

\bibitem{spokoiny2012parametric}
V.~Spokoiny et~al.
\newblock Parametric estimation. finite sample theory.
\newblock {\em The Annals of Statistics}, 40(6):2877--2909, 2012.

\bibitem{stepanov2021one}
I.~Stepanov, A.~Voronov, A.~Beznosikov, and A.~Gasnikov.
\newblock One-point gradient-free methods for composite optimization with
  applications to distributed optimization.
\newblock {\em arXiv preprint arXiv:2107.05951}, 2021.

\bibitem{stich2018local}
S.~U. Stich.
\newblock Local sgd converges fast and communicates little.
\newblock {\em arXiv preprint arXiv:1805.09767}, 2018.

\bibitem{stich2018sparsified}
S.~U. Stich, J.-B. Cordonnier, and M.~Jaggi.
\newblock Sparsified sgd with memory.
\newblock In {\em Advances in Neural Information Processing Systems}, pages
  4447--4458, 2018.

\bibitem{stonyakin2019gradient}
F.~Stonyakin, D.~Dvinskikh, P.~Dvurechensky, A.~Kroshnin, O.~Kuznetsova,
  A.~Agafonov, A.~Gasnikov, A.~Tyurin, C.~A. Uribe, D.~Pasechnyuk, et~al.
\newblock Gradient methods for problems with inexact model of the objective.
\newblock {\em arXiv preprint arXiv:1902.09001}, 2019.

\bibitem{stonyakin2020inexact}
F.~Stonyakin, A.~Tyurin, A.~Gasnikov, P.~Dvurechensky, A.~Agafonov,
  D.~Dvinskikh, M.~Alkousa, D.~Pasechnyuk, S.~Artamonov, and V.~Piskunova.
\newblock Inexact model: A framework for optimization and variational
  inequalities.
\newblock {\em Optimization Methods and Software}, pages 1--47, 2021.

\bibitem{sun2019convergence}
Y.~Sun, A.~Daneshmand, and G.~Scutari.
\newblock Convergence rate of distributed optimization algorithms based on
  gradient tracking.
\newblock {\em arXiv preprint arXiv:1905.02637}, 2019.

\bibitem{sun2020convergence}
Y.~Sun, A.~Daneshmand, and G.~Scutari.
\newblock Distributed optimization based on gradient-tracking revisited:
  Enhancing convergence rate via surrogation.
\newblock {\em arXiv preprint arXiv:1905.02637}, 2020.

\bibitem{tang2020distributed}
Y.~Tang, J.~Zhang, and N.~Li.
\newblock Distributed zero-order algorithms for nonconvex multi-agent
  optimization.
\newblock {\em IEEE Transactions on Control of Network Systems}, 2020.

\bibitem{tian2021acceleration}
Y.~Tian, G.~Scutari, T.~Cao, and A.~Gasnikov.
\newblock Acceleration in distributed optimization under similarity.
\newblock {\em arXiv preprint arXiv:2110.12347}, 2021.

\bibitem{vladislav2021accelerated}
V.~Tominin, Y.~Tominin, E.~Borodich, D.~Kovalev, A.~Gasnikov, and
  P.~Dvurechensky.
\newblock On accelerated methods for saddle-point problems with composite
  structure.
\newblock {\em arXiv preprint arXiv:2103.09344}, 2021.

\bibitem{tsitsiklis1984problems}
J.~N. Tsitsiklis.
\newblock Problems in decentralized decision making and computation.
\newblock Technical report, Massachusetts Inst of Tech Cambridge Lab for
  Information and Decision Systems, 1984.

\bibitem{uribe2018distributed}
C.~A. Uribe, D.~Dvinskikh, P.~Dvurechensky, A.~Gasnikov, and A.~Nedi\'c.
\newblock Distributed computation of {W}asserstein barycenters over networks.
\newblock In {\em 2018 IEEE 57th Annual Conference on Decision and Control
  (CDC)}, 2018.
\newblock Accepted, arXiv:1803.02933.

\bibitem{uribe2017optimal}
C.~A. Uribe, S.~Lee, A.~Gasnikov, and A.~Nedi{\'c}.
\newblock Optimal algorithms for distributed optimization.
\newblock {\em arXiv preprint arXiv:1712.00232}, 2017.

\bibitem{uribe2020dual}
C.~A. Uribe, S.~Lee, A.~Gasnikov, and A.~Nedi{\'c}.
\newblock A dual approach for optimal algorithms in distributed optimization
  over networks.
\newblock {\em Optimization Methods and Software}, pages 1--40, 2020.

\bibitem{vaswani2019fast}
S.~Vaswani, F.~Bach, and M.~Schmidt.
\newblock Fast and faster convergence of sgd for over-parameterized models and
  an accelerated perceptron.
\newblock In {\em The 22nd International Conference on Artificial Intelligence
  and Statistics}, pages 1195--1204, 2019.

\bibitem{GT-book}
J.~von Neumann, O.~Morgenstern, and H.~Kuhn.
\newblock {\em Theory of Games and Economic Behavior (commemorative edition)}.
\newblock Princeton University Press, 2007.

\bibitem{wai2018multi}
H.-T. Wai, Z.~Yang, Z.~Wang, and M.~Hong.
\newblock Multi-agent reinforcement learning via double averaging primal-dual
  optimization.
\newblock {\em arXiv preprint arXiv:1806.00877}, 2018.

\bibitem{wen2017terngrad}
W.~Wen, C.~Xu, F.~Yan, C.~Wu, Y.~Wang, Y.~Chen, and H.~Li.
\newblock Terngrad: Ternary gradients to reduce communication in distributed
  deep learning.
\newblock In {\em Advances in Neural Information Processing Systems}, pages
  1509--1519, 2017.

\bibitem{woodworth2020minibatch}
B.~Woodworth, K.~K. Patel, and N.~Srebro.
\newblock Minibatch vs local sgd for heterogeneous distributed learning.
\newblock {\em arXiv preprint arXiv:2006.04735}, 2020.

\bibitem{woodworth2020local}
B.~Woodworth, K.~K. Patel, S.~U. Stich, Z.~Dai, B.~Bullins, H.~B. McMahan,
  O.~Shamir, and N.~Srebro.
\newblock Is local sgd better than minibatch sgd?
\newblock {\em arXiv preprint arXiv:2002.07839}, 2020.

\bibitem{xiao2004fast}
L.~Xiao and S.~Boyd.
\newblock Fast linear iterations for distributed averaging.
\newblock {\em Systems \& Control Letters}, 53(1):65--78, 2004.

\bibitem{xu2019accelerated}
J.~Xu, Y.~Tian, Y.~Sun, and G.~Scutari.
\newblock Accelerated primal-dual algorithms for distributed smooth convex
  optimization over networks.
\newblock {\em arXiv preprint arXiv:1910.10666}, 2019.

\bibitem{yang2020catalyst}
J.~Yang, S.~Zhang, N.~Kiyavash, and N.~He.
\newblock A catalyst framework for minimax optimization.
\newblock {\em Advances in Neural Information Processing Systems}, 2020.

\bibitem{ye2020multi}
H.~Ye, L.~Luo, Z.~Zhou, and T.~Zhang.
\newblock Multi-consensus decentralized accelerated gradient descent.
\newblock {\em arXiv preprint arXiv:2005.00797}, 2020.

\bibitem{ye2020decentralized}
H.~Ye, Z.~Zhou, L.~Luo, and T.~Zhang.
\newblock Decentralized accelerated proximal gradient descent.
\newblock {\em Advances in Neural Information Processing Systems}, 33, 2020.

\bibitem{yu2019linear}
H.~Yu, R.~Jin, and S.~Yang.
\newblock On the linear speedup analysis of communication efficient momentum
  sgd for distributed non-convex optimization.
\newblock {\em arXiv preprint arXiv:1905.03817}, 2019.

\bibitem{yuan2016convergence}
K.~Yuan, Q.~Ling, and W.~Yin.
\newblock On the convergence of decentralized gradient descent.
\newblock {\em SIAM Journal on Optimization}, 26(3):1835--1854, 2016.

\bibitem{zhou2018direct}
K.~Zhou.
\newblock Direct acceleration of saga using sampled negative momentum.
\newblock {\em arXiv preprint arXiv:1806.11048}, 2018.

\bibitem{zhou2018simple}
K.~Zhou, F.~Shang, and J.~Cheng.
\newblock A simple stochastic variance reduced algorithm with fast convergence
  rates.
\newblock {\em arXiv preprint arXiv:1806.11027}, 2018.

\end{thebibliography}

\end{document}